\nonstopmode
\documentclass[10pt, reqno]{amsart}
\usepackage{graphicx}
\usepackage{latexsym}
\usepackage{fancyhdr}
\usepackage{amsmath, amssymb, amsthm}
\usepackage[all]{xy}
\usepackage{pdflscape}
\usepackage{longtable}
\usepackage{rotating}
\usepackage{verbatim}
\usepackage
{hyperref}
\hypersetup{
    linkcolor = blue,
    citecolor = blue,
    urlcolor  = blue,
    colorlinks = true,
}

\usepackage{etoc}
\etocsettocstyle
  {\medskip}          
  {\bigskip}

\etocsetstyle{section}                                          
  {}
  {\leavevmode\leftskip 0pt\relax}            
  {\etocnumber \etocname          
   \dotfill\etocpage\par\smallskip}
  {}                                          

\etocsetstyle{subsection}
  {}
  {\leavevmode\leftskip 1.5em\relax}
  {\etocnumber\ \etocname\dotfill\etocpage\par\smallskip}
  {}

\etocsettocstyle
  {\section*{\contentsname}\small}           
  {\bigskip}                                 

\usepackage{cleveref}
\usepackage{subfigure}
\usepackage{mathrsfs}
\usepackage{mdwlist}
\usepackage{dsfont}

\usepackage{mathtools}
\mathtoolsset{showonlyrefs=true}

\usepackage{float}
\usepackage{color}
\usepackage{stmaryrd}
\usepackage{parskip}
\usepackage{orcidlink}

\usepackage{tkz-base}
\usepackage{pgfplots}
\pgfplotsset{compat=newest}

\usepackage{tkz-euclide}
\usepackage{etoolbox}

\definecolor{teal}{rgb}{0.0, 0.5, 0.5}

\newcounter{mnotecount}[section]

\newcommand{\rmnote}[1]{}

\allowdisplaybreaks

\DeclareFontFamily{U}{mathb}{\hyphenchar\font45}
\DeclareFontShape{U}{mathb}{m}{n}{
      <5> <6> <7> <8> <9> <10> gen * mathb
      <10.95> mathb10 <12> <14.4> <17.28> <20.74> <24.88> mathb12
      }{}
\DeclareSymbolFont{mathb}{U}{mathb}{m}{n}
\DeclareFontSubstitution{U}{mathb}{m}{n}

\theoremstyle{plain}
\newtheorem*{theorem*}{Theorem}
\newtheorem{theorem}{Theorem}[section]
\newtheorem*{lemma*}{Lemma}
\newtheorem{lemma}[theorem]{Lemma}
\newtheorem*{assumption*}{Assumption}

\newtheorem*{proposition*}{Proposition}
\newtheorem{proposition}[theorem]{Proposition}
\newtheorem*{corollary*}{Corollary}
\newtheorem{corollary}[theorem]{Corollary}
\newtheorem*{claim*}{Claim}

\newtheorem*{conjecture*}{Conjecture}

\newtheorem*{question*}{Question}

\newtheorem*{result*}{Result}

\theoremstyle{definition}
\newtheorem*{definition*}{Definition}
\newtheorem{definition}[theorem]{Definition}
\newtheorem*{example*}{Example}
\newtheorem{example}[theorem]{Example}

\newtheorem*{algorithm*}{Algorithm}
\newtheorem*{remark*}{Remark}
\newtheorem*{remarks*}{Remarks}
\newtheorem{remark}[theorem]{Remark}

\newtheorem*{convention*}{Convention}

\Crefname{l}{Lemma}{Lemmas}    
\Crefname{p}{Proposition}{Propositions}
\Crefname{t}{Theorem}{Theorems}
\Crefname{c}{Corollary}{Corollaries}
\Crefname{r}{Remark}{Remarks}
\Crefname{d}{Definition}{Definitions}
\Crefname{e}{Example}{Examples}
\Crefname{q}{Question}{Questions}

\numberwithin{equation}{section}

\def\al{\alpha}
\def\be{\beta}
\def\ga{\gamma}
\def\de{\delta}
\def\ep{\epsilon}

\def\et{\eta}
\def\th{\theta}

\def\ka{\kappa}
\def\la{\lambda}

\def\rh{\rho}
\def\vr{\varrho}
\def\si{\sigma}

\def\vh{\varphi}

\def\ps{\psi}
\def\om{\omega}

\def\Si{\Sigma}
\def\Up{\Upsilon}

\def\C{\mathbb{C}}

\def\N{\mathbb{N}}

\def\R{\mathbb{R}}

\def\cC{\mathcal{C}}

\def\cH{\mathcal{H}}

\def\cK{\mathcal{K}}
\def\cL{\mathcal{L}}

\def\sK{\mathscr{K}}

\def\sS{\mathscr{S}}

\def\p{\partial}

\def\<{\langle}
\def\>{\rangle}
\renewcommand{\o}{\circ}

\def\dist{\on{dist}}
\def\grad{\on{grad}}

\def\ol{\overline}

\def\nn{\mathbf{N}}
\def\dd{\mathbf{D}}

\def\Cnn{\mathbf{C}_\nn}

\def\dist{\on{dist}}
\def\diam{\on{diam}}

\def\ol{\overline}

\let\on=\operatorname

\DeclareMathOperator*{\esssup}{ess\,sup}

\newcommand{\sr}[1]%
{\ifmmode{}^\dagger\else${}^\dagger$\fi\ifvmode
\vbox to 0pt{\vss
 \hbox to 0pt{\hskip\hsize\hskip1em
 \vbox{\hsize3cm\raggedright\pretolerance10000
 \noindent #1\hfill}\hss}\vss}\else
 \vadjust{\vbox to0pt{\vss%
 \hbox to 0pt{\hskip\hsize\hskip1em%
 \vbox{\hsize3cm\raggedright\pretolerance10000%
 \noindent \small #1\hfill}\hss}\vss}}\fi%
}

\def\NN{\mathbb N}

\providecommand{\mapsfrom}{\kern.2em%
\setbox0=\hbox{$\leftarrow$\kern-.10em\rule[0.26mm]{0.1mm}{1.3mm}}\box0%
\kern.3em}

\title[Effective quasianalytic Remez inequalities]{Effective quasianalytic Remez inequalities\\ on tame sets}

\author[Armin Rainer]{Armin Rainer  \orcidlink{0000-0003-3825-3313}}

\address{Armin Rainer: Faculty of Mathematics and Geoinformation,
    Institute for Statistics and Mathematical Methods in Economics, E105-04,
TU Wien, Wiedner Hauptstraße 8, 1040 Vienna, Austria}
\email{armin.rainer@tuwien.ac.at}

\begin{document}

\begin{abstract}
    We establish a Remez inequality for functions in quasianalytic Denjoy--Carleman classes $\cC_M$ 
    on a large family of fat compact sets $K \subseteq \R^n$ with tame geometry, including all fat compact 
    sets definable in the o-minimal expansion of $\R$ by restricted $\cC_M$ functions.
    The inequality generalizes the classical Remez inequality for polynomials and 
    its quasianalytic versions on convex bodies, replacing the polynomial degree by the Bang degree,
    an integer associated with the weight $M$ and the size of the function. 
    The constants depend explicitly on the Bang degree and the geometry of $K$.
    We derive a range of quantitative consequences: 
    estimates for the volume of sublevel sets, comparison of $L^p$-norms, 
    effective inequalities of {\L}ojasiewicz, Harnack, and Markov type with explicit constants, 
    and decay estimates for oscillatory integrals. 
\end{abstract}

\thanks{This research was funded in whole or 
    in part by the Austrian Science Fund (FWF)  DOI 10.55776/PAT1381823.
For open access purposes, the authors have applied a CC BY public copyright license to any author-accepted manuscript version arising from this submission.}
\thanks{\orcidlinkc{0000-0003-3825-3313}}
\keywords{Quantitative Remez inequalities, quasianalytic functions, Denjoy--Carleman classes, tame sets, o-minimal structures,
{\L}ojasiewicz inequalities, Harnack inequalities, Markov inequalities, oscillatory integrals}
\subjclass[2020]{ 
  Primary  41A17,      
    26E10;  	
  Secondary  03C64,    
    32B20,  	
    26D10,   
    28A75,   
42B20}    

\date{\today}

\maketitle

\setcounter{tocdepth}{1}
\tableofcontents

\section{Introduction}

The Remez inequality is a fundamental tool in approximation theory.
The classical result of Remez \cite{Remes:1936wl} states that for a Lebesgue measurable subset $E$ of the unit interval $[0,1]$ with
Lebesgue measure $|E|>0$
we have
\[
  \|p\|_{[0,1]} \le T_d\Big(\frac{2-|E|}{|E|}\Big)\, \|p\|_E,
\]
for all polynomials $p \in \R[x]$ of degree at most $d$, 
where $T_d(x)= \cos (d \arccos x)$ is the $d$-th Chebyshev polynomial.

In higher dimensions, there is the generalization due to Yu.\ Brudnyi and Ganzburg \cite{Brudnyi:1973un}
for convex bodies $K \subseteq \R^n$: for Lebesgue measurable subsets $E\subseteq K$ with $|E|>0$
we have the sharp inequality
\begin{equation} \label{eq:R1}
    \|p\|_{K} \le T_d\Big(\frac{|K|^{1/n}+(|K|-|E|)^{1/n}}{|K|^{1/n}-(|K|-|E|)^{1/n}}\Big)\, \|p\|_E,
\end{equation}
for all polynomials $p \in \R[x_1,\ldots,x_n]$ of degree at most $d$. 

Beyond convex sets, \eqref{eq:R1} has been generalized by Pierzcha{\l}a \cite{Pierzchala:2015aa} 
to a large family of compact sets $K \subseteq \R^n$ admitting cusps, 
including all compact fat subanalytic sets: there exist $\nu >0$ and $m \in \N_{\ge 1}$, depending only on $K$, 
such that  
\begin{equation} \label{eq:R2}
    \|p\|_{K} \le T_{m d}\Big(\frac{|K|^{\nu}+(|K|-|E|)^{\nu}}{|K|^{\nu}-(|K|-|E|)^{\nu}}\Big)\, \|p\|_E,
\end{equation}
for all Lebesgue measurable $E \subseteq K$ with $|E|>0$ and all $p \in \R[x_1,\ldots,x_n]$ of degree at most $d$.
(Recall that a compact set $K \subseteq \R^n$ is called \emph{fat} if $K = \ol{\on{int}(K)}$.)

The Remez inequality has been extended to various classes of functions 
(e.g.\ analytic \cite{Brudnyi_1999}, plurisubharmonic \cite{Brudnyi:1999tb}, and holomorphic \cite{Brudnyi:2008aa}), 
generally, in the univariate or convex multivariate case. Still in the convex setting, 
\cite{Yomdin:2011tq} showed that, in the Remez inequality for polynomials, the Lebesgue measure can be replaced by a certain geometric invariant 
which may be nonzero for discrete or even finite sets $E$; see also \cite{BRUDNYI:2015vu}.

In this paper, we focus on quasianalytic Denjoy--Carleman classes $\cC_M$, that is, 
classes of smooth functions whose sequence of derivatives is bounded by a given positive sequence $M= (M_j)_{j \ge 0}$ 
with suitable properties. 

Nazarov, Sodin, and Volberg \cite{NazarovSodinVolberg04} proved a univariate Remez inequality for functions $f$
of class $\cC_M$, where the so-called \emph{Bang degree} replaces the polynomial degree. 
The Bang degree is a nonnegative integer which can be computed from the sequence $M$ 
and which depends on the size of $f$ as a parameter. This number was introduced by Bang \cite{Bang53}
and named Bang degree by \cite{NazarovSodinVolberg04}. 
A multivariate Remez inequality for functions $f$ of class $\cC_M$ on convex bodies $K \subseteq \R^n$
was obtained in \cite{Rainer:2022aa}, where also effective bounds on the zero sets were proved.

The main goal of this paper is to establish a Remez inequality for functions $f$ of class $\cC_M$ on 
a large family of compact sets $K \subseteq \R^n$ with ``tame geometry''. 
In addition, we develop several analytic and geometric consequences of this Remez inequality.
The family of admissible sets $K$ (the same as the one used by \cite{Pierzchala:2015aa})
includes all fat compact sets that are definable in $\R_{\cC_M}$, the o-minimal
expansion of the real ordered field by all restricted functions of class $\cC_M$ (assuming that $M$ satisfies some 
standard regularity conditions). In particular, all fat compact subanalytic sets are admissible.
The constants that appear in the Remez inequality depend only on the Bang degree associated with $M$ and 
on the geometry of $K$ and can be computed explicitly.
Similarly, all the consequences,   
including sublevel set estimates, reverse H\"older inequalities, 
inequalities of {\L}ojasiewicz, Harnack, and Markov type, and decay estimates for oscillatory integrals,
have a quantitative character. 
The Bang degree plays the role of an effective complexity parameter, analogous to the polynomial degree 
in semialgebraic geometry.

In the following, we present our main results in more detail.

\subsection{Quasianalytic Remez inequalities on tame sets}

Let $\mu^{|\infty} = (\mu_j)_{j \ge 0}$ be a positive infinite sequence satisfying $0 < \mu_1 \le \mu_2 \le \cdots$. 
Let us also assume here in the introduction (for simplicity of exposition) that $\mu_j \ge j$ for $j \ge 1$.
Given $M_0 >0$, we then set 
\[
    M_j := M_0 \prod_{i=1}^j \mu_i, \quad j \ge 1,
\]
and get a sequence $M=(M_j)_{j\ge 0}$, which conversely determines the pair $(\mu^{|\infty},M_0)$ 
in a unique way.

For open $U \subseteq \R^n$, the \emph{Denjoy--Carleman class} $\cC_M(U)$ consists of all $f \in \cC^\infty(U)$ 
such that for each compact $K \subseteq U$ there exist constants $C,\rh>0$ such that 
\begin{equation} \label{eq:DC}
     |\p^\al f(x)| \le C \rh^{|\al|} M_{|\al|}, \quad \text{ for all } x \in K, \, \al \in \N^n.
\end{equation}
For instance, $\cC_M(U)$ coincides with space of real analytic functions on $U$, if $\mu_j = j$.
By the Denjoy--Carleman theorem, the class $\cC_M$ is \emph{quasianalytic}, i.e., 
the Taylor series homomorphism $\cC_M(U) \to \R \llbracket x_1,\ldots,x_d \rrbracket$ at any point of $U$ is injective,
provided that $U$ is connected,
if and only if 
\begin{equation} \label{eq:Bang}
    \sum_{j \ge 1} \frac{1}{\mu_j} = \infty.
\end{equation}
See \cite{Rainer:2021aa} and \cite{Thilliez08} for background on Denjoy--Carleman classes. 
Throughout the introduction, we make the quasianalyticity assumption \eqref{eq:Bang}; 
this can be relaxed somewhat as shall be seen later.

Let $K \subseteq \R^n$ be nonempty and compact.
It will be convenient to work with the following variant of condition \eqref{eq:DC}:
a function $f : K \to \R$ 
is called \emph{$(\mu^{|\infty},M_0)$-smooth} if $f \in \cC^\infty(K)$ and 
\begin{equation}
    \|f\|_{j,K} := \sup_{x \in K} \sum_{|\al|=j} \frac{j!}{\al!} |\p^\al f(x)| \le M_j, \quad j \ge 0.
\end{equation}
(This is no restriction relative to \eqref{eq:DC} because the constants $C,\rh$ can be absorbed by $M$, 
by redefining $(\mu^{|\infty},M_0)$.)

Given $\mu^{|\infty}$ and $b>0$, we define the \emph{Bang degree} 
\begin{equation}
    \dd_{\mu^{|\infty}}(b) := \sup \Big\{ n \in \N : \sum_{j = j_0(b) +1}^n \frac{1}{\mu_j} < e \Big\},
\end{equation}
where $j_0(b) := \lceil \log(b^{-1}) \rceil_\N$ is the smallest nonnegative integer $j$ with $j \ge \log(b^{-1})$.
Thanks to \eqref{eq:Bang}, $\dd_{\mu^{|\infty}}(b)$ is a nonnegative integer.

We will define a collection $\sK(\R^n)$ of fat compact subsets of $\R^n$ admissible for the 
Remez inequality (see \Cref{ssec:admissiblesets}).
Informally, the sets in $\sK(\R^n)$ admit a uniformly controlled polynomial parametrization.
For the purposes of this introduction, it suffices to note that
$\sK(\R^n)$ includes all compact UPC-sets, 
in particular, convex bodies, fat compact sets with H\"older boundary, and all fat compact sets that are 
subanalytic or definable in $\R_{\cC_{M}}$, where $M = (M_j)_{j\ge 0}$ is any quasianalytic weight 
sequence fulfilling some standard regularity conditions (see \Cref{e:RCM}). 

With each $K \in \sK(\R^n)$ we will associate constants
\begin{equation}
    a_K > 0 \quad \text{ and } \quad 0 < \nu_K \le 1,
\end{equation}
which depend only on the geometry of $K$ (see \Cref{ssec:RemezKL}). 
For instance, if $K \subseteq \R^n$ is a convex body 
then $a_K = \diam(K)$ and $\nu_K = 1/n$.

The following quasianalytic Remez inequality is our main result. 

\begin{theorem} \label[t]{t:Remez_intro}
    Let $\mu^{|\infty} = (\mu_j)_{j \ge 1}$ and $M_0>0$ be as above.
    Let $L\subseteq \R^n$ be a convex body 
    and $K \in \sK(\R^n)$ such that $K \subseteq L$.
    Let $b_0 \in (0,M_0)$.
    Then for each $(\mu^{|\infty},M_0)$-smooth function $f : L \to \R$ with $\|f\|_K\ge b_0$ and 
    each Lebesgue measurable set $E \subseteq K$ with $|E| > 0$ we have
    \[
        \|f\|_K \le \Cnn\, \Big(\frac{|K|^{\nu_K}}{|K|^{\nu_K}-(|K|- |E|)^{\nu_K}}\Big)^{\nn} \, \|f\|_E,
    \]
    where
    \[
        \nn=2\, \dd_{a_K \,\mu^{|\infty}}(\tfrac{b_0}{M_0})
    \]
    and $\Cnn$ is a constant depending only on $\nn$ and $\mu^{|\infty}$.
\end{theorem}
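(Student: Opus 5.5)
The plan is to reduce to a one-dimensional quasianalytic Remez inequality along polynomial curves, following Pierzcha{\l}a's strategy for polynomials.

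\emph{Univariate input.} First I will establish, or take from \cite{NazarovSodinVolberg04, Rainer:2022aa}, a Remez inequality on an interval. If $g:[0,1]\to\R$ is $(\mu^{|\infty},M_0)$-smooth up to a rescaling of the derivatives by the length factor, and $\|g\|_{[0,1]}\ge b$, then for measurable $F\subseteq[0,1]$
\[
\|g\|_{[0,1]}\le C\,(c/|F|)^{D}\,\|g\|_F,\qquad D=\dd_{\mu^{|\infty}}(b/M_0).
\]
The idea is Bang's: count zeros of derivatives. Then approximate $g$ by its Lagrange interpolation polynomial at $D$ suitably spread points of $F$, and use the classical Remez inequality for that polynomial.

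\emph{Using the admissible family.} The admissible sets $K\in\sK(\R^n)$ are defined so that each point $x\in K$ lies on a polynomial curve $h_x:[0,1]\to K$ with $h_x(0)=x$. These curves have uniformly bounded degree $m$ and uniformly bounded size, and they enter $K$ at a controlled cusp rate. Together these give a fibration of a neighbourhood of $x$ by such curves (a cone-type family).

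Given $E$, pick $x\in K$ with $|f(x)|=\|f\|_K$. The set of curves through $x$ sweeps a region $W_x\subseteq K$ whose volume is comparable to $|K|$. By a Fubini/coarea argument and the measure estimate built into the definition of $\nu_K$, some curve $h=h_x(\cdot,\theta)$ satisfies
\[
|h^{-1}(E)|\gtrsim \frac{|K|^{\nu_K}-(|K|-|E|)^{\nu_K}}{|K|^{\nu_K}}.
\]
This is the same computation that produces the exponent $\nu_K$ in \eqref{eq:R2}. Then apply the univariate inequality to $g=f\circ h$.

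\emph{Smoothness and degree bookkeeping.} By Fa\`a di Bruno, $g$ is smooth with controlled constants, and the scaling contributes the factor $a_K$. A polynomial curve of degree $m$ only reparametrizes derivatives, so it can double the relevant index. This is where $\nn=2\dd_{a_K\mu^{|\infty}}(b_0/M_0)$ should come from: the factor $2$ accounts for the composition and the Bang degree is computed for the rescaled sequence $a_K\mu^{|\infty}$. We have $\|g\|\ge |f(x)|=\|f\|_K\ge b_0$ and $\|g\|_{h^{-1}(E)}\le\|f\|_E$. Combining everything gives the stated bound, with $\Cnn$ depending only on $\nn$ and $\mu^{|\infty}$.

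\emph{Main obstacle.} The hard step is showing that $f\circ h$ lies in a class whose Bang degree is bounded by $2\dd_{a_K\mu^{|\infty}}$. This needs the Denjoy--Carleman estimates to survive composition with a polynomial without losing more than the stated constant in the weight. Here I would use $\|\cdot\|_{j,K}$ with the multinomial weights, which behave well under linear substitutions, and treat the polynomial curve as close to a segment after rescaling by $a_K$. This step relies on $f$ being defined on the convex $L\supseteq K$, so that the curves and segments stay in the domain.
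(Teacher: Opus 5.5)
Your architecture (univariate inequality, composition with polynomial curves, a curve issuing from the maximum point) matches the paper's \Cref{t:RemezK}, but only in the regime where $|E|/|K|$ is large. The step that fails is the curve selection. You assert that for \emph{every} measurable $E\subseteq K$ with $|E|>0$, some curve $h$ in $K$ starting at the maximum point $x$ satisfies $|h^{-1}(E)|\gtrsim 1-(1-|E|/|K|)^{\nu_K}$, appealing to ``the measure estimate built into the definition of $\nu_K$''. But $\nu_K$ is produced by the proof, not given in advance, and the claim is false in general. A union of two disjoint closed balls is compact, fat and semialgebraic, hence admissible. If $f$ peaks in one ball and $E$ lies in the other, every curve $h:[0,1]\to K$ with $h(0)=x$ stays in the first ball by connectedness, so $h^{-1}(E)=\emptyset$.

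What the admissible structure actually gives is \Cref{l:P_9.11}: a lower bound for $|\Up_w^{-1}(E)\cap K_w|/|K_w|$, valid only when $\la=|E|/|K|$ exceeds a threshold $\vr\in[0,1)$. Combined with \Cref{l:ga} on the star-shaped $K_w$, the estimate for the substitution $t\mapsto t^q$, and \Cref{l:comp}, this yields the inequality with some exponent $\nu$, but only for $\la>\vr$. For $\la\le\vr$ the paper does not use curves in $K$ at all. It replaces $L$ by $\on{co}(K)$ and applies the convex-body inequality \Cref{thm:Remez} there, whose segments may leave $K$. It compares degrees via $\diam(\on{co}(K))=\diam(K)\le a_K$. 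It then converts $1-(1-\tfrac{|K|}{|L|}\la)^{1/n}$ into $1-(1-\la)^{\ep_0/n}$ using the concavity bound $1-\tfrac{|K|}{|L|}t\le(1-t)^{\ep_0}$ on $[0,\vr]$, which gives $\nu_K=\min\{\nu,\ep_0/n\}$. This is why $f$ must be defined on a convex $L\supseteq K$. It is not needed for the composition step, since the curves $\Up_w\circ\ga_w\circ G_q$ stay inside $K$.

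Your degree bookkeeping is also wrong. Composing with a polynomial curve does not double any index: \Cref{p:comp} shows that $f\circ p$ is $(a\,\mu^{|N},M_0)$-smooth, so only the weight is rescaled. Its proof is the step you leave open, and it runs as follows:
\begin{itemize}
\item fix $t$ and write $(f\circ p)^{(k)}(t)$ as the $k$-th derivative at $z=t$ of the polynomial $z\mapsto\sum_{|\al|\le k}\p^\al f(p(t))(p(z)-p(t))^\al/\al!$;
\item bound this polynomial on a Bernstein ellipse;
\item use $\mu_j\ge j$ to get $M_j/j!\le M_k/k!$;
\item apply Cauchy's estimate with the optimal choice $r=\frac{d+1}{d-1}$.
\end{itemize}
This is where the hypothesis $\mu_j\ge j$ enters (your sketch never uses it), and where the quadratic growth of $a_K$ in $d$ comes from.

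The factor $2$ in $\nn=2\dd_{a_K\mu^{|\infty}}(b_0/M_0)$ belongs to the univariate inequality itself. One interpolates at $\nn=2\dd$ nodes in $E$. The remainder $M_\nn|I|^\nn/\nn!$ can be absorbed only because the lower bound $\min\be>e^{-\dd-1}$ for Bang's function $\be$ yields an index $k\le\dd$, leaving the gap $\nn-k\ge\nn/2$ used in \Cref{l:claim1}. With only $D=\dd$ nodes, as in your univariate sketch, $k$ may equal $D$ and the remainder cannot be absorbed, so that argument does not close as stated.
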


\Cref{t:Remez_intro} can be interpreted as a 
``quantitative propagation-of-smallness principle'' for $(\mu^{|\infty},M_0)$-smooth functions
on general tame compact sets beyond the convex setting.

The constant $\Cnn$ is explicitly given in \Cref{d:Cnn}. For standard examples of $\mu^{|\infty}$ it 
is exponential in $\NN$.

The main defining property of the sets $K$ in $\sK(\R^n)$ is that they can be covered by (possibly infinite) 
families of images of compact convex or star-shaped subsets of the unit cube, with positive measure uniformly bounded below,
under polynomial maps $\R^n \to \R^n$ of uniformly bounded degree and uniformly controlled Jacobian.
While $\nu_K$ can be computed (with some effort) from these data, the constant $a_K$ is explicitly given as a 
function of the diameter $\diam(K)$ and the maximal degree $d$ of the polynomial maps; see \eqref{eq:aK}. 
It follows that $a_K$ grows linearly in $\diam(K)$ and asymptotically quadratically in $d$.

The proof of \Cref{t:Remez_intro} is based on a fundamental technical result of \cite{Pierzchala:2015aa}
on sets in $\sK(\R^n)$, combined with the univariate Remez inequality for $(\mu^{|\infty},M_0)$-smooth functions 
as well as its multivariate version on convex bodies.
In fact, a more general version of \Cref{t:Remez_intro} is stated in \Cref{t:RemezKL} which will be proved in \Cref{sec:poly,sec:Remez_proof}. 
In particular, \Cref{t:RemezKL} provides a Remez inequality for vector-valued 
maps $f = (f_1,\ldots,f_m) : L \to \R^m$.

\begin{remark}
    Contrary to the Remez inequality for polynomials (\eqref{eq:R1} and \eqref{eq:R2}), 
    the vector-valued Remez inequality of \Cref{t:RemezKL} does not simply follow from its single-valued version 
    applied component-wise. The reason is that we may have $\|f\|_K \ge b_0>0$ while $\|f_i\|_K$ is much smaller than $b_0$ for some $i$.
    A component-wise treatment would cause $\nn$ to be much larger than actually needed.
\end{remark}

\begin{remark} \label[r]{r:depb}
    The requirement that $\|f\|_K$ is bounded below by a positive number $b_0$, on which the degree $\nn$ depends, is vital, 
    as we will see in \Cref{ex:fk}. Setting $b_0 = \|f\|_K$ we obtain a Remez inequality, 
    where the Bang degree
    \begin{equation}
        \nn_{f,K} = 2\, \dd_{a_K \,\mu^{|\infty}}(\tfrac{\|f\|_K}{M_0})
    \end{equation}
    explicitly depends on $f|_K$,
    similarly to Remez's inequality for polynomials.  

    The Bang degree is however not a generalization of the polynomial degree and the 
    analogy between the two is limited. For instance,
    the Bang degree $\nn_{f,K}$ of a $(\mu^{|\infty},M_0)$-smooth function $f : L \to \R$ 
    tends to infinity
    as we shrink $K$ to a zero of $f$, since then $\|f\|_K \to 0$. (The accompaning decreasing effect on $a_K$ fails 
    to prevent this; see \Cref{p:an,p:Denjoy1,p:Denjoys}.)
\end{remark}

\begin{example} \label[e]{ex:fk}
Consider the sequence of polynomials $p_k(x) := \frac{x^k}{k!}$, for $k\ge 1$, on $\R$.
    Each $f_k := p_k|_{[0,1]}$ is $(\mu^{|\infty},1)$-smooth for $\mu_j \equiv 1$, or $\mu_j = j$ (if one insists on 
    the condition $\mu_j \ge j$), or other appropriate choices of $\mu^{|\infty}$.
    On the other hand,
    \begin{equation}
        \|f_k\|_{[0,1]} = \frac{1}{k!} \quad \text{ and } \quad \|f_k\|_{[0,\frac{1}{2}]} = \frac{1}{2^k k!}.
    \end{equation}
    Thus a Remez inequality for $f_k$ with a constant that is independent of the size of the functions $f_k$, i.e., 
    independent of $k$, cannot hold.
\end{example}

The set of functions for which \Cref{t:Remez_intro} applies (with 
specified $\mu^{\infty}$, $M_0$, and $b_0$)
can be infinite dimensional.

\begin{remark}
    Setting $K= B(x,r)$ and $b_0 = \|f\|_{B(x,r)}$ in \Cref{t:Remez_intro} we get a bound for the 
    \emph{doubling exponent} of $f$ on the ball $B(x,r)$:
    \begin{align*}
        \log \frac{\|f\|_{B(x,r)}}{\|f\|_{B(x,\tfrac{r}2)}} \le  (n \log 2 + \log n)\, \nn_f(x,r) + \log \mathbf{C}_{\nn_f(x,r)},
    \end{align*}
    where 
    \[
        \nn_{f}(x,r) = 2\, \dd_{2r \,\mu^{|\infty}}(\tfrac{\|f\|_{B(x,r)}}{M_0}).
    \]
\end{remark}

\subsection{Finite determinacy}

So far we only considered $\cC^\infty$-functions with derivatives of all orders controlled by 
the sequence $M=(M_j)$ and we assumed that the quasianalyticity assumption \eqref{eq:Bang} is 
satisfied.

Actually, the result in \Cref{t:Remez_intro} and many of its consequences 
depend only on the derivatives up to some finite order
and remain true even for functions that are only differentiable up to said finite order.
Also the quasianalyticity requirement can often be relaxed.

We will make this precise below, in particular, we will specify the finite order and use 
customized terminology; see \Cref{sec:weights}.

Next we discuss various analytic and geometric consequences of \Cref{t:Remez_intro} and 
retain its setting for simplicity.

\subsection{Volume of sublevel sets and applications}

In the setup of \Cref{t:Remez_intro}, we obtain the following corollaries; see \Cref{sec:sublevel}, 
where more general versions are proved.

\begin{corollary}[Volume of sublevel sets] \label[c]{c:sublevel_intro}
    The sublevel set $K_t := \{x \in K: |f(x)| \le t\}$ of $f|_K$ satisfies
    \begin{equation}
        |K_t| \le \frac{|K|}{\nu_K} \, \Big(\frac{\Cnn \,t}{\|f\|_K}\Big)^{1/\nn}, \quad t>0.
    \end{equation}
\end{corollary}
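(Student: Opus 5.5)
The plan is to apply \Cref{t:Remez_intro} with $E := K_t$ and then invert the resulting inequality. First dispose of the trivial cases. If $|K_t| = 0$ there is nothing to prove. If $t \ge \|f\|_K$, then $\Cnn \ge 1$ (as we expect from \Cref{d:Cnn}, since the Remez inequality with $E=K$ forces it). Hence the right-hand side is at least $|K|/\nu_K \ge |K| \ge |K_t|$, because $\nu_K \le 1$.

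So assume $|K_t|>0$ and $t<\|f\|_K$. The set $K_t$ is closed, hence measurable, and $\|f\|_{K_t} \le t$. Write $s := |K_t|/|K| \in (0,1]$ and $\nu := \nu_K$. \Cref{t:Remez_intro} gives
\[
  \|f\|_K \le \Cnn \Big(\frac{1}{1-(1-s)^{\nu}}\Big)^{\nn} t,
  \quad\text{i.e.}\quad
  1-(1-s)^{\nu} \le \Big(\frac{\Cnn\, t}{\|f\|_K}\Big)^{1/\nn}.
\]
It remains to bound $s$ from above by a multiple of $1-(1-s)^\nu$.

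For this, use concavity of $\phi(u) = 1-(1-u)^\nu$ on $[0,1]$. Since $\phi(0)=0$ and $\phi(1)=1$, concavity gives $\phi(s) \ge s$, which is even stronger than needed and yields $|K_t| \le |K| (\Cnn t/\|f\|_K)^{1/\nn}$. Alternatively, the mean value theorem gives $1-(1-s)^\nu = \nu\xi^{\nu-1}s \ge \nu s$ for some $\xi\in(1-s,1)$, because $\nu-1\le 0$ and $\xi\le 1$. Either route implies the stated bound with the factor $1/\nu_K$.

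There is no real obstacle. The only points to check are measurability of $E=K_t$, the applicability condition $\|f\|_K\ge b_0$ (inherited from the setting of \Cref{t:Remez_intro}), and the degenerate cases handled above.
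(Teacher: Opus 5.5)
Your argument is the paper's proof: apply the Remez inequality with $E=K_t$, use $\|f\|_{K_t}\le t$, invert to get $1-(1-s)^{\nu_K}\le (\Cnn t/\|f\|_K)^{1/\nn}$, and finish with $\nu s\le 1-(1-s)^{\nu}$. Your mean value theorem argument proves exactly this last inequality, so the proof is complete. Splitting off the case $t\ge\|f\|_K$ is harmless but unnecessary: the Remez inequality applies to $E=K_t$ whenever $|K_t|>0$, which is the only case the paper excludes.

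However, your ``concavity'' route is wrong and should be deleted. For $\phi(u)=1-(1-u)^\nu$ one has $\phi'(u)=\nu(1-u)^{\nu-1}$, which is increasing when $\nu\in(0,1]$. So $\phi$ is \emph{convex} on $[0,1]$, and the chord inequality goes the other way: $\phi(s)\le s$. For instance, $\nu=s=\tfrac12$ gives $\phi(\tfrac12)=1-2^{-1/2}\approx 0.29<\tfrac12$.

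Consequently the claimed sharper bound $|K_t|\le |K|\,(\Cnn t/\|f\|_K)^{1/\nn}$ does not follow from this argument. With $X=(\Cnn t/\|f\|_K)^{1/\nn}\le 1$, inverting $\phi(s)\le X$ exactly gives only $s\le 1-(1-X)^{1/\nu}$. This bound lies between $X$ and $X/\nu$ and behaves like $X/\nu$ as $X\to 0$, so the factor $1/\nu_K$ cannot be removed by this inversion.
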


This simple consequence is very useful. For instance, it implies the following 
inequality of Nikolskii type which reverses H\"older's inequality.

\begin{corollary}[Comparison of $L^p$-norms] \label[c]{c:rH_intro}
    For all $0 < q < p \le  \infty$,
    \begin{equation}
        \|f\|_{L^p(K)} \le \Big(\frac{\Cnn}{\nu_K^\nn}\Big)^{1-\frac{q}{p}} \Big(\frac{q \nn+1}{|K|}\Big)^{\frac{1}{q}-\frac{1}{p}} \|f\|_{L^q(K)}.
    \end{equation}
\end{corollary}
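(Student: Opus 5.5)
The plan is to derive the comparison from the sublevel estimate of \Cref{c:sublevel_intro} through the layer-cake formula. By homogeneity we may assume $\|f\|_K = 1$; the case $\|f\|_K=0$ is excluded by the hypothesis $\|f\|_K\ge b_0>0$. Write $\mathbf{A} := \Cnn^{1/\nn}/\nu_K$, so that \Cref{c:sublevel_intro} gives $|K_t| \le |K|\,\mathbf{A}\, t^{1/\nn}$ for $t>0$.

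First consider $p=\infty$, where $\|f\|_{L^\infty(K)} = \|f\|_K = 1$ by continuity of $f$ and fatness of $K$. Write
\[
\|f\|_{L^q(K)}^q = \int_0^1 q s^{q-1}\, |\{x\in K : |f(x)|>s\}|\,ds \ge \int_0^1 q s^{q-1}\max\{0,\,|K|-|K|\mathbf{A} s^{1/\nn}\}\,ds .
\]
Since $\Cnn\ge 1$ and $\nu_K\le 1$, we have $\mathbf{A}\ge 1$. Set $s_0 := \mathbf{A}^{-\nn}\le 1$ and integrate over $[0,s_0]$ only. This gives
\[
|K|\Big(s_0^q - \mathbf{A}\,\tfrac{q}{q+1/\nn}\, s_0^{q+1/\nn}\Big) = |K|\, s_0^q\Big(1-\tfrac{q\nn}{q\nn+1}\Big) = \frac{|K|\, \mathbf{A}^{-q\nn}}{q\nn+1}.
\]
Taking $q$-th roots yields $1 \le \mathbf{A}^{\nn}\big(\tfrac{q\nn+1}{|K|}\big)^{1/q}\|f\|_{L^q(K)}$. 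Since $\mathbf{A}^{\nn}=\Cnn/\nu_K^{\nn}$, this is exactly the claim for $p=\infty$.

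For $p<\infty$, interpolate: $\|f\|_{L^p}^p \le \|f\|_{L^\infty}^{p-q}\|f\|_{L^q}^q$, so
\[
\|f\|_{L^p}\le \|f\|_{L^\infty}^{1-q/p}\|f\|_{L^q}^{q/p}.
\]
Inserting the $p=\infty$ bound for $\|f\|_{L^\infty}$ gives the factor $\big(\Cnn/\nu_K^\nn\big)^{1-q/p}\big((q\nn+1)/|K|\big)^{(1/q)(1-q/p)}\|f\|_{L^q}$, and $(1/q)(1-q/p)=1/q-1/p$. This produces the exponents in the statement.

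The main point requiring care is the choice of truncation level $s_0$ in the layer-cake integral. This choice makes the constant come out exactly as $\Cnn/\nu_K^{\nn}$ with the factor $(q\nn+1)^{1/q}$. We also need the mild facts $\Cnn\ge1$ and $\nu_K\le 1$, which ensure $s_0\le 1$. If $\Cnn\ge 1$ were not guaranteed by \Cref{d:Cnn}, it would still follow from the Remez inequality itself by taking $E=K$.
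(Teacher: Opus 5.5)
Your proof is correct and is essentially the paper's proof: the paper also turns the sublevel estimate into a lower bound for $\|f\|_{L^q(K)}$, phrased through the decreasing rearrangement in \Cref{l:rearrange}, and obtains the same factor $\frac{1}{q\nn+1}$ from $\int_0^1 (1-\lambda)^{q\nn}\,d\lambda$. That integral is your truncated layer-cake integral with the order of integration swapped. The paper then treats $p=\infty$ first and interpolates via $\|f\|_{L^p}^p \le \|f\|_{L^\infty}^{p-q}\,\|f\|_{L^q}^q$, exactly as you do.
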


We conclude integrability of $|f|^{-\al}$ in the range $0<\al<1/\nn$.

\begin{corollary}[Integrability of $|f|^{-\al}$]
    The integral $\int_K |f(x)|^{-\al}\, dx$ converges, provided that $0 < \al < 1/\nn$.
\end{corollary}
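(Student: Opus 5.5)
The plan is to derive the integrability from the sublevel set estimate of \Cref{c:sublevel_intro} through the layer-cake representation. Assume $f$ is not identically zero on $K$, so that $\|f\|_K>0$ and we may take $b_0=\|f\|_K$ in \Cref{t:Remez_intro}. For $\al>0$, Fubini (Cavalieri's principle) gives
\[
  \int_K |f(x)|^{-\al}\,dx = \int_0^\infty \bigl|\{x \in K : |f(x)|^{-\al} > s\}\bigr|\, ds = \int_0^\infty \bigl|\{x\in K: |f(x)| < s^{-1/\al}\}\bigr|\, ds .
\]
The set on the right is contained in the sublevel set $K_t$ with $t=s^{-1/\al}$. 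Points where $f=0$ are included: they form a null set by \Cref{c:sublevel_intro} with $t\to 0$, and there we interpret $|f|^{-\al}=\infty$.

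Next I split the $s$-integral at $s_0:=\|f\|_K^{-\al}$. For $s\le s_0$ I bound the integrand trivially by $|K|$, which contributes at most $|K|\,\|f\|_K^{-\al}$. For $s>s_0$ I apply \Cref{c:sublevel_intro} with $t=s^{-1/\al}$:
\[
  |K_t| \le \frac{|K|}{\nu_K}\Big(\frac{\Cnn}{\|f\|_K}\Big)^{1/\nn} s^{-1/(\al\nn)} .
\]
The tail integral $\int_{s_0}^\infty s^{-1/(\al\nn)}\,ds$ converges exactly when $1/(\al\nn)>1$, that is, when $\al<1/\nn$. Its value is
\[
  \frac{s_0^{\,1-1/(\al\nn)}}{1/(\al\nn)-1},
\]
so the whole integral is finite. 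Collecting terms even yields an explicit bound of the form
\[
  \int_K |f|^{-\al}\,dx \le \|f\|_K^{-\al}\,|K|\Big(1+\frac{\Cnn^{1/\nn}}{\nu_K}\cdot\frac{\al\nn}{1-\al\nn}\Big).
\]

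There is no serious obstacle. The only care needed is the measurability and zero-set convention above, and the observation that $\nn\ge 1$ is needed for the range to make sense. If the Bang degree could vanish, \Cref{c:sublevel_intro} would have to be read with $1/\nn=\infty$, meaning $|f|$ is bounded below on $K$. In that case the integral is trivially finite for every $\al$.
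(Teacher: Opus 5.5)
Your argument is correct and is essentially the paper's proof: the paper integrates the lower bound for the decreasing rearrangement $(f|_K)^*$ from \Cref{l:rearrange}, which is \Cref{t:sublevel} rewritten, and your layer-cake computation does the same thing directly. Your deduction that the zero set of $f$ in $K$ is null, obtained by letting $t\to 0$ in the sublevel bound, is a self-contained replacement for the paper's external citation.

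One small point: you do not need to reset $b_0=\|f\|_K$. Doing so changes $\nn$, and it is not permitted when $\|f\|_K=M_0$. The hypothesis $\|f\|_K\ge b_0>0$ already lets you apply the sublevel bound with the given $\nn$, and $\nn\ge 2$ holds automatically because $b_0<M_0$.
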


We get a bound for the mean oscillation of $\log |f|$ over Lebesgue measurable sets.

\begin{corollary}[Mean oscillation of $\log |f|$]
For each Lebesgue measurable set $E \subseteq K$ with $|E|>0$, the mean oscillation of $\log |f|$ over $E$ 
satisfies 
\begin{equation}
\on{mo}_E(\log |f|) \le
2\log \Cnn + 2\nn \,\Big( 1 + \log\Big(\frac{|K|}{\nu_K\, |E|}\Big)\Big).
\end{equation}     
\end{corollary}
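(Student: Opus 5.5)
The approach is to bound the mean oscillation by twice the mean deviation from the constant $c := \log \|f\|_K$, and to control that deviation with the sublevel set estimate of \Cref{c:sublevel_intro} via the layer-cake formula. I take $\on{mo}_E(g) = \frac{1}{|E|}\int_E |g - g_E|\,dx$ with $g_E$ the mean of $g$ over $E$. The standard fact $\on{mo}_E(g) \le \frac{2}{|E|}\int_E |g-c|\,dx$ holds for every constant $c$. The natural choice is $c := \log\|f\|_K$, because then $\log|f| \le c$ on $E \subseteq K$, so the integrand $|\log|f|-c| = \log(\|f\|_K/|f|)$ is nonnegative.

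Next I apply the layer-cake formula to this nonnegative function:
\[
\int_E \log\frac{\|f\|_K}{|f(x)|}\,dx = \int_0^\infty \big|\{x\in E : |f(x)| < \|f\|_K e^{-s}\}\big|\,ds .
\]
Each set in the integrand is contained in both $E$ and the sublevel set $K_t$ with $t=\|f\|_K e^{-s}$. Hence, by \Cref{c:sublevel_intro}, its measure is at most
\[
\min\Big(|E|,\ \frac{|K|}{\nu_K}\,\Cnn^{1/\nn} e^{-s/\nn}\Big).
\]
Set $A := |K|/(\nu_K |E|)$; note $A \ge 1$ because $\nu_K \le 1$ and $|E|\le|K|$. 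Dividing by $|E|$, it remains to bound $\int_0^\infty \min(1, A\,\Cnn^{1/\nn} e^{-s/\nn})\,ds$.

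I split this integral at $s_0 := \nn\log A + \log \Cnn$. This threshold is nonnegative provided $\Cnn \ge 1$, which I expect from the explicit definition in \Cref{d:Cnn}; otherwise it already follows from the theorem applied with $E=K$. On $[0,s_0]$ the integrand is at most $1$, contributing at most $s_0$. On $[s_0,\infty)$ the exponential tail integrates to exactly $\nn$. Therefore
\[
\frac{1}{|E|}\int_E |\log|f| - c|\,dx \le \log\Cnn + \nn\Big(1+\log\frac{|K|}{\nu_K|E|}\Big),
\]
and doubling gives the claimed bound.

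The only delicate points are bookkeeping: checking $\Cnn\ge1$ and $A\ge 1$ so that the split point $s_0$ is nonnegative, and noting that $\log|f|$ is integrable on $E$. The latter follows from the same layer-cake computation, since $|f|>0$ almost everywhere on $K$ by the sublevel estimate as $t\to0$.
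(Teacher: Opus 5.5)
Your proof is correct and is essentially the paper's argument. The paper also bounds $\on{mo}_E(\log|f|)$ by twice the mean of $\log(\|f\|_K/|f|)$ over $E$. It then integrates the lower bound for the decreasing rearrangement $(f|_E)^*(|E|\lambda)$ from \Cref{l:rearrange} over $\lambda\in(0,1)$, where you apply the layer-cake formula in $s$ to \Cref{t:sublevel}. Since \Cref{l:rearrange} is just the inverted form of that sublevel estimate, the two integrals agree under $s=\log\Cnn+\nn\log\frac{|K|}{\nu_K|E|(1-\lambda)}$ and give the same constant; the paper's untruncated pointwise bound simply spares it your check that $s_0\ge 0$.
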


This bound is not sufficient for concluding that $\log |f|$ is BMO, in contrast to polynomials or analytic functions; 
see \cite{Brudnyi_1999}.
What is missing is control near the zero set of $f$, since the Bang degree depends strongly on the size of the function;
see \Cref{r:depb} and \Cref{r:BMO}.

\subsection{Inequalities of {\L}ojasiewicz type}

Functions definable in polynomially bounded o-minimal structures satisfy inequalities of {\L}ojasiewicz type 
(cf.\ \cite[4.14]{vandenDriesMiller96}), 
but the constants are typically unspecified.
In \Cref{sec:Lojasiewicz}, we establish several inequalities of {\L}ojasiewicz type with \emph{explicit} 
constants, depending on the Bang degree and the geometry of $K$.

Following \cite[Definition 1.16]{Pierzchala:2022aa}, we say that $K \in \sK(\R^n)$ is 
\emph{$(c_K,\th_K)$-thick} if there exist constants $c_K,\th_K>0$ such that 
\begin{equation}
    |K \cap B(x,r)| \ge c_K\, r^{\th_K}, 
\end{equation}
for all $x \in  K$ and $0<r\le 1$. 
Every compact fat nonempty set $K \subseteq \R^n$ that is definable in a polynomially bounded o-minimal structure 
is $(c_K,\th_K)$-thick for some $c_K,\th_K>0$, as a consequence of the mentioned {\L}ojasiewicz inequalities 
(see \cite[Theorem 1.18]{Pierzchala:2022aa}). 
Each convex body $K \subseteq \R^n$
is $(c_K,\th_K)$-thick with $\th_K = n$.

\begin{corollary}
    In the setup of \Cref{t:Remez_intro},
    assume additionally that $K$ is $(c_K,\th_K)$-thick.
    For any function $g : K \to [-1,1]$, 
    we have  
    \begin{equation} 
        \|f\|_{B(x,\frac{|g(x)|}2) \cap K} \ge \frac{\|f\|_K}{\Cnn}\,\Big(\frac{c_K\, \nu_K}{2^{\th_K} \, |K|}\Big)^{\nn}\,  |g(x)|^{\th_K \nn},
    \end{equation}
for all $x \in K$. 
\end{corollary}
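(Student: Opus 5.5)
Fix $x \in K$ and set $r := |g(x)|/2 \in [0,\tfrac12]$. If $g(x)=0$ the right-hand side vanishes and there is nothing to prove, so assume $r>0$. The plan is to apply the sublevel set estimate of \Cref{c:sublevel_intro} with the level $t := \|f\|_{B(x,r)\cap K}$. Then the whole set $B(x,r)\cap K$ lies in the sublevel set $K_t=\{y\in K: |f(y)|\le t\}$. This gives
\[
|B(x,r)\cap K| \le |K_t| \le \frac{|K|}{\nu_K}\Big(\frac{\Cnn\, t}{\|f\|_K}\Big)^{1/\nn}.
\]
Since $0<r\le 1$ and $x\in K$, thickness gives $|B(x,r)\cap K| \ge c_K r^{\th_K} = c_K 2^{-\th_K}|g(x)|^{\th_K}$.

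Combining the two estimates and raising to the power $\nn$ yields
\[
\Big(\frac{c_K\,\nu_K}{2^{\th_K}|K|}\Big)^{\nn}|g(x)|^{\th_K\nn} \le \frac{\Cnn\, t}{\|f\|_K}.
\]
Rearranging gives exactly the claimed lower bound for $t=\|f\|_{B(x,\frac{|g(x)|}{2})\cap K}$.

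The main care point is the degenerate case $t=0$. If $t=0$ then $f$ vanishes on a set of positive measure, namely $B(x,r)\cap K$, whose measure is at least $c_K r^{\th_K}>0$. But \Cref{c:sublevel_intro}, applied with arbitrarily small $t'>0$, forces $|K_{t'}|\to 0$, since $\|f\|_K\ge b_0>0$. This is a contradiction, so $t>0$. One can also run the argument with $t'\downarrow t$ directly, which avoids dividing by zero.

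The only remaining check is that \Cref{c:sublevel_intro} is used in the same setup, with the same $\nn$ and $\Cnn$. That holds because $\|f\|_K\ge b_0$ is part of the hypotheses.
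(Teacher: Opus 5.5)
Your proof is correct and is essentially the paper's argument, which likewise combines the sublevel-set bound of \Cref{t:sublevel} with the thickness lower bound $|B(x,r)\cap K|\ge c_K r^{\th_K}$. The only difference is phrasing: the paper argues contrapositively (for every $t>0$ whose sublevel bound is $<c_K r^{\th_K}$, the set $B(x,r)\cap K$ cannot lie in $K_t$, hence $\|f\|_{B(x,r)\cap K}>t$), which avoids the separate $t=0$ check that you handle correctly.
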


Note that we do not require $f|_K^{-1}(0) \subseteq g^{-1}(0)$. See \Cref{t:Lojasiewicz} for a more general version.

In particular, this applies to the distance to the zero set of $f$,
\begin{equation}
g(x) := \min\{\dist(x,f|_K^{-1}(0)), 1\}.
\end{equation}

Taking $g \equiv r$ for $r \in (0,1)$, we get a bound on the order of vanishing.

\begin{corollary}
    The order of vanishing of $f$ at each $x \in K$ is bounded by $\th_K \nn$.
\end{corollary}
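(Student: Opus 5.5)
The plan is to deduce the statement directly from the preceding corollary, the {\L}ojasiewicz-type lower bound on balls, by taking the constant test function $g \equiv r$ for $r \in (0,1)$. For each $x \in K$ and each such $r$, that corollary gives
\[
  \|f\|_{B(x,\frac r2)\cap K} \ge \frac{\|f\|_K}{\Cnn}\Big(\frac{c_K\,\nu_K}{2^{\th_K}|K|}\Big)^{\nn}\, r^{\th_K\nn} =: c\, r^{\th_K \nn},
\]
where $c>0$ does not depend on $r$ or $x$. Here we use that $\|f\|_K \ge b_0>0$.

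Now suppose $f$ vanishes at $x$ to order $k$, meaning $\p^\al f(x)=0$ for all $|\al|< k$, and assume $k > \th_K\nn$. We compare this with the lower bound above. By Taylor's formula with remainder, for $y \in B(x,r/2)\cap K$ we get
\[
  |f(y)| \le \frac{\|f\|_{k,L}}{k!}\,|y-x|^k \le \frac{M_k}{k!}\,(r/2)^k .
\]
This uses that $f$ is $(\mu^{|\infty},M_0)$-smooth on the convex body $L \supseteq K$, so the segment $[x,y]$ lies in $L$. The constant $\|f\|_{k,L}$ arises from the multinomial form of the $k$-th differential, which is exactly what the norm $\|\cdot\|_{k,L}$ controls.

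It follows that $c\, r^{\th_K\nn} \le C' r^k$ for all small $r>0$, which forces $k \le \th_K\nn$ as $r\to 0$. This is a contradiction. Hence the order of vanishing, that is, the largest $k$ such that all derivatives of order $<k$ vanish at $x$, is at most $\th_K\nn$. If $f(x)\neq 0$ the order is $0$ and there is nothing to prove.

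The only point needing care is the Taylor estimate on a possibly nonconvex $K$, which is why we pass to the convex body $L$. The remaining subtlety is the convention for "order of vanishing"; with either the derivative-based or the growth-based definition, the argument above gives the same conclusion. Otherwise the proof is routine.
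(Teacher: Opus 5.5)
Your first step matches the paper's proof: \Cref{t:Lojasiewicz} with $g\equiv r$ gives $\|f\|_{B(x,\frac r2)\cap K}\ge c\,r^{\th_K\nn}$, with $c>0$ independent of $r$. The gap is in how you finish. In the paper, the order of vanishing is the growth quantity $\on{ord}_x f=\limsup_{r\to0}\log\|f\|_{B(x,r)\cap K}/\log r$. The proof concludes by taking logarithms in the lower bound, dividing by $\log\frac r2<0$, and letting $r\to0$.

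Your Taylor comparison instead bounds the derivative-based order $k_x$, the largest $k$ with $\p^\al f(x)=0$ for all $|\al|<k$. Taylor's formula gives $\|f\|_{B(x,r)\cap K}\le C r^{k_x}$, so $k_x\le\on{ord}_x f$ always holds. Hence your conclusion $k_x\le\th_K\nn$ is weaker than the statement, and genuinely so. The two notions agree at interior points of $K$ but not at cuspidal boundary points. Take $K=\{(x_1,x_2):0\le x_1\le 1,\ |x_2|\le x_1^2\}$, which is fat and semialgebraic, hence admissible and thick, and $f(x)=x_2$. Then $k_0=1$, while $\frac{r^2}{2}\le\|f\|_{B(0,r)\cap K}\le r^2$ for small $r$, so $\on{ord}_0 f=2$. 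So your remark that the argument works \emph{with either definition} is not justified. An upper bound on $|f|$ near $x$ can only bound $\on{ord}_x f$ from below; an upper bound needs the lower bound from \Cref{t:Lojasiewicz}, read through the logarithm.

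The repair is that one-line logarithm argument, which makes the Taylor step unnecessary. It uses only continuity of $f$, so it also works in the setting of \Cref{t:Lojasiewicz}, where $f$ is merely $(\mu^{|\nn},M_0)$-smooth. Your route needs $f\in\cC^k$ with $k>\th_K\nn$. Thickness forces $\th_K\ge n$ (compare $c_K r^{\th_K}\le\om_n r^n$ as $r\to0$), so this $k$ exceeds $\nn$. That regularity is available only under the introduction's $(\mu^{|\infty},M_0)$-smoothness assumption.
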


We deduce several inequalities for the gradient with completely explicit constants.
For instance, the following corollary is a special case of \Cref{t:gradptw1}.

\begin{corollary} 
Assume that $(\grad f|_K)^{-1}(0) \subseteq f|_K^{-1}(0)$ and $M_0\ge1$.
Then,
\begin{equation} 
    |\grad f(x)| \ge\frac{1}{2\sqrt{M_2}} \Big(\frac{\|f\|_K}{2\Cnn}\Big)^{\frac{1}{\th_K\nn}}\,\Big(\frac{c_K\, \nu_K}{|K|}\Big)^{\frac{1}{\th_K}}\, |f(x)|^{\frac{3}{2} -\frac{1}{\th_K\nn}}
\end{equation}
in a neighborhood of $f|_K^{-1}(0)$ in $K$.
\end{corollary}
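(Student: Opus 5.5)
The plan is to combine a second-order Taylor expansion of $f$ along a segment with the {\L}ojasiewicz-type lower bound of the preceding corollary, applied at a radius adapted to $|f(x)|$. Throughout, $x\in K$ with $f(x)\ne 0$ lies close to $f|_K^{-1}(0)$, and $\rho\in(0,1)$ is a radius to be chosen. Since $f$ is $(\mu^{|\infty},M_0)$-smooth on the convex body $L\supseteq K$, every segment from $x$ to a point $y\in K$ stays in $L$. We also have $\|f\|_{2,L}\le M_2$, which bounds the Hessian in operator norm. Taylor's formula then gives, for $y\in B(x,\rho)\cap K$,
\[
|f(y)| \le |f(x)| + |\grad f(x)|\,\rho + \tfrac{M_2}{2}\rho^2 .
\]
Applying the preceding corollary with $g\equiv 2\rho$ gives the opposite bound
\[
\|f\|_{B(x,\rho)\cap K}\ \ge\ A\,\rho^{\th_K\nn}, \qquad A:=\frac{\|f\|_K}{\Cnn}\Big(\frac{c_K\nu_K}{|K|}\Big)^{\nn}.
\]
(The factor $2^{\th_K}$ is absorbed because the radius is $\rho=g/2$.) Comparing the two bounds yields
\[
|\grad f(x)|\,\rho\ \ge\ A\rho^{\th_K\nn}-|f(x)|-\tfrac{M_2}{2}\rho^2 .
\]

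The key step is the choice of $\rho$. I would take $\rho$ with $A\rho^{\th_K\nn}=2|f(x)|$, that is $\rho=(2|f(x)|/A)^{1/(\th_K\nn)}$. This explains the factor $(\|f\|_K/(2\Cnn))^{1/(\th_K\nn)}(c_K\nu_K/|K|)^{1/\th_K}$ in the claim, since it equals $(2/A)^{-1/(\th_K\nn)}$ up to the stated normalisation. The inequality becomes $|\grad f(x)|\,\rho\ge |f(x)|-\tfrac{M_2}{2}\rho^2$. The difficulty is that the quadratic error $M_2\rho^2$ need not be smaller than $|f(x)|$ when $\th_K\nn\ge 2$. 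I would therefore split into two cases.

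\emph{Case $M_2\rho^2\le |f(x)|$.} Then $|\grad f(x)|\ge |f(x)|/(2\rho)$. This is at least the claimed quantity, because near the zero set $|f(x)|\le M_0\le$ const, and with $M_0\ge1$ (so $M_2$ is large enough) the factor $|f|^{1/2}/\sqrt{M_2}$ is at most $1$.

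\emph{Case $M_2\rho^2>|f(x)|$.} Then $\rho>\sqrt{|f(x)|/M_2}$. Here I would shrink the radius to $\rho'=\sqrt{|f(x)|/M_2}\,\le \rho$ and use the bound at radius $\rho'$ only through monotonicity in the corollary. This has to be combined with the hypothesis $(\grad f|_K)^{-1}(0)\subseteq f|_K^{-1}(0)$: it guarantees $\grad f(x)\neq0$ near the zero set and allows restricting to a neighborhood on which $|f(x)|$ is small. The target bound $\tfrac{1}{2\sqrt{M_2}}|f(x)|^{1/2}\cdot |f(x)|\rho^{-1}$ is exactly $|f(x)|/(2\rho)$ multiplied by $\sqrt{|f(x)|/M_2}\le 1$. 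This strongly suggests the intended argument is a one-line combination of the two cases, namely $|\grad f(x)|\ge \frac{|f(x)|}{2\rho}\min\{1,\sqrt{|f(x)|/M_2}\}$, followed by substituting $\rho$.

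I expect the main obstacle to be making the second case rigorous. One needs to control the gradient when the quadratic Taylor term dominates. The route I would try first is to apply Taylor's formula in the direction of $\grad f(x)$ (or towards a nearest zero $z$ of $f|_K$, using $0=f(z)$), which gives $|f(x)|\le |\grad f(x)|\,d+\tfrac{M_2}{2}d^2$ with $d=\dist(x,f|_K^{-1}(0))$. I would then bound $d$ from above via the preceding corollary with $g(x)=\min\{\dist(x,f|_K^{-1}(0)),1\}$. The remaining delicate point is whether the sup of $|f|$ over $B(x,d/2)$ can be compared with $|f(x)|$; if it cannot, the neighborhood in the statement must be shrunk accordingly.
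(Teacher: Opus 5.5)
Your Case~1 is fine; it even gives the stronger pointwise bound $|\grad f(x)|\ge |f(x)|/(2\rho)$. Case~2, however, is a genuine gap, and neither of your suggested repairs closes it.

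At the radius $\rho'=\sqrt{|f(x)|/M_2}$ the effective lower bound is $A\rho'^{\th_K\nn}\le A|f(x)|/M_2<|f(x)|$, since $\th_K\nn\ge 2$. Comparing it with the Taylor bound $\tfrac32|f(x)|+|\grad f(x)|\rho'$ is therefore vacuous.

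The nearest-zero route fails for the same reason. To extract a lower bound on $|\grad f(x)|$ from $|f(x)|\le|\grad f(x)|\,d+\tfrac{M_2}{2}d^2$ you need $d\lesssim\rho'$. The distance-function corollary can at best give $d\lesssim\rho$. Even that only holds after bounding $\|f\|_{B(x,d/2)\cap K}$ by a multiple of $|f(x)|$, which is exactly the point you leave open.

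Your formula $|\grad f(x)|\ge\frac{|f(x)|}{2\rho}\min\{1,\sqrt{|f(x)|/M_2}\}$ is, in Case~2, simply a restatement of the claim. The structural problem is this: Taylor plus \Cref{t:Lojasiewicz} can only be exploited on balls where the Taylor remainder is dominated by $|f(x)|$. To find such balls of useful size you need a priori information comparing $|\grad f(x)|$ with $|f(x)|$.

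The missing ingredient is the qualitative {\L}ojasiewicz gradient inequality of Bierstone--Milman (\Cref{l:BM}). It is available because $f\in\cC_M$ under the regularity assumptions of \Cref{e:RCM}. This is where the hypothesis $(\grad f|_K)^{-1}(0)\subseteq f|_K^{-1}(0)$ really enters. It is not there merely to make $\grad f(x)\neq0$; it gives $|\grad f|\ge c|f|^{\al}$ with $\al<1$. Hence there is a neighborhood $U$ of $f|_K^{-1}(0)$ in $K$ on which $|f|\le 1$ and $|f|\le|\grad f|$.

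The paper then takes a radius adapted to the gradient, namely $g(x)/2$ with $g(x)=|f(x)|^{3/2}/(\sqrt{M_2}\,|\grad f(x)|)$, which lies in $[0,1]$ on $U$. On $B(x,g(x)/2)\cap L$:
the linear Taylor term is $\frac{1}{2\sqrt{M_2}}|f(x)|^{3/2}\le\frac12|f(x)|$;
the quadratic term is $\frac18|f(x)|\,(|f(x)|/|\grad f(x)|)^2\le\frac18|f(x)|$.
So $\|f\|_{B(x,g(x)/2)\cap K}\le 2|f(x)|$. \Cref{t:Lojasiewicz} with this $g$ yields $2|f(x)|\ge A\,(g(x)/2)^{\th_K\nn}$. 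Solved for $|\grad f(x)|$, this is exactly the claim; in your notation it says $g(x)/2\le\rho$.

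Note also that on $U$ the claimed bound already follows directly from $|\grad f|\ge|f|\ge|f|^{\frac32-\frac{1}{\th_K\nn}}$, because the constant in front is $<1$. So it is the qualitative input from \Cref{l:BM} that does the work, and your proposal has no substitute for it.
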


It is well-known (see \Cref{l:BM}) that there exist $c>0$ and $\al \in [0,1)$ such that 
\[
   |\grad f(x)| \ge c \, |f(x)|^\al,
\]
but $c$ and $\al$ are not explicit. 
We have the following variant with an explicit exponent $<1$; see \Cref{r:gradexp}.

\begin{corollary}
Assume that $(\grad f|_K)^{-1}(0) \subseteq f|_K^{-1}(0)$ and $M_0\ge1$.
Then,
\begin{equation} 
    \|\grad f\|_{B_x} \ge \frac{1}{2} \Big(\frac{2\|f\|_K}{3\Cnn}\Big)^{\frac{1}{\th_K\nn}}\,\Big(\frac{c_K\, \nu_K}{|K|}\Big)^{\frac{1}{\th_K}}\,  |f(x)|^{1-\frac{1}{\th_K \nn}}
\end{equation}
for $x$ in a neighborhood of $f|_K^{-1}(0)$ in $K$, where $B_x$ is a neighborhood of $x$ in $L$ that shrinks 
to $x$ if $x$ tends to $f|_K^{-1}(0)$.
\end{corollary}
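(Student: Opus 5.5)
The plan is to combine the {\L}ojasiewicz-type corollary above (the lower bound for $\|f\|_{B(x,\frac{|g(x)|}{2})\cap K}$), applied with a constant function $g \equiv r$, with the mean value theorem along a segment in the convex body $L$. The radius $r = r(x)$ will be chosen in terms of $|f(x)|$ so that the ball $B(x,r/2)$ is forced to contain a point where $|f|$ is noticeably larger than $|f(x)|$. If $f(x)=0$ the inequality is trivial, so assume $f(x)\neq 0$.

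First I fix $x \in K$ and define $r>0$ by
\[
  r^{\th_K \nn} = \frac{3}{2}\,|f(x)|\,\frac{\Cnn}{\|f\|_K}\,\Big(\frac{2^{\th_K}|K|}{c_K\,\nu_K}\Big)^{\nn},
\]
that is,
\[
  r = 2\Big(\frac{3\Cnn\,|f(x)|}{2\|f\|_K}\Big)^{\frac{1}{\th_K\nn}}\Big(\frac{|K|}{c_K\nu_K}\Big)^{\frac{1}{\th_K}}.
\]
Since $f$ is continuous on the compact set $K$, we have $r \le 1$ for all $x$ in a neighborhood of $f|_K^{-1}(0)$ in $K$. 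Moreover $r \to 0$ as $x$ tends to the zero set. For such $x$, the constant function $g \equiv r$ takes values in $[-1,1]$, so the corollary applies and gives
\[
  \|f\|_{B(x,\frac r2)\cap K} \ge \frac{\|f\|_K}{\Cnn}\Big(\frac{c_K\nu_K}{2^{\th_K}|K|}\Big)^{\nn} r^{\th_K\nn} = \tfrac32 |f(x)|.
\]
By compactness there is a point $y \in \ol{B}(x,r/2)\cap K$ with $|f(y)| \ge \frac32|f(x)|$.

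Next I set $B_x := \ol{B}(x,r/2)\cap L$, which is a neighborhood of $x$ in $L$ that shrinks to $x$ as $x$ approaches $f|_K^{-1}(0)$. Since $L$ is convex, the segment $[x,y]$ lies in $B_x$, and the mean value theorem gives
\[
  \tfrac12|f(x)| \le |f(y)|-|f(x)| \le |y-x|\,\|\grad f\|_{B_x} \le \tfrac r2\,\|\grad f\|_{B_x}.
\]
Hence $\|\grad f\|_{B_x}\ge |f(x)|/r$. Inserting the value of $r$ yields exactly
\[
  \tfrac12\Big(\frac{2\|f\|_K}{3\Cnn}\Big)^{\frac{1}{\th_K\nn}}\Big(\frac{c_K\nu_K}{|K|}\Big)^{\frac{1}{\th_K}}|f(x)|^{1-\frac{1}{\th_K\nn}}.
\]

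The only delicate points are the constraint $r\le 1$ needed for thickness, which is what localizes the statement to a neighborhood of the zero set, and having the segment inside the domain of $f$, which is handled by working in $L$ rather than $K$. The hypotheses $(\grad f|_K)^{-1}(0)\subseteq f|_K^{-1}(0)$ and $M_0\ge 1$ do not appear to be needed in this argument. I would use them only to relate this estimate to the pointwise version (\Cref{t:gradptw1}) and to check that the exponent $1-\frac{1}{\th_K\nn}$ is $<1$ (\Cref{r:gradexp}).
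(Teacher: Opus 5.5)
Your proof is correct, and it follows a genuinely different route from the paper's. The paper reruns the proof of \Cref{t:gradptw1} with the gradient-adapted radius $g(x)=|f(x)|/|\grad f(x)|$. It needs \Cref{l:BM} only to know that $g\le 1$ near the zero set and $g(x)\to 0$, and with that lemma come the hypothesis $(\grad f|_K)^{-1}(0)\subseteq f|_K^{-1}(0)$, the quasianalytic setting of \Cref{e:RCM}, and $K\subseteq\on{int}(L)$. It then bounds $|f(y)|\le\frac32\|\grad f\|_{B_x}\,g(x)$ on $B_x=B(x,\frac{g(x)}{2})\cap L$, inserts this into \Cref{t:Lojasiewicz}, and finally absorbs $|\grad f(x)|\le\|\grad f\|_{B_x}$. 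You instead choose a radius depending only on $|f(x)|$, tuned so that the {\L}ojasiewicz lower bound equals exactly $\frac32|f(x)|$, and conclude with the mean value theorem on a segment in the convex set $L$. Your route uses only \Cref{t:Lojasiewicz} with $m=1$ and $f\in\cC^1(L)$, so $M_0\ge 1$, the critical-point hypothesis and \Cref{l:BM} are indeed superfluous. Moreover, both the region of validity, $|f(x)|\le\frac{2\|f\|_K}{3\Cnn}\big(\frac{c_K\nu_K}{2^{\th_K}|K|}\big)^{\nn}$, and the radius of $B_x$, a constant times $|f(x)|^{1/(\th_K\nn)}$, are explicit; in the paper both depend on the unspecified constants $c,\al$ of \Cref{l:BM}. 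In exchange, the paper's argument works at the natural scale $|f(x)|/|\grad f(x)|$. It also yields the slightly finer intermediate estimate $\|\grad f\|_{B_x}\,|\grad f(x)|^{\th_K\nn-1}\ge\frac{2\|f\|_K}{3\Cnn}\big(\frac{c_K\nu_K}{2^{\th_K}|K|}\big)^{\nn}|f(x)|^{\th_K\nn-1}$, which mixes pointwise and local information. The two proofs produce different sets $B_x$, which the statement permits since it only asks that $B_x$ shrink to $x$.
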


See \Cref{sec:Lojasiewicz} for further inequalities for the gradient.

\subsection{Inequalities of Harnack type}

If $f : L \to \R$ is nonvanishing on $K$ and $\min_{x \in K} |f(x)| \ge m_0>0$, then  
\Cref{t:Remez_intro} easily implies 
\begin{equation}
    \max_{x \in K} |f(x)| \le 2\, \Cnn\, \Big(\frac{|K|^{\nu_K}}{|K|^{\nu_K}-(|K|- |E_{m_0}|)^{\nu_K}}\Big)^{\nn} \, \min_{x \in K} |f(x)|,
\end{equation}
where $E_{m_0} = \{x \in K : |f(x)| \le  \min_{x \in K} |f(x)| + m_0\}$. But it is difficult to determine $|E_{m_0}|$. 

In \Cref{sec:Harnack}, we combine this inequality for $f$ with the one for $1/f$ in order to obtain the following 
inequality of Harnack type with completely explicit constants. 
This requires an additional regularity assumption on $\mu^{|\infty}$ which allows to conclude that 
$1/f : L \to \R$ is $(\be_0 \,\mu^{|\infty},\frac{2}{m_0})$-smooth, provided that $f : L \to \R$ is $(\mu^{|\infty},M_0)$-smooth and $\min_{x \in L} |f(x)| \ge m_0>0$,
where 
\begin{equation}
    \be_0 = \be_0(\mu^{|\infty},n,M_0,m_0)>1.
\end{equation}

\begin{corollary}
   Then, for all $K \in \sK(\R^n)$ such that $K \subseteq L$,
   \[
       \max_{x \in K} |f(x)| \le \Cnn^2 \Big(\frac{2^{\nu_K}}{2^{\nu_K}-1}\Big)^{2\nn} \min_{x \in K} |f(x)|,
   \]
   where 
   \[
       \nn = 2 \, \dd_{a_K \be_0\, \mu^{|\infty}}(\tfrac{m_0}{2M_0}).
   \]
\end{corollary}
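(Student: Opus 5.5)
The plan is to combine the Remez inequality of \Cref{t:Remez_intro} applied to $f$ with the same inequality applied to $1/f$, in both cases taking $E$ to be a sublevel set of half the measure of $K$. Throughout, $f$ is $(\mu^{|\infty},M_0)$-smooth on $L$ with $\min_L|f|\ge m_0$. Then $1/f$ is $(\be_0\,\mu^{|\infty},\tfrac{2}{m_0})$-smooth by the stated regularity assumption.

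\emph{Step 1 (a common degree).} We need a single lower bound $b_0$ valid for both functions.
\begin{itemize}
\item For $f$: $\|f\|_K\ge m_0\ge \tfrac{m_0}{2}$, so we use $b_0=\tfrac{m_0}{2}$, giving the ratio $b_0/M_0=\tfrac{m_0}{2M_0}$.
\item For $1/f$: $\|1/f\|_K\ge 1/\|f\|_K\ge 1/M_0$, since $\|f\|_K\le M_0$. With $M_0'=2/m_0$ we take $b_0'=1/M_0$, so $b_0'/M_0'=\tfrac{m_0}{2M_0}$. We need $b_0'<M_0'$, which holds because $m_0\le M_0$.
\end{itemize}
The Bang degree is monotone in the weight: a larger weight makes the partial sums of $1/\mu_j$ smaller, so the degree is larger. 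Since $\be_0>1$, we have $\dd_{a_K\mu^{|\infty}}\le \dd_{a_K\be_0\mu^{|\infty}}$. Both applications can therefore use $\nn=2\,\dd_{a_K\be_0\mu^{|\infty}}(\tfrac{m_0}{2M_0})$, assuming $\Cnn$ is nondecreasing in $\nn$ and that the Remez inequality remains valid when $\nn$ is increased (the factor $(\cdot)^{\nn}$ is $\ge1$). One caveat: $\Cnn$ depends on $\mu^{|\infty}$, so I take the constant for $\be_0\mu^{|\infty}$ and check from \Cref{d:Cnn} that it dominates the one for $\mu^{|\infty}$. Checking this monotonicity in both $\nn$ and $\mu^{|\infty}$ is the step I expect to be the main technical obstacle.

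\emph{Step 2 (choice of $E$).} By continuity, $K$ is fat, and $t\mapsto|\{x\in K:|f(x)|\le t\}|$ is nondecreasing and right-continuous. Let $s$ be a median value, and set $E_1=\{|f|\le s\}$ and $E_2=\{|f|\ge s\}$. Then $|E_1|\ge|K|/2$ and $|E_2|\ge |K|/2$; the boundary level set is included in both, so both estimates hold. For $|E|\ge|K|/2$,
\[
\frac{|K|^{\nu_K}}{|K|^{\nu_K}-(|K|-|E|)^{\nu_K}}\le\frac{1}{1-2^{-\nu_K}}=\frac{2^{\nu_K}}{2^{\nu_K}-1}.
\]
Applying the Remez inequality to $f$ with $E_1$ gives $\max_K|f|\le \Cnn(\tfrac{2^{\nu_K}}{2^{\nu_K}-1})^{\nn}s$. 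Applying it to $1/f$ with $E_2$, where $\|1/f\|_{E_2}\le 1/s$, gives $1/\min_K|f|\le \Cnn(\tfrac{2^{\nu_K}}{2^{\nu_K}-1})^{\nn}/s$.

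\emph{Step 3 (conclusion).} Multiplying the two inequalities cancels $s$ and yields
\[
\max_K|f|\le \Cnn^2\Big(\frac{2^{\nu_K}}{2^{\nu_K}-1}\Big)^{2\nn}\min_K|f|,
\]
as claimed. Beyond the monotonicity check in Step 1, the remaining points are routine: measurability of $E_1$ and $E_2$, and the hypothesis $\tfrac{m_0}{2M_0}<1$.
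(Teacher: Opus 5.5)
Your proof is correct and gives the same constant as the paper. The route differs somewhat.

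\emph{Paper's route.} The paper sets $M_f=\max_K|f|$ and $m_f=\min_K|f|$ and uses the geometric mean $\sqrt{M_fm_f}$ as the threshold. Remez for $f$ on $E_-=\{|f|\le\sqrt{M_fm_f}\}$ gives $M_f\le \Cnn A^{\nn}\sqrt{M_fm_f}$, with $A=2^{\nu_K}/(2^{\nu_K}-1)$ once $|E_-|\ge|K|/2$. Squaring yields $M_f\le \Cnn^2A^{2\nn}m_f$. Remez for $1/f$ on the complement $E_+=\{|f|>\sqrt{M_fm_f}\}$ gives the same bound in the same way. Since $E_-$ and $E_+$ partition $K$, one of them has measure at least $|K|/2$, and only that one inequality is needed.

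\emph{Your route.} You threshold at a median $s$, so that both $\{|f|\le s\}$ and $\{|f|\ge s\}$ have measure at least $|K|/2$. You then apply both Remez inequalities and multiply, which cancels $s$.

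\emph{Trade-offs.} Your argument needs the existence of a median, which follows from right-continuity of the distribution function together with continuity of measure from below. It also always uses both inequalities. The paper avoids the median by pigeonhole but relies on the squaring step.

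Your choice $b_0=m_0/2$ for $f$ has a small advantage. It keeps $M_0>b_0$ strictly. The paper's choice $b_0=m_0$ formally violates this in the edge case $M_0=m_0$, although that case is trivial because then $|f|\equiv M_0$ on $L$.

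\emph{The step you flagged.} The monotonicity of $\Cnn$ in $\nn$ and in the weight is immediate. $\ga_{\tilde\mu}(\nn)=\sup_{1\le s\le\nn}s\tilde\mu'(s)/\tilde\mu(s)$ does not change under $\tilde\mu\mapsto\be_0\tilde\mu$, and it is nondecreasing in $\nn$. Hence $\Cnn=(4e^{4+\ga_{\tilde\mu}(\nn)})^{\nn}$ is the same for $\mu^{|\infty}$ and $\be_0\mu^{|\infty}$, and it is nondecreasing in $\nn$. The paper uses exactly this, implicitly, when it replaces $\mathbf{C}_{\nn'}$ by $\Cnn$.
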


See \Cref{t:Harnack2}, where $\be_0$ is made fully explicit.

\subsection{Inequalities of Markov type}

The Bang degree gives an upper bound for the total number of zeros of an $(\mu^{|\infty},M_0)$-smooth function 
on an interval, 
as proved in \cite{Rainer:2022aa}; see \Cref{p:zeros}. 
Using this fact and the reverse H\"older inequality in 
\Cref{c:rH_intro}, we obtain inequalities of Markov type in \Cref{sec:Markov}. 

Here is an exemplary result for univariate functions.
Since the zero bound is applied to the first derivative $f'$, we work with the shifted weight $\mu^{|\infty}_{+1} = (\mu_{j+1})_{j\ge 1}$.

\begin{corollary} \label[c]{c:Markov_intro}
   Let $I \subseteq \R$ be a compact interval.
   Let $b_1 \in (0,M_1)$.
   Every $(\mu^{|\infty}, M_0)$-smooth function $f : I \to \R$ such that $\|f'\|_I \ge b_1$
   satisfies
   \begin{equation}
   \|f'\|_{I} \le   \frac{2\,\Cnn}{|I|}  (\nn+1)^2\, \|f\|_I,   
   \end{equation}
   where
   \[
       \nn = 2 \, \dd_{|I|\, \mu^{|\infty}_{+ 1}}(\tfrac{b_1}{M_1}).
   \]
\end{corollary}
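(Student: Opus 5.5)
The plan is to apply the one-dimensional case of the machinery to the derivative $g := f'$ and combine three ingredients: the zero bound of \Cref{p:zeros}, the Remez inequality \Cref{t:Remez_intro} on the interval $K = L = I$, and a mean value argument. For $n=1$ and $K=I$ a convex body we have $a_K = |I|$ and $\nu_K = 1$. Since $f$ is $(\mu^{|\infty},M_0)$-smooth, the derivative $g$ is $(\mu^{|\infty}_{+1},M_1)$-smooth, because $\|g\|_{j,I} = \|f\|_{j+1,I} \le M_{j+1} = M_1\prod_{i=1}^{j}\mu_{i+1}$. The hypothesis $\|g\|_I \ge b_1$ then places $g$ exactly in the setting of \Cref{t:Remez_intro} with degree $\nn = 2\,\dd_{|I|\mu^{|\infty}_{+1}}(b_1/M_1)$.

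\textbf{Zero count.} By \Cref{p:zeros}, applied to $g$ with the shifted weight, $g$ has at most roughly $\nn/2$, and in any case at most $\nn$, zeros in $I$, counted with multiplicity. Hence $f$ is piecewise monotone on $I$ with at most $\nn+1$ pieces. Consequently the sublevel set $\{x\in I : |g(x)|\le t\}$ and its complement split into at most $\nn+1$ subintervals.

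\textbf{Locating a long interval where $|g|$ is large.} Set $t := \|g\|_I/(2\Cnn\, 2^{\nn})$. \Cref{c:sublevel_intro} with $\nu_K=1$ gives
\[
|\{|g|\le t\}| \le |I|\,(\Cnn t/\|g\|_I)^{1/\nn} = |I|/2^{1+1/\nn}\cdot(\ldots),
\]
and after tuning $t$ this is at most $|I|/2$. So $E := \{|g| > t\}$ has measure at least $|I|/2$. Now $E$ is a union of intervals on each of which $g$ has constant sign. The number of such pieces is controlled by the zeros of $g$ together with the crossings of the levels $\pm t$. The crossings are the delicate part. If we can bound the number of components of $E$ by $C(\nn+1)$, then some component $J$ has $|J| \ge |I|/(2C(\nn+1))$. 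On $J$ the function $f$ is monotone with $|f'| > t$, so
\[
2\|f\|_I \ge |f(\sup J) - f(\inf J)| \ge t|J| \ge \frac{\|g\|_I\,|I|}{4C\,\Cnn 2^{\nn}(\nn+1)},
\]
which yields an inequality of the desired form.

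\textbf{Main obstacle.} The main obstacle is matching the precise constant $2\Cnn(\nn+1)^2/|I|$. The factor $(\nn+1)^2$ suggests the paper's route is different from the sublevel argument above. It seems to go through the reverse H\"older inequality \Cref{c:rH_intro} with $p=\infty$ and $q=1$: with $\nu_K=1$ this reads $\|g\|_I \le \Cnn(\nn+1)|I|^{-1}\|g\|_{L^1(I)}$. For the remaining step, piecewise monotonicity on at most $\nn+1$ pieces gives total variation $\|g\|_{L^1(I)} = \on{Var}_I(f) \le (\nn+1)\cdot 2\|f\|_I$. Combining the two gives exactly $\|f'\|_I \le \frac{2\Cnn}{|I|}(\nn+1)^2\|f\|_I$. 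I would adopt this cleaner route. The only checks needed are that \Cref{p:zeros} gives at most $\nn$ sign changes of $g$, so that there are at most $\nn+1$ monotone pieces, and that \Cref{c:rH_intro} applies to $g$ with the shifted data $(\mu^{|\infty}_{+1},M_1,b_1)$.
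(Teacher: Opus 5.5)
Your adopted route is exactly the paper's: the corollary is \Cref{p:Markov2} with $E=I$, i.e.\ the bound $\|f'\|_{L^1(I)}\le 2(\nn+1)\|f\|_I$ from piecewise monotonicity via \Cref{p:zeros} (this is \Cref{l:Markov1}), combined with the reverse H\"older inequality \eqref{eq:LpLqE} applied to $f'$ with $p=\infty$, $q=1$, $\nu_K=1$. The preliminary sublevel-set sketch, whose component count you left unresolved, is not needed and can simply be dropped.
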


This has consequence for sampling the supremum norm; see \Cref{ssec:sampling}.

In \Cref{sec:Markov}, we also provide multivariate versions of \Cref{c:Markov_intro} on $K \in \sK(\R^n)$.

\subsection{Oscillatory integrals}

In \Cref{sec:oscillatory}, we combine
the estimate for the volume of sublevel sets in \Cref{c:sublevel_intro} and the bound for the number of zeros 
with the van der Corput lemma (which we recall in \Cref{l:Corput}) 
in order to obtain decay estimates for oscillatory integrals whose phase functions are 
$(\mu^{|\infty},M_0)$-smooth.

\begin{corollary} \label[c]{c:osc1_intro}
    Let $I^n \subseteq \R^n$ be a compact cube.
    Let $b_2 \in (0,M_2)$.
    Let $f : I^n \to \R$ be a $(\mu^{|\infty},M_0)$-smooth function.
    Assume, for some $1 \le i_0 \le n$, we have $\|\p_{i_0}^2f\|_\ell \ge b_2$ for each line segment $\ell$ in $I^n$ with direction $e_{i_0}$. 
    Let $g : I^n \to \C$ belong to $\cC^1(I^n)$. Then, 
    \begin{equation} 
        \Big|\int_{I^n} e^{i\la f(x)} g(x)\, dx\Big| \le B\, \la^{-\frac{1}{\nn+2}},\quad \text{ if } \la \ge \Big(\frac{2}{b_2}\Big)^{1 + \frac{2}{\nn}}, 
    \end{equation}
    where 
    \begin{gather}
        \nn = 2\, \dd_{\sqrt n\, |I|\, \mu^{|\infty}_{+2}}(\tfrac{\frac{b_2}2}{M_2 + \frac{b_2}2}),
        \\
        B =  \frac{ \Cnn^{1/\nn}\, n\, |I|^n\,\|g\|_{I^n}}{\|\p_{i_0}^2f\|_{I^n}^{1/\nn}}  + 2C\, (\nn+1)\, \big(|I|^{n-1}\,\|g\|_{I^n} +  \|\p_{i_0} g\|_{L^1(I^n)}\big),
    \end{gather}
    and $C>0$ is an absolute constant. 
\end{corollary}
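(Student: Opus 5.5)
Integrate first in the direction $e_{i_0}$ and then over the remaining variables. Write $x = (x_{i_0}, x')$ with $x' \in I^{n-1}$, and for each fixed $x'$ let $\ell_{x'}$ be the segment in direction $e_{i_0}$. For a threshold $\ep>0$ to be chosen later, set $\phi := \p_{i_0}^2 f$ and split $I^n$ into the sublevel set $S_\ep := \{x \in I^n : |\phi(x)| \le \ep\}$ and its complement:
\[
\int_{I^n} e^{i\la f} g\,dx = \int_{S_\ep} e^{i\la f} g\,dx + \int_{I^{n-1}} \int_{\ell_{x'} \setminus S_\ep} e^{i\la f} g\,dx_{i_0}\,dx'.
\]

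\textbf{Sublevel part.} The function $\phi$ is $(\mu^{|\infty}_{+2},M_2)$-smooth on $I^n$. We apply \Cref{c:sublevel_intro} to $\phi$ with $K = I^n$, which is convex, so $\nu_K = 1/n$ and $a_K = \diam(I^n) = \sqrt n\,|I|$. The hypothesis $\|\phi\|_\ell \ge b_2$ on segments gives $\|\phi\|_{I^n} \ge b_2$. The lower bound $b_0$ and the resulting Bang degree must match the stated $\nn$; the ratio $\frac{b_2/2}{M_2+b_2/2}$ suggests applying the results to a shifted or rescaled function such as $\phi \pm b_2/2$, or adjusting $M_0$. I will trust that the general version in \Cref{sec:sublevel} absorbs this bookkeeping. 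The corollary then gives
\[
|S_\ep| \le n\,|I|^n \Big(\frac{\Cnn\,\ep}{\|\phi\|_{I^n}}\Big)^{1/\nn},
\]
so the first integral is bounded by $\|g\|_{I^n}$ times this quantity. This accounts for the first summand of $B$ multiplied by $\ep^{1/\nn}$.

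\textbf{Oscillatory part.} On each line $\ell_{x'}$, the set $\ell_{x'} \setminus S_\ep$ is a union of intervals. By the zero bound (\Cref{p:zeros}), the number of these intervals is controlled by the number of zeros of $|\phi|-\ep$ on $\ell_{x'}$. The function $\phi \mp \ep$ restricted to the segment is smooth with respect to the shifted weights, and the condition $\|\phi\|_\ell \ge b_2$ together with $\ep \le b_2/2$ yields $\|\phi\mp\ep\|_\ell \ge b_2/2$ with $M$-bound $M_2 + b_2/2$. This is exactly the Bang degree in the statement, with diameter at most $|I| \le \sqrt n\,|I|$, and it gives at most roughly $\nn+1$ intervals. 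On each interval $|\p_{i_0}^2 f| \ge \ep$, so the second-derivative van der Corput lemma (\Cref{l:Corput}) with amplitude $g$ gives a bound
\[
C(\la\ep)^{-1/2}\Big(\|g\|_{I^n} + \int_{\ell_{x'}} |\p_{i_0} g|\Big)
\]
per interval. Summing over the intervals and integrating over $x'$ yields
\[
2C(\nn+1)(\la\ep)^{-1/2}\big(|I|^{n-1}\|g\|_{I^n} + \|\p_{i_0} g\|_{L^1(I^n)}\big).
\]

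\textbf{Optimization.} Choose $\ep$ so that $\ep^{1/\nn} = (\la\ep)^{-1/2}$, that is, $\ep = \la^{-\nn/(\nn+2)}$. Both terms then become $\la^{-1/(\nn+2)}$ times the respective parts of $B$. The requirement $\ep \le b_2/2$ translates into $\la^{\nn/(\nn+2)} \ge 2/b_2$, i.e. $\la \ge (2/b_2)^{1+2/\nn}$, which matches the hypothesis.

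\textbf{Main obstacle.} The delicate step is the zero count for $|\phi| = \ep$ uniformly in $x'$. It requires correctly shifting the weights and the bound $M_2 + b_2/2$, so that a single Bang degree $\nn$ serves both for the sublevel estimate and for the interval count.
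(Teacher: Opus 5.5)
Your proposal is correct and follows the paper's proof step by step. The common steps are: the sublevel estimate for $\p_{i_0}^2 f$ via \Cref{t:sublevel} on the convex body $I^n$ (with $\nu_K = 1/n$, $a_K=\sqrt n\,|I|$); the zero count of $|\p_{i_0}^2 f| - t$ along each $e_{i_0}$-line via \Cref{p:zeros}, using $\|\p_{i_0}^2 f \pm t\|_\ell \in [\frac{b_2}{2}, M_2+\frac{b_2}{2}]$ for $0<t\le \frac{b_2}{2}$; van der Corput with $k=2$ plus integration by parts on the remaining segments; and the choice $t=\la^{-\nn/(\nn+2)}$. The bookkeeping you deferred in the sublevel part is immediate: $\p_{i_0}^2 f$ is also $(\mu^{|\infty}_{+2}, M_2+\frac{b_2}{2})$-smooth with $\|\p_{i_0}^2 f\|_{I^n}\ge b_2\ge \frac{b_2}{2}$, so \Cref{t:sublevel} applies directly with exactly the stated $\nn$. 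Any extra factor $2$ in the segment count is absorbed into the absolute constant $C$; such a factor arises from counting zeros of $\p_{i_0}^2 f - t$ and $\p_{i_0}^2 f + t$ separately.
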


If $K \in \sK(\R^d)$ is definable in $\R_{\cC_M}$, we can use uniform finiteness in o-minimal structures 
and obtain the following variant for integrals over $K$.

\begin{corollary}
    In the setting of \Cref{c:osc1_intro}, 
    assume additionally that $K \in \sK(\R^n)$ is contained in the interior of $I^n$ and definable 
    in $\R_{\cC_M}$. If $\|\p_{i_0}^2 f\|_K \ge b_2'>0$, then 
    \begin{equation} 
        \Big|\int_{K} e^{i\la f(x)} g(x)\, dx\Big| \le B\, \la^{-\frac{1}{\nn+2}},\quad \text{ for } \la >0,
    \end{equation}
    where 
    \begin{gather}
        \nn = 2\, \dd_{\sqrt n\, |I|\, \mu^{|\infty}_{+2}}(\tfrac{\frac{b_2}2}{M_2 + \frac{b_2}2}),
        \\
        B =  \frac{\Cnn^{1/\nn}\, |K|\,  \|g\|_{K}}{\nu_K\, \|\p_{i_0}^2f\|_{K}^{1/\nn}}  + 2C\, N_{f,K}\, \big(|I|^{n-1}\,\|g\|_{I^n} +  \|\p_{i_0} g\|_{L^1(I^n)}\big),
    \end{gather}
    $C>0$ is an absolute constant, and $N_{f,K} \in \N$ is an integer depending only on $f$ and $K$.
\end{corollary}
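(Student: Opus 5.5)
We must prove the last corollary: the oscillatory integral over a definable $K$ with $\|\p_{i_0}^2 f\|_K\ge b_2'$. The plan is to follow the proof of \Cref{c:osc1_intro}. We split $K$ into a sublevel set where $|\p_{i_0}^2 f|$ is small, handled by \Cref{c:sublevel_intro}, and its complement, handled by van der Corput (\Cref{l:Corput}) along lines in direction $e_{i_0}$. In the cube case the number of monotonicity intervals of $\p_{i_0} f$ on each line was bounded by the zero count of \Cref{p:zeros}. For $K$, each line $\ell$ in direction $e_{i_0}$ meets $K$ in a set that need not be an interval. Since $K$ is definable in $\R_{\cC_M}$, uniform finiteness gives a bound $N_K$ on the number of connected components of $\ell\cap K$, uniformly in $\ell$. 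The same holds for $\ell\cap\{x\in K:|\p_{i_0}^2 f(x)|>\de\}$, since $\p_{i_0}^2 f$ restricted to a neighbourhood of $K$ in the interior of $I^n$ is definable (a restricted $\cC_M$ function). Here uniformity in $\de$ is obtained by treating $\de$ as a parameter of the definable family. This explains the integer $N_{f,K}$.

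Concretely, fix $\de>0$ and write $K=K_\de\cup K^\de$, where $K_\de=\{x\in K: |\p_{i_0}^2 f(x)|\le \de\}$.
\begin{enumerate}
\item \textbf{Sublevel part.} The function $\p_{i_0}^2 f$ is $(\mu^{|\infty}_{+2},M_2)$-smooth. With $\nn$ as in the statement, \Cref{c:sublevel_intro} applied to $\p_{i_0}^2 f$ on $K$ gives
\[
\Big|\int_{K_\de}e^{i\la f}g\Big|\le \|g\|_K\,|K_\de|\le \frac{|K|\,\|g\|_K}{\nu_K}\Big(\frac{\Cnn\,\de}{\|\p_{i_0}^2f\|_K}\Big)^{1/\nn}.
\]
The choice of $b_0$ must match the definition of $\nn$. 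I would reuse the device from \Cref{c:osc1_intro}, which presumably adds $\tfrac{b_2}{2}$ to the function to make its norm bounded below; this explains the argument $\frac{b_2/2}{M_2+b_2/2}$. I would copy that choice verbatim.
\item \textbf{Complement.} By Fubini we write the integral over $K^\de$ as an integral over lines $\ell$ in direction $e_{i_0}$. Each $\ell\cap K^\de$ is a union of at most $N_{f,K}$ intervals. On each interval, $|\p_{i_0}^2 f|>\de$ and $\p_{i_0}^2 f$ has constant sign, so $\p_{i_0} f$ is monotone there. Van der Corput with $k=2$ and the amplitude $g$ then gives a bound
\[
C(\la\de)^{-1/2}\Big(\|g\|+\int_\ell|\p_{i_0}g|\Big)
\]
per interval. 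Summing over the at most $N_{f,K}$ intervals and integrating over the transversal $(n-1)$-cube yields
\[
2C\,N_{f,K}\,(\la\de)^{-1/2}\big(|I|^{n-1}\|g\|_{I^n}+\|\p_{i_0}g\|_{L^1(I^n)}\big).
\]
\item \textbf{Balancing.} We choose $\de=\la^{-\nn/(\nn+2)}$, which equalizes $\de^{1/\nn}$ and $(\la\de)^{-1/2}$ as $\la^{-1/(\nn+2)}$. Combining the two estimates gives the claim for all $\la>0$. No lower restriction on $\la$ is needed, because we no longer require $\de\le b_2/2$ to ensure that the sublevel set on each line is governed by the zero count.
\end{enumerate}

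The main obstacle is justifying the uniform component bound $N_{f,K}$. This requires $f$, hence $\p_{i_0}^2 f$, to be definable in $\R_{\cC_M}$ on a box containing $K$. That holds if the restriction of $f$ to the closed cube $I^n$ is a restricted $\cC_M$ function, after rescaling the cube to $[-1,1]^n$. We then apply uniform finiteness to the definable family $\{(\ell,\de,t): t\in\ell\cap K,\ |\p_{i_0}^2f|>\de\}$, with parameters $(\ell,\de)$. A secondary subtlety is endpoint terms in van der Corput on intervals whose ends lie on $\p K$. These are absorbed by the $\|g\|$ term, since the bound for each subinterval only uses the sup of $g$ and the total variation along $\ell$, and both are dominated by the full-line quantities.
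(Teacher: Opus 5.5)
Your architecture matches the paper's proof: a sublevel estimate on $K_t=\{x\in K:|\p_{i_0}^2f(x)|\le t\}$; uniform finiteness for the definable family $\{(x_1,x',t):(x_1,x')\in K,\ |\p_{i_0}^2f|>t\}$ with parameters $(x',t)$, replacing the zero count; van der Corput plus integration by parts along $e_{i_0}$-lines; and $t=\la^{-\nn/(\nn+2)}$. Steps 2 and 3 are fine.

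The gap is in step 1. You never check that the displayed $\nn$ is an admissible degree for the sublevel estimate of $\p_{i_0}^2f$ on $K$, and in general \Cref{t:sublevel} does not supply it. Applied to the $(\mu^{|\infty}_{+2},M_2)$-smooth function $\p_{i_0}^2f$ on $K$, that theorem needs a degree at least $\nn(\mu^{|\infty}_{+2},M_2,a_K,b_0)$ with $b_0\le\|\p_{i_0}^2f\|_K$.

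The displayed degree is tailored to the cube. There $\sqrt n\,|I|=\diam(I^n)=a_{I^n}$. The quantity $b_2/2$ is a lower bound for $\|\p_{i_0}^2f\pm t\|_\ell$ on every $e_{i_0}$-segment when $0<t\le b_2/2$, and that is what feeds \Cref{p:zeros}. So the shift by $b_2/2$ serves the zero count, not the sublevel estimate, and copying it verbatim does not help.

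For a general $K\in\sK(\R^n)$ neither ingredient transfers:
\begin{itemize}
\item $a_K$ can be much larger than $\sqrt n\,|I|$; already $a_K\ge 108$ when the characteristic has $d\ge 2$.
\item $\|\p_{i_0}^2f\|_K$ can be far below $b_2/2$, since the line hypothesis says nothing about the size of $\p_{i_0}^2f$ on $K$.
\end{itemize}
A telltale sign is that your argument never uses the hypothesis $\|\p_{i_0}^2f\|_K\ge b_2'$.

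The paper's proof (the last theorem of \Cref{sec:oscillatory}, which this corollary announces) runs your argument with
\[
\nn=\nn\big(\mu^{|\infty}_{+2},M_2+\tfrac{b_2}{2},\max\{a_K,|I|\},\min\{\tfrac{b_2}{2},b_2'\}\big).
\]
So the degree must involve $a_K$ and $b_2'$, which the formula displayed in the statement omits. With this choice step 1 is legitimate, for four reasons:
\begin{itemize}
\item $\p_{i_0}^2f$ is $(\mu^{|\infty}_{+2},M_2+\tfrac{b_2}{2})$-smooth;
\item $\|\p_{i_0}^2f\|_K\ge b_2'$;
\item $\mu_{j+2}\ge j$;
\item by \Cref{r:nnprop}(2) and the monotonicity of $\Cnn$ in $\nn$, the Remez and sublevel bounds stay valid for the larger degree.
\end{itemize}
Steps 2 and 3 then go through unchanged, since uniform finiteness has replaced the zero count.

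A minor point on definability: the $\cC_M$ bounds are only given on $I^n$, so $f|_{I^n}$ need not be a restricted $\cC_M$ function after rescaling. Use a box $Q$ with $K\subseteq Q\subseteq\on{int}(I^n)$, as your first paragraph does; this is exactly why $K\subseteq\on{int}(I^n)$ is assumed.
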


\subsection{Metric entropy of sublevel sets}

Let $\ep>0$ and $K \subseteq \R^n$ relatively compact.
The \emph{covering number} $N_\ep(K)$ of $K$ is the minimal number of closed balls of radius $\ep$ covering $K$
and $H_\ep(K) := \log_2 N_\ep(K)$ is the \emph{$\ep$-entropy} of $K$.

\begin{corollary}
    In the setup of \Cref{t:Remez_intro}, assume additionally that $K \in \sK(\R^n)$ is definable in  $\R_{\cC_M}$ and $L = B(0,r)$.
    Then the covering number of the sublevel set $K_t$ of $f|_K$ satisfies 
    \begin{equation}
        N_\ep(K_t) \le C(n) \frac{|K|}{\nu_K} \, \Big(\frac{\Cnn \,t}{\|f\|_K}\Big)^{1/\nn} \frac{1}{\ep^n} + C_{K_t} \Big(1+ \Big(\frac{r}{\ep}\Big)^{n-1}\Big), \quad t>0,
    \end{equation}
    for a constant $C_{K_t}>0$.  
\end{corollary}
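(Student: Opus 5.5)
The plan is to split $K_t$ into a ``large-scale'' part controlled by volume and a ``boundary'' part controlled by o-minimal geometry.

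\textbf{Covering by cubes.} Tile $\R^n$ by the grid of closed cubes $Q$ of side $\ep/\sqrt n$, so that each cube lies in a closed ball of radius $\ep$ (up to a dimensional factor absorbed in $C(n)$). Then $N_\ep(K_t)$ is at most the number of grid cubes meeting $K_t$. We divide these cubes into two types.
\begin{itemize}
\item[(a)] Cubes $Q \subseteq \on{int}(K_t)$, or more generally cubes with $|Q\cap K_t| \ge \tfrac12 |Q|$.
\item[(b)] Cubes meeting the topological boundary $\p K_t$ (relative to $\R^n$) without being of type (a).
\end{itemize}

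\textbf{Type (a) cubes.} Their number is at most
\[
2|K_t|\,(\sqrt n/\ep)^n \le C(n)\,|K_t|\,\ep^{-n}.
\]
Inserting \Cref{c:sublevel_intro},
\[
|K_t| \le \frac{|K|}{\nu_K}\Big(\frac{\Cnn t}{\|f\|_K}\Big)^{1/\nn},
\]
gives exactly the first term of the claimed bound.

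\textbf{Type (b) cubes.} Every cube meeting $K_t$ either lies in $\on{int}(K_t)$ or meets $\p K_t$. The cubes that meet $\p K_t$ and are not of type (a) are counted via the boundary.

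First, $K_t = K \cap f^{-1}([-t,t])$ is definable in $\R_{\cC_M}$. This is because $f$ is $\cC_M$ on a neighborhood of the compact set $K \subseteq L=B(0,r)$, so $f|_L$ restricted to a box is a restricted $\cC_M$ function (after rescaling). Hence $\p K_t$ is a compact definable set of dimension $\le n-1$.

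For compact definable sets of dimension $\le n-1$ contained in $B(0,r)$, a standard o-minimal metric-entropy bound applies. One way to get it is through a Cauchy--Crofton / uniform finiteness argument: by uniform finiteness, a generic line in each coordinate direction meets $\p K_t$ in at most $N$ points. Consequently the number of grid cubes meeting $\p K_t$ is at most
\[
C(n)\,N\,\big(1+(r/\ep)^{n-1}\big).
\]
This count is obtained by projecting onto coordinate hyperplanes: there are $(r/\ep)^{n-1}$ columns, each column contains at most $O(N)$ cubes meeting the boundary, and degenerate columns are handled by induction on dimension or by a slight shift of the grid. Absorbing $N$ and the dimensional constants into $C_{K_t}$ gives the second term.

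\textbf{Main obstacle.} The delicate step is the rigorous boundary count. The argument must ensure that the fiber-finiteness bound $N$ (which depends on $K_t$, hence on $t$ and $f$, as the statement allows) controls cubes rather than only lines. My first attempt would be to use cell decomposition of $\p K_t$ into finitely many graphs of definable functions over cells of dimension $\le n-1$, together with the known bound: a definable graph of dimension $k$ in $B(0,r)$ is covered by $\le C(1+(r/\ep)^{k})$ balls of radius $\ep$. That bound is itself proven via Yomdin--Gromov type parametrization or via Crofton and uniform finiteness.
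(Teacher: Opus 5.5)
Your decomposition is sound, and it is a different route from the paper's. The paper applies the Vitushkin-type covering estimate \cite[Proposition 5.6]{Yomdin:2004aa} directly to $K_t$. That estimate bounds $N_\ep(K_t)$ by $C(n)|K_t|\ep^{-n}$ plus lower-order terms governed by the numbers $B_{n-j}(K_t)$ of connected components of sections of $K_t$ by affine $(n-j)$-planes; the paper then inserts \Cref{c:sublevel_intro}.

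You instead split off the volume term by hand. There are at most $2|K_t|(\sqrt n/\ep)^n$ half-filled cubes, and every other cube meeting $K_t$ meets $\p K_t$ by connectedness. You then need an entropy bound $N_\ep(\p K_t)\le C'(1+(r/\ep)^{n-1})$ for the definable set $\p K_t$ of dimension at most $n-1$. Since each $\ep$-ball meets at most $C(n)$ grid cubes, such a bound controls the cube count. This makes the origin of the two terms transparent, but it does not avoid the cited tool. The bound for $\p K_t$ is the same estimate applied to a set of zero volume, or it follows from Lipschitz ($L$-regular) cell decompositions or Yomdin--Gromov parametrization. Either way it has to be quoted, not derived as you sketch.

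Your line-counting derivation of that bound fails, for two reasons.
\begin{enumerate}
\item A single column can contain of order $r/\ep$ cubes meeting $\p K_t$ while generic vertical lines meet $\p K_t$ there at most once, or not at all. Examples are a piece of $\p K_t$ lying in a hyperplane parallel to $e_n$, or a steep graph. Shifting the grid does not change this; one must sum oscillations over columns using the other directions as well.
\item More seriously, the generic-line count $N$ cannot control the number of cubes at all. Lower-dimensional pieces of $\p K_t$, such as isolated points or (for $n\ge 3$) curves, are missed by generic lines in every coordinate direction. For instance, if $t=\min_K|f|>0$ is attained at finitely many points, then $K_t=\p K_t$ is a nonempty finite set and $N=0$. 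Your claimed count $C(n)N(1+(r/\ep)^{n-1})$ of cubes meeting $\p K_t$ is then $0$, which is absurd.
\end{enumerate}
What is actually needed is control of connected components of sections by affine planes of \emph{every} dimension $0,\dots,n-1$, combined via Vitushkin's multidimensional variations. This is why the paper's constant $C_{K_t}$ involves all the $B_{n-j}(K_t)$.

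Your fallback is acceptable only as a citation. Cell decomposition by itself produces graphs of possibly non-Lipschitz maps, and the covering bound $C(1+(r/\ep)^k)$ for such graphs is exactly the nontrivial input.
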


\begin{proof}
    This follows from \Cref{c:sublevel_intro} and \cite[Proposition 5.6]{Yomdin:2004aa}.
\end{proof}

The constant $C_{K_t}$ is of the form 
\begin{equation}
    C_{K_t} = C(n) \max_{1\le j \le n-1} \om_{j}\cdot B_{n-j}(K_t),
\end{equation}
where $B_{n-j}(K_t)$ bounds the number of connected components of $P \cap K_t$, where $P$ is any affine $(n-j)$-plane in $\R^n$.
The bounds $B_{n-j}(K_t)$ exist because $K_t$ is definable in $\R_{\cC_M}$.
It would be interesting to find explicit bounds via the Bang degree depending on the Denjoy--Carleman complexity of $K_t$.

\subsection{Organization of the paper}

In \Cref{sec:weights}, we introduce the admissible weights that control the derivatives of 
quasianalytic functions and define the Bang degree. We also introduce terminology  that 
accounts for the fact that often differentiability of  finite order is sufficient.

In \Cref{sec:o-minimal}, we recall basics on o-minimal structures, in particular, the expansion 
of the real field by restricted function in a quasianalytic Denjoy--Carleman class.

In \Cref{sec:Remez}, we formulate the general version of the quasianalytic Remez inequality 
(\Cref{t:RemezKL}) and discuss the collection $\sK(\R^d)$ of sets that support this inequality.
Its proof comprises \Cref{sec:poly,sec:Remez_proof}.

In \Cref{sec:sublevel}, we deduce estimates for the volume of sublevel sets and obtain a  
number of consequences, including a reverse H\"older inequality and bounds on the mean 
oscillation of $\log |f|$.

\Cref{sec:Lojasiewicz} is devoted to several inequalities of {\L}ojasiewicz type. 
In \Cref{sec:Harnack}, inequalities of Harnack types are obtained. 
Combining the reverse H\"older inequality with a bound for the number of zeros in terms 
of the Bang degree, we obtain inequalities of Markov type in \Cref{sec:Markov}. 

Finally, in \Cref{sec:oscillatory}, the sublevel set approach together with the van der 
Corput lemma are used to provide decay effective estimates for oscillatory integrals. 

The paper ends with two appendices. 
In \Cref{sec:A}, we establish precise upper and lower bounds for the Bang degree 
associated with specific admissible weights (the analytic weight and Denjoy weights).
In \Cref{sec:B}, we reexamine the proof of the univariate quasianalytic Remez inequality 
of \cite{NazarovSodinVolberg04} and adjust it to the vector-valued case.

\subsection{Notation}

We use the Euclidean norm $|x| = (\sum_{j=1}^n x_j^2)^{1/2}$ in $\R^n$. 
Sometimes we also use the $1$-norm $|x|_1 = \sum_{j=1}^n |x_j|$.
Recall that $|x| \le |x|_1 \le \sqrt n \, |x|$.

For differentiable functions $f : \R^n \supseteq K \to \R$ we typically use (semi)norms based on the $1$-norm:
\begin{equation}
    \|f\|_{j,K} := \sup_{x \in K} \sum_{|\al|=j} \frac{j!}{\al!} |\p^\al f(x)|,
\end{equation}
in particular, 
\begin{equation}
    \|\grad f\|_{K} := \sup_{x \in K} \sum_{i=1}^n |\p_i f(x)| = \|f\|_{1,K}.
\end{equation}

The Lebesgue measure in $\R^n$ is denoted by $\cL^{n}$ and often we also write $|E| = \cL^n(E)$ if the 
ambient dimension $n$ is clear. Let $\om_n$ denote the volume of the unit ball $B(0,1) \subseteq \R^n$. 
The standard basis vectors in $\R^n$ are denoted by $e_1,\ldots,e_n$.

\section{Weights, quasianalytic functions, and the Bang degree} \label{sec:weights}

This section introduces the central objects of the paper and sets the terminology used throughout.

\subsection{Admissible weights and \texorpdfstring{$(\mu^{|N},M_0)$}{(mu,M0)}-smooth functions}

\begin{definition}[Admissible weights]
    An \emph{admissible weight} is an infinite increasing sequence $\mu^{|\infty}=(\mu_j)_{j \ge 1}$ of elements in $\R_{>0}$, i.e.,
    $0< \mu_1 \le \mu_2 \le \cdots$.
\end{definition}

\begin{definition}[Quasianalytic weights]
    An admissible weight $\mu^{|\infty} = (\mu_j)_{j \ge 1}$ is called \emph{quasianalytic} if 
\begin{equation}
    \sum_{j \ge 1}  \frac{1}{\mu_j} = \infty.
\end{equation}
\end{definition}

Given an admissible weight $\mu^{|\infty} = (\mu_j)_{j\ge 1}$
and any positive real number $M_0>0$, 
we define 
\begin{equation} \label{eq:M}
    M_j := M_0 \prod_{i = 1}^j \mu_i, \quad j \ge 1.
\end{equation}
Then the sequence $M=(M_j)_{j \ge 0}$ satisfies the logarithmic convexity property 
\begin{equation}
    M_j^2 \le M_{j-1}M_{j+1},\quad j \ge 1.
\end{equation}
Conversely, $M=(M_j)_{j \ge 0}$ uniquely determines the pair $(\mu^{|\infty},M_0)$ (setting $\mu_j = \frac{M_j}{M_{j-1}}$).

\begin{definition}[$(\mu^{|N},M_0)$-smooth functions and maps]
    Let $\mu^{|\infty} = (\mu_j)_{j\ge 1}$ be an admissible weight and $M_0>0$.
    Let $N \in \N_{\ge 1} \cup \{\infty\}$.
    A function $f : K \to \R$, defined on a nonempty compact set $K \subseteq \R^n$,
    is called \emph{$(\mu^{|N},M_0)$-smooth} if $f$ is of class $\cC^{N}$ (in some open neighborhood of $K$) 
    and 
    \begin{equation}
        \|f\|_{j,K} \le M_j, \quad 0 \le j < N +1,
    \end{equation}
    where 
    \[
        \|f\|_{j,K} :=\sup_{x \in K} \sum_{|\al|=j} \frac{j!}{\al!} |\p^\al f(x)|
    \]
    and $\|f\|_K = \|f\|_{0,K} = \sup_{x \in K} |f(x)|$ denotes the sup-norm. 

    Similarly, a map $f = (f_1,\ldots,f_m) : K \to \R^m$ is 
    called \emph{$(\mu^{|N},M_0)$-smooth} if $f$ is of class $\cC^{N}$ 
    and 
    \begin{equation}
        \|f\|_{j,K} \le M_j, \quad 0 \le j < N +1,
    \end{equation}
    where 
    \[
        \|f\|_{j,K} :=\sup_{x \in K} \sum_{i=1}^m\sum_{|\al|=j} \frac{j!}{\al!} |\p^\al f_i(x)|.
    \]
    We set $\|f\|_K := \|f\|_{0,K} = \sup_{x \in K} \sum_{i=1}^m |f_i(x)|$. 
\end{definition}

\begin{remark}
    In the single-valued case,
    $\|f\|_{j,K}$ is the supremum over $x \in K$ of the $1$-norm of the vector with entries $\p^\al f(x)$, 
    where $|\al|=j$ and the entry $\p^\al f(x)$ appears $\frac{j!}{\al!}$ times. 
    This is convenient because the directional derivatives satisfy
    \begin{align*}
        |d_v^j f(x)|=\big|\p_t^j|_{t=0} f(x+tv)\big| = \Big|\sum_{|\al|=j} \frac{j!}{\al!} \p^\al f(x)v^\al\Big| 
        \le |v|^j \sum_{|\al|=j} \frac{j!}{\al!} |\p^\al f(x)|.
    \end{align*}
    The vector-valued case behaves similarly.
\end{remark}

\subsection{The Bang degree}

Let $\mu^{|\infty}=(\mu_j)_{j \ge 1}$ be an admissible weight.
We consider the function $\Si_{\mu^{|\infty}} : \N_{\ge 1} \times \N  \to [0,\infty)$
defined by
\[
  \Si_{\mu^{|\infty}}(m,n) := \sum_{j =m}^n \frac{1}{\mu_j}.
\]
If $n < m$ then $\Si_{\mu^{|\infty}}(m,n)=0$ (being an empty sum). 
The function $(m,n)\mapsto \Si_{\mu^{|\infty}}(m,n)$ is increasing in $n$
and decreasing in $m$. 

For $b>0$, let $j_0(b)$ be the smallest integer $j\in \N$ with $j \ge \log (b^{-1})$, i.e.,
\[
  j_0(b):= \lceil \log (b^{-1})\rceil_{\N}.
\]

\begin{definition}[Bang degree]
    Let $\mu^{|\infty}=(\mu_j)_{j \ge 1}$ be an admissible weight and $b>0$.
    We define the \emph{Bang degree} $\dd_{\mu^{|\infty}}(b) \in \N \cup \{\infty\}$
by setting
\begin{align*}
  \dd_{\mu^{|\infty}}(b) := \sup\big\{n \in \N:  \Si_{\mu^{|\infty}}(j_0(b)+1,n)
  < e \big\}.
\end{align*}
\end{definition}

\begin{lemma} \label[l]{l:Bang}
    We have:
    \begin{enumerate}
        \item $\dd_{\mu^{|\infty}}(b) \ge j_0(b)$ and $\dd_{\mu^{|\infty}}(b) = j_0(b)$ if and only if $\mu_{j_0(b)+1}\le  1/e$.
        \item $\dd_{\mu^{|\infty}}(b) = \infty$ if and only if $\Si_{\mu^{|\infty}}(j_0(b)+1,n) < e$ for all $n$.
        \item The map $(a,b) \mapsto \dd_{a\,\mu^{|\infty}}(b)$ is increasing in $a>0$ and decreasing in $b>0$.
        \item If $\mu^{|\infty}$ is quasianalytic, then $\dd_{\mu^{|\infty}}(b)$ is finite.
    \end{enumerate}
\end{lemma}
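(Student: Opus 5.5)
The proof is a direct unwinding of the definition. The one structural fact used throughout is that, for fixed $m$, the map $n \mapsto \Si_{\mu^{|\infty}}(m,n)$ is increasing. Hence the set
\[
S(b) := \{n \in \N : \Si_{\mu^{|\infty}}(j_0(b)+1,n) < e\}
\]
is an initial segment of $\N$: if $n \in S(b)$ and $n' \le n$, then $n' \in S(b)$. So $\dd_{\mu^{|\infty}}(b) = \sup S(b)$ is either the largest element of $S(b)$ or $\infty$. Moreover, $S(b) \ni j_0(b)$, because $\Si_{\mu^{|\infty}}(j_0(b)+1,j_0(b)) = 0 < e$ is an empty sum.

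\emph{Items (1) and (2).} Since $j_0(b) \in S(b)$, we get $\dd_{\mu^{|\infty}}(b) \ge j_0(b)$. By the initial-segment property, equality holds if and only if $j_0(b)+1 \notin S(b)$. This means $1/\mu_{j_0(b)+1} \ge e$, i.e., $\mu_{j_0(b)+1} \le 1/e$. For (2), $\sup S(b) = \infty$ holds if and only if $S(b)$ is unbounded. Since $S(b)$ is an initial segment, this happens if and only if $S(b) = \N$, which is exactly the stated condition.

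\emph{Item (3).} Replacing $\mu^{|\infty}$ by $a\,\mu^{|\infty}$ gives $\Si_{a\mu^{|\infty}}(m,n) = a^{-1}\Si_{\mu^{|\infty}}(m,n)$. This is decreasing in $a$, so the corresponding set $S$ grows with $a$ and the supremum is increasing in $a$. For $b$, note that $b \mapsto \log(b^{-1})$ is decreasing and $x \mapsto \lceil x\rceil_\N$ is increasing, so $j_0(b)$ is decreasing in $b$. Now let $b \le b'$, so that $j_0(b) \ge j_0(b')$. Since $\Si$ is decreasing in its first argument, $\Si(j_0(b')+1,n) \ge \Si(j_0(b)+1,n)$. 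Hence $S(b') \subseteq S(b)$ and $\dd(b') \le \dd(b)$, as claimed.

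\emph{Item (4).} If $\sum_j 1/\mu_j = \infty$, then removing the finitely many terms with $j \le j_0(b)$ does not affect divergence. So $\Si(j_0(b)+1,n) \to \infty$ as $n \to \infty$, and in particular it is $\ge e$ for some $n$. By (2), $\dd_{\mu^{|\infty}}(b) < \infty$.

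There is no real obstacle here. The only points needing care are the empty-sum convention in (1) and the direction of monotonicity of $j_0$ in (3).
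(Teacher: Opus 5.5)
Your proposal is correct and follows the same approach as the paper: the paper simply says that all four properties follow easily from the definition. Your argument supplies those routine details correctly, including the empty-sum convention in (1) and the direction of monotonicity of $j_0(b)$ in (3).
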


\begin{proof}
    All properties follow easily from the definition.
\end{proof}

\begin{remark}
  Our notion of Bang degree differs slightly from the notion proposed in \cite{NazarovSodinVolberg04}, 
  where it is attached to 
  a given $(\mu^{|\infty},M_0)$-smooth function $f : [0,1] \to \R$
  and equals $\dd_{\mu^{|\infty}}(\|f\|_{[0,1]}/M_0)$ (in our notation). 
  The unspecified argument $b$ allows for more flexibility.
\end{remark}

\subsection{Shifted weights} 

Occasionally, we will deal with derivatives of $(\mu^{|\infty},M_0)$-smooth functions. This naturally requires 
a ``shift'' of the weight $\mu^{|\infty}$.

Given an admissible weight $\mu^{|\infty} = (\mu_j)_{j \ge 1}$, also the shifted sequence 
\begin{equation} \label{eq:shift}
 \mu^{|\infty}_{+i} = (\mu_{j+i})_{j \ge 1},   
\end{equation}
where $i \in \N_{\ge 1}$,
is an admissible weight.
Evidently, $\mu^{|\infty}$ is quasianalytic if and only if $\mu^{|\infty}_{+i}$ is quasianalytic.

The Bang degree for $\mu^{|\infty}_{+i}$ can always be reduced to the Bang degree for $\mu^{|\infty}$.

\begin{lemma}
    We have $\dd_{\mu^{|\infty}_{+i}}(b) = \dd_{\mu^{|\infty}}(e^{-i} b) - i$.
\end{lemma}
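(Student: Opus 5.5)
The identity should follow by rewriting the defining sum for the shifted weight as a sum for the original weight, with both indices shifted by $i$. Throughout I assume $0<b\le 1$, which is the regime in which the Bang degree is used (e.g.\ $b=b_0/M_0<1$). Some restriction of this kind is genuinely needed: $j_0$ is truncated at $0$, so for large $b$ the two sides can differ.

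\emph{Step 1: the shift of $j_0$.} For $b\le 1$ we have $\log(b^{-1})\ge 0$. Hence $j_0(b)=\lceil \log(b^{-1})\rceil$, with no truncation. Since $\log((e^{-i}b)^{-1})=i+\log(b^{-1})$ and $i$ is an integer,
\[
j_0(e^{-i}b)=\lceil i+\log(b^{-1})\rceil = i+j_0(b).
\]
(It suffices that $\lceil\log(b^{-1})\rceil\ge 0$, i.e.\ $b<e$.)

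\emph{Step 2: reindexing the sum.} For every $n\in\N$, substituting $k=j+i$ gives
\[
\Si_{\mu^{|\infty}_{+i}}(j_0(b)+1,n)=\sum_{j=j_0(b)+1}^{n}\frac{1}{\mu_{j+i}}
=\sum_{k=j_0(b)+i+1}^{n+i}\frac{1}{\mu_k}
=\Si_{\mu^{|\infty}}(j_0(e^{-i}b)+1,n+i).
\]
This also covers the empty-sum case $n\le j_0(b)$.

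\emph{Step 3: comparing the suprema.} Set $A:=\{n\in\N:\Si_{\mu^{|\infty}_{+i}}(j_0(b)+1,n)<e\}$ and $B:=\{m\in\N:\Si_{\mu^{|\infty}}(j_0(e^{-i}b)+1,m)<e\}$.
\begin{itemize}
\item By Step~2, $n\in A$ if and only if $n+i\in B$, so $A+i=B\cap\N_{\ge i}$.
\item Every $m\le j_0(e^{-i}b)$ lies in $B$, since the corresponding sum is empty. As $j_0(e^{-i}b)=i+j_0(b)\ge i$, the set $B\cap\N_{\ge i}$ is nonempty and has the same supremum as $B$.
\item Therefore $\sup A+i=\sup B$, that is, $\dd_{\mu^{|\infty}_{+i}}(b)+i=\dd_{\mu^{|\infty}}(e^{-i}b)$.
\end{itemize}
The argument applies equally when both suprema are $\infty$, consistent with \Cref{l:Bang}(2).

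The only delicate point is Step~1: the truncation in $j_0$ must be inactive, which is exactly where the hypothesis on $b$ enters. Everything else is bookkeeping.
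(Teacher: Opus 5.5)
Your proof is correct and uses the same reindexing argument as the paper, whose proof just records the identity $\Si_{\mu^{|\infty}_{+1}}(j_0(b)+1,n)=\Si_{\mu^{|\infty}}(j_0(b)+2,n+1)$. Your caveat is also well founded: that identity only gives the lemma if $j_0(e^{-i}b)=j_0(b)+i$, which requires $b<e$ (for $\mu_j\equiv 1$, $b=e$, $i=1$ the two sides are $2$ and $1$), a restriction the paper leaves implicit but which is harmless in the regime $b<1$ where the Bang degree is evaluated throughout.
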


\begin{proof}
    This follows from $\Si_{\mu^{|\infty}_{+1}} (j_0(b)+1,n) = \Si_{\mu^{|\infty}} (j_0(b)+2,n+1)$.
\end{proof}

\subsection{Further definitions and conventions}

Let us adjust the definition of the Bang degree to its use in the Remez inequality.
We will not automatically assume that $\mu^{|\infty}$ is quasianalytic.
Instead we work with an admissibility condition that guarantees that the Bang degree is finite.

\begin{definition}[Associated degree] \label[d]{d:nn}
    Let $\mu^{|\infty}= (\mu_j)_{j\ge 1}$ be an admissible weight and $M_0,a,b>0$.
    We say that $(\mu^{|\infty},M_0,a,b)$ is \emph{admissible}, if 
    \begin{equation} \label{eq:condition}
        \sum_{j=j_0+1}^\infty \frac{1}{\mu_j} > a\, e, \quad \text{ where }  j_0 = \lceil \log (\tfrac{M_0}{b})\rceil_{\N}.
    \end{equation}
    In that case, we define the integer
    \begin{equation} \label{eq:nn}
        \nn= \nn(\mu^{|\infty},M_0,a,b):= 2\, \dd_{a\, \mu^{|\infty}}(\tfrac{b}{M_0})
    \end{equation}
    and say that $\nn$ is the \emph{degree associated with $(\mu^{|\infty},M_0,a,b)$}.
\end{definition}

\begin{remark} \label[r]{r:nnprop}
    (1) By definition, for $r,s>0$, $(r\, \mu^{|\infty},s\, M_0,a,b)$ is admissible if and only 
    if $(\mu^{|\infty},M_0,r\, a, \frac{b}{s})$ is admissible,
    and
    \[
        \nn(r \, \mu^{|\infty},s\, M_0,a,b) = \nn(\mu^{|\infty},M_0,r\, a, \tfrac{b}{s}).
    \]

    (2) Assuming admissibility, the integer $\nn(\mu^{|\infty},M_0,a,b)$ is increasing in $a$ and decreasing in $b$.

    (3)  If $\mu^{|\infty}$ is quasianalytic, then \eqref{eq:condition} is satisfied for all $M_0,a,b>0$.
\end{remark}

\begin{remark} \label[r]{r:Nnonzero}
    In our results, the integer $\nn$ will be useless if it is zero.
    We have $\nn(\mu^{|\infty},M_0,a,b) =0$ if and only if $ae \mu_1 \le 1$ and $M_0 \le b$ (by \Cref{l:Bang}(1)).
    Therefore, we will generally assume that $M_0 > b$ so that $\nn(\mu^{|\infty},M_0,a,b) \ge 2$.

    In \cite{Rainer:2022aa}, we occasionally used $M_0 = b = \|f\|_K$, for a $(\mu^{|\infty},M_0)$-smooth function $f$, 
    and took extra care for the case $\nn(\mu^{|\infty},M_0,a,b) =0$.
    These occasions can simply be adjusted to the convention $M_0 >b$ by setting $b = \|f\|_K$ and $M_0 = 2 \,\|f\|_K$.
    This possibly increases $\nn$ a little, while $f$ is still $(\mu^{|\infty},M_0)$-smooth.
\end{remark}

\begin{definition}[The function $\ga_{\tilde \mu}$] \label[d]{d:ga}
Let $\mu^{|\infty}= (\mu_j)_{j\ge 1}$ be an admissible weight.
Let $\tilde \mu : [1,\infty) \to (0,\infty)$ be an increasing continuous and (piecewise) $\cC^1$ 
function such that
\[
  \mu_j = \tilde \mu(j), \quad j\ge 1.
\]
For instance, we may take $\tilde \mu$ piecewise affine and work consistently with the left derivative
at points, where $\tilde \mu$ is not differentiable.

Then we define (following \cite{NazarovSodinVolberg04})
\begin{equation*}
    \ga_{\tilde \mu}(n) := \sup_{1 \le s\le n} \frac{s  \tilde \mu'(s)}{\tilde \mu(s)}, \quad n \in \N_{\ge 1}.  
\end{equation*}
Note that $\ga_{\tilde \mu}$ depends on the choice of $\tilde \mu$ which is not unique.
\end{definition}

\begin{definition}[The constant $\Cnn$] \label[d]{d:Cnn}
    Let $\mu^{|\infty}= (\mu_j)_{j\ge 1}$ be an admissible weight and $M_0,a,b>0$ with $M_0 >b$.
   Assume that $(\mu^{|\infty},M_0,a,b)$ is admissible and let $\nn=\nn(\mu^{|\infty},M_0,a,b)$.
   Suppose that $\tilde \mu$ has been chosen.
    We define 
\begin{equation}
    \Cnn :=  (4 e^{4 + \ga_{\tilde \mu}(\nn)})^{\nn}.
\end{equation}
\end{definition}

If $\sup_{1 \le s < \infty} \frac{s  \tilde \mu'(s)}{\tilde \mu(s)}$ is finite (see e.g.\ \Cref{p:anGa,p:DenjoyGa}), 
then $\Cnn$ can be replaced 
by the potentially larger constant $C^\nn$ for some suitable $C>0$.

\begin{remark}
    Our notation differs somewhat from the notation used in \cite{Rainer:2022aa}. For instance, 
    the integer $\nn$ defined in \eqref{eq:nn} is twice the integer denoted by the same symbol in \cite{Rainer:2022aa}.
\end{remark}

\section{O-minimal structures} \label{sec:o-minimal}

We recall the definition of an \emph{o-minimal structure} over the real ordered field and some background;
cf.\ \cite{vandenDriesMiller96} and \cite{vandenDries98}.

\begin{definition}[O-minimal structure over the real ordered field]
A \emph{structure}  over the real ordered field $(\R,+,\cdot,<)$ is a sequence $\sS = (\sS_n)_{n\ge 1}$
of collections of subsets $\sS_n$ of $\R^n$, such that for all $n,n'\ge 1$:
\begin{enumerate}
  \item $\sS_n$ is a Boolean algebra with respect to the usual set-theoretic operations;
  \item $\sS_n$ contains all semialgebraic subsets of $\R^n$;
  \item if $X \in \sS_n$ and $X' \in \sS_{n'}$, then $X \times X' \in \sS_{n+n'}$;
  \item if $n \ge n'$ and $X \in \sS_n$, then $\pi(X) \in \sS_{n'}$, where $\pi : \R^{n}\to \R^{n'}$
  is the projection on the first $n'$ coordinates.
\end{enumerate}
A subset $X \subseteq \R^n$ is said to be \emph{definable} in the structure $\sS$ if $X \in \sS_n$.
A map $f : X \to \R^{n'}$ is called \emph{definable} in $\sS$ if its graph is definable.
A structure $\sS$ is called \emph{o-minimal} if
\begin{enumerate}
    \item[(5)] every set in $\sS_1$ is a finite union of intervals and points.
\end{enumerate}
A structure $\sS$ is called \emph{polynomially bounded} if for every function $f : \R \to \R$ that is definable in $\sS$
there exists $N \in \N$ such that $f(t) = O(t^N)$ as $t \to \infty$.
An o-minimal structure $\sS$ either is polynomially bounded or the exponential function $\exp : \R \to \R$ is definable in $\sS$
(see \cite{Miller:1994ue}).
\end{definition}

\begin{example}[Semialgebraic sets]
    The family of all semialgebraic sets forms a polynomially bounded o-minimal structure.
\end{example}

\begin{example}[Globally subanalytic sets]
    The family of globally subanalytic sets forms a polynomially bounded o-minimal structure denoted $\R_{\on{an}}$.
  It is the smallest structure over the real ordered field containing all restricted analytic functions
  $f : \R^n \to \R$, i.e., $f|_{[-1,1]^n}$ is analytic and $f=0$ outside $[-1,1]^n$.
\end{example}

\begin{example}[The structure $\R_{\cC_M}$] \label[e]{e:RCM}
    Let $\mu^{|\infty} = (\mu_j)_{j \ge 1}$ be an admissible weight and $M_0 >0$. 
    Let $M = (M_j)_{j \ge 0}$ be defined by \eqref{eq:M}.
    Consider 
    the associated Denjoy--Carleman class $\cC_M$ (see \eqref{eq:DC}).

    Let us make the following additional assumptions on the admissible weight $\mu^{|\infty}$.
    \begin{enumerate}
        \item The sequence 
            \[
                a_j := \Big(\frac{\mu_1\cdots \mu_j}{j!}\Big)^{1/j},\quad j \ge 1,
            \]
            satisfies $\liminf_{j \to \infty} a_j >0$.
        \item The sequence $a_j$ is almost increasing, i.e., there exists a constant $H \ge 1$ such that 
            $a_j \le H  a_k$ whenever $j \le k$.
        \item There is a constant $C\ge 1$ such that $\mu_{j+1} \le C^j$, equivalently, 
                    $M_{j+1} \le C^j M_j$, for $j \ge 1$.
                \item $\mu^{|\infty}$ is quasianalytic.
    \end{enumerate}
    Under the assumptions (1)--(4), the class $\cC_M$ is quasianalytic, containes all real analytic functions, 
    and is stable under composition, differentiation, and inverse and implicit functions (cf.\ \cite{Rainer:2021aa}). 
    
    Under these assumptions, the expansion $\R_{\cC_M}$ 
    of the real ordered field by all restricted $\cC_M$-functions 
    is a polynomially bounded o-minimal structure, by \cite{RolinSpeisseggerWilkie03}. 
    A function $f : \R^n \to \R$ is called a \emph{restricted $\cC_M$-function} if 
    there exists $\tilde f \in \cC_M(U)$ for an open neighborhood $U$ of $[-1,1]^n$ 
    such that $f(x) = \tilde f(x)$ if $x \in [-1,1]^n$ and $f(x) = 0$ if  $x \not\in [-1,1]^n$.

    Note that (2) is satisfied with $H=1$ if the sequence $\mu_j/j$ is increasing (which means that $M_j/j!$ is logarithmically convex).
    Under this stronger condition, one can drop the assumption (3) (which guarantees that $\cC_M$ is stable under differentiation): 
    the expansion $\R_{\ol \cC_M}$ 
    of the real ordered field by all restricted $\ol \cC_M$-functions 
    is a polynomially bounded o-minimal structure, where 
    \[
        \ol \cC_M := \bigcup_{k\ge 0} \cC_{M_{+k}} \quad \text{ and } \quad M_{+k} := (M_{j+k})_{j\ge 0}.
    \]
\end{example}

\section{Quasianalytic Remez inequalities} \label{sec:Remez}

After reviewing quasianalytic Remez inequalities on intervals and convex bodies, 
we discuss a large collection of sets in $\R^n$ to which the quasianalytic Remez inequality can be extended; 
see \Cref{t:RemezKL}.

\subsection{Remez inequality for univariate functions}

The following theorem is a Remez inequality for univariate vector-valued functions.
It generalizes \cite[Theorem B]{NazarovSodinVolberg04} for single-valued functions; see also \cite[Theorem 5.4]{Rainer:2022aa}.
It is proved in \Cref{sec:B} for the reader's convenience. 

\begin{theorem} \label[t]{thm:NSV}
    Let $\mu^{|\infty}=(\mu_j)_{j\ge1}$ be an admissible weight and $M_0>b_0>0$.
    Assume that $(\mu^{|\infty},M_0,1,b_0)$ is admissible and let $\nn = \nn(\mu^{|\infty},M_0,1,b_0)$.
    Let $f: [0,1] \to \R^m$ be $(\mu^{|\nn},M_0)$-smooth such that $\|f\|_{[0,1]}\ge b_0$.
  Then for each interval $I \subseteq [0,1]$ and each Lebesgue measurable set $E \subseteq I$ with $|E|>0$ we have
  \begin{equation} \label{eq:NSV}
     \|f\|_I \le \Cnn \,\Big(\frac{m\, |I|}{|E|}\Big)^{\nn} \|f\|_E.
  \end{equation}
\end{theorem}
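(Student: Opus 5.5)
We adapt the proof of \cite[Theorem B]{NazarovSodinVolberg04} to maps $f=(f_1,\ldots,f_m)$. The argument has three steps: build a polynomial approximation of $f$ of degree about $\nn/2$, whose error is small relative to $b_0$ at every point of $[0,1]$; transfer the size lower bound from $f$ to this polynomial vector; and apply the classical Remez inequality on the interval $I$ to its components.

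\textbf{Step 1 (Bang's lemma, pointwise).} Let $d := \dd_{\mu^{|\infty}}(b_0/M_0)$, so that $\nn = 2d$, and set $j_0 = \lceil \log(M_0/b_0)\rceil_\N$. Bang's argument selects a scale from the partial sums $\Si_{\mu^{|\infty}}(j_0+1,\cdot)$. Apply it to each derivative $f^{(j)}$ with $j \le \nn$, using only the bounds $\|f\|_{j,[0,1]} \le M_j$. This should show that near every $x\in[0,1]$ the Taylor polynomial $P_x$ of order $d$ satisfies
\[
\|f-P_x\|_J \le e^{-c\,d}\,\|f\|_{[0,1]}
\]
on an interval $J\ni x$ of length comparable to $1/(\text{something in } d)$. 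We use the $1$-norm on $\R^m$ throughout, so the Taylor remainder estimate holds for the whole vector at once, with the bound $\|d_v^j f\| \le |v|^j M_j$. This is why we work with $\|f\|$ rather than with each $\|f_i\|$ separately. The factor $\ga_{\tilde\mu}(\nn)$ enters here: it controls the ratios $M_{j+k}/M_j$ for $j\le\nn$, and it is the source of the $e^{\ga_{\tilde\mu}(\nn)}$ in $\Cnn$.

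\textbf{Step 2 (globalization and the lower bound).} In NSV, one covers $I$ by a chain of such intervals and uses the univariate polynomial Remez inequality, of degree $\le d$ on each piece, to pass from $E$ to $I$. Each piece costs a factor of order $(|J|/|E\cap J|)^{d}$. A pigeonhole choice of a piece with $|E\cap J|/|J|\ge |E|/|I|$, combined with the doubling-type control obtained from $\|f\|_{[0,1]}\ge b_0$, yields the exponent $\nn=2d$.

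\textbf{Step 3 (vector-valued modification).} Where NSV applies the scalar Remez inequality to $P_x$, we apply it componentwise to $P_x=(P_{x,1},\ldots,P_{x,m})$. For each $i$ we have $\|P_{x,i}\|_J\le C_d(|J|/|E\cap J|)^d\|P_{x,i}\|_{E\cap J}$. Summing over $i$ gives
\[
\|P_x\|_J\le C_d\,(|J|/|E\cap J|)^d\, m\,\max_i \|P_{x,i}\|_{E\cap J}\le m\,C_d\,(|J|/|E\cap J|)^d\,\|P_x\|_{E\cap J}.
\]
To get $m^{\nn}$ instead of a single factor $m$, we pick a component-dependent subset of $E$ of measure at least $|E|/m$. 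The point of absorbing $m$ into the ratio $m|I|/|E|$ is exactly this measure loss, and it keeps the Bang degree unchanged, as \Cref{thm:NSV} requires.

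\textbf{Main obstacle.} The delicate step is Step 1: making the Bang-type estimate hold uniformly for the vector in the $1$-norm, and confirming that only derivatives up to order $\nn$ are used, so that $(\mu^{|\nn},M_0)$-smoothness suffices. The remaining bookkeeping consists of collecting the constants into $(4e^{4+\ga_{\tilde\mu}(\nn)})^{\nn}$.
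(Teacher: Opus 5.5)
Your proposal has a genuine gap. It omits the one estimate that makes the theorem work, and the estimate you propose in Step 1 has the wrong form.

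\textbf{The missing claim.} What is needed is that, on a \emph{short} interval $I$, the error of the polynomial approximation is dominated by $\|f\|_I$ itself. Your bound $\|f-P_x\|_J\le e^{-cd}\|f\|_{[0,1]}$ is an additive error relative to the global norm. It cannot be absorbed when $\|f\|_I\ll\|f\|_{[0,1]}$, as already happens for $f(x)=x$ and $I=[0,\epsilon]$ with $\epsilon<e^{-cd}$. No polynomial Remez inequality and no pigeonhole choice of $J$ repairs this. In the paper this is the claim \eqref{eq:NSV1}: with $M_0=1$ and $m_j=M_j/j!$, if $m_\nn|I|^\nn\le e^{-\nn(\log m+3+\ga_{\tilde\mu}(\nn))}$ then $m_\nn|I|^\nn\le\frac23\|f\|_I$. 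It rests on three ingredients absent from your outline:
(i) Bang's chain argument for $\be(x)=\sup_{j\le\nn}\sum_i|f_i^{(j)}(x)|/(e^jM_j)$. It uses $\|f\|_{[0,1]}\ge b_0$ and the definition of $\dd$ to give $\min_x\be(x)>e^{-\dd-1}$, i.e.\ at \emph{every} point some derivative of order $k\le\dd$ is large (\Cref{l:lb}).
(ii) Markov's inequality for the degree-$(\nn-1)$ Taylor polynomial at the center of $I$. This turns (i) into a lower bound for $\|f\|_I+m_\nn(|I|/2)^\nn$ by $m_k(|I|/2)^k$, up to the factor $m\,e^{(2+1/e)\dd}$ (\Cref{l:claim2}).
(iii) The comparison of $m_k|I|^k$ with $m_\nn|I|^\nn$ for $k\le\nn/2$. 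This is where $\ga_{\tilde\mu}(\nn)$ actually enters (\Cref{l:NSVint,l:claim1}).

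The factor $2$ in $\nn=2\dd$ is exactly the room between the order $k\le\dd$ of the large derivative and the order $\nn$ of the remainder. That room is what makes the threshold in \eqref{eq:NSV1} uniform in $k$. Taylor polynomials of degree $d=\dd$ plus an unspecified ``doubling-type control'' do not provide it.

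\textbf{How the argument proceeds once the claim is available.} No chain covering is needed.
For short $I$, Lagrange interpolation at $\nn$ points of $E$ with spacing $\ge|E|/(\nn-1)$ gives $\|f\|_I\le(2e|I|/|E|)^{\nn-1}\|f\|_E+m_\nn|I|^\nn$, and the claim lets the last term be absorbed.
For long $I$ one takes a single subinterval $I_1$ of threshold length with $|E\cap I_1|/|I_1|\ge|E|/|I|$; this is close to your pigeonhole. One returns to $I$ not by doubling but by combining the trivial bound $\|f\|_I\le 1$ with the absolute lower bound $\|f\|_{I_1}\ge\frac32 m_\nn|I_1|^\nn\ge\frac32\,(2e^{\log m+3+\ga_{\tilde\mu}(\nn)})^{-\nn}$.

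\textbf{Step 3.} This step is confused. A single factor $m$ would be \emph{stronger} than $m^\nn$, and passing to a subset of $E$ of measure $|E|/m$ can only worsen the bound. In the paper, $m$ enters by summing the componentwise Markov bounds in (ii). This puts $\log m$ into the threshold and hence $m^\nn$ into the constant.
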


\subsection{Remez inequality on convex bodies}

\Cref{thm:NSV}, for $m=1$, was extended to convex bodies in $\R^n$ in \cite[Theorem 5.4]{Rainer:2022aa}. 
Here we state a vector-valued version.

\begin{theorem} \label[t]{thm:Remez}
    Let $\mu^{|\infty}=(\mu_j)_{j\ge 1}$ be an admissible weight and $M_0>b_0>0$.
    Let $K \subseteq L\subseteq \R^n$ be convex bodies. 
    Assume that $(\mu^{|\infty},M_0,\diam(K),b_0)$ is admissible
    and let $\nn=\nn(\mu^{|\infty},M_0,\diam(K),b_0)$. 
  Let $f : L \to \R^m$ be $(\mu^{|\nn},M_0)$-smooth 
  such that $\|f\|_{K} \ge b_0$.
  If $E \subseteq K$ is a Lebesgue measurable subset with $|E|>0$, then
  \begin{equation}
     \|f\|_{K} \le \Cnn \Big(
     \frac{m\, |K|^{1/n}}{|K|^{1/n} - (|K|-|E|)^{1/n}}
     \Big)^{\nn} \|f\|_E.
  \end{equation}
\end{theorem}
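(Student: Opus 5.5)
We reduce to the univariate estimate of \Cref{thm:NSV} by the classical Brudnyi--Ganzburg slicing argument along rays, as in the proof of \cite[Theorem 5.4]{Rainer:2022aa}. First, by compactness and continuity, choose $x_0 \in K$ with $\|f(x_0)\| = \|f\|_K \ge b_0$, where $\|\cdot\|$ denotes the $1$-norm on $\R^m$. The key geometric input is the following lemma (Brudnyi--Ganzburg): there is a ray $\ell$ from $x_0$ such that the segment $I := \ell \cap K$ and the set $E \cap I$ satisfy
\[
  \frac{\cL^1(E\cap I)}{\cL^1(I)} \ge 1 - \Big(\frac{|K|-|E|}{|K|}\Big)^{1/n} = \frac{|K|^{1/n} - (|K|-|E|)^{1/n}}{|K|^{1/n}}.
\]
To prove it, write $|K\setminus E|$ in polar coordinates centred at $x_0$. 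Suppose the inequality failed on every ray, i.e. $\cL^1((K\setminus E)\cap I_\theta) > \la\, \cL^1(I_\theta)$ for each direction $\theta$ with $\la=((|K|-|E|)/|K|)^{1/n}$. Since $K$ is star-shaped about $x_0$ and the weight $r^{n-1}$ is increasing, the radial mass of $(K\setminus E)\cap I_\theta$ is at least that of the outer subsegment of relative length $\la$. That mass is $(1-(1-\la)^n)\rho_\theta^n/n$. Using $1-(1-\la)^n\ge \la^n$ and integrating over $\theta$ would give $|K\setminus E| > \la^n|K| = |K|-|E|$, a contradiction. Measurability issues are handled by Fubini in polar coordinates.

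Next we apply \Cref{thm:NSV} to the restriction of $f$ to $\ell$. Parametrize the full chord of $K$ through $x_0$ in direction $\theta$ by an affine map $\phi:[0,1]\to K$ with $|\phi'| = \cL^1(\tilde I)\le \diam(K)$, where $\tilde I \supseteq I$ is that chord, and set $g := f\circ \phi$. By the Remark on directional derivatives, $\|g\|_{j,[0,1]} \le \diam(K)^j \|f\|_{j,L} \le \diam(K)^j M_j$. Thus $g$ is $(\diam(K)\,\mu^{|\nn},M_0)$-smooth. We also have $\|g\|_{[0,1]}\ge \|f(x_0)\| \ge b_0$. By \Cref{r:nnprop}(1), the degree associated with $(\diam(K)\mu^{|\infty},M_0,1,b_0)$ equals $\nn(\mu^{|\infty},M_0,\diam(K),b_0)=\nn$, and admissibility transfers in the same way. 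One should check that the constant $\Cnn$ computed with the rescaled $\tilde\mu$ is unchanged: $\ga_{\tilde \mu}$ is invariant under multiplying $\tilde\mu$ by a constant, so it is.

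Applying \eqref{eq:NSV} with the interval $\phi^{-1}(I)$ and the set $\phi^{-1}(E\cap I)$, and noting that ratios of lengths are preserved, we get
\[
 \|f\|_K = \|f(x_0)\| \le \|g\|_{\phi^{-1}(I)} \le \Cnn\Big(\frac{m\,\cL^1(I)}{\cL^1(E\cap I)}\Big)^\nn \|f\|_{E}.
\]
Combined with the lemma this gives the claim. If $x_0$ lies in the interior of the chord, we simply take $I$ to be the half-chord from $x_0$; the lemma was stated for rays from $x_0$, so this is consistent.

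The main obstacle is the geometric lemma: getting the exact exponent $1/n$ requires the monotone-rearrangement step in polar coordinates, namely that the worst placement of $K\setminus E$ on each ray is at the far end. The rest is bookkeeping of the scaling in \Cref{r:nnprop}.
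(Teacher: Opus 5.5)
Your overall route is the paper's. The paper deduces this theorem from \Cref{c:starlike}: it picks $x_0\in K$ with $|f(x_0)|_1=\|f\|_K$ and takes the segment from $x_0$ supplied by \Cref{l:ga}, which it cites from Pierzcha{\l}a without proof. It then applies \Cref{thm:NSV} to $f$ composed with the affine parametrization, which is $(\diam(K)\,\mu^{|\nn},M_0)$-smooth. Your bookkeeping is correct and slightly more explicit than the paper's. This covers the transfer of admissibility and of $\nn$ via \Cref{r:nnprop}(1), the invariance of $\ga_{\tilde\mu}$ (hence of $\Cnn$) under rescaling $\tilde\mu$, and the preservation of length ratios.

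The one step that fails as written is the rearrangement in your proof of the ray lemma. Since $r^{n-1}$ is increasing, the outer segment has the \emph{largest} weighted mass among subsets of $[0,\rho_\theta]$ of a given length, not the smallest. So the claim that the radial mass of $(K\setminus E)\cap I_\theta$ is at least $(1-(1-\lambda)^n)\rho_\theta^n/n$ is false. Take $(K\setminus E)\cap I_\theta=[0,L]$ with $L$ slightly above $\lambda\rho_\theta$: its mass is about $\lambda^n\rho_\theta^n/n$, which is smaller for $n\ge 2$ and $0<\lambda<1$. Your closing remark that ``the worst placement of $K\setminus E$ is at the far end'' is backwards for the same reason.

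The correct bathtub bound compares with the \emph{inner} segment $[0,\lambda\rho_\theta]$ and gives mass $>\lambda^n\rho_\theta^n/n$. That is exactly the quantity you end up integrating. So the conclusion $|K\setminus E|>\lambda^n|K|=|K|-|E|$ survives once you replace ``outer'' by ``inner'' and drop the detour through $1-(1-\lambda)^n\ge\lambda^n$. Equivalently, $E$ is worst placed at the far end, so on a failing ray its mass is $<(1-\lambda^n)\rho_\theta^n/n$.

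For measurability, it is cleanest to replace $E$ by a Borel subset of the same measure, so that every slice is measurable. This is why \Cref{l:ga} is phrased via a measurable subset $F\subseteq\ga^{-1}(E)$.
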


Even for $m=1$,
the statement of \Cref{thm:Remez} is slightly stronger than the one of \cite[Theorem 5.4]{Rainer:2022aa}. 
In fact,
in the latter, the potentially bigger integer $\nn(\mu^{|\infty},M_0,\diam(L),b_0)$ is 
used rather than $\nn=\nn(\mu^{|\infty},M_0,\diam(K),b_0)$.
The stronger vector-valued version is obtained by the same proof (based on the technique of \cite{Brudnyi:1973un}). 
Alternatively, we shall see that \Cref{thm:Remez} follows from \Cref{c:starlike} below.

\subsection{Star-shaped sets}

The following lemma, due to \cite[Lemma 4.1]{Pierzchala:2015aa},
will play an important role in the proof of \Cref{t:Remez_intro}.

\begin{lemma} \label[l]{l:ga}
    Let $K \subseteq \R^n$ be a compact set that is star-shaped with respect to $x_0 \in K$ and satisfies $|K|>0$. 
    Let $E \subseteq K$ be Lebesgue measurable. 
    Then there exists an affine curve $\ga : [0,1] \to K$ with $\ga(0) = x_0$ and such that $\ga^{-1}(E)$ contains a 
    Lebesgue measurable subset $F$ satisfying 
    \begin{equation} \label{eq:starlike}
        \cL^1(F) \ge 1 -\Big(1 - \frac{|E|}{|K|}\Big)^{1/n}.
    \end{equation}
\end{lemma}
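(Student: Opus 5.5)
The plan is to use polar coordinates centred at $x_0$ and then average over directions. Since $K$ is compact and star-shaped with respect to $x_0$, there is a radial function $\rho : S^{n-1} \to [0,\infty)$ with $K = \{x_0 + t\theta : \theta \in S^{n-1},\ 0 \le t \le \rho(\theta)\}$. This $\rho$ is upper semicontinuous because $K$ is closed, so it is measurable. For each $\theta$ we take the affine curve $\ga_\theta(s) = x_0 + s\rho(\theta)\theta$, $s\in[0,1]$. It maps $[0,1]$ into $K$ and satisfies $\ga_\theta(0)=x_0$. We set $F_\theta := \ga_\theta^{-1}(E)$, which is measurable for almost every $\theta$ by Fubini in polar coordinates. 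It then suffices to find one $\theta$ with $\rho(\theta)>0$ and $\cL^1(F_\theta) \ge 1-(1-|E|/|K|)^{1/n}$.

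To find such a $\theta$, write $c := |E|/|K|$ and $\de := 1-(1-c)^{1/n}$. We argue by contradiction, supposing $\cL^1(F_\theta) < \de$ for almost every $\theta$ with $\rho(\theta)>0$. Polar coordinates give
\[
|E| = \int_{S^{n-1}} \rho(\theta)^n \int_0^1 \mathds{1}_{F_\theta}(s)\, s^{n-1}\, ds\, d\si(\theta).
\]
For a set $F\subseteq[0,1]$ with $\cL^1(F) \le \de$, the weight $s^{n-1}$ is increasing, so the inner integral is maximal when $F$ is pushed to the right end, $F = [1-\de,1]$. This gives
\[
\int_F s^{n-1}\,ds \le \frac{1-(1-\de)^n}{n} = \frac{c}{n},
\]
and the inequality is strict when $\cL^1(F)<\de$. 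Integrating over $\theta$ yields $|E| < c \int \rho^n\, d\si / n = c|K| = |E|$. This is a contradiction once the set of directions with $\rho>0$ has positive $\si$-measure, which holds because $|K|>0$.

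Hence some $\theta$ with $\rho(\theta)>0$ has $\cL^1(F_\theta)\ge\de$, and we take $F := F_\theta$. The main care needed is measure-theoretic: we must check that $F_\theta$ is measurable for almost every $\theta$, and we must restrict to directions where $\rho(\theta)>0$. Both follow from Tonelli applied to $\mathds{1}_E$ in polar coordinates after modifying on null sets. If $E$ itself is not Borel, we first replace it by a Borel subset of the same measure; this is why the statement only asks that $\ga^{-1}(E)$ contain a measurable $F$. The rearrangement step is the elementary bathtub principle.
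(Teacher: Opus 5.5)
Your proof is correct. The paper gives no proof of this lemma: it is quoted from Pierzcha{\l}a \cite[Lemma 4.1]{Pierzchala:2015aa}. So your argument serves as a self-contained replacement rather than as a variant of something in the text.

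The constant comes out exactly. You combine the polar-coordinate identity $|E|=\int_{S^{n-1}}\rho(\theta)^n\int_0^1\mathbf{1}_{F_\theta}(s)\,s^{n-1}\,ds\,d\sigma(\theta)$ with the bathtub bound $\int_F s^{n-1}\,ds\le \frac{1}{n}\bigl(1-(1-\mathcal{L}^1(F))^n\bigr)$. With $\delta=1-(1-|E|/|K|)^{1/n}$, so that $1-(1-\delta)^n=|E|/|K|$, this gives the required inequality.

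The measure-theoretic points you flag are handled properly:
\begin{itemize}
\item $\rho$ is finite and upper semicontinuous.
\item Passing to a Borel set $B\subseteq E$ with $|B|=|E|$ makes every $\gamma_\theta^{-1}(B)$ Borel, and makes the $\theta$-integrand measurable by Tonelli.
\item $\sigma(\{\rho>0\})>0$ follows from $|K|=\frac1n\int\rho^n\,d\sigma$.
\end{itemize}

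Your argument also gives slightly more than the statement asks. A set of directions of positive $\sigma$-measure works, and the segment can be taken nondegenerate.
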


We give a corollary which implies \Cref{thm:Remez}: for the convex body $K$, it suffices to choose $x_0 \in K$ such that $|f(x_0)|_1 = \|f\|_K$.

\begin{corollary} \label[c]{c:starlike}
    Let $\mu^{|\infty}=(\mu_j)_{j \ge 1}$ be an admissible weight and $M_0 >b_0>0$.
    Let $K \subseteq  \R^n$ be a compact set that is star-shaped with respect to $x_0 \in K$ and such that $|K|>0$. 
    Assume that $(\mu^{|\infty},M_0,\diam(K),b_0)$ is admissible
    and let $\nn=\nn(\mu^{|\infty},M_0,\diam(K),b_0)$. 
  Let $f : K \to \R^m$ be $(\mu^{|\nn},M_0)$-smooth 
  such that $|f(x_0)|_1 \ge b_0$.
  If $E \subseteq K$ is a Lebesgue measurable subset with $|E|>0$, then
  \begin{equation} 
     |f(x_0)|_1 \le \Cnn \Big(
     \frac{m\, |K|^{1/n}}{|K|^{1/n} - (|K|-|E|)^{1/n}}
     \Big)^{\nn} \|f\|_E.
  \end{equation}
\end{corollary}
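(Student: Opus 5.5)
We combine \Cref{l:ga} with the univariate \Cref{thm:NSV}, applied to $f$ restricted to a segment issuing from $x_0$. If $|E| = |K|$ the estimate is immediate from the next steps, so the main case is $0<|E|<|K|$.

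\emph{Step 1 (choice of segment).} By \Cref{l:ga} there is an affine curve $\ga:[0,1]\to K$, $\ga(t) = x_0 + t v$, with $\ga(0)=x_0$, and a measurable $F\subseteq \ga^{-1}(E)$ with
\[
\cL^1(F) \ge 1-(1-|E|/|K|)^{1/n} = \frac{|K|^{1/n}-(|K|-|E|)^{1/n}}{|K|^{1/n}}.
\]
Since $\ga([0,1])\subseteq K$, we have $|v|\le \diam(K)$. If $v=0$, then $F$ with $\cL^1(F)>0$ forces $x_0\in E$ and the claim is trivial because $\Cnn\ge1$ and the bracket is $\ge 1$. So we may assume $v\neq 0$.

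\emph{Step 2 (smoothness of the pullback).} Set $g := f\o\ga : [0,1]\to\R^m$. Differentiating along the line gives $g^{(j)}(t) = d_v^j f(\ga(t))$, so by the directional-derivative estimate in the remark after the definition of $(\mu^{|N},M_0)$-smooth maps,
\[
\|g\|_{j,[0,1]} \le |v|^j \|f\|_{j,K} \le \diam(K)^j M_j = M_0\prod_{i=1}^j\big(\diam(K)\mu_i\big), \quad j\le \nn .
\]
Hence $g$ is $((\diam(K)\,\mu)^{|\nn},M_0)$-smooth. Here $f$ must be $\cC^\nn$ near $K$, which is fine since the segment lies in $K$. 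By \Cref{r:nnprop}(1), admissibility of $(\mu^{|\infty},M_0,\diam(K),b_0)$ is the same as admissibility of $(\diam(K)\mu^{|\infty},M_0,1,b_0)$, and the associated degrees coincide and equal $\nn$. The constant $\Cnn$ is also unchanged: $\ga_{\tilde\mu}$ is invariant under scaling $\tilde\mu$ by a constant, because $s\tilde\mu'/\tilde\mu$ is scale-invariant.

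\emph{Step 3 (apply \Cref{thm:NSV}).} We have $\|g\|_{[0,1]}\ge |g(0)|_1 = |f(x_0)|_1\ge b_0$. Apply \Cref{thm:NSV} with $I=[0,1]$ and the set $F$ (we may assume $|F|>0$, which holds since $|E|>0$):
\[
|f(x_0)|_1\le \|g\|_{[0,1]}\le \Cnn \Big(\frac{m}{\cL^1(F)}\Big)^{\nn}\|g\|_F \le \Cnn \Big(\frac{m\,|K|^{1/n}}{|K|^{1/n}-(|K|-|E|)^{1/n}}\Big)^{\nn}\|f\|_E,
\]
using $\ga(F)\subseteq E$ to get $\|g\|_F\le\|f\|_E$.

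\emph{Main obstacle.} The only delicate point is Step 2: the admissibility condition and the constant $\Cnn$ must transfer exactly under the rescaling of the weight by $|v|\le\diam(K)$, including the monotonicity used when $|v|<\diam(K)$. A smaller scaling gives smaller derivative bounds, and the bound with factor $\diam(K)^j$ is still valid, so we simply use $\diam(K)$ directly rather than $|v|$.
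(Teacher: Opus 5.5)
Your proposal is correct and follows the paper's proof: you take the segment from \Cref{l:ga}, apply \Cref{thm:NSV} to $g=f\o\ga$ viewed as $(\diam(K)\,\mu^{|\nn},M_0)$-smooth on $[0,1]$ with the set $F$, and conclude via \eqref{eq:starlike}. Your added checks (that $\Cnn$ is unchanged because $\ga_{\tilde\mu}$ is scale-invariant, the directional-derivative bound, and the trivial case $v=0$) only spell out details the paper leaves implicit.
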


\begin{proof}
    Let $\ga : [0,1] \to K$, $\ga(t) = x_0 + t (x_1 - x_0)$ with $x_1 = \ga(1)$, be the affine curve from \Cref{l:ga}.
    We apply \Cref{thm:NSV} to $g:= f \o \ga$ which is $(\diam(K)\, \mu^{|\nn},M_0)$-smooth
    and to the set $F$ from \Cref{l:ga},
    \begin{align}
        |f(x_0)|_1 = |g(0)|_1 &\le \Cnn \Big(\frac{m}{\cL^1(F)} \Big)^\nn\, \|g\|_F.
    \end{align}
    This implies the assertion, by \eqref{eq:starlike}.
\end{proof}

\subsection{Admissible sets} \label{ssec:admissiblesets}

We introduce a collection of sets in $\R^n$ admissible for the Remez inequality. 
The following definitions are based on \cite[Definition 9.1]{Pierzchala:2015aa}.

\begin{definition}[Admissible collection] \label[d]{d:admcoll}
    Suppose that a family $(K_w,a_w,\Up_w)_{w \in W}$ is given, where $K_w \subseteq [0,1]^n$ is compact, $a_w \in K_w$, 
    $\Up_w : [0,\infty)^n \to \R^n$ is a map, and $W$ is an arbitrary set. 
    This family is called an \emph{admissible collection} in $\R^n$ if the following holds.
    There exist constants $\de_1,\de_2,\de_3 >0$, $d,q \in \N_{\ge 1}$, and polynomial maps $R_w : \R^n \to \R^n$ of degree at most $d$ such that, 
    for all $w \in W$:
    \begin{enumerate}
        \item $K_w$ is star-shaped with respect to $a_w$ and $|K_w| \ge \de_1$;
        \item $\Up_w(x_1,\ldots,x_n) = R_w(x_1^{1/q},\ldots,x_n^{1/q})$;
        \item $\de_2 \le \sum_\ka |c_{w,\ka}| \le \de_3$, where $c_{w,\ka}$ are the coefficients of the polynomial $\on{Jac}(R_w)$.
        \item $a_{w,i} = 0$ whenever $\Up_w$ is not a polynomial map with respect to the variable $x_i$, where $a_w = (a_{w,1},\ldots,a_{w,n})$.
    \end{enumerate}
    We also say that $(d,q,\de_1,\de_2,\de_2)$ is the \emph{characteristic} of $(K_w,a_w,\Up_w)_{w \in W}$. 
\end{definition}

\begin{definition}[Admissible set]
    Let $K \subseteq \R^n$ be a nonempty compact set.
    We say that $K$ is an \emph{admissible set} if 
    there exists an admissible collection $(K_w,a_w,\Up_w)_{w \in K}$ in $\R^n$ such that 
    \begin{enumerate}
        \item $\Up_w(a_w) = w$ for each $w \in K$;
        \item $K = \bigcup_{w \in K} \Up_w(K_w)$.
    \end{enumerate}
    We also say that $K$ has the \emph{characteristic} $(d,q,\de_1,\de_2,\de_2)$ if $(K_w,a_w,\Up_w)_{w \in K}$ has this characteristic. 

    Let us denote by $\sK(\R^n)$ the family of all admissible sets in $\R^n$.
\end{definition}

In particular, every compact set $K \subseteq \R^n$ with the \emph{special parameterization property}
as defined in \cite[Definition 1.3]{Pierzchala:2015aa}
is admissible.

\begin{example}
Each convex body $K \subseteq \R^n$ is an admissible set with characteristic $(1,1,\de_1,\de_2,\de_3)$.
Indeed, we may use the admissible collection $(K_w,a_w,\Up_w)_{w \in K}$, where
$a_w := (\frac{1}{2},\ldots,\frac{1}{2})$, $T_w(x) := a_w + \frac{1}{2\diam(K)}(x-w)$, 
$K_w := T_w(K)$, and $\Up_w := T_w^{-1}$.
\end{example}

\begin{definition}[UPC set] \label[d]{d:UPC}
    A subset $K \subseteq \R^n$ is \emph{UPC} if there exists a map $P : \ol K \times \R \to \R^n$ and 
    constants $A,B>0$ and $d\in \N_{\ge 1}$ such that, for each $x \in \ol K$, 
    \begin{itemize}
        \item $P(x,\cdot)$ is a polynomial curve of degree at most $d$;
        \item $P(x,0)=x$;
        \item $\dist(P(x,t), \R^n \setminus K) \ge A\, t^B$, for $t \in [0,1]$.
    \end{itemize}
\end{definition}

\begin{proposition}[{\cite[Lemma 3.8]{Pierzchala:2015aa}}]
    Each compact UPC set is admissible with characteristic $(\max\{d,\lceil B \rceil\},1,\de_1,\de_2,\de_3)$, 
    where $d \in \N_{\ge 1}$ and $B>0$ are the constants from \Cref{d:UPC}.
\end{proposition}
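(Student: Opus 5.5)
The plan is to build, for each $w \in K$, one star-shaped set $K_w$ and one polynomial map $R_w$ directly from the curve $P(w,\cdot)$. We take $q=1$, so that $\Up_w=R_w$ and condition (4) of \Cref{d:admcoll} is vacuous. Put $m:=\lceil B\rceil$ and $a_w:=0$ for all $w$, and use the cone
\[
  K_w := \{x \in [0,1]^n : 0 \le x_i \le x_1 \text{ for } i=2,\ldots,n\}.
\]
This set is star-shaped with respect to $0$ and satisfies $|K_w| = \int_0^1 x_1^{n-1}\,dx_1 = 1/n =: \de_1$. Writing $\mathbf 1=(1,\ldots,1)$, we define
\[
  R_w(x) := P(w,x_1) + c_w\, x_1^{m-1}\,(2x - x_1\mathbf 1),
\]
with a constant $c_w \in (0, A/\sqrt n]$ to be fixed below. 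Its degree is at most $\max\{d,m\}$. Since $R_w(0)=P(w,0)=w$, condition (1) of the admissible set definition holds, and hence $K \subseteq \bigcup_w \Up_w(K_w)$.

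For the reverse inclusion $\Up_w(K_w)\subseteq K$, fix $x \in K_w$ and set $v=2x-x_1\mathbf 1$. Each coordinate of $v$ satisfies $|v_i|\le x_1$, so $|v| \le \sqrt n\, x_1$. It follows that $|R_w(x)-P(w,x_1)| \le c_w\sqrt n\, x_1^{m} \le A x_1^m \le A x_1^B$, using $x_1\in[0,1]$ and $m\ge B$. By the UPC property, the closed ball of radius $A x_1^B$ around $P(w,x_1)$ lies in $K$ (for $x_1=0$ this is just the point $w$). Hence $R_w(K_w)\subseteq K$ and $K=\bigcup_{w}\Up_w(K_w)$.

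It remains to verify condition (3), uniformly in $w$. For the upper bound, the curve $P(w,\cdot)$ has degree $\le d$ and takes values in the compact set $K$ on $[0,1]$. Equivalence of norms on polynomials of degree $\le d$ (or Markov's inequality) then bounds its coefficients by a constant depending only on $d$ and $\sup_K|x|$. So the coefficients of $R_w$, and of $\on{Jac}(R_w)$, are bounded by some $\de_3$ independent of $w$. For the lower bound, compute the Jacobian matrix. The columns for $x_2,\ldots,x_n$ are $2c_w x_1^{m-1} e_i$. Thus
\[
  \on{Jac}(R_w)(x) = (2c_w x_1^{m-1})^{n-1}\big(\p_t P_1(w,x_1) + c_w\, m\, x_1^{m-1}\big).
\]
The only way this could degenerate is through cancellation in the last factor. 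To prevent it, look at the coefficient of $x_1^{(m-1)n}$: it equals $(2c_w)^{n-1}(c_w m + p_w)$, where $p_w$ is the corresponding coefficient of $\p_tP_1(w,\cdot)$. Choose $c_w \in \{c_0, c_0/2\}$ with $c_0 := A/\sqrt n$. Both choices cannot satisfy $|c_w m + p_w| < c_0 m/4$, since the two values of $c_w m$ differ by $c_0m/2$. Fixing the good choice gives $\sum_\ka |c_{w,\ka}| \ge (c_0)^{n-1} c_0 m/4 =: \de_2 > 0$, independent of $w$.

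The main obstacle is exactly this uniform nondegeneracy of the Jacobian. Cancellation between the curve's velocity and the added perturbation could otherwise drive the coefficients of $\on{Jac}(R_w)$ to zero along a sequence of $w$'s, and the two-value choice of $c_w$ is the device that rules this out. The other checks are routine: the degree bound $\max\{d,\lceil B\rceil\}$, $q=1$, and the inclusion $K_w\subseteq[0,1]^n$.
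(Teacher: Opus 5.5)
Your proposal is correct. The cone $K_w$ is convex with $|K_w|=1/n$, and $R_w(0)=w$. The bound $|R_w(x)-P(w,x_1)|\le A x_1^B$ puts $R_w(x)$ in the closed ball $\bar B(P(w,x_1),Ax_1^B)$. That ball lies in $K$ because $K$ is closed, which matters here since equality occurs, e.g.\ at $x=\mathbf 1$ when $c_w=A/\sqrt n$. The Jacobian matrix is lower triangular, so your determinant formula holds. Finally, the choice $c_w\in\{c_0,c_0/2\}$ keeps the coefficient of $x_1^{(m-1)n}$ of modulus at least $c_0^n m/4$.

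The paper carries out no construction of its own. It refers to the proof of Lemma~3.8 in \cite{Pierzchala:2015aa} and only reads off that the maps there have degree $\max\{d,\lceil B\rceil\}$ and that no root substitution is needed ($q=1$). Your route verifies all conditions of \Cref{d:admcoll} directly, which buys two things.

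First, it gives explicit constants: $\de_1=1/n$, $\de_2=(A/\sqrt n)^n\lceil B\rceil/4$, and $\de_3$ depending only on $n$, $d$, $A$, $\lceil B\rceil$ and $\max_K|x|$. This fits the paper's remark that $\nu_K$ is computable from the characteristic.

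Second, it shows exactly why the degree need not exceed $\max\{d,\lceil B\rceil\}$. Keeping the perturbation at degree $\lceil B\rceil$ permits cancellation against $\p_tP_1(w,\cdot)$ when $\lceil B\rceil\le d$. Your two-value choice of $c_w$ removes that cancellation. Raising the perturbation degree above $d$ would also avoid it, but would spoil the stated characteristic.

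The citation buys brevity. Since \Cref{l:P_9.11} and \Cref{t:RemezK} use only the abstract properties of an admissible collection, your collection can replace Pierzcha{\l}a's with no change downstream.
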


\begin{proof}
    The characteristic follows easily from the proof of Lemma 3.8 in \cite{Pierzchala:2015aa}.
\end{proof}

\begin{example}
    Each compact fat subanalytic set is UPC and hence admissible, by \cite{PawluckiPlesniak86}.
    More generally, each compact fat set that is definable in $\R_{\cC_M}$ is UPC and hence admissible, 
    where $\cC_M$ is a suitable quasianalytic Denjoy--Carleman class; see \Cref{e:RCM}. 
    This is due to \cite{Pierzchal-a:2005uk}. 
\end{example}    

\begin{example}
    The closure of a bounded Lipschitz domain is UPC and thus admissible with characteristic 
    $(1,1,\de_1,\de_2,\de_3)$. 
    More generally, the closure of a bounded H\"older domain with H\"older exponent $\al \in (0,1]$ 
    is UPC and thus admissible with characteristic 
    $(\lceil \al^{-1}\rceil,1,\de_1,\de_2,\de_3)$. Cf.\ \cite[Section 1.3]{Rainer:2024aa}. 
\end{example}

\subsection{Remez inequality on admissible sets} \label{ssec:RemezKL}

\begin{definition}[The constant $a_K$]
    Let $K \in \sK(\R^n)$. 
    Assume that $K$ has the characteristic $(d,q,\de_1,\de_2,\de_3)$. 
    Then we associate the positive number 
\begin{equation} \label{eq:aK}
    a_K :=
    \begin{cases}
        4\,\frac{(d+1)^{d+1}}{(d-1)^{d-1}} \cdot \max\{\diam(K),1\}, & \text{ if } d \ge 2,
        \\
        \diam(K), & \text{ if } d=1.
    \end{cases}
\end{equation}
\end{definition}

We are ready to state the general Remez inequality.

\begin{theorem} \label[t]{t:RemezKL}
    Let $L\subseteq \R^n$ be a convex body and $K \in \sK(\R^n)$ such that $K \subseteq L$. 
    There exists $\nu_K \in (0,1]$, depending only on $K$, such that the following holds.
    Let $\mu^{|\infty} = (\mu_j)_{j \ge 1}$ be an admissible weight satisfying $\mu_j\ge j$ for $j \ge 1$. 
    Let $M_0>b_0>0$.
    Let $m \in \N_{\ge 1}$.
    Assume that $(\mu^{|\infty},m\,M_0,a_K,b_0)$ is admissible and let 
    \[
        \nn = \nn(\mu^{|\infty},m\,M_0,a_K,b_0).
    \]
    Then, for each $(\mu^{|\nn},M_0)$-smooth map $f : L \to \R^m$ with $\|f\|_K\ge b_0$ and 
    each Lebesgue measurable set $E \subseteq K$ with $|E| > 0$, we have
\[
    \|f\|_K \le \Cnn\, \Big(\frac{m\, |K|^{\nu_K}}{|K|^{\nu_K}-(|K|-|E|)^{\nu_K}}\Big)^{\nn} \, \|f\|_E. 
\]
\end{theorem}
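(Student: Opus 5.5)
We would follow Pierzcha{\l}a's strategy for polynomials \cite{Pierzchala:2015aa} and substitute the quasianalytic univariate and star-shaped inequalities (\Cref{thm:NSV}, \Cref{c:starlike}) wherever the polynomial Remez inequality is used. First, choose $w \in K$ with $|f(w)|_1 = \|f\|_K \ge b_0$. By admissibility, $w = \Up_w(a_w)$ with $K_w \subseteq [0,1]^n$ star-shaped with respect to $a_w$ and $|K_w| \ge \de_1$. The plan is to reduce to the composition $g := f \o \Up_w : K_w \to \R^m$ and apply \Cref{c:starlike} at the point $a_w$ to the set $F := \Up_w^{-1}(E) \cap K_w$. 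This gives $\|f\|_K = |g(a_w)|_1 \le \Cnn\, (m|K_w|^{1/n}/(|K_w|^{1/n}-(|K_w|-|F|)^{1/n}))^{\nn}\|f\|_E$. Two ingredients then remain: a Bang-degree estimate for $g$, and a lower bound of $|F|$ in terms of $|E|/|K|$.

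\textbf{Step 1 (smoothness of $g$ along segments).} Since \Cref{c:starlike} only uses $g$ along the affine segment $\ga(t)=a_w+t(x_1-a_w)$ from \Cref{l:ga}, it suffices to control $h := f\o \Up_w \o \ga$ on $[0,1]$. When $q=1$, $\Up_w\o\ga$ is a polynomial curve of degree $\le d$ with image in $K \subseteq L$. Its derivatives are bounded via a Markov/Bernstein-type estimate on $[0,1]$ in terms of $\diam(K)$, which is where the factor $(d+1)^{d+1}/(d-1)^{d-1}$ in $a_K$ should come from (it is the extremal Chebyshev-type constant). Fa\`a di Bruno's formula, with $\mu_j\ge j$ and the $1$-norm convention of $\|\cdot\|_{j}$, should then show that $h$ is $(a_K\,\mu^{|\nn}, M_0)$-smooth. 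We get $\nn(\mu^{|\infty},M_0,a_K,b_0)$ via \Cref{r:nnprop}(1), and the extra factor $m$ in $m\,M_0$ absorbs the passage between $|\cdot|_1$ and component bounds. If $q>1$, the roots $x_i^{1/q}$ appear only in coordinates with $a_{w,i}=0$ (condition (4)). A segment starting at $a_w$ has these coordinates of the form $t\,c_i$, so $(tc_i)^{1/q}$ is not polynomial in $t$. Here we would reparametrize $t = s^q$, which makes $\Up_w\o\ga$ a polynomial in $s$ of degree $\le dq$, and account for the change of measure $\cL^1$ under $t=s^q$ (it only improves measure near $0$ up to a power, which is absorbed into $\nu_K$).

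\textbf{Step 2 (measure transfer).} We must show $|K_w|-|F|$ is controlled by a power of $(|K|-|E|)/|K|$, uniformly in $w$. Set $G := K\setminus E$. Then $K_w \setminus F \subseteq \Up_w^{-1}(G)$, and we need $|\Up_w^{-1}(G)\cap K_w| \le C\,|G|^{\ka}$ uniformly. This is exactly the fundamental technical lemma of \cite{Pierzchala:2015aa}, which we would invoke. It uses condition (3): the Jacobian of $R_w$ is a polynomial of bounded degree whose coefficient norm is bounded below by $\de_2$, so by a polynomial sublevel-set (Remez) estimate the set where $|\on{Jac}|$ is small has small measure. Together with the upper bound $\de_3$ and the change of variables formula, this yields the H\"older-type transfer. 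Plugging this into the bound from \Cref{c:starlike} and using $|K_w|\ge\de_1$ gives a denominator comparable to $|K|^{\nu_K}-(|K|-|E|)^{\nu_K}$ for a suitable $\nu_K\in(0,1]$ depending only on the characteristic and $|K|$. Some additional care is needed to make the constant exactly $1$ (rather than $C^{\nn}$) in front, e.g.\ by choosing $\nu_K$ small enough that the elementary inequality absorbs the constants, as Pierzcha{\l}a does.

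The main obstacle is Step 1: obtaining the precise scaling $a_K$ so that the composite is $(a_K\mu^{|\nn},M_0)$-smooth without degrading $M_0$. Fa\`a di Bruno with polynomial inner maps generally produces combinatorial factors. We would handle this by bounding all derivatives of the inner curve by $A\,k!$-type quantities and using $\mu_j \ge j$ to dominate $\sum \prod$ terms by $a_K^j M_j$.
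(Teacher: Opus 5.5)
There is a genuine gap. Your reduction to the single piece $\Up_w(K_w)$ through the maximizer $w$ only works when $\la=|E|/|K|$ is close to $1$, and the proposal has no argument for the remaining range. The pieces need not cover $K$. For example, $K=[0,1]\cup[2,3]\subseteq\R$ is admissible with $d=q=1$: use $K_w=[0,1]$ and the translations $x\mapsto x$ and $x\mapsto x+2$. The maximizer may lie in $[0,1]$ while $E=[2,3]$. Then $F=\Up_w^{-1}(E)\cap K_w=\emptyset$, and \Cref{c:starlike} gives nothing.

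Correspondingly, any transfer estimate $|\Up_w^{-1}(K\setminus E)\cap K_w|\le C\,|K\setminus E|^{\kappa}$ only yields $|F|\ge |K_w|-C\,(|K|(1-\la))^{\kappa}$. This is vacuous unless $\la$ is near $1$, which is why \Cref{l:P_9.11} carries a threshold $\vr$. No choice of $\nu_K$ repairs this, because the target right-hand side is finite for every $|E|>0$.

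A sanity check shows the gap cannot be patched within your framework. Your argument only ever uses $f$ on $K$: along curves inside $\Up_w(K_w)\subseteq K$, and on $E$. Yet the statement fails for maps that are merely $(\mu^{|\nn},M_0)$-smooth on $K$. In the example, take $f\equiv b_0$ near $[0,1]$ and $f\equiv 0$ near $[2,3]$.

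The missing idea is a second regime, and this is where the convex body $L$ enters.

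\begin{itemize}
\item \textbf{Case $\la>\vr$.} Your outline, made precise, handles this case; it is \Cref{t:RemezK}. Apply \Cref{l:ga} on $K_w$ and then \Cref{l:P_9.11}. Use the rearrangement bound $\cL^1(G_q^{-1}(F_0))\ge 1-(1-\cL^1(F_0))^{1/q}$ for $G_q(t)=t^q$. Finish with \Cref{l:comp} to reach the form $(1-(1-\la)^{\nu})^{-1}$.
\item \textbf{Case $\la\in(0,\vr]$.} One may take $L=\on{co}(K)$, so $\diam(L)=\diam(K)\le a_K$. Hence the convex-body degree satisfies $\nn'\le\nn$ and $\mathbf C_{\nn'}\le\Cnn$. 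Apply \Cref{thm:Remez} on $L$ with $|E|/|L|=\rho\la$, where $\rho:=|K|/|L|$. Combine this with the concavity bound $1-\rho t\le(1-t)^{\ep_0}$ on $[0,\vr]$, where $1-\rho\vr=(1-\vr)^{\ep_0}$.
\end{itemize}

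Together these give the claim with $\nu_K=\min\{\nu,\ep_0/n\}$, which depends only on $K$.

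Two smaller corrections to Step 1. First, by \Cref{d:admcoll}(4) the curve $\Up_w\o\ga_w\o G_q$ has degree at most $d$, not merely $dq$; you need this to arrive at $a_K$ as defined.

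Second, the factor $(d+1)^{d+1}/(d-1)^{d-1}$ is not a Markov constant. \Cref{p:comp} avoids Fa\`a di Bruno altogether, which removes what you identified as the main obstacle. It bounds the polynomial $z\mapsto\sum_{|\al|\le k}\p^\al f(p(t))\,(p(z)-p(t))^\al/\al!$ on a Bernstein ellipse $E_r$. It then applies Cauchy's estimate on the disc of radius $(r-1)^2/(2r)$ and minimizes $r^{d+1}/(r-1)^2$ at $r=(d+1)/(d-1)$.
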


Clearly, \Cref{t:Remez_intro} is a direct consequence of \Cref{t:RemezKL}.
The proof of \Cref{t:RemezKL} comprises \Cref{sec:poly} and \Cref{sec:Remez_proof}. 

\begin{remark}
    It is possible to explicitly compute a number $\nu_K$ for which \Cref{t:RemezKL} holds;
    this requires the full characteristic $(d,q,\de_1,\de_2,\de_3)$ of $K$ as well as other 
    specifics of $K$ such as the ambient dimension $n$ and $|K|$.
\end{remark}

\begin{remark}
    \Cref{t:RemezKL} extends \Cref{thm:Remez}; in fact, if $K$ is a convex body we may 
    take $d=1$, $a_K=\diam(K)$, $\nu_K = 1/n$, and $\nn = \nn(\mu^{|\infty},M_0,a_K,b_0)$ (even if $m>1$).
\end{remark}

\begin{remark}
Noting that
\[
    \frac{(d+1)^{d+1}}{(d-1)^{d-1}} = \Big(\frac{d+1}{d-1}\Big)^{d-1} (d+1)^2 = \Big(1+ \frac{2}{d-1} \Big)^{d-1} (d+1)^2 
\]
we see that the growth of $a_K$ (as defined in \eqref{eq:aK}) is quadratic in $d$, 
\[
    a_K \sim 4 e^2 (d+1)^2 \cdot \max\{\diam(K),1\} \quad \text{ as } d \to \infty.
\]
\end{remark}

\begin{remark} 
    The proof will show that
    \Cref{t:RemezKL} remains valid in the following more general setup: 
    let $a>0$, $\nn = \nn (\mu^{|\infty},m\,M_0,a,b_0)$, and assume that 
    the maps $f : L \to \R^m$ with $\|f\|_K \ge b_0$ satisfy 
    \begin{itemize}
        \item $f|_\ell$ is $(a\, \mu^{|\nn},M_0)$-smooth for each line segment $\ell \subseteq L$;
        \item $f\o p$ is $(a\, \mu^{|\nn},M_0)$-smooth for each polynomial curve $p : [0,1] \to K$ of degree at most $d$.
    \end{itemize}
\end{remark}

\section{Bounds along polynomial curves} \label{sec:poly}

In this section, we establish bounds for the composition of a $(\mu^{|N},M_0)$-smooth function 
with a polynomial curve of degree at most $d$.

We will use the following corollary of the Bernstein--Walsh lemma; see e.g.\ \cite[Corollary 12.1.6]{RS02}.

\begin{lemma}[Bernstein's lemma]
    Let $p$ be a polynomial of degree at most $d$.
    For $r>1$, let $E_r \subseteq \C$ be the ellipse with foci $~\pm 1$ and semiaxes $\frac{1}{2}(r+r^{-1})$ 
    and $\frac{1}{2}(r-r^{-1})$.
    Then 
    \begin{equation} \label{eq:Bernstein}
    |p(z)| \le r^d \, \|p\|_{[-1,1]}, \quad z \in E_r.
    \end{equation}
\end{lemma}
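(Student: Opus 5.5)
The plan is the classical proof via the Chebyshev extremal property combined with the Joukowski map, since this lemma is only quoted from \cite{RS02}. After an affine change of variable we work with $p$ on $[-1,1]$, normalized so that $\|p\|_{[-1,1]} = 1$. Let $\phi(z) = z + \sqrt{z^2-1}$ be the inverse of the Joukowski map $J(w)=\frac12(w+w^{-1})$. We take the branch that is holomorphic on $\C\setminus[-1,1]$ with $|\phi(z)|>1$ there and $\phi(z)\sim 2z$ at infinity. Then $J$ maps the circle $|w|=r$ onto the ellipse $\p E_r$ with foci $\pm1$ and semiaxes $\frac12(r\pm r^{-1})$. Consequently $|\phi| = r$ on $\p E_r$, and $|\phi(z)|\to 1$ as $z$ approaches $[-1,1]$ from either side.

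Consider $h(z) := p(z)/\phi(z)^d$ on the unbounded domain $\Omega = \hat\C\setminus[-1,1]$. It is holomorphic there, and it is bounded at $\infty$ because $\deg p\le d$ and $\phi(z)^d\sim (2z)^d$; so $\infty$ is a removable singularity. At boundary points of $[-1,1]$ we have $\limsup |h| \le \|p\|_{[-1,1]}\cdot 1 = 1$. The maximum principle on $\Omega$ therefore gives $|h|\le 1$ on $\Omega$, i.e.
\[
|p(z)| \le |\phi(z)|^d\,\|p\|_{[-1,1]}, \qquad z\notin[-1,1].
\]

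Every $z$ in the closed region bounded by $\p E_r$ satisfies $|\phi(z)|\le r$, either because $z\in[-1,1]$ (where the bound is trivial) or because $|\phi|$ increases across the confocal ellipses. This yields \eqref{eq:Bernstein}.

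The only delicate step is the boundary behaviour on the slit $[-1,1]$, where $\phi$ has different limits from the two sides. To handle it, I would instead apply the maximum principle on the annulus $1<|w|<R$ to the function $w\mapsto p(J(w))\,w^{-d}$, where it is clean. On $|w|=1$ we have $J(w)\in[-1,1]$, hence modulus at most $1$. As $R\to\infty$ the function stays bounded, since $p(J(w))\,w^{-d}\to$ const. Equivalently, and simplest, use that $p(J(w))\,w^{d}$ is a polynomial in $w$ of degree at most $2d$ with modulus at most $1$ on $|w|=1$. The maximum principle on $|w|\le 1$ then gives $|p(J(w))|\le |w|^{-d}$ for $|w|\le1$, and taking $|w| = 1/r$ (noting $J(w)=J(1/w)$) gives the claim.
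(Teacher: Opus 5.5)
Your proof is correct and follows the same route as the paper, which simply quotes the statement as the $[-1,1]$ case of the Bernstein--Walsh lemma. That lemma is proved by exactly your maximum-principle argument for $p/\phi^d$, with $\log|\phi|$ the Green function of $[-1,1]$ with pole at $\infty$; your variant via the polynomial $w^d\,p(J(w))$ of degree at most $2d$ is the classical streamlined form of the same idea. The slit is not really delicate: $|\phi(z)|\to 1$ from both sides, so the $\limsup$ form of the maximum principle applies directly, and interior points of the ellipse follow from your confocal-ellipse remark or from the maximum principle for $p$ on the closed elliptical region.
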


The next proposition is the goal of this section.

\begin{proposition} \label[p]{p:comp}
Let $\mu^{|\infty}$ be an admissible weight satisfying
$\mu_j \ge j$, for $j \ge 1$,
and $M_0 >0$.
Let $K \subseteq \R^n$ be a compact nonempty set containing the origin. 
Let $f : K \to \R$ be a $(\mu^{|N},M_0)$-smooth function and $p : [-1,1] \to K$ a polynomial curve of degree at most $d$.
Then $f\o p : [-1,1] \to \R$ is $(a\, \mu^{|N},M_0)$-smooth, 
where
\begin{equation} \label{eq:a}
    a := \begin{cases}
        2\,\frac{(d+1)^{d+1}}{(d-1)^{d-1}} \cdot \max\{\diam(K),1\}, & \text{ if } d \ge 2,
        \\
        \frac{1}{2}\diam(K) , & \text{ if } d =1.
    \end{cases}
\end{equation}
\end{proposition}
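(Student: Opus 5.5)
The plan is to bound the derivatives of $p$ by a Bernstein/Cauchy argument, feed them into Faà di Bruno's formula through a majorant series, and then use $\mu_j\ge j$ to compare $M_k/k!$ with $M_j/j!$.

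\emph{Case $d=1$.} Write $p(t)=c+tv$ with $v=\frac12(p(1)-p(-1))$. Since $p(\pm1)\in K$, we have $|v|\le \frac12\diam(K)$. Then $(f\o p)^{(j)}(t)=d_v^jf(p(t))$. The remark following the definition of $(\mu^{|N},M_0)$-smooth functions gives
\[
|(f\o p)^{(j)}(t)|\le |v|^j\|f\|_{j,K}\le (\tfrac12\diam(K))^jM_j .
\]
This is exactly $(a\,\mu^{|N},M_0)$-smoothness with $a=\frac12\diam(K)$, because $a^jM_j=M_0\prod_{i\le j}(a\mu_i)$.

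\emph{Case $d\ge2$, Step 1: derivatives of $p$.} Since $0\in K$ and $p([-1,1])\subseteq K$, each component satisfies $\|p_i\|_{[-1,1]}\le\diam(K)$, so $|p(t)|\le \diam(K)$ for $t\in[-1,1]$. Fix $r>1$. By Bernstein's lemma \eqref{eq:Bernstein}, applied to the complexified components (or to $\langle p,e\rangle$ for unit vectors $e$), $|p(z)|\le r^d\diam(K)$ on $E_r$. For $t\in[-1,1]$ the closed disc of radius $\rho:=\frac12(r+r^{-1})-1=\frac{(r-1)^2}{2r}$ about $t$ lies in $E_r$, since this is the minor-axis gap. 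The Cauchy estimates then give
\[
\frac{|p^{(i)}(t)|}{i!}\le A\rho^{-i},\qquad A:=r^d\max\{\diam(K),1\},\quad i\ge1 .
\]

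\emph{Step 2: Faà di Bruno via majorants.} Write $(f\o p)^{(j)}(t)/j!$ as $\sum_{k=1}^j\frac{1}{k!}d^kf(p(t))$ evaluated on the Taylor coefficients of $p$, collecting the coefficient of $s^j$. The multilinear bound $|d^kf(x)(v_1,\ldots,v_k)|\le \|f\|_{k,K}\prod|v_l|$ holds, because $\sum_{|\al|=k}\frac{k!}{\al!}|\p^\al f|$ equals the sum over all index tuples and $|v_l|_\infty\le|v_l|$. With it we get
\[
\frac{|(f\o p)^{(j)}(t)|}{j!}\le\sum_{k=1}^j\frac{M_k}{k!}\,[s^j]\Big(\frac{As}{\rho-s}\Big)^k=\rho^{-j}\sum_{k=1}^j\frac{M_k}{k!}\binom{j-1}{k-1}A^k .
\]
Since $\mu_i\ge i$, we have $M_j/M_k=\prod_{i=k+1}^j\mu_i\ge j!/k!$, so $M_k/k!\le M_j/j!$. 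Using also $\sum_k\binom{j-1}{k-1}A^k=A(1+A)^{j-1}\le(2A)^j$ (because $A\ge1$), this yields $|(f\o p)^{(j)}|\le M_j\,(2A/\rho)^j$ for $1\le j<N+1$. The case $j=0$ is trivial.

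\emph{Step 3: optimize $r$.} We have $2A/\rho=4\max\{\diam(K),1\}\,r^{d+1}/(r-1)^2$. This is minimized at $r=\frac{d+1}{d-1}$, where $r^{d+1}/(r-1)^2=\frac{(d+1)^{d+1}}{4(d-1)^{d-1}}$. The resulting factor $\frac{(d+1)^{d+1}}{(d-1)^{d-1}}\max\{\diam(K),1\}$ is at most the claimed $a$, so there is a spare factor $2$.

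\emph{Main obstacle.} The main care is in Step 2: making the multivariate Faà di Bruno majorant rigorous with the specific norm $\|\cdot\|_{j,K}$, and checking that the vector-valued Cauchy bound loses no dimensional factor. If a factor such as $\sqrt n$ appears there, the plan is to absorb it using the spare factor $2$ in $a$, or to bound each coordinate separately with $|\cdot|_\infty$.
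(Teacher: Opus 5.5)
Your argument is correct. The $d=1$ case is exactly the paper's, but for $d\ge2$ you organize the estimate differently.

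The paper never uses Fa\`a di Bruno. For fixed $t$ it takes the polynomial $z\mapsto\sum_{|\al|\le k}(\p^\al f)(p(t))\frac{(p(z)-p(t))^\al}{\al!}$, whose $k$-th derivative at $z=t$ equals $(f\o p)^{(k)}(t)$. It bounds this polynomial on $E_r$ using Bernstein (each factor $|p_i(z)-p_i(t)|\le(r^d+1)\diam(K)$) together with $M_j/j!\le M_k/k!$. It then applies a single Cauchy estimate on the disc of radius $\frac{(r-1)^2}{2r}$, so all chain-rule combinatorics are absorbed into that one estimate.

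You apply Cauchy to $p$ first and handle the composition through the multilinear bound, the majorant $(As/(\rho-s))^k$ and the count $\binom{j-1}{k-1}$. This takes a bit more bookkeeping, which you did correctly. In return it gives $|(f\o p)^{(j)}|\le (2A/\rho)^jM_j$ with no constant in front. Optimizing in $r$ then gives the constant $\frac{(d+1)^{d+1}}{(d-1)^{d-1}}\max\{\diam(K),1\}$, half of the stated $a$. In the paper, the geometric-sum estimate leaves a prefactor $2$ in front of $\big(4r^{d+1}\max\{\diam(K),1\}/(r-1)^2\big)^k$. It absorbs this via $2\cdot 4^k\le 8^k$, and that is the source of its extra factor $2$.

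Two small remarks.
\begin{itemize}
\item $\rho=\frac12(r+r^{-1})-1$ is the focus-to-vertex gap along the \emph{major} axis, not the minor axis. Discs about interior points of $[-1,1]$ are still contained in $E_r$ by convexity, since the minor-axis gap $\frac12(r-r^{-1})$ is larger.
\item The dimensional worry in your last paragraph does not arise. Your multilinear bound only involves $|v_l|_\infty$, so Bernstein and Cauchy applied to each component $p_i$ already give $|p^{(i)}(t)|_\infty/i!\le A\rho^{-i}$ with no loss. You never need a bound on $|p(z)|$ for complex $z$; real unit vectors $e$ alone would give that only up to a factor $\sqrt2$. This matters, because a factor $\sqrt n$ could not have been absorbed into the spare factor $2$.
\end{itemize}
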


\begin{proof}
    If $d=1$ then we may assume that $p(t) = \frac{1}{2}(x_0+x_1) + \frac{t}{2}(x_1-x_0)$, where $x_0,x_1 \in K$ 
    and the assertion follows easily.

    Let us assume $d \ge 2$. 
    Write $p = (p_1,\ldots,p_n)$.
    Since  $0 \in K$, we have
\begin{equation} \label{eq:diam}
   \|p_i\|_{[-1,1]} \le \max_{t \in [-1,1]} |p(t)| \le \diam(K), \quad 1 \le i \le n.
\end{equation}

Obviously, $\|f \o p\|_{[-1,1]} \le M_0$.
The derivative $(f \o p)^{(k)}(t)$, for $1 \le k \le N$ and $t \in [-1,1]$, coincides with the $k$-th derivative of 
\begin{equation} \label{eq:poly}
    z \mapsto \sum_{|\al|\le k} (\p^\al f)(p(t)) \frac{(p(z)-p(t))^\al}{\al!}
\end{equation}
at $z = t$. For fixed $t \in [-1,1]$, this is a polynomial in $z$ and we may assume that $z \in \C$.
By \eqref{eq:Bernstein} and \eqref{eq:diam}, 
we have 
\begin{align*}
    |(p(z)-p(t))^\al| &= \prod_{i=1}^n |p_i(z)-p_i(t)|^{\al_i} 
    \\
                      &\le \prod_{i=1}^n ((r^d+1) \|p_i\|_{[-1,1]})^{\al_i} \le (r^d+1)^{|\al|} \diam(K)^{|\al|}.
\end{align*}
Thus,
the modulus of the polynomial \eqref{eq:poly} can be estimated on $E_r$ by
\begin{align*}
  \MoveEqLeft  \sum_{|\al|\le k} \frac{1}{\al!} |\p^\al f(p(t))| (r^d+1)^{|\al|} \diam(K)^{|\al|}
\\  
    &= \sum_{j=0}^k \sum_{|\al| = j} \frac{1}{\al!} |\p^\al f(p(t))| (r^d+1)^{j} \diam(K)^{j}
    \\
    &\le \sum_{j=0}^k \frac{\|f\|_{j,K}}{j!} (r^d+1)^{j} \diam(K)^{j}
    \\
    &\le \sum_{j=0}^k \frac{M_j}{j!} ((r^d+1) \diam(K))^{j}
    \\
    &\le  \frac{M_k}{k!}\sum_{j=0}^k  ((r^d+1) \diam(K))^{j},
\end{align*}
where we used $\mu_j \ge j$ in the last step.

For each $t \in [-1,1]$ the ball $B(t,\de)$ with radius
\begin{equation}
    \de = \frac{1}{2}(r+r^{-1}) -1 = \frac{(r-1)^2}{2r}
\end{equation}
is contained in $E_r$. 
Thus, by Cauchy's estimates, we find
\begin{equation} \label{eq:Cauchy}
    |(f\o p)^{(k)}(t)| \le \frac{M_k}{\de^k} \sum_{j=0}^k  ((r^d+1) \diam(K))^{j},
\end{equation}
for $1 \le k \le N$ and $t \in [-1,1]$. 

Let us estimate 
\begin{equation} \label{eq:toest}
    \frac{1}{\de^k}  \sum_{j=0}^k  ((r^d+1) \diam(K))^{j} = \frac{2^k r^k}{(r-1)^{2k}} \sum_{j=0}^k  ((r^d+1) \diam(K))^{j}.
\end{equation}
We distinguish two cases:

(1) $\diam(K) < 1$: Then \eqref{eq:toest} is bounded by
\begin{align*}
    \frac{2^k r^k}{(r-1)^{2k}} \sum_{j=0}^k (2 r^d)^j &= \frac{2^k r^k}{(r-1)^{2k}} \frac{(2 r^d)^{k+1}-1}{2 r^d -1}
    \\
                                                           &\le \frac{2^k r^k}{(r-1)^{2k}} \frac{(2r^d)^{k+1}}{2 r^d -1} \le \Big(\frac{8 r^{d+1}}{(r-1)^{2}} \Big)^k,
\end{align*}
since $\frac{2r^d}{2 r^d -1} \le 2$. 
As $d \ge 2$, 
the minimum of $\frac{r^{d+1}}{(r-1)^{2}}$ for $r>1$ is attained at $r_0 = \frac{d+1}{d-1}$ and amounts to 
$\frac{1}{4} \frac{(d+1)^{d+1}}{(d-1)^{d-1}}$. Thus \eqref{eq:toest} is bounded by
\begin{equation}
  \Big(\frac{2(d+1)^{d+1}}{(d-1)^{d-1}} \Big)^k.  
\end{equation}

(2) $\diam(K) \ge 1$: In that case, $r^d \diam(K) \ge 1$ and hence \eqref{eq:toest} is bounded by 
\begin{align*}
    \frac{2^k r^k}{(r-1)^{2k}} \sum_{j=0}^k (2 r^d \diam(K))^j &= \frac{2^k r^k}{(r-1)^{2k}} \frac{(2r^d \diam(K))^{k+1}-1}{2 r^d \diam(K) -1}
    \\
                                                                 &\le \frac{2^kr^k}{(r-1)^{2k}} \frac{(2r^d \diam(K))^{k+1}}{2 r^d\diam(K) -1} 
                                                                 \\
                                                                 &\le \Big(\frac{8r^{d+1} \diam(K)}{(r-1)^{2}} \Big)^k,
\end{align*}
since $\frac{2r^d \diam(K)}{2 r^d \diam(K) -1} \le 2$. 
As in case (1), we conclude that \eqref{eq:toest} is bounded by
\begin{equation}
  \Big(\frac{2(d+1)^{d+1}}{(d-1)^{d-1}} \cdot \diam(K) \Big)^k.  
\end{equation}

In view of \eqref{eq:Cauchy}, we conclude that, for $1 \le k \le N$ and $t \in [-1,1]$,
\begin{equation}
    |(f\o p)^{(k)}(t)| \le a^k M_k, 
\end{equation}
where $a$ is given by \eqref{eq:a}. 
This completes the proof.
\end{proof}

\section{Proof of the Remez inequality on admissible sets} \label{sec:Remez_proof}

This section is dedicated to the proof of \Cref{t:RemezKL}.

\subsection{Preparatory results}

We will make crucial use of the following result on admissible sets, due to \cite{Pierzchala:2015aa}.

\begin{proposition}[{\cite[Corollary 9.11]{Pierzchala:2015aa}}] \label[p]{l:P_9.11}
    Let $K \in \sK(\R^n)$ be an admissible set with admissible collection $(K_w,a_w,\Up_w)_{w \in K}$.
    There exist constants $\si, \et>0$, and $\vr \in [0,1)$
    such that for each Lebesgue measurable $E \subseteq K$ and each $w \in  K$,
    \begin{equation}
        \frac{|\Up_w^{-1}(E) \cap K_w|}{|K_w|} \ge \frac{1}{\et (1-\la)^{1/\si} +1}
        \quad \text{ if } \la := \frac{|E|}{|K|} > \vr.
    \end{equation}
\end{proposition}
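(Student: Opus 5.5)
The plan is to compare Lebesgue measure on $K_w$ with Lebesgue measure on $K$ through the maps $\Up_w$, with constants that are uniform in $w$. Since $K_w\subseteq[0,1]^n$ and $\Up_w(K_w)\subseteq K$, it is enough to show that the \emph{complementary} set
\[
A_w:=\Up_w^{-1}(K\setminus E)\cap K_w
\]
has measure at most $C\,(1-\la)^{\tau}$ for some $C,\tau>0$ independent of $w$. Everything should then follow from the characteristic $(d,q,\de_1,\de_2,\de_3)$ of the admissible collection together with $|K|$.

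\emph{Step 1 (Jacobian and multiplicity).} For $x\in(0,1]^n$ the chain rule gives
\[
J_w(x):=\on{Jac}(\Up_w)(x)=q^{-n}\,\on{Jac}(R_w)(x^{1/q})\prod_i x_i^{1/q-1}.
\]
Since $x_i^{1/q-1}\ge1$ on $(0,1]$, this yields $|J_w(x)|\ge q^{-n}|\on{Jac}(R_w)(x^{1/q})|$. Next I bound the number of preimages. The map $x\mapsto x^{1/q}$ is injective on $[0,\infty)^n$. By Bézout, $R_w$ has at most $d^n$ isolated preimages at points where its Jacobian is nonzero. The area formula then gives
\[
\int_{A_w}|J_w|\,dx\le d^n\,|\Up_w(A_w)|\le d^n\,|K\setminus E|=d^n(1-\la)|K|.
\]

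\emph{Step 2 (uniform sublevel estimate for the Jacobian).} I expect this to be the main obstacle. The polynomial $P_w:=\on{Jac}(R_w)$ has degree at most $D:=n(d-1)$, and its coefficient $\ell^1$-norm lies in $[\de_2,\de_3]$. By equivalence of norms on the finite-dimensional space of polynomials of degree at most $D$, this gives $\|P_w\|_{[0,1]^n}\ge c\,\de_2$ with $c=c(n,D)$. The Brudnyi--Ganzburg inequality \eqref{eq:R1} on the cube, applied to $P_w$ and the set $\{|P_w|\ge s\}$, then yields a sublevel bound
\[
|\{y\in[0,1]^n:|P_w(y)|<s\}|\le C_1\, s^{1/D},
\]
uniformly in $w$. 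The delicate part is pulling this back under $x\mapsto y=x^{1/q}$. The inverse map $y\mapsto y^q$ has Jacobian $q^n\prod_i y_i^{q-1}\le q^n$, so measures change at most by the factor $q^n$. This gives $|\{x:|P_w(x^{1/q})|<s\}|\le q^nC_1s^{1/D}$.

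\emph{Step 3 (optimization and reformulation).} Split $A_w$ according to whether $|J_w|<q^{-n}s$ or not, and apply Chebyshev's inequality to the second part using Step 1. This gives
\[
|A_w|\le q^nC_1 s^{1/D}+\frac{q^n d^n(1-\la)|K|}{s}.
\]
Choosing $s=(1-\la)^{D/(D+1)}$ yields $|A_w|\le C_2(1-\la)^{1/(D+1)}$. Since $|K_w|\ge\de_1$, it follows that
\[
\frac{|\Up_w^{-1}(E)\cap K_w|}{|K_w|}\ge 1-\frac{C_2}{\de_1}(1-\la)^{1/(D+1)}.
\]
Now use $1-x\ge 1/(1+2x)$ for $0\le x\le\frac12$. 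Choose $\vr\in[0,1)$ so that $\frac{C_2}{\de_1}(1-\la)^{1/(D+1)}\le\frac12$ whenever $\la>\vr$. Setting $\si:=D+1$ and $\et:=2C_2/\de_1$ then gives the stated inequality.

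Points needing care: the measure-zero set where $J_w$ vanishes or some $x_i=0$, which is harmless for the area formula, and the uniformity of all constants in $w$. If the Bézout multiplicity bound causes trouble on the non-generic set, I would instead first discard the set $\{|P_w|<s\}$; outside it every fiber is finite and Bézout applies directly.
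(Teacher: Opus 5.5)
The paper does not prove this proposition; it quotes it from Pierzchała's Corollary 9.11, so there is no in-paper argument to compare with. Your proof is correct and self-contained. It rests on three estimates:
\begin{itemize}
\item the area formula with the Bézout bound $d^n$ on fibres, which gives $\int_{A_w}|J_w|\le d^n(1-\la)|K|$;
\item a uniform sublevel estimate for $\on{Jac}(R_w)$, uniform because the sup-norm on $[0,1]^n$ and the coefficient $\ell^1$-norm are equivalent on polynomials of degree at most $D$; it controls the part of $A_w$ where $|J_w|$ is small;
\item balancing the two, which yields $|A_w|\le C_2(1-\la)^{1/(D+1)}$ uniformly in $w$.
\end{itemize}
The inequality $1-x\ge 1/(1+2x)$ on $[0,\tfrac12]$ then gives exactly the stated form. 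What your route buys over the bare citation is explicit data: $\si=n(d-1)+1$, $\et=2C_2/\de_1$ and $\vr$ depend only on $n,d,q,\de_1,\de_2,|K|$. This fits the paper's remark that $\nu_K$ can be computed from the characteristic of $K$.

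A few small repairs are needed.
\begin{itemize}
\item In Step 2 the Remez inequality must be applied to the sublevel set $\{y\in[0,1]^n:|P_w(y)|\le s\}$, on which $\|P_w\|\le s$. Applying it to $\{|P_w|\ge s\}$ gives nothing.
\item If $d=1$ then $D=0$ and $s^{1/D}$ is meaningless. Take $D:=\max\{n(d-1),1\}$; then $P_w$ is a constant of modulus at least $\de_2$ and the sublevel bound is trivial.
\item For $\la=1$ your choice $s=0$ degenerates. In that case $\int_{A_w}|J_w|=0$ and $J_w\ne 0$ almost everywhere give $|A_w|=0$ directly.
\item You should record that $\Up_w^{-1}(E)$ is measurable. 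Off the null set $\{J_w=0\}\cup\p[0,\infty)^n$ the map $\Up_w$ is a local diffeomorphism, so preimages of null sets are null.
\end{itemize}

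Finally, the Bézout step needs neither Sard nor your fallback. Apply the area formula on the open set $(0,\infty)^n$ to $A_w\cap\{J_w\ne0\}$. Under $x\mapsto x^{1/q}$, distinct points of such a fibre go to distinct nondegenerate zeros of $R_w-y$. These are isolated, so there are at most $d^n$ of them.
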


The following lemma is a slight modification of \cite[Lemma 8.2]{Pierzchala:2015aa}.

\begin{lemma} \label[l]{l:comp}
    Let $a,b,c >0$. Consider the functions $\vh : (- \infty, 1] \to (0,1]$ and $\ps : (0,1]  \to [1,\infty)$ given by 
    \begin{equation}
        \vh(t) = \frac{1}{a(1-t)^b + 1} \quad \text{ and } \quad
        \ps(t) = \frac{1}{1-(1-t)^c}.
    \end{equation}
    There exists $d = d(a,b,c) >0$ such that 
    \begin{equation}
        (\ps \o \vh)(t) \le \frac{1}{1-(1-t)^d}, \quad t \in (0,1].
    \end{equation}
\end{lemma}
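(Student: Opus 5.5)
We need $d>0$ with
\[
\frac{1}{1-(1-\vh(t))^c}\le \frac{1}{1-(1-t)^d}, \quad t\in(0,1].
\]
Since both sides are of the form $1/(1-x)$, which is increasing for $x \in [0,1)$, this is equivalent to
\[
(1-\vh(t))^c \le (1-t)^d .
\]
Put $s := 1-t\in[0,1)$. Then
\[
1-\vh(t) = \frac{a s^b}{a s^b+1},
\]
so the task is to find $d>0$ with
\[
\Big(\frac{a s^b}{a s^b+1}\Big)^c \le s^d \quad \text{for all } s\in[0,1).
\]
At $s=0$ both sides vanish, so it suffices to treat $s\in(0,1)$.

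The natural choice is $d := bc$. We have
\[
\frac{a s^b}{a s^b+1}\le a s^b,
\]
which gives the bound $a^c s^{bc}$. This only suffices if $a\le 1$, so a second estimate is needed.

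For the general case I would split according to the size of $a s^b$. Since
\[
\frac{a s^b}{1+a s^b}\le \min\{a s^b,\ 1\},
\]
and the left side is strictly less than $1$, it is cleaner to work with the function
\[
h(s) := \frac{1}{s^{d}}\Big(\frac{a s^b}{1+as^b}\Big)^c = \frac{a^c s^{bc-d}}{(1+a s^b)^c}
\]
and choose $d < bc$. Then $h(s)\to 0$ as $s\to 0$, and at $s=1$ we get $h(1) = (a/(1+a))^c<1$.

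It is simplest to restrict to $d\le bc$ and check $h(s)\le 1$ on $(0,1]$ directly. Write $u=a s^b\in(0,a]$, so that $s^{bc-d}=(u/a)^{(bc-d)/b}$, and set $e := (bc-d)/b\in[0,c)$. Then
\[
h = \frac{a^{c-e} u^{e}}{(1+u)^c}.
\]
Using $u^e\le (1+u)^e$, we get
\[
h \le a^{c-e}(1+u)^{e-c} \le a^{c-e}(1+u)^{-(c-e)} .
\]
This is at most $1$ only when $u\ge a-1$, so the crude bound is not enough and instead I would choose $e$ so that $a^{c-e}$ is controlled. A cleaner route is to maximize $u^e(1+u)^{-c}$ over $u\in(0,a]$, which is increasing on $u\le e/(c-e)$. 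By choosing $e$ close to $c$ (equivalently $d$ small), we can make
\[
a^{c-e}\,\sup_{0<u\le a}\frac{u^e}{(1+u)^c}\le 1 .
\]
Concretely, take $e$ with $e/(c-e)\ge a$, so that the sup is attained at $u=a$ and equals $a^e/(1+a)^c$. Then
\[
h\le \frac{a^c}{(1+a)^c}<1 .
\]
So choose
\[
e := \frac{ac}{1+a}, \qquad d := bc - be = \frac{bc}{1+a}>0 .
\]

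Finally, I would verify the monotonicity claim: $\log$ of $u^e(1+u)^{-c}$ has derivative $e/u - c/(1+u)$, which is $\ge0$ iff $u\le e/(c-e)=a$. This yields the lemma with $d=bc/(1+a)$. The main (mild) obstacle is choosing the exponent so that the estimate does not rely on $a\le1$; the above choice handles it.
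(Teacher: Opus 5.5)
Your proof is correct and follows essentially the paper's route: use monotonicity of $x\mapsto 1/(1-x)$ to reduce to the elementary inequality $\frac{as}{1+as}\le s^{r}$ on $[0,1)$ (so that $d=bcr$), and prove that inequality by a one-variable maximization. The only difference is the exponent. You get the explicit, non-optimal value $r=1/(1+a)$, i.e.\ $d=bc/(1+a)$. The paper takes $r=1$ when $a\le 1$, and for $a>1$ the sharp but implicitly defined $r$ solving $r(1-r)^{(1-r)/r}=1/a$.
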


\begin{proof}
    We claim that there exists $r = r(a) \in (0,1]$ such that 
    \begin{equation} \label{eq:claim}
        as \le (1+as)s^r, \quad \text{ for all } s \ge 0.
    \end{equation}
    Therefore, setting $s = (1-t)^b$,  
    \[
        \vh(t) \ge 1-(1-t)^{br}, \quad t \in (-\infty,1], 
    \]
    and,
    consequently, as $\ps$ is decreasing,
    \[
        (\ps \o \vh)(t) \le \ps\big(1-(1-t)^{br}\big) = \frac{1}{1-(1-t)^{bcr}}
    \]
    for $t \in (0,1]$.

    It remains to prove \eqref{eq:claim}. If $a \in (0,1]$, it suffices to set $r:= 1$.
    Let us assume that $a>1$. The inequality is always true for $s=0$ and $s \ge 1$. So let $s \in (0,1)$.
    By substituting $u=s^r$, \eqref{eq:claim} is equivalent to 
    \[
        u^{\frac{1-r}r} (1-u) \le \frac{1}a. 
    \]
    The maximum of the left-hand side for $u \in (0,1)$ as attained at $u=1-r$. Hence, for $a>1$, letting 
    $r$ be the unique solution of 
    \[
        r(1-r)^{\frac{1-r}r} = \frac{1}a 
    \]
    in the interval $(0,1)$,
    implies \eqref{eq:claim}.
    (Such $r$ exists because the left-hand side defines a strictly increasing surjective function $(0,1) \to (0,1)$.)
\end{proof}

\subsection{A preliminary Remez inequality}

The following Remez inequality holds if the ratio $|E|/|K|$ is not too small.

\begin{theorem} \label[t]{t:RemezK}
    Let $K \in \sK(\R^n)$. 
    Then there exist $\nu \in (0,1]$ and $\vr \in [0,1)$ such that the following holds.
    Let $\mu^{|\infty} = (\mu_j)_{j \ge 1}$ be an admissible weight satisfying $\mu_j\ge j$ for $j \ge 1$. 
    Let $M_0 >b_0>0$.
    Let $m \in \N_{\ge 1}$.
Assume that $(\mu^{|\infty},m\, M_0,a_K,b_0)$ is admissible and let $\nn = \nn(\mu^{|\infty},m\, M_0,a_K,b_0)$.
Let $f : K \to \R^m$ be $(\mu^{|\nn},M_0)$-smooth such that $\|f\|_K \ge b_0$. 
Then for each Lebesgue measurable set $E \subseteq K$ 
we have
\[
    \|f\|_K \le \Cnn\, \Big(\frac{m}{1-(1-\la)^\nu}\Big)^{\nn} \, \|f\|_E \quad \text{ if } \la := \frac{|E|}{|K|} > \vr.
\]
\end{theorem}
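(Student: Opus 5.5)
Fix a point $w \in K$ at which $\|f\|_K$ is attained, i.e.\ $|f(w)|_1 = \|f\|_K$. Let $(K_w,a_w,\Up_w)_{w\in K}$ be the admissible collection of $K$, with characteristic $(d,q,\de_1,\de_2,\de_3)$. Then $w = \Up_w(a_w) \in \Up_w(K_w) \subseteq K$. The plan is to pull the problem back to the star-shaped set $K_w\subseteq[0,1]^n$ and apply \Cref{l:ga}. To the set $\Up_w^{-1}(E)\cap K_w$ this lemma gives an affine segment $\ga:[0,1]\to K_w$ with $\ga(0)=a_w$, together with a set $F\subseteq\ga^{-1}(\Up_w^{-1}(E))$ satisfying
\[
\cL^1(F)\ge 1-\Big(1-\tfrac{|\Up_w^{-1}(E)\cap K_w|}{|K_w|}\Big)^{1/n}.
\]
The curve $\Up_w\o\ga$ runs in $K$, starts at $w$, and maps $F$ into $E$.

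Next I would make $\Up_w\o\ga$ polynomial. Since $\Up_w(x)=R_w(x_1^{1/q},\ldots,x_n^{1/q})$ and $a_{w,i}=0$ whenever $\Up_w$ is not polynomial in $x_i$, I reparametrize by $t=s^q$, i.e.\ set $\tilde\ga(s):=\ga(s^q)$. Along the segment from $a_w$, the $i$-th coordinate is $a_{w,i}+t(\ga_i(1)-a_{w,i})$. In the non-polynomial coordinates $a_{w,i}=0$, so $(\ga_i(s^q))^{1/q}=s\,\ga_i(1)^{1/q}$ is linear in $s$. Hence $p:=\Up_w\o\tilde\ga:[0,1]\to K$ is a polynomial curve of degree at most $dq$, with $p(0)=w$.

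This is where the constant must be checked. It is in the characteristic setup, following Pierzcha{\l}a, that the degree bound for these curves is $d$ itself (the $q$-th roots being absorbed into the parametrization); if instead it were $dq$, then $a_K$ would have to be defined with $dq$. I will assume the curves have degree $\le d$, consistent with \eqref{eq:aK}. Reparametrizing $[0,1]\to[-1,1]$ affinely, \Cref{p:comp} (translated so that $0\in K$, which does not affect $\diam$) shows that $g:=f\o p$ is $(a\,\mu^{|\nn},M_0)$-smooth on $[-1,1]$. Rescaling to $[0,1]$ doubles the factor, giving $(a_K\,\mu^{|\nn},M_0)$-smooth on $[0,1]$; this matches $a_K=2a$ in \eqref{eq:aK}.

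Then I apply \Cref{thm:NSV}, with \Cref{r:nnprop}(1) used to absorb $a_K$, to $g$ with $I=[0,1]$. The hypothesis is satisfied since $\|g\|_{[0,1]}\ge |g(0)|_1=\|f\|_K\ge b_0$, and the normalization by $m M_0$ accounts for the vector-valued $1$-norm. The set is the image $F'$ of $F$ under $s\mapsto s^{1/q}$. This gives
\[
\|f\|_K\le \Cnn\,\big(m/\cL^1(F')\big)^{\nn}\,\|f\|_E .
\]
The map $t\mapsto t^{1/q}$ does not decrease measure in the relevant way: because it is concave with $t^{1/q}\ge t$, we have $\cL^1(F')\ge 1-(1-\cL^1(F))^{?}$. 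Rather than chase this, I would use the cruder bound $\cL^1(F')\ge \cL^1(F)$, which holds since the derivative $\tfrac1q t^{1/q-1}\ge \tfrac1q$; this costs at most a factor $q$, which can be absorbed in $\nu$ after the next step. Finally, when $\la>\vr$, \Cref{l:P_9.11} gives $|\Up_w^{-1}(E)\cap K_w|/|K_w|\ge \vh(\la)$ with $\vh$ as in \Cref{l:comp}. \Cref{l:comp}, with $c=1/n$, then yields $1/\cL^1(F)\le 1/(1-(1-\la)^{\nu})$ for some $\nu$ depending only on $\si,\et,n$, which is the claimed estimate.

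The main obstacle is the polynomial-curve step: making precise that $\Up_w\o\ga$, after the reparametrization $s\mapsto s^q$, is a polynomial curve of controlled degree lying in $K$, and handling the resulting measure distortion of $F$ without changing $\nn$.
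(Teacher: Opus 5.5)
Your overall route is the paper's: maximizer $w$, \Cref{l:ga} in $K_w$, \Cref{l:P_9.11}, the reparametrization $t=s^q$, \Cref{p:comp}, \Cref{thm:NSV} and \Cref{l:comp}. However, the step that handles the measure distortion under $s\mapsto s^q$ fails.

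\textbf{The measure step.} The bound $\cL^1(F')\ge \cL^1(F)$ is false. For $F=[1-\ep,1]$ you get $F'=[(1-\ep)^{1/q},1]$, whose length is $1-(1-\ep)^{1/q}\approx \ep/q$. What your derivative estimate actually gives is only $\cL^1(F')\ge \cL^1(F)/q$. The resulting loss $q^{\nn}$ cannot be absorbed into $\nu$. You would need $\frac1q\bigl(1-(1-\la)^{\nu_0}\bigr)\ge 1-(1-\la)^{\nu}$ on $(\vr,1]$, which fails at $\la=1$ as soon as $q\ge 2$. The factor $\bigl(1-(1-\la)^{\nu}\bigr)^{-\nn}$ tends to $1$ as $\la\to 1$, whereas $q^{\nn}$ does not. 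Nor can it go into $\Cnn$, which is a fixed explicit constant. As written, you only prove the inequality with an extra factor $q^{\nn}$.

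\textbf{The missing ingredient.} It is the sharp bound you left with a question mark. The measure of $F'$ is $\int_F \frac1q t^{1/q-1}\,dt$, and this density is decreasing. By the bathtub principle (rearrangement), the minimum over sets $F$ of measure $\ell$ is attained at $F=[1-\ell,1]$. Hence
$\cL^1(F')\ge 1-(1-\cL^1(F))^{1/q}$,
which is exactly the rearrangement step in the paper's proof. Now the loss sits only in the exponent, so your instinct to absorb it into $\nu$ becomes correct. You can either compose first, $1-\cL^1(F')\le \bigl(1-\vh(\la)\bigr)^{1/(nq)}$, and apply \Cref{l:comp} with $c=1/(nq)$. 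Or you can keep your order: from $1-\cL^1(F)\le(1-\la)^{\nu}$ you get $1-\cL^1(F')\le (1-\la)^{\nu/q}$.

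\textbf{The degree bound.} A smaller point: you do not need to assume it, since it can be proved. In a coordinate $x_i$ in which $\Up_w$ is polynomial, $R_w$ depends on $y_i$ only through $y_i^q$; otherwise non-integer powers $x_i^{k/q}$ would appear in $\Up_w$. Take a monomial $y^{\be}$ of $R_w$ and substitute $x=\ga(s^q)$.
\begin{itemize}
\item Each polynomial coordinate contributes $x_i^{\be_i/q}$, and $x_i$ has degree at most $q$ in $s$, so this factor has degree at most $\be_i$.
\item Each non-polynomial coordinate (where $a_{w,i}=0$) contributes $\bigl(s\,\ga_i(1)^{1/q}\bigr)^{\be_i}$, of degree $\be_i$.
\end{itemize}
So $p$ has degree at most $|\be|\le d$, and $a_K$ as defined in \eqref{eq:aK} applies.

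The remaining steps are fine: the rescaling factor $2$, the normalization $mM_0$, and the use of \Cref{r:nnprop}(1).
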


\begin{proof}
    We may assume that $K$ contains the origin; indeed, the statement of \Cref{t:RemezK} is invariant under translations.
  Let $(K_w,a_w,\Up_w)_{w \in K}$ be an admissible collection for $K$ in $\R^n$ with 
  characteristic $(d,q,\de_1,\de_2,\de_3)$. 
  Let $\si,\et>0$ and $\vr\in [0,1)$ be the constants provided by \Cref{l:P_9.11}.
Assume that $E \subseteq K$ is Lebesgue measurable and $\la := \frac{|E|}{|K|} > \vr$.

Fix $w \in K$ such that $|f(w)|_1 = \|f\|_K$.
The preimage $\Up_w^{-1}(E) \subseteq [0,\infty)^n$ is Lebesgue measurable 
(indeed, $E$ is the union of a $F_\si$-set and a set 
of measure zero and the preimage under $\Up_w$ of the latter is again of measure zero).

By \Cref{l:ga}, there exists an affine curve $\ga_w : [0,1] \to K_w$ such that $\ga_w(0)=a_w$ 
and $\ga_w^{-1}(\Up_w^{-1}(E) \cap K_w)$ contains a Lebesgue measurable subset $F_0$ satisfying 
\begin{equation}
    \cL^1(F_0) \ge 1 - \Big(1 - \frac{|\Up_w^{-1}(E) \cap K_w|}{|K_w|} \Big)^{1/n}.
\end{equation}
By \Cref{l:P_9.11}, we conclude that 
\begin{equation} \label{eq:admkey}
    \cL^1(F_0) \ge 1 - \Big(1 -  \frac{1}{\et (1-\la)^{1/\si} +1}\Big)^{1/n}.
\end{equation}

Consider the function $G_q : [0,1] \to [0,1]$, $G_q(t) = t^q$, and $F_1 := G_q^{-1}(F_0)$. Then 
(by \cite[Theorem 1.13]{LiebLoss01} and \cite[Lemma 7.1]{Pierzchala:2015aa})
\[
    \cL^1(F_0) \le 1 - (1-\cL^1(F_1))^q
\]
and, hence, using \eqref{eq:admkey}, 
\begin{align*}
    \cL^1(F_1) \ge 1 - (1- \cL^1(F_0))^{1/q} \ge 1 -  \Big(1 -  \frac{1}{\et (1-\la)^{1/\si} +1}\Big)^{1/(nq)}.
\end{align*}
Thus, by \Cref{l:comp}, there exists $\nu >0$ such that 
\begin{equation} \label{eq:F1}
    \frac{1}{\cL^1(F_1)} 
    \le \frac{1}{1 -  (1 - \la)^\nu}.
\end{equation}
Replacing $\nu$ by $\min\{\nu,1\}$, we may assume that $\nu \in (0,1]$.
    
The composite $p := \Up_w \o \ga_w \o G_q : [0,1] \to K$ is a polynomial curve of degree at most $d$, 
by \Cref{d:admcoll}(4)
(cf.\ \cite[Lemma 9.6]{Pierzchala:2015aa}). 

We claim that $f \o p : [0,1] \to \R^m$ is $(a_K \, \mu|^{\nn},m\, M_0)$-smooth.
Let $\ell : [-1,1] \to [0,1]$ be given by $\ell(t) = \frac{1}2 (t+1)$.
Then $p \o \ell : [-1,1] \to K$ is a polynomial curve of degree at most $d$. 
Since $\|f\|_K  = |f(w)|_1 = |f(p(0))|_1$,
we have 
\begin{equation}
   b_0 \le  \|f \o p\|_{[0,1]} = \|f\|_K \le M_0. 
\end{equation}
By \Cref{p:comp},
for $1 \le k \le  \nn$, 
\begin{align*}
    \|(f\o p)^{(k)}\|_{[0,1]} &= \sup_{x \in [0,1]} \sum_{i=1}^m |(f_i\o p)^{(k)}(x)| 
    = \sup_{x \in [0,1]} \sum_{i=1}^m |(f_i\o p\o \ell \o \ell^{-1})^{(k)}(x)| 
    \\
&= 2^k \sup_{x \in [-1,1]} \sum_{i=1}^m |(f_i\o p\o \ell)^{(k)}(x)| 
\le m\,M_0\, a_K^k \, \mu_1 \mu_2 \cdots \mu_k,
\end{align*}
where $a_K$ is defined in \eqref{eq:aK}.
This implies the claim.

In view of \Cref{r:nnprop}(1),
we may apply \Cref{thm:NSV} to $f \o p : [0,1] \to \R^m$ and $F_1 \subseteq [0,1]$. 
Therefore, 
\begin{align*}
    \|f\|_K = \|f \o p\|_{[0,1]} &\le \Cnn \, \Big(\frac{m}{\cL^1(F_1)}\Big)^{\nn} \, \|f \o p\|_{F_1}.
\end{align*}
By \eqref{eq:F1} and $f(p(F_1)) = f(\Up_w(\ga_w(F_0))) \subseteq f(\Up_w( \Up_w^{-1}(E) \cap K_w )) \subseteq f(E)$,
the proof is complete.
\end{proof}

\subsection{Proof of \texorpdfstring{\Cref{t:RemezKL}}{Proof of Theorem 3.10}}

Let $L\subseteq \R^n$ be a convex body and
$K \in \sK(\R^n)$
such that $K \subseteq L$. 
Let $\mu^{|\infty} = (\mu_j)_{j \ge 1}$ be an admissible weight satisfying $\mu_j\ge j$ for $j \ge 1$. 
Let $M_0 > b_0>0$. Let $m \in \N_{\ge 1}$.
Assume that $(\mu^{|\infty},m\,M_0,a_K,b_0)$ is admissible and let 
$\nn = \nn(\mu^{|\infty},m\, M_0,a_K,b_0)$. 
Let $f : L \to \R^m$ be $(\mu^{|\nn},M_0)$-smooth such that $\|f\|_K\ge b_0$.
Let $E \subseteq K$ be Lebesgue measurable with $|E| > 0$ and set $\la := \frac{|E|}{|K|}$. 

We may assume without loss of generality that $L$ is the convex hull of $K$ and hence $\diam(L)=\diam(K)$. 
Indeed the convex hull $\on{co}(K)$ of $K$ is contained in $L$ and the restriction $f|_{\on{co}(K)}$ 
is $(\mu^{|\nn},M_0)$-smooth.

Let $\nu\in (0,1]$ and $\vr \in [0,1)$ be the constants associated with $K$ in \Cref{t:RemezK}.
Assume that $\vr>0$ (otherwise we are done).
If $\la \in (\vr,1]$, then the statement follows from \Cref{t:RemezK}.
Suppose that $\la \in (0,\vr]$.

Set $\mu := \frac{|K|}{|L|} \in (0,1]$.
There is a unique $\ep_0 \in (0,1]$ such that $1-\mu \vr = (1-\vr)^{\ep_0}$.
Thus $u(t) := (1-t)^{\ep_0} -1 +\mu t$ vanishes at $t=0$ and $t=\vr$ and is concave in $[0,\vr]$ 
since $u''(t) = \ep_0 (\ep_0-1) (1-t)^{\ep_0-2}\le 0$.
It follows that 
\begin{equation} \label{eq:ep}
    (1- \mu t) \le (1-t)^{\ep_0} \quad \text{ for } t \in [0,\vr].
\end{equation}

Since $\diam(L) = \diam(K) \le a_K$ (see \eqref{eq:aK}), $(\mu^{|\infty},m\,M_0,\diam(L),b)$ is admissible and
\[
    \nn' = \nn(\mu^{|\infty},m\,M_0,\diam(L),b) \le \nn(\mu^{|\infty},m\,M_0,a_K,b) =\nn
\]
as well as $\mathbf C_{\nn'} \le \Cnn$ (see \Cref{r:nnprop}(2)).
Thus, by \Cref{thm:Remez} and \eqref{eq:ep}, 
\begin{align*}
    \|f\|_K \le \|f\|_L 
    &\le \mathbf C_{\nn'}\, \Big( \frac{m}{1-(1-\mu\la)^{1/n}}\Big)^{\nn'}\, \|f\|_E
    \le \Cnn \, \Big( \frac{m}{1-(1-\la)^{\ep_0/n}}\Big)^{\nn}\, \|f\|_E.
\end{align*}

It follows that \Cref{t:RemezKL} holds with $\nu_K := \min\{\nu,\ep_0/n\}$,
which depends only on $K$, because we assumed that $L= \on{co}(K)$.

\section{Sublevel sets and applications} \label{sec:sublevel}

In this section, we deduce from \Cref{t:RemezKL} a precise bound for the volume growth of sublevel sets
and several applications.

For the rest of the paper, let $a_K$ be defined by \eqref{eq:aK} and let $\nu_K \in (0,1)$ be the constant provided by  
\Cref{t:RemezKL} with the convention that $a_K = \diam(K)$ and $\nu_K = 1/n$, in the case that 
$K \subseteq \R^n$ is a convex body.

The following setup applies throughout this section.

\subsection{Setup} \label{ssec:setup}

Let $L\subseteq \R^n$ be a convex body and
$K \in \sK(\R^n)$ 
such that $K \subseteq L$. 

Let $\mu^{|\infty} = (\mu_j)_{j \ge 1}$ be an admissible weight satisfying $\mu_j\ge j$ for $j \ge 1$. 
Let $M_0>b_0>0$.
Assume that $(\mu^{|\infty},M_0,a_K,b_0)$ is admissible and let 
\[
\nn = \nn(\mu^{|\infty},M_0,a_K,b_0).
\]
Let $f : L \to \R$ be a $(\mu^{|\nn},M_0)$-smooth function  satisfying $\|f\|_K\ge b_0$.
We only consider single-valued functions in this section (i.e.\ $m=1$).

\begin{remark} \label[r]{r:convexbody}
    Most of the results in this section were obtained in \cite{Rainer:2022aa} in the special case that $K \subseteq \R^n$ 
    is a convex body. In that case, the condition $\mu_j \ge j$ for $j \ge 1$ is not needed.
\end{remark}

\subsection{Sublevel sets}

Let $K_t := \{x \in K: |f(x)| \le t\}$ denote the sublevel set of $f|_K$.

\begin{theorem} \label[t]{t:sublevel}
    We have
    \begin{equation}
        |K_t| \le \frac{|K|}{\nu_K} \, \Big(\frac{\Cnn \,t}{\|f\|_K}\Big)^{1/\nn}, \quad t>0.
    \end{equation}
\end{theorem}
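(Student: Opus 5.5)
Apply \Cref{t:RemezKL} (with $m=1$) to the sublevel set $E = K_t$ and invert the resulting inequality. On $K_t$ we have $\|f\|_{K_t} \le t$. If $|K_t| = 0$ there is nothing to prove, so assume $|K_t|>0$ and put $\la := |K_t|/|K| \in (0,1]$. Since $K_t$ is Lebesgue measurable ($f$ is continuous and $K$ compact), \Cref{t:RemezKL} gives
\begin{equation}
    \|f\|_K \le \Cnn \Big(\frac{1}{1-(1-\la)^{\nu_K}}\Big)^{\nn} t .
\end{equation}
Rearranging,
\begin{equation}
    1-(1-\la)^{\nu_K} \le \Big(\frac{\Cnn\, t}{\|f\|_K}\Big)^{1/\nn}.
\end{equation}

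It remains to bound $\la$ by the left-hand side with the factor $1/\nu_K$. The function $\la \mapsto 1-(1-\la)^{\nu_K}$ is concave on $[0,1]$ because $\nu_K \in (0,1]$. The key elementary inequality is
\begin{equation}
    \nu_K\,\la \le 1-(1-\la)^{\nu_K}, \qquad \la \in [0,1].
\end{equation}
This follows from Bernoulli's inequality $(1-\la)^{\nu_K} \le 1-\nu_K \la$, valid for exponents in $(0,1]$. Alternatively, it follows from the mean value theorem: the derivative $\nu_K(1-s)^{\nu_K-1}$ is at least $\nu_K$ on $[0,1)$.

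Combining the two displays gives $\la \le \nu_K^{-1} (\Cnn t/\|f\|_K)^{1/\nn}$. Multiplying by $|K|$ gives the claim. The factor $1/\nu_K$ in the statement is exactly this linearization loss.

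There is no real obstacle. The only points to check are that $\nn \ge 1$, so that taking the $1/\nn$-th root is legitimate (this holds since $M_0 > b_0$, by \Cref{r:Nnonzero}), and that the case $\|f\|_{K_t} = 0$ is handled. In that case the Remez inequality forces $\|f\|_K = 0$ whenever $|K_t|>0$, which contradicts $\|f\|_K \ge b_0 > 0$. Hence, for small $t$, either $|K_t|=0$ or the bound holds trivially.
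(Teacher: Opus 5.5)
Your proof is correct and is exactly the paper's argument: apply \Cref{t:RemezKL} with $m=1$ and $E=K_t$ (using $\|f\|_{K_t}\le t$), invert to get $1-(1-\lambda)^{\nu_K}\le (\Cnn t/\|f\|_K)^{1/\nn}$, and conclude with $\nu s\le 1-(1-s)^{\nu}$ for $s\in[0,1]$, $\nu\in(0,1]$. Your closing remark about the case $\|f\|_{K_t}=0$ is unnecessary, since only the bound $\|f\|_{K_t}\le t$ with $t>0$ is ever used.
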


\begin{proof}
    We may assume that $\la_t := \frac{|K_t|}{|K|}>0$.
    By \Cref{t:RemezKL}, 
    \begin{align*}
        \|f\|_K \le  \Cnn\, \Big(\frac{1}{1-(1-\la_t)^{\nu_K}}\Big)^{\nn} \, t
    \end{align*}
    and hence
    \[
        1-(1-\la_t)^{\nu_K} \le \Big(\frac{\Cnn \,t}{\|f\|_K}\Big)^{1/\nn}.
    \]
    So the assertion follows from the fact that $\nu\,s \le 1-(1-s)^\nu$, for $s\in [0,1]$ and $\nu \in (0,1]$. 
\end{proof}

\subsection{Comparison of \texorpdfstring{$L^p$}{Lp}-norms}

Recall that the \emph{distribution function} $d_f$ of 
a Borel measurable function $f : \R^n \supseteq X \to \R$ is defined by 
\[
d_f(t) :=  |\{x \in X : |f(x)|>t\}| 
\]
and the \emph{decreasing rearrangement} $f^*$ of $f$ by 
\[
    f^*(y) := \inf \{t>0 : d_f(t) \le y\}.
\]

\begin{lemma} \label[l]{l:rearrange}
    Let $E \subseteq K$ be a Lebesgue measurable set with $|E|>0$.
    Then
    \begin{equation} \label{eq:rearrange}
        (f|_E)^*(|E|\,\la) \ge \frac{\|f\|_K}{\Cnn} \Big(\frac{\nu_K\,|E|\, (1-\la)}{|K|}\Big)^{\nn}, 
        \quad \la \in (0,1).
    \end{equation}
\end{lemma}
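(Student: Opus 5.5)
We will derive \eqref{eq:rearrange} by applying \Cref{t:sublevel} to a sublevel set of $f$ restricted to $E$, exactly as in the case of convex bodies in \cite{Rainer:2022aa}. Fix $\la \in (0,1)$ and set $s := (f|_E)^*(|E|\la)$. By the definition of the decreasing rearrangement and right-continuity of the distribution function, we have $d_{f|_E}(s) \le |E|\la$. Equivalently, the set $E_s := \{x \in E : |f(x)| \le s\}$ satisfies
\[
|E_s| = |E| - d_{f|_E}(s) \ge |E|\,(1-\la) > 0 .
\]
If $s = 0$, then $f$ vanishes on a set of positive measure. This contradicts \Cref{t:RemezKL}, since $\|f\|_K \ge b_0 > 0$. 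So we may assume $s > 0$.

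Since $E_s \subseteq K_s = \{x \in K : |f(x)| \le s\}$, \Cref{t:sublevel} gives
\[
|E|\,(1-\la) \le |E_s| \le |K_s| \le \frac{|K|}{\nu_K}\Big(\frac{\Cnn\, s}{\|f\|_K}\Big)^{1/\nn}.
\]
Raising both sides to the power $\nn$ and solving for $s$ yields
\[
s \ge \frac{\|f\|_K}{\Cnn}\Big(\frac{\nu_K\,|E|\,(1-\la)}{|K|}\Big)^{\nn},
\]
which is \eqref{eq:rearrange}.

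The only delicate step is the claim $d_{f|_E}(s) \le |E|\la$ at $s = f^*(|E|\la)$, which needs the infimum in the definition of $f^*$ to be attained. This follows from the right-continuity of $d_f$: $d_f(t) \le y$ holds for all $t > s$, and $\{|f| > t\}$ increases to $\{|f| > s\}$ as $t \downarrow s$. Alternatively, one can avoid right-continuity by working with any $t > s$ satisfying $d_{f|_E}(t) \le |E|\la$ and letting $t \downarrow s$, since the right-hand side of the bound is continuous in $t$.
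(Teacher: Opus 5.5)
Your proposal is correct and follows the paper's argument: apply \Cref{t:sublevel} to the sublevel set $\{x\in E : |f(x)|\le t\}\subseteq K_t$ and solve for $t$. The only difference is presentational. The paper argues contrapositively, showing $d_{f|_E}(t) > |E|\la$ for every $t\in(0,s_\la)$, where $s_\la$ is the right-hand side of \eqref{eq:rearrange}; this sidesteps both the right-continuity of $d_{f|_E}$ and the separate case $s=0$, which you handle correctly, and your limiting variant with $t\downarrow s$ achieves the same.
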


\begin{proof}
    For $E_t := \{x \in E : |f(x)| \le t\}$,
    we have 
    \[
        d_{f|_E}(t) = |E|\, \la_t, \quad \text{ where } \la_t := 1- \frac{|E_t|}{|E|}.
    \]
    Let $s_\la$ denote the right-hand side of \eqref{eq:rearrange}.
    If $t \in (0,s_\la)$ then, by \Cref{t:sublevel}, 
    \begin{align*}
        |E| (1-\la_t) =   |E_t| \le \frac{|K|}{\nu_K} \, \Big(\frac{\Cnn \,t}{\|f\|_K}\Big)^{1/\nn} 
        < \frac{|K|}{\nu_K} \, \Big(\frac{\Cnn \,s_\la}{\|f\|_K}\Big)^{1/\nn} = |E| (1-\la),
    \end{align*}
    implying that $d_{f|_E}(t) = |E|\, \la_t > |E|\, \la$, for $t \in (0,s_\la)$. 
    Thus, $(f|_E)^*(|E|\,\la) \ge s_\la$ and \eqref{eq:rearrange} is proved.
\end{proof}

Let us write
\begin{align*}
  \|f\|_{L^p(E)}^\sharp &:= \Big( \frac{1}{|E|} \int_E |f(x)|^p \, dx \Big)^{1/p}, \quad 0 < p <\infty,
  \\
  \|f\|_{L^\infty(E)}^\sharp &:= \esssup_{E}|f|,
\end{align*}
for the normalized $L^p$-norms (or quasinorms if $0<p<1$) of $f$ on a Lebesgue measurable set $E$ with $0<|E|<\infty$.
Then, as a consequence of H\"older's inequality,
\[
  \|f\|_{L^q(E)}^\sharp \le \|f\|_{L^p(E)}^\sharp, \quad \text{ if } 0< q \le p \le \infty.
\]

In the setup of \Cref{ssec:setup}, also opposite inequalities hold.

\begin{theorem} \label[t]{t:comparison}
   Let $E \subseteq K$ be a Lebesgue measurable subset with $|E|>0$.
  Then, for all $0< q< p \le \infty$,
  \begin{equation} \label{eq:LpLqmaster}
      \|f\|_{L^p(K)}^\sharp \le \Big(\Cnn \Big(\frac{|K|}{\nu_K \,|E|}\Big)^{\nn} \Big)^{1-\frac{q}{p}}
        (q\, \nn+1)^{\frac{1}{q}-\frac{1}{p}}  
        (\|f\|_{L^q(K)}^\sharp)^{\frac{q}p} (\|f\|_{L^q(E)}^\sharp)^{1-\frac{q}{p}}.
    \end{equation}
  In particular, 
  \begin{equation} \label{eq:LpLqK}
      \|f\|_{L^p(K)}^\sharp \le \Big(\frac{\Cnn}{\nu_K^{\nn}} \Big)^{1-\frac{q}{p}}
        (q\, \nn+1)^{\frac{1}{q}-\frac{1}{p}}  
        \|f\|_{L^q(K)}^\sharp
  \end{equation}
  and
 \begin{equation} \label{eq:LpLqE}
       \|f\|_{L^p(K)}^\sharp \le  \Cnn \Big(\frac{|K|}{\nu_K \,|E|}\Big)^{\nn} \,
       (q\, \nn+1)^{\frac{1}{q}}\,  \|f\|_{L^q(E)}^\sharp.
    \end{equation}
\end{theorem}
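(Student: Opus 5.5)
We first prove the case $p=\infty$ and then interpolate. Write $s:=\|f\|_K/\Cnn$ and $A:=\Cnn(|K|/(\nu_K|E|))^{\nn}$, so that \Cref{l:rearrange} reads
\[
(f|_E)^*(|E|\la)\ge \frac{\|f\|_K}{A}(1-\la)^{\nn}, \qquad \la\in(0,1).
\]

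Since the decreasing rearrangement is equimeasurable with $f|_E$, we have
\[
(\|f\|_{L^q(E)}^\sharp)^q=\frac{1}{|E|}\int_0^{|E|}((f|_E)^*(y))^q\,dy=\int_0^1 ((f|_E)^*(|E|\la))^q\,d\la .
\]
Inserting the lower bound from \Cref{l:rearrange} gives
\[
(\|f\|_{L^q(E)}^\sharp)^q\ge \Big(\frac{\|f\|_K}{A}\Big)^q\int_0^1(1-\la)^{q\nn}\,d\la=\Big(\frac{\|f\|_K}{A}\Big)^q\frac{1}{q\nn+1}.
\]
Hence
\[
\|f\|_{L^\infty(K)}^\sharp\le\|f\|_K\le A\,(q\nn+1)^{1/q}\,\|f\|_{L^q(E)}^\sharp ,
\]
which is \eqref{eq:LpLqmaster} for $p=\infty$. (Here $\|f\|_{L^\infty(K)}^\sharp=\|f\|_K$, since $f$ is continuous and $K$ is fat.)

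For finite $p>q$ we use the elementary interpolation bound
\[
\int_K|f|^p\le \|f\|_K^{p-q}\int_K|f|^q,
\]
which after normalization becomes
\[
\|f\|_{L^p(K)}^\sharp\le\|f\|_K^{1-q/p}\,(\|f\|_{L^q(K)}^\sharp)^{q/p}.
\]
Substituting the $p=\infty$ bound for $\|f\|_K$ yields exactly the factor
\[
A^{1-q/p}(q\nn+1)^{(1-q/p)/q}(\|f\|^\sharp_{L^q(E)})^{1-q/p}.
\]
The exponent of $(q\nn+1)$ is $(1-q/p)/q=\frac1q-\frac1p$, as stated. This proves \eqref{eq:LpLqmaster}.

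The two special cases follow directly. Taking $E=K$ gives \eqref{eq:LpLqK}, because $|K|/(\nu_K|K|)=1/\nu_K$. For \eqref{eq:LpLqE}, we bound $\|f\|_{L^q(K)}^\sharp\le\|f\|_K$ in \eqref{eq:LpLqmaster}, or more simply apply the chain $\|f\|^\sharp_{L^p(K)}\le\|f\|_K\le A(q\nn+1)^{1/q}\|f\|^\sharp_{L^q(E)}$.

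The only step requiring care is the rearrangement identity: the integral of $|f|^q$ over $E$ equals the integral of $(f|_E)^{*q}$ over $(0,|E|)$. This is standard (layer-cake formula), and $f$ is measurable since it is continuous. Everything else is bookkeeping of the exponents.
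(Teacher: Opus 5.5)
Your proof is correct and follows the paper's argument. You get the $p=\infty$ case from \Cref{l:rearrange} together with the rearrangement integral over $(0,1)$, and then interpolate via $\int_K|f|^p\le\|f\|_K^{p-q}\int_K|f|^q$. The only cosmetic difference is in \eqref{eq:LpLqE}: you bound $\|f\|^\sharp_{L^p(K)}\le\|f\|_K$ directly, whereas the paper inserts $\|f\|^\sharp_{L^q(K)}\le\|f\|^\sharp_{L^p(K)}$ into \eqref{eq:LpLqmaster} and divides.
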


\begin{proof}
    Both \eqref{eq:LpLqK} and \eqref{eq:LpLqE} are simple consequences of \eqref{eq:LpLqmaster}:
    it suffices to specialize to $E=K$ 
    and to use $\|f\|_{L^q(K)}^\sharp \le \|f\|_{L^p(K)}^\sharp$, respectively.

    Let us prove \eqref{eq:LpLqmaster}.
    By \Cref{l:rearrange},  
    \begin{align} 
        \frac{1}{|E|} \int_{E} |f(x)|^q \, dx &= \frac{1}{|E|} \int_0^{|E|} \big((f|_E)^*(y)\big)^q \, dy
       =  \int_0^1 \big((f|_E)^*(|E|\, \la)\big)^q \, d\la
       \\
                                              &\ge   \frac{\|f\|_K^q}{\Cnn^q}  \Big(\frac{\nu_K\, |E|}{ |K|}\Big)^{q \nn}\int_0^{1} (1-\la)^{q \nn} \, d\la 
       \\ \label{eq:compc}
                               &= \frac{\|f\|_K^q}{\Cnn^q}  \Big(\frac{\nu_K\, |E|}{ |K|}\Big)^{q \nn} \frac{1}{q \nn+1}
    \end{align}
    and thus 
    \begin{equation} \label{eq:LinftyLqmaster}
       \|f\|_K \le \Cnn \Big(\frac{ |K|}{\nu_K\,|E|}\Big)^{\nn}\,
        (q\, \nn+1)^{\frac{1}{q}}  \,
        \|f\|_{L^q(E)}^\sharp
    \end{equation}
    which is \eqref{eq:LpLqmaster} for $p=\infty$.

    If $0< q < p < \infty$, then \eqref{eq:LinftyLqmaster} implies
     \begin{align*}
     \MoveEqLeft 
     \frac{1}{|K|} \int_{K} |f(x)|^p \,dx
        \le  \|f\|_{K}^{p-q} \frac{1}{|K|}\int_{K} |f(x)|^q \,dx = \|f\|_{K}^{p-q}\, (\|f\|_{L^q(K)}^\sharp)^q
        \\
        &\le \Big(\Cnn \Big(\frac{|K|}{\nu_K\, |E|}\Big)^{\nn}
        (q\, \nn+1)^{\frac{1}{q}}\Big)^{p-q} \, (\|f\|_{L^q(K)}^\sharp)^q (\|f\|_{L^q(E)}^\sharp)^{p-q}
     \end{align*}
     from which \eqref{eq:LpLqmaster} follows easily.
\end{proof}

\subsection{Integrability of \texorpdfstring{$|f|^{-\al}$}{|f|{-alpha}}}

\begin{theorem} \label[t]{t:integrability}
   The integral 
   \[
       \int_K |f(x)|^{-\al} \, dx
   \]
   converges, provided that $0<\al < 1/\nn$.
\end{theorem}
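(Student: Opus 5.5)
We will use the layer-cake formula together with the sublevel set estimate of \Cref{t:sublevel}. Writing $K_t = \{x \in K : |f(x)| \le t\}$, Tonelli's theorem gives
\begin{equation}
    \int_K |f(x)|^{-\al}\, dx = \int_0^\infty \cL^n\big(\{x \in K : |f(x)|^{-\al} > s\}\big)\, ds = \int_0^\infty |\{x\in K: |f(x)| < s^{-1/\al}\}|\, ds.
\end{equation}
Note that the zero set $K_0$ has measure zero, by \Cref{t:sublevel}, since the right-hand side there tends to $0$ as $t \to 0$. So $|f|^{-\al}$ is finite almost everywhere, and the integral makes sense as a nonnegative measurable integral.

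We split the $s$-integral at $s = 1$ (or at any fixed threshold). On $[0,1]$ we simply bound the integrand by $|K|$. On $[1,\infty)$ we bound the integrand by $|K_{t}|$ with $t = s^{-1/\al}$ and apply \Cref{t:sublevel}:
\begin{equation}
    |K_{s^{-1/\al}}| \le \frac{|K|}{\nu_K}\Big(\frac{\Cnn}{\|f\|_K}\Big)^{1/\nn} s^{-\frac{1}{\al\nn}}.
\end{equation}
Since $0<\al<1/\nn$, the exponent $\frac{1}{\al \nn}$ exceeds $1$, so $\int_1^\infty s^{-1/(\al\nn)}\,ds = \frac{\al\nn}{1-\al\nn}<\infty$. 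This yields the explicit bound
\begin{equation}
    \int_K |f|^{-\al}\,dx \le |K| + \frac{|K|}{\nu_K}\Big(\frac{\Cnn}{\|f\|_K}\Big)^{1/\nn}\frac{\al\nn}{1-\al\nn}.
\end{equation}
Here $\|f\|_K \ge b_0>0$, and $\nn \ge 2$ by \Cref{r:Nnonzero}, since $M_0>b_0$.

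There is no real obstacle here. The only points requiring care are the measure-zero zero set and the bookkeeping in the change of variables $t = s^{-1/\al}$. Alternatively, one could use the decreasing rearrangement bound of \Cref{l:rearrange} with $E = K$: it gives $f^*(|K|\la) \ge c(1-\la)^{\nn}$, so $\int_K |f|^{-\al} = |K|\int_0^1 (f^*(|K|\la))^{-\al}d\la \le |K| c^{-\al}\int_0^1(1-\la)^{-\al\nn}d\la$, which is finite exactly when $\al\nn<1$. This route gives a cleaner constant, and I would likely present it that way.
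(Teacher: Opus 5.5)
Your proof is correct, and the route you say you would present (\Cref{l:rearrange} with $E=K$, then integrating $(1-\la)^{-\al\nn}$) is exactly the paper's proof. There are two small differences.

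First, the paper shows that the zero set has measure zero by citing an external result (Theorem~4.6 of the earlier work on convex bodies). You instead obtain $|\{x\in K: f(x)=0\}|=0$ directly from \Cref{t:sublevel} by letting $t\to 0$, which is more self-contained.

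Second, your layer-cake argument is not really a different method. \Cref{l:rearrange} is just \Cref{t:sublevel} rewritten for the decreasing rearrangement, and integrating $((f|_K)^*)^{-\al}$ is the layer-cake formula in inverted variables. Your first bound looks weaker only because you split at $s=1$, which is not homogeneous in $f$. Set $A=\frac{|K|}{\nu_K}(\Cnn/\|f\|_K)^{1/\nn}$, bound the distribution by $\min\{|K|, A s^{-1/(\al\nn)}\}$, and split at the crossover $s_*=\nu_K^{-\al\nn}(\Cnn/\|f\|_K)^{\al}$. The two pieces then sum to $|K|\,s_*/(1-\al\nn)$, which is precisely the paper's bound $|K|\big(\|f\|_K\nu_K^{\nn}/\Cnn\big)^{-\al}(1-\al\nn)^{-1}$. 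So the two presentations carry the same information; the rearrangement version simply builds in the optimal split automatically.
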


\begin{proof}
   By \cite[Theorem 4.6]{Rainer:2022aa}, the zero set $Z_f$ of $f : L \to \R$ has Lebesgue measure zero.
   Thus $K \ni x \mapsto |f(x)|^{-\al}$ is defined almost everywhere and measurable. 
   A computation similar to \eqref{eq:compc} (using \Cref{l:rearrange} for $E=K$) shows
   \begin{align*}
       \int_K |f(x)|^{-\al} \, dx 
       &= |K|  \int_0^1 \big((f|_K)^*(|K|\, \la)\big)^{-\al} \, d\la 
       \\
       &\le |K|\, \Big(  \frac{\|f\|_K \, \nu_K^\nn}{\Cnn} \Big)^{-\al}  \int_0^{1} (1-\la)^{-\al \nn} \, d\la.
   \end{align*}
   The integral on the right-hand side converges precisely if $\al \nn < 1$. 
\end{proof}

\subsection{On the mean oscillation of \texorpdfstring{$\log |f|$}{log |f|}}

Recall that the \emph{mean oscillation} of a locally integrable function $g : \R^n \supseteq X \to \R$ 
over a Lebesgue measurable set $E \subseteq X$ with $0<|E|<\infty$ is given by
\[
  \on{mo}_E(g) := \frac{1}{|E|} \int_E |g(x)-g_E|\, dx,
\]
where $g_E$ is the average of $g$ over $E$,
\[
  g_E := \frac{1}{|E|} \int_E g(x)\, dx.
\]
In the Setup \ref{ssec:setup}, we have the following bound for the mean oscillation of $\log |f|$.

\begin{corollary} \label[c]{cor:Remez6}
  For each Lebesgue measurable set $E \subseteq K$ with $|E|>0$,
  \begin{equation} \label{eq:mo}
     \on{mo}_E(\log |f|) \le
        2\log \Cnn + 2\nn \, \Big(1+ \log\Big(\frac{|K|}{\nu_K\, |E|}\Big)\Big).
  \end{equation}
\end{corollary}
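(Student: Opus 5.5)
We bound $\on{mo}_E(\log|f|)$ by comparing $\log|f|$ with the constant $c:=\log\|f\|_K$, which dominates it on $K$. Since $\log|f|\le c$ on $E$, the elementary inequality
\[
\on{mo}_E(g)\le \frac{2}{|E|}\int_E |g-c|\,dx
\]
holds for any constant $c$. It therefore suffices to show
\[
\frac{1}{|E|}\int_E \log\frac{\|f\|_K}{|f(x)|}\,dx \le \log\Cnn + \nn\Big(1+\log\frac{|K|}{\nu_K|E|}\Big).
\]
The integrand is nonnegative. It is defined almost everywhere because the zero set of $f$ has measure zero (as used in \Cref{t:integrability}).

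To estimate this integral we pass to the decreasing rearrangement, exactly as in the proof of \Cref{t:comparison}. Equimeasurability gives
\[
\frac{1}{|E|}\int_E \log\frac{\|f\|_K}{|f|}\,dx=\int_0^1 \log\frac{\|f\|_K}{(f|_E)^*(|E|\la)}\,d\la.
\]
Apply \Cref{l:rearrange} pointwise in $\la\in(0,1)$:
\[
\log\frac{\|f\|_K}{(f|_E)^*(|E|\la)}\le \log\Cnn+\nn\log\frac{|K|}{\nu_K|E|}+\nn\log\frac{1}{1-\la}.
\]
Now integrate over $\la\in(0,1)$. The identity $\int_0^1\log\frac{1}{1-\la}\,d\la=1$ yields precisely the bound above. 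Multiplying by $2$ gives \eqref{eq:mo}.

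The main point requiring care is the justification of the rearrangement identity for the function $\log(\|f\|_K/|f|)$. This function is decreasing in $|f|$ and may be unbounded near the zeros of $f$. I would handle it via the layer-cake formula applied to $|f|$ together with monotonicity, or by truncating $\log\max\{|f|,\ep\}$ and using monotone convergence. Integrability of $\log|f|$ on $E$ then follows a posteriori from the finite bound, so $g_E$ is well defined.
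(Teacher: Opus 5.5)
Your proposal is correct and follows essentially the same route as the paper: the triangle-inequality reduction $\on{mo}_E(\log|f|)\le \frac{2}{|E|}\int_E|\log(|f|/\|f\|_K)|\,dx$, then passing to the decreasing rearrangement and integrating the pointwise bound from \Cref{l:rearrange} using $\int_0^1\log\frac{1}{1-\la}\,d\la=1$. Your remarks on the a.e.\ definedness of the integrand and the a posteriori integrability of $\log|f|$ on $E$ cover points the paper leaves implicit; the rearrangement identity follows directly from equimeasurability for any nonnegative Borel function of $|f|$, so no truncation is needed.
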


\begin{proof}
Observe that
\begin{equation} \label{eq:mo1}
  \on{mo}_E(\log |f|) \le  \frac{2}{|E|} \int_E \Big|\log \frac{|f(x)|}{\|f\|_K}\Big| \, dx.
\end{equation}
Indeed,
\begin{align*}
   \Big| \log |f| - \frac{1}{|E|} \int_E \log |f|\, dx \Big| 
   &\le \Big| \log |f| - \log \|f\|_K \Big| + \Big| \log \|f\|_K  - \frac{1}{|E|} \int_E \log |f|\, dx \Big|
   \\
   &= \Big| \log \frac{|f|}{\|f\|_K} \Big| + \Big| \frac{1}{|E|} \int_E \log \|f\|_K  -  \log |f|\, dx \Big|
   \\
   &\le \Big| \log \frac{|f|}{\|f\|_K} \Big| + \frac{1}{|E|} \int_E \Big| \log \frac{|f|}{\|f\|_K} \Big| \, dx.
\end{align*}
By \Cref{l:rearrange},
\begin{align}
   \frac{1}{|E|}\int_E \Big|\log \frac{|f(x)|}{\|f\|_K}\Big| \, dx
   &= \frac{1}{|E|} \int_0^{|E|} \Big|\log \frac{(f|_E)^*(y)}{\|f\|_K}\Big| \, dy
    =  \int_0^{1} \Big|\log \frac{(f|_E)^*(|E|\, \la)}{\|f\|_K}\Big| \, d\la
    \\
    &\le \int_0^1  -\log\Big(\frac{1}{\Cnn}\Big) - \nn \, \log \Big( \frac{\nu_K\, |E|\,(1-\la)}{|K|}\Big) \, d\la
    \\
    &= \log \Cnn + \nn \, \Big( 1 +  \log\Big(\frac{|K|}{\nu_K\, |E|}\Big)\Big) \label{eq:compR6}
\end{align}
and \eqref{eq:mo} follows.
\end{proof}

\Cref{cor:Remez6} provides an explicit bound on how fast $\on{mo}_E(\log |f|)$ can grow 
as $|E|$ tends to zero. In the following corollary, we derive a different bound, where 
the dependence is integrated in the integer $\nn$. We only consider complex bodies and balls therein, for simplicity.

\begin{corollary} \label[c]{c:nnB}
    Let $K\subseteq \R^n$ be a convex body. 
    Let $\mu^{|\infty} = (\mu_j)_{j \ge 1}$ be an admissible quasianalytic weight. 
    Let $f : K \to \R$ be a $(\mu^{|\infty},M_0)$-smooth function which is not identically zero and where $M_0>0$ is such that $M_0 > \|f\|_K$. Then, 
    for each ball $B \subseteq K$,
    \begin{equation}
        \on{mo}_B(\log |f|) \le 2 \log \mathbf{C}_{\nn_B} + 2(1+ \log n)\, \nn_B,
    \end{equation}
    where $\nn_B := \nn(\mu^{|\infty},M_0,\diam(K),\|f\|_B)$.
\end{corollary}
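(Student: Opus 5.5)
Fix a ball $B \subseteq K$. The idea is to redo the proof of \Cref{cor:Remez6} with $B$ in place of $K$, so that the ratio $|K|/|E|$ disappears and only the geometry of the ball enters. The first observation is that the normalization in \eqref{eq:mo1} can be carried out with any constant, and we use $\|f\|_B$ instead of $\|f\|_K$:
\[
\on{mo}_B(\log|f|) \le \frac{2}{|B|}\int_B \Big|\log\frac{|f(x)|}{\|f\|_B}\Big|\,dx .
\]
Since $f$ is not identically zero and $\mu^{|\infty}$ is quasianalytic, $f$ cannot vanish identically on the ball $B$ (its Taylor series at an interior point would vanish). Hence $\|f\|_B>0$, and $\nn_B$ is well defined. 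By \Cref{r:nnprop}(3), the tuple $(\mu^{|\infty},M_0,\diam(K),\|f\|_B)$ is admissible, and $M_0>\|f\|_K\ge \|f\|_B$ places us in the standing convention $M_0>b_0$.

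Next we apply the convex-body Remez inequality \Cref{thm:Remez} with $m=1$, with the convex body $B$ in place of $K$, with $L=K$, and with $b_0=\|f\|_B$. This gives the degree $\nn(\mu^{|\infty},M_0,\diam(B),\|f\|_B)\le \nn_B$, because $\diam(B)\le \diam(K)$ and \Cref{r:nnprop}(2) applies. Monotonicity of $\mathbf C_{\nn}$ in $\nn$ then lets us replace this degree and its constant by $\nn_B$ and $\mathbf C_{\nn_B}$; note that the ratio term $\frac{|B|^{1/n}}{|B|^{1/n}-(|B|-|E|)^{1/n}}\ge 1$, so raising the exponent only weakens the bound. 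Running the arguments of \Cref{t:sublevel} and \Cref{l:rearrange} with $K$ replaced by $B$, $E=B$ and $\nu=1/n$ yields
\[
(f|_B)^*(|B|\,\la)\ge \frac{\|f\|_B}{\mathbf C_{\nn_B}}\Big(\frac{1-\la}{n}\Big)^{\nn_B},\qquad \la\in(0,1).
\]

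Finally, since $|f|\le\|f\|_B$ on $B$, the integrand equals $-\log\big((f|_B)^*(|B|\la)/\|f\|_B\big)\ge 0$. Integrating exactly as in \eqref{eq:compR6} gives
\[
\frac{1}{|B|}\int_B\Big|\log\frac{|f|}{\|f\|_B}\Big|\,dx \le \log\mathbf C_{\nn_B}+\nn_B\Big(\log n+\int_0^1 -\log(1-\la)\,d\la\Big)=\log\mathbf C_{\nn_B}+\nn_B(1+\log n).
\]
Doubling this yields the claim.

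The one delicate point is checking that the monotonicity replacements are legitimate. This means verifying that $\mathbf C_\nn=(4e^{4+\ga_{\tilde\mu}(\nn)})^\nn$ is increasing in $\nn$, which holds because $\ga_{\tilde\mu}$ is increasing as a supremum over a growing range. It also means checking that the sublevel derivation uses only $\nu_B=1/n$ and $E=B$, so that no factor $|K|/|B|$ appears.
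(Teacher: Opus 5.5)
Your proposal is correct and follows essentially the same route as the paper, which applies \Cref{l:rearrange} with $E$, the body, and $L$ all equal to $B$ (so $\nu=1/n$ and no ratio $|K|/|E|$ appears) and then integrates as in \eqref{eq:compR6}. You additionally make explicit two points the paper leaves tacit: that $\|f\|_B>0$ by quasianalyticity, and that the degree $\nn(\mu^{|\infty},M_0,\diam(B),\|f\|_B)$ and its constant may be replaced by $\nn_B$ and $\mathbf{C}_{\nn_B}$ by monotonicity.
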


\begin{proof}
    Since $M_0 > \|f\|_K$, we have $\nn_B \in \N_{\ge 2}$, by \Cref{r:Nnonzero}, 
    and thus the constant $\mathbf{C}_{\nn_B}>0$ is well-defined. By \Cref{l:rearrange} (applied to $E=K =L = B$), 
    \[
        (f|_B)^*(|B|\, \la) \ge \frac{\|f\|_B}{\mathbf{C}_{\nn_B}} \Big(\frac{1-\la}{n}\Big)^{\nn_B}.
    \]
    Then, the computation \eqref{eq:compR6} 
    implies the assertion.
\end{proof}

\begin{remark} \label[r]{r:nnB}
    We refer to \Cref{p:an}, \Cref{p:Denjoy1}, and \Cref{p:Denjoys} for 
   precise growth estimates of $\nn(\mu^{|\infty},1,a,b)$ as a function of $a$ and $b$, 
   for particular admissible weights $\mu^{|\infty}$. 
   For instance, in the case $\mu_{\on{an}}^{|\infty} = (j)_{j\ge 1}$, 
   $\nn_B$ is of the same order as
   \[
         \Big\lceil \log \frac{M_0}{\|f\|_B}\Big\rceil_\N.
   \]
\end{remark}

We cannot infer that $\log |f|$ is BMO, but we have the following.

\begin{corollary}
    Let $K\subseteq \R^n$ be a convex body. 
    Let $\mu^{|\infty} = (\mu_j)_{j \ge 1}$ be an admissible quasianalytic weight. 
    Let $M_0> b_0>0$.
    Let $f : K \to \R$ be a $(\mu^{|\infty},M_0)$-smooth function such that $M_0 > \|f\|_K$. Then, 
    for each ball $B \subseteq K$ satisfying $\|f\|_B \ge b_0$,
    \begin{equation}
        \on{mo}_B(\log |f|) \le 2 \log \Cnn + 2 (1+\log n) \, \nn,
    \end{equation}
    where $\nn := \nn(\mu^{|\infty},M_0,\diam(K),b_0)$.
    In particular, for each $b_0>0$,
    the restriction of $\log |f|$ to the set $\{x \in K : |f(x)|\ge b_0\}$ is BMO.
\end{corollary}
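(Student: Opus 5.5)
The plan is to reduce to \Cref{c:nnB} by monotonicity of the degree and of $\Cnn$ in the last argument. Fix a ball $B \subseteq K$ with $\|f\|_B \ge b_0$.

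First I apply \Cref{l:rearrange} with $K = L = E = B$, and with the lower bound $\|f\|_B \ge b_0$ in place of the one that defined $\nn_B$. A ball is a convex body with $\nu_B = 1/n$ and $a_B = \diam(B) \le \diam(K)$. The function $f|_B$ is $(\mu^{|\infty},M_0)$-smooth, and $M_0 > b_0$. Hence the setup of \Cref{ssec:setup} holds for the data $(\mu^{|\infty},M_0,\diam(K),b_0)$. To see this, note that the Remez inequality of \Cref{thm:Remez} on $B$ may be used with the degree $\nn(\mu^{|\infty},M_0,\diam(K),b_0)$, since by \Cref{r:nnprop}(2) this degree dominates $\nn(\mu^{|\infty},M_0,\diam(B),b_0)$. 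Also, $\Cnn$ is increasing in $\nn$, because $\ga_{\tilde\mu}$ is increasing. Quasianalyticity guarantees admissibility by \Cref{r:nnprop}(3). Convexity means the condition $\mu_j\ge j$ is not needed, by \Cref{r:convexbody}. This yields
\[
    (f|_B)^*(|B|\,\la) \ge \frac{\|f\|_B}{\Cnn}\Big(\frac{1-\la}{n}\Big)^{\nn}, \quad \la\in(0,1).
\]

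Next I rerun the computation \eqref{eq:mo1}–\eqref{eq:compR6}, normalizing by $\|f\|_B$ instead of $\|f\|_K$. By the same triangle-inequality argument,
\[
\on{mo}_B(\log|f|) \le \frac{2}{|B|}\int_B \Big|\log\frac{|f|}{\|f\|_B}\Big|\,dx .
\]
Since $|f|\le \|f\|_B$ on $B$, the integrand equals $-\log(|f|/\|f\|_B)$. The rearrangement bound then gives
\[
\log\Cnn + \nn\,(1+\log n),
\]
using $\int_0^1 -\log(1-\la)\,d\la = 1$. Doubling this yields the claimed estimate.

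For the BMO statement, I note that the right-hand side is a constant depending only on $b_0$ (through $\nn$) and not on $B$. So the mean oscillation of $\log|f|$ is uniformly bounded over all balls $B\subseteq K$ meeting $\{|f|\ge b_0\}$. Any such ball satisfies $\|f\|_B\ge b_0$ automatically, which gives the BMO property of the restriction in the stated sense.

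The main care is in the first step. I must justify that \Cref{l:rearrange} holds with the non-sharp degree $\nn$ rather than $\nn_B$. This follows because $\Cnn(\cdot)^{\nn}$ with base at least $1$ only grows when $\nn$ is enlarged, which is where the monotonicity in \Cref{r:nnprop}(2) enters.
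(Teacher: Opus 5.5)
Your proposal is correct and is essentially the paper's argument. The paper invokes \Cref{c:nnB} together with $\nn(\mu^{|\infty},M_0,\diam(K),\|f\|_B)\le\nn(\mu^{|\infty},M_0,\diam(K),b_0)$ (and, implicitly, that $\Cnn$ increases with $\nn$), while you unpack the proof of \Cref{c:nnB} and insert the larger degree already at the Remez/\Cref{l:rearrange} stage; since the final bound $2\log\Cnn+2(1+\log n)\nn$ is increasing in $\nn$, the two are equivalent, and your reading of the BMO clause matches the paper's.
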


\begin{proof}
    Since $\nn(\mu^{|\infty},M_0,\diam(K),\|f\|_B) \le \nn(\mu^{|\infty},M_0,\diam(K),b_0)$,
    this is a consequence of \Cref{c:nnB}.
\end{proof}

\begin{remark} \label[r]{r:BMO}
    A locally integrable function $g : \R^n\to \R$ is BMO if and only if $g= C_1 \, \log \om$ 
    for some constant $C_1$ and some nonnegative $\om : \R^n \to \R$ that satisfies a reverse H\"older inequality
    in the sense that there exist $p=p(\om)>1$ and $C_2 = C_2(\om)>0$ such that 
    \[
        \|\om\|^\sharp_{L^p(B)} \le C_2 \, \|\om\|^\sharp_{L^1(B)},
    \]
    for all balls $B$. See \cite[Chapter V]{Stein93}. In our reverse H\"older inequality \eqref{eq:LpLqK} for $K=B$,
    the constant depends on $B$ (through $\nn$).
\end{remark}

\section{Inequalities of {\L}ojasiewicz type} \label{sec:Lojasiewicz}

The goal of this section is to establish several inequalities of {\L}ojasiewicz type with explicit constants, which 
depend on $\nn$ and the geometry of the underlying admissible set $K$.

\subsection{Thickness of a set}

Let us recall the definition of thickness,
following \cite[Definition 1.16]{Pierzchala:2022aa}.

\begin{definition}[Thick sets]
Let $K \subseteq \R^n$ be a Lebesgue measurable nonempty set.
We say that $K$ is \emph{$(c_K,\th_K)$-thick} if there exist constants 
$c_K,\th_K > 0$ such that 
\begin{equation}
    |K \cap B(x,r)| \ge c_K \,  r^{\th_K}
\end{equation}
for all $x \in K$ and $0 < r \le 1$.    
\end{definition}

Every compact fat nonempty $K \subset \R^n$ that is definable in a polynomially bounded o-minimal 
expansion of the real field is \emph{$(c_K,\th_K)$-thick} for some $c_K,\th_K > 0$. This follows 
easily from the {\L}ojasiewicz inequality that is available for polynomially bounded o-minimal structures; 
see e.g.\ the proof of \cite[Theorem 1.18]{Pierzchala:2022aa}.

\begin{remark} \label[r]{r:cbthick}
    Each convex body $K \subseteq \R^n$ is $(c_K,\th_K)$-thick with 
    \begin{equation}
        \th_K = n \quad \text{ and } \quad c_k = \om_n \Big(\frac{\rh}{H}\Big)^n,  
    \end{equation}
    where $H := \max\{ \diam(K),1\}$ and $\rh>0$ is the radius of the largest ball contained in $K$.
    Indeed, there exists a ball $B(x_0,\rh) \subseteq K$.
    For given $x \in K$ and $0<r\le 1$, consider the ball $B:= \{x+\frac{r}{H}(y-x) : y \in B(x_0,\rh) \}$.
    Then $B \subseteq K$, by convexity, and $B \subseteq B(x,r)$ since 
    $\tfrac{r}{H}|y-x| \le  \tfrac{r}{H} \diam(K) \le r$. It follows that 
    \[
     |K \cap B(x,r)| \ge |B| =\om_n (\tfrac{\rh}{H})^n \, r^n.
    \]
\end{remark}

\subsection{Inequalities of {\L}ojasiewicz type with explicit constants}

The following theorem is formulated for vector-valued maps, since among other things we will apply it to the gradient.

\begin{theorem} \label[t]{t:Lojasiewicz}
Let $L\subseteq \R^n$ be a convex body and
$K \in \sK(\R^n)$ 
with $K \subseteq L$. 
Assume that $K$ is $(c_K,\th_K)$-thick.
Let $\mu^{|\infty} = (\mu_j)_{j \ge 1}$ be an admissible weight satisfying $\mu_j \ge j$ for $j \ge 1$. 
    Let $M_0>b_0>0$.
    Let $m \in \N_{\ge 1}$.
    Assume that $(\mu^{|\infty},m\, M_0,a_K,b_0)$ is admissible and let 
    \[
        \nn = \nn(\mu^{|\infty},m\,M_0,a_K,b_0).
    \]
    Let $f : L \to \R^m$ be a $(\mu^{|\nn},M_0)$-smooth map 
    satisfying $\|f\|_K\ge b_0$.
    For any function $g : K \to [-1,1]$, 
    we have 
    \begin{equation} \label{eq:L}
        \|f\|_{B(x,\frac{|g(x)|}2) \cap K} \ge \frac{\|f\|_K}{\Cnn}\,\Big(\frac{c_K\, \nu_K}{m\,2^{\th_K} \, |K|}\Big)^{\nn}\,  |g(x)|^{\th_K \nn},
    \end{equation}
    for all $x \in K$. 
\end{theorem}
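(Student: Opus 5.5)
The plan is to apply \Cref{t:RemezKL} with $E := B(x,\frac{|g(x)|}{2}) \cap K$ and to bound $|E|$ from below using thickness.

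Fix $x \in K$. If $g(x)=0$, the right-hand side of \eqref{eq:L} vanishes (since $\th_K\nn>0$), so there is nothing to prove; in that case we interpret the ball as degenerate. So assume $r := \frac{|g(x)|}{2} \in (0,\frac12]$. Since $r\le 1$ and $x \in K$, thickness gives
\[
|E| = |K \cap B(x,r)| \ge c_K\, r^{\th_K} = \frac{c_K}{2^{\th_K}}\, |g(x)|^{\th_K} > 0 .
\]
All hypotheses of \Cref{t:RemezKL} hold: $f$ is $(\mu^{|\nn},M_0)$-smooth on $L$, $\|f\|_K\ge b_0$, $(\mu^{|\infty},m M_0,a_K,b_0)$ is admissible, and $E\subseteq K$ is measurable with $|E|>0$. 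Hence
\[
\|f\|_K \le \Cnn \Big(\frac{m\,|K|^{\nu_K}}{|K|^{\nu_K} - (|K|-|E|)^{\nu_K}}\Big)^{\nn} \|f\|_E .
\]

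Next we simplify the denominator, exactly as in the proof of \Cref{t:sublevel}. Put $s := |E|/|K|\in(0,1]$. Then
\[
\frac{|K|^{\nu_K}-(|K|-|E|)^{\nu_K}}{|K|^{\nu_K}} = 1-(1-s)^{\nu_K} \ge \nu_K\, s ,
\]
using $\nu s \le 1-(1-s)^\nu$ for $\nu\in(0,1]$, $s \in [0,1]$. Therefore
\[
\|f\|_K \le \Cnn\Big(\frac{m\,|K|}{\nu_K |E|}\Big)^{\nn}\|f\|_E .
\]
Inserting the lower bound for $|E|$ and rearranging gives
\[
\|f\|_E \ge \frac{\|f\|_K}{\Cnn}\Big(\frac{c_K\,\nu_K}{m\,2^{\th_K}|K|}\Big)^{\nn}|g(x)|^{\th_K\nn},
\]
which is \eqref{eq:L}.

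There is no serious obstacle here. The only points needing care are the degenerate case $g(x)=0$ and checking that $r\le 1$, so that the thickness inequality applies; the latter is guaranteed because $g$ takes values in $[-1,1]$.
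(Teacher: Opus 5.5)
Your proof is correct and is essentially the paper's argument. The paper passes through the vector-valued sublevel-set bound $|K_t|\le \frac{m|K|}{\nu_K}\big(\Cnn t/\|f\|_K\big)^{1/\nn}$, which is \Cref{t:RemezKL} applied to $E=K_t$ together with $\nu s\le 1-(1-s)^\nu$, and then uses thickness to show $B(x,\frac{|g(x)|}{2})\cap K\not\subseteq K_t$ for every $t$ below the threshold; your direct application of \Cref{t:RemezKL} to $E=B(x,\frac{|g(x)|}{2})\cap K$ is the same estimate read in the other direction and yields the identical constant slightly more directly.
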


Note that we do not require that $f|_K^{-1}(0) \subseteq g^{-1}(0)$.

\begin{proof}
    If $x \in g^{-1}(0)$, then \eqref{eq:L} holds trivially; in that case, the left-hand side of \eqref{eq:L} 
    equals $|f(x)|_1$.
    Fix $x \in K \setminus g^{-1}(0)$ and set 
    $B :=  B(x,\frac{r}{2}) \cap K$ for $r:= |g(x)|$. Note that $r \in (0,1]$.

    We formulated \Cref{t:sublevel} for single-valued function, but it remains true in the vector-valued setting 
    with the same proof. Thus, for $t>0$,
    \begin{equation} \label{eq:L2}
        |K_t|\le  \frac{m\,|K|}{\nu_K} \, \Big(\frac{\Cnn \,t}{\|f\|_K}\Big)^{1/\nn},
    \end{equation}
    where $K_t := \{x\in K : |f(x)|_1 \le t\}$.
    
    Let $T$ be the set of all $t>0$ such that 
    \begin{equation} \label{eq:L3}
        \frac{m\, |K|}{\nu_K} \, \Big(\frac{\Cnn \,t}{\|f\|_K}\Big)^{1/\nn} < c_K \Big(\frac{r}{2}\Big)^{\th_K}.
    \end{equation}
    That means $t \in T$ if and only if 
    \begin{equation} \label{eq:L4}
        0<  t <\frac{\|f\|_K}{\Cnn} \, \Big(\frac{c_K\, \nu_K}{m\,2^{\th_K}\, |K|}\Big)^{\nn} \, r^{\th_K \nn}.
    \end{equation}
    Since $|B| \ge c_K (\frac{r}{2})^{\th_K}$ because $K$ is $(c_K,\th_K)$-thick, 
    we have $B\not \subseteq K_t$ and hence $\|f\|_B > t$
    for all $t \in T$, by \eqref{eq:L2} and \eqref{eq:L3}.
    In view of \eqref{eq:L4}, this implies \eqref{eq:L}.
\end{proof}

Let us apply this to the distance function.

\begin{corollary}
    In the setup of \Cref{t:Lojasiewicz} with $m=1$, we have, for all $x \in K$, 
    \begin{equation} 
        \|f\|_{B(x,\frac{r(x)}2) \cap K} \ge \frac{\|f\|_K}{\Cnn}\,\Big(\frac{c_K\, \nu_K}{2^{\th_K} \, |K|}\Big)^{\nn}\,  r(x)^{\th_K \nn},
    \end{equation}
    where 
    \begin{equation}
        r(x) := \min\{ \on{dist}(x,f|_K^{-1}(0)),1\}, \quad x \in K,
    \end{equation}
    or, alternatively,
    \begin{equation}
        r(x) := \frac{\on{dist}(x,f|_K^{-1}(0))}{\max\{\diam(K),1\}}, \quad x \in K. s
    \end{equation}
\end{corollary}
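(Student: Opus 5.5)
The plan is to apply \Cref{t:Lojasiewicz} with $m=1$ to the two choices of $g$; the only work is checking that each $g$ is an admissible function $K \to [-1,1]$.

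For the first choice, set $g(x) := r(x) = \min\{\dist(x,f|_K^{-1}(0)),1\}$. This takes values in $[0,1] \subseteq [-1,1]$. If $f|_K^{-1}(0)=\emptyset$, we use the convention $\dist(x,\emptyset)=\infty$, so $r \equiv 1$ and the statement is still meaningful. \Cref{t:Lojasiewicz} applies to arbitrary $g$ with values in $[-1,1]$, with no measurability or continuity hypothesis and no inclusion $f|_K^{-1}(0)\subseteq g^{-1}(0)$. Inserting $|g(x)| = r(x)$ into \eqref{eq:L} with $m=1$ then gives the displayed inequality verbatim.

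For the alternative choice, set $r(x) := \dist(x,f|_K^{-1}(0))/\max\{\diam(K),1\}$. First assume the zero set $f|_K^{-1}(0)$ is nonempty. Then for $x \in K$ we have $\dist(x,f|_K^{-1}(0)) \le \diam(K) \le \max\{\diam(K),1\}$, because both $x$ and the nearest zero lie in $K$. Hence $r(x)\in[0,1]$, and \Cref{t:Lojasiewicz} applies again with $g := r$.

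The one point needing care is the empty zero set in the second choice. There $\dist = \infty$, so $r$ is not $[-1,1]$-valued. In that case one either reads the formula as $\min\{\cdot,1\}$, or applies \Cref{t:Lojasiewicz} with $g\equiv 1$, which dominates $r$. Since the right-hand side of \eqref{eq:L} is increasing in $|g(x)|$, applying it with $|g| \le 1$ replaced by $1$ makes the bound no weaker, so the conclusion survives.

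No other obstacle arises; the corollary is a direct specialization of \Cref{t:Lojasiewicz}.
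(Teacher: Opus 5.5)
Your argument is the paper's own: the corollary is \Cref{t:Lojasiewicz} with $m=1$ and $g=r$, and all that needs checking is that $r$ takes values in $[0,1]$. Your verification of this for both choices, using $\dist(x,f|_K^{-1}(0))\le\diam(K)$ when the zero set is nonempty, is correct. One small slip: in the degenerate case $f|_K^{-1}(0)=\emptyset$ of the second choice, $r\equiv\infty$ and the right-hand side is infinite, so the inequality is meaningless as written. Applying the theorem with $g\equiv 1$ does not rescue it, since $g\equiv 1$ does not dominate $r$; that case must be excluded, or $r$ replaced by $\min\{r,1\}$, which is just the first version.
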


\subsection{Order of vanishing}

    The order of vanishing of a continuous function $f : K \to \R$ at $x\in K$ is defined by 
    \begin{equation}
        \on{ord}_x f = \limsup_{r\to 0} \frac{\log \|f\|_{B(x,r)\cap K}}{\log r}.
    \end{equation}
    If $f$ is smooth and $K$ is open, then $\on{ord}_x f$ is the order of the smallest nonzero partial derivative of $f$ at $x$.

\begin{corollary}
    In the setup of \Cref{t:Lojasiewicz} with $m=1$, 
    we have $\on{ord}_x f \le \th_K \nn$ for all $x \in K$.
\end{corollary}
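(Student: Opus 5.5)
The plan is to specialize \Cref{t:Lojasiewicz} to constant functions $g \equiv r$ and then let $r \to 0$. Fix $x \in K$ and $r \in (0,1)$, and apply \Cref{t:Lojasiewicz} with $m=1$ and $g \equiv r$, which takes values in $[-1,1]$. This gives
\[
  \|f\|_{B(x,\frac{r}{2}) \cap K} \ge c\, r^{\th_K \nn},
  \qquad
  c := \frac{\|f\|_K}{\Cnn}\Big(\frac{c_K\, \nu_K}{2^{\th_K}\, |K|}\Big)^{\nn} > 0,
\]
and $c$ does not depend on $r$ or on $x$.

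Next, substitute $s = r/2 \in (0,\tfrac12)$ to get
\[
  \|f\|_{B(x,s)\cap K} \ge c\, 2^{\th_K\nn}\, s^{\th_K \nn}.
\]
Taking logarithms, for $s<1$ we have $\log s < 0$, so dividing flips the inequality:
\[
  \frac{\log \|f\|_{B(x,s)\cap K}}{\log s} \le \th_K \nn + \frac{\log(c\,2^{\th_K\nn})}{\log s}.
\]
As $s \to 0$ the last term tends to $0$, because its numerator is a fixed finite number. Taking $\limsup$ therefore gives $\on{ord}_x f \le \th_K \nn$.

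There is no real obstacle. The only points to check are that $\|f\|_{B(x,s)\cap K} > 0$, which the lower bound guarantees so the logarithm is finite, and that the direction of the inequality flips correctly when dividing by the negative quantity $\log s$.
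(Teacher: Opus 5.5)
Your proposal is correct and matches the paper's proof: apply \Cref{t:Lojasiewicz} with the constant function $g \equiv r$, $r \in (0,1)$, to get a lower bound for $\|f\|_{B(x,r/2)\cap K}$ of the form (constant independent of $r$)$\cdot (r/2)^{\th_K \nn}$, then let $r \to 0$. You also spell out the logarithm-and-$\limsup$ step that the paper covers with ``this implies the assertion.''
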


\begin{proof}
    Applying \Cref{t:Lojasiewicz} to $g\equiv r$ for $r\in (0,1)$, gives
    \[
        \|f\|_{B(x,\frac{r}2) \cap K} \ge  \frac{\|f\|_K}{\Cnn}\,\Big(\frac{c_K\, \nu_K}{|K|}\Big)^{\nn}\,  \Big(\frac{r}{2}\Big)^{\th_K \nn},
    \]
    for all $x \in K$.
    This implies the assertion.
\end{proof}

\subsection{Inequalities for the gradient}

We will deduce several inequalities for the gradient.

\subsubsection{First gradient inequality}

The first is a direct application of \Cref{t:Lojasiewicz} for $\grad f$ and $f$ in place of $f$ and $g$.
It requires a ``shift'' of the weight $\mu^{|\infty}$, see \eqref{eq:shift}.

\begin{corollary} \label[c]{c:grad}
Let $L\subseteq \R^n$ be a convex body and
$K \in \sK(\R^n)$ 
such that $K \subseteq L$. 
Assume that $K$ is $(c_K,\th_K)$-thick.
Let $\mu^{|\infty} = (\mu_j)_{j \ge 1}$ be an admissible weight satisfying $\mu_j \ge j$ for $j \ge 1$. 
    Let $M_1 = M_0\mu_1 > b_1>0$. 
    Assume that $(\mu^{|\infty}_{+1},n\,M_1,a_K,b_1)$ is admissible and let 
    \[ 
        \nn = \nn(\mu^{|\infty}_{+1},n\,M_1,a_K,b_1).
    \]
    Let $f : L \to \R$ be a $(\mu^{|\nn+1},M_0)$-smooth function satisfying $\|f\|_K \le 1$ and
    $\|\grad f\|_K\ge b_1$.  
    Then, for all $x \in K$, 
    \begin{equation} \label{eq:Lgrad}
        \|\on{grad} f\|_{B(x,\frac{|f(x)|}2) \cap K} \ge \frac{\|\grad f\|_K}{\Cnn}\,\Big(\frac{c_K\, \nu_K}{n\, 2^{\th_K} \, |K|}\Big)^{\nn}\,  |f(x)|^{\th_K \nn}.
    \end{equation}
\end{corollary}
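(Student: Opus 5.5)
The plan is to apply \Cref{t:Lojasiewicz} with $m=n$, taking the vector-valued map $F := \grad f = (\p_1 f,\ldots,\p_n f) : L \to \R^n$ in place of $f$ and the function $g := f|_K$ in place of $g$. Since $\|f\|_K \le 1$, $g$ takes values in $[-1,1]$, as that theorem requires. With $|g(x)| = |f(x)|$, its conclusion is exactly \eqref{eq:Lgrad}. Here $\|F\|_{B} = \sup_B \sum_i |\p_i f| = \|\grad f\|_B$ by the convention in the notation section, and $m=n$ produces the factor $n$ in the denominator.

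The work is in checking the hypotheses on $F$ with the shifted weight. Our chosen data are: admissible weight $\mu^{|\infty}_{+1} = (\mu_{j+1})_{j\ge1}$, zeroth bound $M_1 = M_0\mu_1$, and lower bound $b_1$.

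\begin{enumerate}
\item \emph{Weight and constants.} We have $\mu_{j+1} \ge j+1 \ge j$, so the shifted weight satisfies the requirement $\mu_j\ge j$. The condition $M_1 > b_1$ holds by assumption. Admissibility of $(\mu^{|\infty}_{+1}, n\,M_1, a_K, b_1)$ is assumed, and $\nn$ is defined accordingly. Finally $\|F\|_K = \|\grad f\|_K \ge b_1$.
\item \emph{Smoothness of $F$.} We need $F$ to be $(\mu^{|\nn}_{+1}, M_1)$-smooth. First, $F$ is of class $\cC^{\nn}$ because $f$ is $\cC^{\nn+1}$.
\item \emph{Derivative norm identity.} For $0\le j\le \nn$ we must show $\|F\|_{j,L} \le M_1\mu_2\cdots\mu_{j+1} = M_{j+1}$. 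I would prove the pointwise identity
\[
\sum_{i=1}^n \sum_{|\al|=j} \frac{j!}{\al!}|\p^\al\p_i f(x)| \;\le\; \sum_{|\be|=j+1}\frac{(j+1)!}{\be!}|\p^\be f(x)|.
\]
Write $\be = \al + e_i$. The coefficient of $|\p^\be f|$ on the left is $\sum_{i:\be_i\ge1} \frac{j!}{(\be-e_i)!} = \frac{j!}{\be!}\sum_i \be_i = \frac{(j+1)!}{\be!}$, so the inequality is in fact an equality. Hence $\|F\|_{j,L} = \|f\|_{j+1,L} \le M_{j+1}$.
\end{enumerate}

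The only point needing care, and the likely main obstacle, is step 3: this combinatorial bookkeeping of the multinomial weights in the $1$-norm-based seminorms. It is exactly why the norms were defined that way, and once it is verified the corollary is immediate from \Cref{t:Lojasiewicz}.
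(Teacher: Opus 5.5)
Your proposal is correct and follows the paper's own argument: the paper also applies \Cref{t:Lojasiewicz} with $m=n$ to $\grad f$ (with $g=f|_K$), justifying it only by the remark that $\grad f$ is $(\mu^{|\infty}_{+1},M_1)$-smooth. Your multinomial count showing $\|\grad f\|_{j,L}=\|f\|_{j+1,L}\le M_{j+1}$ is precisely the verification the paper leaves implicit.
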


We do not require $(\grad f|_K)^{-1}(0) \subseteq f|_K^{-1}(0)$.

\begin{proof}
    It is easy to see that $\grad f : L \to \R^n$ is $(\mu^{|\infty}_{+1},M_1)$-smooth.
    Thus the statement follows from \Cref{t:Lojasiewicz}.
\end{proof}

\begin{remark}
    Alternatively, using that $\|\grad f\|_{K} \ge b_1$ implies $\|\p_{j_0} f\|_K\ge  b_1/n$ for some $j_0$ 
    and \Cref{t:Lojasiewicz} for $\p_{j_0} f$ implies the following version of \eqref{eq:Lgrad}: 
   \begin{equation}
        \|\on{grad} f\|_{B(x,\frac{|f(x)|}2) \cap K} \ge \frac{b_1}{\Cnn \, n}\,\Big(\frac{c_K\, \nu_K}{2^{\th_K} \, |K|}\Big)^{\nn}\,  |f(x)|^{\th_K \nn}.
   \end{equation}
   with $\nn = \nn(\mu^{|\infty}_{+1},n\,M_1,a_K,b_1)$.
\end{remark}

\subsubsection{Second gradient inequality}

The constants in \eqref{eq:Lgrad} are completely explicit, but the exponent $\th_K \nn$ of $|f(x)|$ 
is typically large. In the analytic or quasianalytic setting (see \Cref{l:BM}), 
one has an (unspecified) exponent $\al \in (0,1)$.
In the following, we discuss gradient inequalities with explicit constants and \emph{small} exponents of $|f(x)|$.

\begin{lemma}[{\cite[Theorem 6.3]{BM04}}] \label[l]{l:BM}
    Let $\cC_M$ be a quasianalytic Denjoy--Carleman class satisfying the properties listed in \Cref{e:RCM}.
    Let $U \subseteq \R^n$ be open and $f \in \cC_M(U)$. 
    Let $K \subseteq U$ be compact and assume that $(\grad f|_K)^{-1}(0) \subseteq f|_K^{-1}(0)$. 
    Then there exist $c>0$ and $\al \in [0,1)$ such that 
    \[
        |\grad f(x)| \ge c\, |f(x)|^{\al}
    \]
    in a neighborhood of $K$.
\end{lemma}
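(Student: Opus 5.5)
The approach is to reduce to the o-minimal, polynomially bounded structure $\R_{\cC_M}$ from \Cref{e:RCM}. There I would run the classical curve-selection argument for the gradient {\L}ojasiewicz inequality, with polynomial boundedness forcing the exponent to be $<1$.

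\textbf{Localization and definability.} Cover $K$ by finitely many closed cubes $Q_1,\ldots,Q_s \subseteq U$, each with a slightly larger concentric cube still in $U$. After an affine change of variables, each $f|_{Q_i}$ is the restriction of a restricted $\cC_M$-function. By stability of $\cC_M$ under differentiation, the same holds for $\p_j f$. Hence $f|_{Q_i}$, $\grad f|_{Q_i}$ and $|\grad f|^2|_{Q_i}$ are definable in $\R_{\cC_M}$, which is polynomially bounded by \cite{RolinSpeisseggerWilkie03}. It suffices to prove the inequality on a neighborhood of $K\cap Q_i$ inside $Q_i$ for each $i$, and then take the minimal $c$ and the maximal $\al$.

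\textbf{Removing spurious critical points.} The hypothesis only controls $K$, so I first show that there is a compact neighborhood $V$ of $K$ with $(\grad f|_V)^{-1}(0) \subseteq f|_V^{-1}(0)$. Suppose not. Then there are critical points $x_k \to x \in K$ with $f(x_k) \ne 0$. By continuity $\grad f(x)=0$, so $f(x)=0$. The critical set $\Si$ is definable near $x$ and has finitely many connected components, each definably path connected. Along any definable $\cC^1$ curve in $\Si$ we have $(f\o\ga)' = \grad f(\ga)\cdot \ga' = 0$, so $f$ is constant on the component of $\Si$ through $x$ within a small ball. This gives $f(x_k)=0$ for large $k$, a contradiction.

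\textbf{The core estimate.} On $V$ define
\[
\vh(t) := \inf\{|\grad f(y)| : y \in V,\ |f(y)| = t\}, \quad t>0 \text{ small}.
\]
This is a definable function of one variable. By compactness and the previous step, $\vh(t)>0$ for $t>0$ in the range of $|f|$; where $|f|$ is bounded away from $0$ we just use a positive minimum of $|\grad f|$. By o-minimality the infimum is attained, and by definable choice there is a definable curve $\ga(t) \in V$ with $|f(\ga(t))| = t$ and $|\grad f(\ga(t))| = \vh(t)$. Shrinking the $t$-interval, $\ga$ is $\cC^1$ on $(0,\ep)$ and extends continuously to $\ga(0) \in f^{-1}(0)$.

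Polynomial boundedness gives a Puiseux-type behavior $|\ga'(t)| \le C t^{\be-1}$ for some $\be>0$. Indeed, the components of $\ga$ are definable and bounded, so each is $t^{\be}$-like with $\be>0$ rational, or constant. Differentiating $|f(\ga(t))| = t$ yields $1 \le |\grad f(\ga(t))|\,|\ga'(t)|$, hence
\[
\vh(t) \ge C^{-1} t^{1-\be}.
\]
So $|\grad f(x)| \ge \vh(|f(x)|) \ge c\,|f(x)|^{\al}$ with $\al := \max\{1-\be,0\} \in [0,1)$.

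\textbf{Main obstacle.} The delicate step is the last one: justifying $|\ga'(t)| \lesssim t^{\be-1}$ with $\be>0$. This requires the asymptotic expansion and monotonicity of definable one-variable functions in a polynomially bounded structure (\cite{Miller:1994ue}). It also requires checking that the selected curve tends to the zero set. I would handle that via the monotonicity theorem and compactness of $V$.
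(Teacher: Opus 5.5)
Your proof is correct, but it takes a genuinely different route from the one behind the statement. The paper does not prove the lemma. It quotes \cite[Theorem 6.3]{BM04}, where the gradient inequality is derived, like the other {\L}ojasiewicz inequalities there, from resolution of singularities in quasianalytic classes. That argument works directly inside $\cC_M$, with no o-minimal input.

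You instead pass to the polynomially bounded structure $\R_{\cC_M}$ and run the Kurdyka--{\L}ojasiewicz curve argument. You minimize $|\grad f|$ on the level sets $\{|f|=t\}$, select a definable minimizing curve $\ga$, and read off $\al<1$ from the power-type growth of $\ga'$. Your route is short and conceptual, and it works verbatim for any $\cC^1$ function definable in a polynomially bounded o-minimal structure. Its cost is that it rests on the Rolin--Speissegger--Wilkie theorem, itself proved with blowing-up techniques of Bierstone--Milman type. It also uses all of conditions (1)--(4) of \Cref{e:RCM}, but these are exactly the hypotheses of the lemma, so nothing is lost. The resolution approach, in turn, is axiomatic in the quasianalytic class and avoids model theory altogether.

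A few small points:
\begin{itemize}
\item The step you flag closes directly. Since $\ga_j'$ is definable, either it vanishes near $0$ or $\ga_j'(t)\sim d_j t^{s_j}$. Boundedness of $\ga_j$ forces $s_j>-1$, so $|\ga'(t)|\le C t^{\be-1}$ with $\be=\min_j(s_j+1)>0$.
\item Differentiating $|f(\ga(t))|=t$ already shows $\grad f(\ga(t))\ne0$ for small $t$. Your o-minimal removal of spurious critical points is correct, but it could be replaced by a plain compactness argument for the levels $|f|\ge\ep$.
\item The infimum defining $\vh$ is attained by compactness, not o-minimality.
\item $V$ should be taken definable, e.g.\ a finite union of your cubes whose interiors cover $K$.
\item When gluing the local estimates, take $\al=\max_i\al_i$ and adjust $c$ using a bound for $|f|$. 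Taking just the minimal $c_i$ is not enough.
\end{itemize}
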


\begin{theorem} \label[t]{t:gradptw1}
Let $L\subseteq \R^n$ be a convex body and $K \in \sK(\R^n)$ such that $K \subseteq \on{int}(L)$. 
Assume that $K$ is $(c_K,\th_K)$-thick.
Let $\mu^{|\infty} = (\mu_j)_{j \ge 1}$ be a quasianalytic admissible weight satisfying $\mu_j \ge j$ for $j \ge 1$
and all the properties listed in \Cref{e:RCM}. 
    Let $M_0 \ge 1$ and $M_0>b_0>0$
    and
    \[
        \nn = \nn(\mu^{|\infty},M_0,a_K,b_0).
    \]
    Let $f : L \to \R$ be a $(\mu^{|\nn},M_0)$-smooth function 
    satisfying $\|f\|_K\ge b_0$.
Assume that $(\grad f|_K)^{-1}(0) \subseteq f|_K^{-1}(0)$.
Then,
\begin{equation} \label{eq:gradptw1}
    |\grad f(x)| \ge\frac{1}{2\sqrt{M_2}} \Big(\frac{\|f\|_K}{2\Cnn}\Big)^{\frac{1}{\th_K\nn}}\,\Big(\frac{c_K\, \nu_K}{|K|}\Big)^{\frac{1}{\th_K}}\, |f(x)|^{\frac{3}{2} -\frac{1}{\th_K\nn}}
\end{equation}
in a neighborhood of $f|_K^{-1}(0)$ in $K$.
\end{theorem}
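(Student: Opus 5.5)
Combine \Cref{t:Lojasiewicz} (applied to $f$ itself, with $m=1$) with a second-order Taylor estimate, and choose the radius so that $f$ cannot be small on the whole ball unless $\grad f(x)$ is large. Fix $x\in K$ close to $f|_K^{-1}(0)$, and write $\rho := \dist(x,\R^n\setminus L)$. Since $K\subseteq\on{int}(L)$ is compact, $\rho$ is bounded below uniformly on $K$.

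For $0<r\le 1$, apply \Cref{t:Lojasiewicz} with the constant function $g\equiv r$. This gives
\[
\|f\|_{B(x,r/2)\cap K}\ge A\,(r/2)^{\th_K\nn},\qquad A:=\frac{\|f\|_K}{\Cnn}\Big(\frac{c_K\nu_K}{|K|}\Big)^{\nn}.
\]
So there is a point $y\in K$ with $|y-x|\le r/2$ and $|f(y)|\ge A (r/2)^{\th_K\nn}$. On the other hand, the segment $[x,y]$ lies in $\on{co}(K)\subseteq L$. Taylor's formula along it, together with $\|f\|_{2,L}\le M_2$ and the directional-derivative bound from the remark in \Cref{sec:weights}, gives
\[
|f(y)|\le |f(x)|+|\grad f(x)|\,\tfrac r2+\tfrac{M_2}{2}\big(\tfrac r2\big)^2 .
\]
Here I use the $1$-norm gradient. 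The Euclidean one differs from it by at most a factor $\sqrt n$, which can be absorbed or noted.

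Now choose $r$ to balance the terms. Set $s:=r/2$ and $\be:=\th_K\nn$. We want $s$ so small that $\frac{M_2}{2}s^2\le \frac12 A s^{\be}$ fails the other way. Since $\be\ge 2$ typically, the term $As^\be$ is the small one, so I reverse the viewpoint: take $s$ such that $As^{\be}=2|f(x)|$, i.e.
\[
s=\Big(\frac{2|f(x)|}{A}\Big)^{1/\be}.
\]
This is $\le 1/2$ once $|f(x)|$ is small, which defines the neighborhood of the zero set. Then $|\grad f(x)|s\ge |f(x)|-\frac{M_2}{2}s^2$. To prevent the quadratic term from destroying this, I would instead not apply Taylor directly but first use the hypothesis $(\grad f|_K)^{-1}(0)\subseteq f|_K^{-1}(0)$ together with \Cref{l:BM}. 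That lemma gives $|\grad f|\ge c|f|^\al$ with $\al<1$. Hence near the zero set $|\grad f(x)|^2\ge$ anything proportional to $|f(x)|^{2}$, which makes the quadratic comparison consistent. It also justifies restricting to a neighborhood on which $s$ is small.

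The cleaner route, which yields the exponent $\frac32-\frac1\be$ and the factor $\frac1{2\sqrt{M_2}}$, is to optimize in the Taylor bound instead. Suppose the target inequality fails at $x$, i.e. $|\grad f(x)|< \frac{1}{2\sqrt{M_2}}\,(2|f(x)|/A)^{... }$. Then with
\[
s = \frac{2|\grad f(x)|}{M_2}\ \text{ or }\ s=\sqrt{|f(x)|/M_2}\cdot(\text{const}),
\]
the Taylor bound gives $|f|\le 2|f(x)|$ on $B(x,s)$. Meanwhile \Cref{t:Lojasiewicz} forces $\|f\|_{B(x,s)\cap K}\ge A s^{\be}$. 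The choice $s\sim\sqrt{|f(x)|/M_2}$ makes the quadratic term comparable to $|f(x)|$, and this produces the $\sqrt{M_2}$. Then the inequality $A s^\be\le 2|f(x)|+|\grad f(x)|s$, rearranged, gives $|\grad f(x)|\gtrsim (A s^{\be}-c|f(x)|)/s$, with the exponent $\frac32-\frac1\be$ coming from the powers of $|f(x)|$. The main obstacle is making the bookkeeping of this optimization produce exactly the stated constant. I expect it requires writing the inequality as $|\grad f(x)|\ge \frac{A s^\be - |f(x)|}{s}-\frac{M_2 s}{2}$, choosing $s$ by $A s^{\be}=2|f(x)|$ as above, and using \Cref{l:BM} (through the neighborhood) to guarantee $M_2 s^2\le |f(x)|\cdot$const there. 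I would work this final algebra out carefully; the conceptual steps are the Łojasiewicz lower bound plus Taylor.
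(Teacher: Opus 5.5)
There is a genuine gap. None of the radii you try turns the {\L}ojasiewicz bound into a \emph{lower} bound for $|\grad f(x)|$, and your final plan breaks exactly at the step you defer as bookkeeping. With $s=(2|f(x)|/A)^{1/\be}$, your inequality $|\grad f(x)|\ge |f(x)|/s-M_2s/2$ says something only if $M_2s^2<2|f(x)|$. But
\[
\frac{M_2 s^2}{|f(x)|}=M_2\Big(\frac{2}{A}\Big)^{2/\be}|f(x)|^{\frac{2}{\be}-1},
\]
which tends to $\infty$ as $|f(x)|\to0$ when $\be=\th_K\nn>2$, and equals the huge constant $2M_2/A$ when $\be=2$. \Cref{l:BM} cannot repair this. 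It bounds $|\grad f(x)|$ from below, but this $s$ does not involve $\grad f(x)$ at all, so no gradient information can make $M_2s^2\lesssim |f(x)|$. (Even if it worked, it would give exponent $1-\frac1\be$ and no $\sqrt{M_2}$, so it is not the route to the stated bound.)

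Your other two candidates also fail:
\begin{itemize}
\item For $s\sim\sqrt{|f(x)|/M_2}$ you do get $|f|\le 2|f(x)|$ on the ball. But then $As^\be\sim AM_2^{-\be/2}|f(x)|^{\be/2}\le 2|f(x)|$ holds automatically near the zero set, so nothing is contradicted.
\item For $s=2|\grad f(x)|/M_2$, the relation $As^\be\le2|f(x)|$ is an \emph{upper} bound on $|\grad f(x)|$.
\end{itemize}

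The missing idea is to make the radius inversely proportional to the gradient:
\[
g(x):=\frac{|f(x)|^{3/2}}{\sqrt{M_2}\,|\grad f(x)|},
\]
with $g:=0$ where $\grad f$ vanishes, which the hypothesis on the zero sets permits. Use \Cref{l:BM} only to find a neighborhood $U$ of $f|_K^{-1}(0)$ in $K$ on which $|f|\le1$ and $|f|/|\grad f|\le1$; then $0\le g\le1$ there. For $y\in B(x,g(x)/2)\cap L$, apply Taylor's formula with Cauchy--Schwarz in the linear term. (You do not need the $1$-norm, and a $\sqrt n$ loss could not be absorbed into the stated constant.) Using $M_2\ge1$, this gives
\[
|f(y)-f(x)|\le \frac{|f(x)|^{3/2}}{2\sqrt{M_2}}+\frac{|f(x)|^3}{8|\grad f(x)|^2}\le \tfrac12|f(x)|+\tfrac18|f(x)|.
\]
The linear term no longer depends on the gradient, and the quadratic term is controlled by $|f|/|\grad f|\le 1$. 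Hence $|f|\le 2|f(x)|$ on the ball. \Cref{t:Lojasiewicz} at $x$ with this $g$ then yields $2|f(x)|\ge A\,(g(x)/2)^{\be}$. Since $g(x)$ is decreasing in $|\grad f(x)|$, this rearranges to exactly the claimed inequality. The factor $|f(x)|^{3/2}/\sqrt{M_2}$ in $g$ is what produces the exponent $\frac32-\frac1\be$ and the constant $\frac{1}{2\sqrt{M_2}}$.
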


\begin{proof}
By \Cref{l:BM}, there exist $c>0$ and $\al \in [0,1)$ such that, for $x \in K$,
\[
    \frac{|f(x)|}{|\grad f(x)|}   =  \frac{|f(x)|^\al|f(x)|^{1-\al}}{|\grad f(x)|}   \le \frac{|f(x)|^{1-\al}}{c}.
\]
Thus there is a neighborhood $U$ in $K$ of $f|_K^{-1}(0)$ such that 
\begin{equation}
   |f(x)| \le 1 \quad \text{ and } \quad  \frac{|f(x)|}{|\grad f(x)|} \le 1 \quad \text{ for } x \in U.
\end{equation}
In particular, the function $g : K \to \R$ which is $0$ on $(\grad f|_K)^{-1}(0)$ and  
\begin{equation} \label{eq:g2}
    g(x) := \frac{|f(x)|^{3/2}}{\sqrt{M_2} \,|\grad f(x)|} \quad \text{ for } x \not\in (\grad f|_K)^{-1}(0),  
\end{equation}
is well-defined on $K$ and satisfies $0 \le g \le 1$ on $U$. 

Let $x \in U$ and $y \in B(x,\frac{g(x)}{2}) \cap L$. Then 
\begin{align*}
    |f(y) - f(x)| &\le   |\langle \grad f(x), y-x \rangle| + \frac{M_2}{2} |x-y|^2 
    \\
                  &\le \frac{1}{2 \sqrt{M_2}} |f(x)|^{3/2} + \frac{1}{8} |f(x)|   
\end{align*}
and consequently (noting that $M_2 \ge 1$),
\begin{equation} \label{eq:uu}
    |f(y)| \le 2 \, |f(x)|.
\end{equation}

By \Cref{t:Lojasiewicz} applied to the choice \eqref{eq:g2} of $g$ and \eqref{eq:uu},
for $x \in U$, 
\begin{align*}
    2\, |f(x)| \ge  \|f\|_{B(x,\frac{g(x)}2) \cap K} \ge \frac{\|f\|_K}{\Cnn}\,\Big(\frac{c_K\, \nu_K}{2^{\th_K} \, |K|}\Big)^{\nn}\,  \Big( \frac{|f(x)|^{3/2}}{\sqrt{M_2} \,|\grad f(x)|}\Big)^{\th_K \nn},
\end{align*}
implying
\begin{equation}
    |\grad f(x)|^{\th_K \nn} \ge \frac{\|f\|_K}{2\Cnn \, M_2^{\th_K\nn/2}}\,\Big(\frac{c_K\, \nu_K}{2^{\th_K} \, |K|}\Big)^{\nn}\, |f(x)|^{\frac{3}{2}\th_K \nn -1}.
\end{equation}
This gives \eqref{eq:gradptw1}.
\end{proof}

\begin{corollary} \label[c]{r:gradexp}
    In the setup of \Cref{t:gradptw1}, 
\begin{equation}
    \|\grad f\|_{B_x} \ge \frac{1}{2} \Big(\frac{2\|f\|_K}{3\Cnn}\Big)^{\frac{1}{\th_K\nn}}\,\Big(\frac{c_K\, \nu_K}{|K|}\Big)^{\frac{1}{\th_K}}\,  |f(x)|^{1-\frac{1}{\th_K \nn}}
\end{equation}
for $x$ in a neighborhood of $f|_K^{-1}(0)$ in $K$, where $B_x$ is a neighborhood of $x$ 
in $L$ that shrinks to $x$ if $x$ tends to $f|_K^{-1}(0)$.
\end{corollary}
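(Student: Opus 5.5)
The idea is to repeat the argument of \Cref{t:gradptw1}, but with a test radius that is linear in $|f(x)|$ rather than of order $|f(x)|^{3/2}$. The price is that the gradient must be taken as a supremum over a small neighbourhood $B_x$ rather than at the point $x$. This lets me control $f$ on the ball by the mean value theorem alone, without the second-order Taylor term.

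\textbf{Step 1: choice of $B_x$ and of $g$.} By \Cref{l:BM} there are $c>0$ and $\al\in[0,1)$ with $|f(x)|/|\grad f(x)|\le |f(x)|^{1-\al}/c$ near $f|_K^{-1}(0)$. At points where $\grad f(x)=0$ we have $f(x)=0$ by hypothesis, and the claim is trivial there. Elsewhere set
\[
\rho(x):=\frac{|f(x)|}{2\,|\grad f(x)|}\quad\text{and}\quad B_x:=B(x,\rho(x))\cap L .
\]
Then $\rho(x)\to 0$ as $x\to f|_K^{-1}(0)$, so $B_x$ shrinks to $x$. Since $K\subseteq \on{int}(L)$, there is a neighbourhood $U$ of $f|_K^{-1}(0)$ in $K$ on which $B(x,\rho(x))\subseteq L$, $\rho(x)\le \tfrac12$ and $|f(x)|\le 1$.

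Put $h(x):=\|\grad f\|_{B_x}$ and $g(x):=|f(x)|/h(x)$. Because $h(x)\ge|\grad f(x)|$, we get $g(x)/2\le\rho(x)$, hence $B(x,g(x)/2)\subseteq B_x$, and $0\le g\le 1$ on $U$. Outside $U$ we extend $g$ by $0$.

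\textbf{Step 2: upper bound for $f$ on the small ball.} Let $y\in B(x,g(x)/2)$. Since $L$ is convex and the 1-norm bounds directional derivatives, the mean value theorem gives
\[
|f(y)-f(x)|\le h(x)\,|y-x|\le \tfrac12 |f(x)|.
\]
Therefore $\|f\|_{B(x,g(x)/2)\cap K}\le \tfrac32|f(x)|$.

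\textbf{Step 3: lower bound and solving for $h(x)$.} Apply \Cref{t:Lojasiewicz} with $m=1$ and this $g$:
\[
\tfrac32|f(x)|\ \ge\ \frac{\|f\|_K}{\Cnn}\Big(\frac{c_K\nu_K}{2^{\th_K}|K|}\Big)^{\nn}\Big(\frac{|f(x)|}{h(x)}\Big)^{\th_K\nn}.
\]
Solving for $h(x)$ gives
\[
h(x)^{\th_K\nn}\ \ge\ \frac{2\|f\|_K}{3\Cnn}\Big(\frac{c_K\nu_K}{|K|}\Big)^{\nn}\,2^{-\th_K\nn}\,|f(x)|^{\th_K\nn-1}.
\]
Taking the $\th_K\nn$-th root yields exactly the asserted bound.

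\textbf{Main obstacle.} The only delicate point is the circularity between $g$ and $B_x$: the ball on which we evaluate the gradient must contain the ball $B(x,g(x)/2)$ on which we control $f$. Defining $B_x$ through the pointwise gradient, and using $h(x)\ge|\grad f(x)|$, breaks this circularity. The smallness of $\rho$, $g\le 1$, and $B_x\subseteq L$ then follow from \Cref{l:BM} together with $K\subseteq\on{int}(L)$.
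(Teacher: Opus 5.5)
Your proof is correct and follows essentially the paper's argument. It uses the same neighbourhood $B_x=B\big(x,\tfrac{|f(x)|}{2|\grad f(x)|}\big)\cap L$, the mean value estimate on $B_x$, and \Cref{t:Lojasiewicz} with $m=1$. The only difference is cosmetic: the paper takes $g=|f|/|\grad f|$ and uses $|\grad f(x)|\le\|\grad f\|_{B_x}$ at the end to eliminate the pointwise gradient, whereas you put $\|\grad f\|_{B_x}$ into $g$ from the start, which gives the cleaner bound $\tfrac32|f(x)|$ and the same constant.
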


\begin{proof}
    We follow the proof of \Cref{t:gradptw1} 
    and replace \eqref{eq:g2} by 
    \begin{equation}   
    g(x) := \frac{|f(x)|}{|\grad f(x)|}.
    \end{equation}
    Then the function $g : K \to \R$ is well-defined and satisfies $0 \le g \le 1$ on $U$.
    Moreover, $g(x) \to 0$ as $x$ tends to $f|_K^{-1}(0)$.
    
    Set $B_x :=  B(x,\frac{g(x)}{2}) \cap L$.
    For $x \in U$ and $y \in B_x'$, we have
\begin{align*}
    |f(y) - f(x)| &\le  \sup_{z \in B_x}  |\grad f(z)|  \frac{|f(x)|}{2\,|\grad f(x)|}
                 \le \frac{\|\grad f\|_{B_x}}{2 \, |\grad f(x)|} \,|f(x)|,
\end{align*}
(using $|\grad f(z)| \le |\grad f(z)|_1$)
and thus
\begin{equation}
    |f(y)| \le \Big(1+\frac{\|\grad f\|_{B_x}}{2 \, |\grad f(x)|}\Big) \, |f(x)| 
    \le \frac{3}{2}\, \|\grad f\|_{B_x}\,\frac{|f(x)|}{ |\grad f(x)|}.
\end{equation}
Then \Cref{t:Lojasiewicz} applied to this choice of $g$ gives
\begin{equation}
    \frac{3}{2}\, \|\grad f\|_{B_x}\,\frac{|f(x)|}{ |\grad f(x)|}  \ge \frac{\|f\|_K}{\Cnn}\,\Big(\frac{c_K\, \nu_K}{2^{\th_K} \, |K|}\Big)^{\nn}\,  \Big( \frac{|f(x)|}{|\grad f(x)|}\Big)^{\th_K \nn},
\end{equation}
and hence
\begin{equation}
    \|\grad f\|_{B_x}^{\th_K \nn} \ge \frac{2\|f\|_K}{3\Cnn}\,\Big(\frac{c_K\, \nu_K}{2^{\th_K} \, |K|}\Big)^{\nn}\,  |f(x)|^{\th_K \nn-1}.
\end{equation}
which implies the statement.
\end{proof}

\subsubsection{Third gradient inequality}

We close this section with another inequality of {\L}ojasiewicz type based on the coarea formula, which we recall 
in the following lemma in the form we will use (cf.\ \cite{Federer69}).

\begin{lemma} \label[l]{l:coarea}
   Let $f : \R^n \to \R$ be Lipschitz.
   Then, for all Borel measurable functions $g : \R^n \to [0,\infty]$,
  \[
      \int_{\R^n} g(x)\, |\grad f(x)| \, dx = \int_{\R} \int_{f^{-1}(y)} g \, d\cH^{n-1} \, dy.
  \] 
\end{lemma}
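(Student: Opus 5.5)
The statement is the classical coarea formula for Lipschitz maps into $\R$, in the form that integrates a nonnegative Borel weight $g$ against $|\grad f|$. Since it is a standard result of geometric measure theory, I would not reprove it from scratch. I would cite it from \cite{Federer69}, Theorem 3.2.12 together with 3.2.22, or from the Evans--Gariepy treatment, and only indicate how the general case follows from the basic one.

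The reduction has four steps.

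\emph{Step 1: the basic case $g=\mathbf 1_A$.} For a Borel set $A$, the identity reads
\[
\int_A |\grad f|\,dx=\int_\R \cH^{n-1}(A\cap f^{-1}(y))\,dy .
\]
By Rademacher's theorem, $f$ is differentiable almost everywhere, so $\grad f$ is defined a.e.\ and is Borel measurable.

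\emph{Step 2: measurability of the right-hand side.} I would check that $y\mapsto \int_{f^{-1}(y)} g\,d\cH^{n-1}$ is Lebesgue measurable. For $g=\mathbf 1_A$ with $A$ compact, this follows from upper semicontinuity arguments on covers. For Borel $A$, it follows by a monotone class argument.

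\emph{Step 3: extension to general $g$.} By linearity, the identity holds for nonnegative simple Borel functions. For a general Borel $g\ge 0$, take simple functions $g_k\uparrow g$ and apply the monotone convergence theorem to both sides. On the right, apply it first inside the $\cH^{n-1}$-integral and then in $y$.

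\emph{Step 4: the basic case itself (the only real content).} I would prove it in two halves.

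First, an upper bound for the right-hand side. Use Eilenberg's inequality
\[
\int^*\cH^{n-1}(A\cap f^{-1}(y))\,dy\le C\,\mathrm{Lip}(f)\,\cH^n(A),
\]
applied locally. On pieces where $\grad f$ is approximately constant, replace $f$ by its linear approximation.

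Second, the matching lower bound. Decompose $\{\grad f\ne0\}$ into countably many Borel pieces on which $f$ agrees with a $\cC^1$ function with nonvanishing gradient; this is Whitney/Federer approximation. On each piece, the implicit function theorem turns the formula into Fubini's theorem in adapted coordinates, with Jacobian $|\grad f|$. On $\{\grad f=0\}$ both sides vanish: the left side trivially, and the right side by applying the upper bound to $f+\epsilon x_1$ and letting $\epsilon\to0$.

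The main obstacle is this basic case, and in particular the handling of the critical set $\{\grad f=0\}$ and the approximation by $\cC^1$ functions. Since the paper uses the lemma purely as a recalled tool, I would state it with the reference and omit this part.
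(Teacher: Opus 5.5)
Your plan agrees with the paper, which gives no proof of this lemma and simply recalls the classical coarea formula with a reference to \cite{Federer69}, so citing it is all that is required. One caveat on your optional sketch: perturbing to $f+\epsilon x_1$ on $\R^n$ tells you nothing about the level sets of $f$ itself; the standard device is the auxiliary map $(x,t)\mapsto f(x)+\epsilon t$ on $\R^{n+1}$, whose level sets, sliced at $\{t=\text{const}\}$, are level sets of $f$, or alternatively a Vitali covering argument using that $f$ has oscillation $o(r)$ on small balls centered at points where $\grad f=0$.
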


\begin{theorem} \label[t]{t:grad2}
Let $L\subseteq \R^n$ be a convex body and $K \in \sK(\R^n)$ such that $K \subseteq L$. 
Let $\mu^{|\infty} = (\mu_j)_{j \ge 1}$ be an admissible weight satisfying $\mu_j \ge j$ for $j \ge 1$. 
    Let $M_0 > b_0>0$. 
    Assume that $(\mu^{|\infty},M_0,a_K,b_0)$ is admissible and let 
    \[ 
        \nn = \nn(\mu^{|\infty},M_0,a_K,b_0).
    \]
    Let $f : L \to \R$ be a $(\mu^{|\nn},M_0)$-smooth function satisfying $\|f\|_K \ge b_0$.  
    Assume that there exist $\al\ge 0$ and $l,t_0>0$ such that  
    \begin{equation}
        \cH^{n-1}(\{|f|=y\}\cap K) \ge l\, y^{\al} \quad \text{ for almost every } y \in [0,t_0].
    \end{equation}
    Then, for $K_t := \{x \in K : |f(x)| \le t\}$, 
    \begin{equation}
        \|\grad f\|_{K_t} \ge \frac{l\, \nu_K}{(\al+1)\,|K|} \Big(\frac{\|f\|_K}{\Cnn}\Big)^{1/\nn} \, \|f\|_{K_t}^{\al+1-\frac{1}{\nn}}, \quad  0 \le t \le t_0.
    \end{equation}
\end{theorem}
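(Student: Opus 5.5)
The plan is to combine the coarea formula (\Cref{l:coarea}) with the sublevel set estimate of \Cref{t:sublevel}. Fix $t \in [0,t_0]$ and set $s := \|f\|_{K_t} \le t$; we may assume $s>0$, since otherwise the inequality is trivial. Note that $K_t = K_s$ up to the null set where the level is exactly between, and in fact $\{x\in K: |f(x)|\le s\}\supseteq K_t$, so we work with $K_s$. We apply \Cref{l:coarea} to the Lipschitz function $|f|$, extended in a Lipschitz way from $L$ to $\R^n$; its gradient has modulus $|\grad f|$ almost everywhere on $\{f\ne 0\}$, and the zero set has measure zero by \cite[Theorem 4.6]{Rainer:2022aa}. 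We take $g = \mathds{1}_{K_s}$.

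The left-hand side is bounded above by
\[
\int_{K_s} |\grad f|\,dx \le \|\grad f\|_{K_s}\, |K_s| \le \|\grad f\|_{K_t}\,|K_s|.
\]
Here we use $|\grad f| \le |\grad f|_1$, consistent with the paper's norm convention, and $K_s \subseteq K_t$ because $s\le t$. The right-hand side equals
\[
\int_0^{s} \cH^{n-1}(\{|f|=y\}\cap K)\,dy \ge l \int_0^s y^\al\,dy = \frac{l\, s^{\al+1}}{\al+1},
\]
using the hypothesis for almost every $y\in[0,s]\subseteq[0,t_0]$.

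Next we bound $|K_s|$ from above via \Cref{t:sublevel}: $|K_s| \le \frac{|K|}{\nu_K}\big(\frac{\Cnn s}{\|f\|_K}\big)^{1/\nn}$. Combining the two estimates gives
\[
\|\grad f\|_{K_t} \ge \frac{l\,\nu_K}{(\al+1)|K|}\Big(\frac{\|f\|_K}{\Cnn}\Big)^{1/\nn} s^{\al+1-\frac1\nn},
\]
which is the claim with $s = \|f\|_{K_t}$.

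The main technical obstacle is justifying the coarea step rigorously. First, the level sets should be intersected with $K$ rather than taken in all of $\R^n$; this is handled by choosing $g = \mathds{1}_{K_s}$, which is Borel because $K$ is compact and $f$ is continuous. Second, $|f|$ is only Lipschitz, not smooth, but it is differentiable almost everywhere with $|\grad|f|| = |\grad f|$ off the null set $\{f=0\}$. Finally, we must be careful that the hypothesis concerns $\{|f|=y\}\cap K$ for $y\le s$, which requires $s\le t_0$; this holds since $s\le t\le t_0$.
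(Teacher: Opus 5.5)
Your proposal is correct and is essentially the paper's proof. In both, the coarea formula on the sublevel set gives $\|\grad f\|_{K_t}\,|K_t| \ge \int_0^{s}\cH^{n-1}(\{|f|=y\}\cap K)\,dy \ge \frac{l\,s^{\al+1}}{\al+1}$, and \Cref{t:sublevel} bounds $|K_t|$ from above; the differences are cosmetic: you use the weight $\mathbf 1_{K_s}$ where the paper uses $\mathbf 1_{K_t'}/|\grad f|$ with $K_t'=\{x\in K: 0<|f(x)|\le t\}$ at level $t$, which spares you dividing by $|\grad f|$, and $K_s=K_t$ holds exactly, so your ``up to a null set'' caveat is unnecessary.
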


\begin{proof}
    First we observe that $|\grad |f(x)|| = |\grad f(x)|$ if $f(x) \ne 0$. The zero set of $f$ has Lebesgue measure zero, 
    by \cite[Theorem 4.6]{Rainer:2022aa}.
    Set $K_t' := \{x \in K : 0<|f(x)| \le t\}$.
    By \Cref{l:coarea}, 
\begin{align*}
    |K_t'| = \int_0^{t} \int_{\{|f| = y\}\cap K} \frac{\mathbf 1_{K_t'}}{|\grad f|}\, d\cH^{n-1}\, dy.
\end{align*}
We conclude that, for $0 \le t \le t_0$,
\begin{align*}
    |K_t'|&\ge \frac{1}{\|\grad f\|_{K_t'}}  \int_0^{t} \int_{\{|f|=y\}\cap K} \mathbf 1_{K_t'}\, d\cH^{n-1}\, dy 
    \\
          &= \frac{1}{\|\grad f\|_{K_t'}}  \int_0^{t} \cH^{n-1}(\{|f|=y\} \cap K) \, dy 
          \ge \frac{l \, t^{\al+1}}{(\al+1)\, \|\grad f\|_{K_t'}}. 
\end{align*}
Combined with \Cref{t:sublevel}, this implies the assertion.
\end{proof}

\section{Inequalities of Harnack type} \label{sec:Harnack}

In this section, we prove two inequalities of Harnack type for $(\mu^{|N},M_0)$-smooth functions.
The first is a simple consequence of the Remez inequality, but the involved constant is somewhat implicit 
and not easy to compute.
The second inequality has a completely explicit constant but requires more work.

\subsection{First inequality of Harnack type}

\begin{proposition} \label[p]{p:Harnack}
Let $L\subseteq \R^n$ be a convex body and
$K \in \sK(\R^n)$ 
such that $K \subseteq L$. 
Let $\mu^{|\infty} = (\mu_j)_{j \ge 1}$ be an admissible weight satisfying $\mu_j\ge j$ for $j \ge 1$. 
Let $M_0>b_0>0$.
Assume that $(\mu^{|\infty},M_0,a_K,b_0)$ is admissible and let 
\[
\nn = \nn(\mu^{|\infty},M_0,a_K,b_0).
\]
Let $f : L \to \R$ be $(\mu^{|\nn},M_0)$-smooth with $\|f\|_K\ge b_0$ 
and $\min_{x \in K} |f(x)| \ge m_0>0$.
    Then
    \begin{equation}
        \max_{x \in K} |f(x)| \le 2\,\Cnn\, \Big(\frac{|K|^{\nu_K}}{|K|^{\nu_K}-(|K|-|E_{m_0}|)^{\nu_K}}\Big)^{\nn}\, \min_{x \in K} |f(x)|,
    \end{equation}
    where $E_{m_0}:= \{x \in K : |f(x)| \le \min_{x \in K} |f(x)| +m_0 \}$.
\end{proposition}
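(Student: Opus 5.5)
The plan is to apply \Cref{t:RemezKL} with $m=1$ directly, taking $E := E_{m_0}$. Write $\underline m := \min_{x \in K}|f(x)|$ and $E_{m_0} = \{x \in K : |f(x)| \le \underline m + m_0\}$. The proof has three steps: show $|E_{m_0}|>0$, bound $\|f\|_{E_{m_0}}$ by $2\underline m$, and substitute into the Remez inequality.

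\emph{Step 1: $|E_{m_0}|>0$.} Since $K$ is compact and $f$ is continuous, the minimum $\underline m$ is attained at some $x_* \in K$. By continuity there is a ball $B(x_*,\rho)$ on which $|f| < \underline m + m_0$. Since $K$ is fat, $x_*$ is a limit of interior points of $K$, so $B(x_*,\rho)\cap \on{int}(K)$ is a nonempty open set. This set is contained in $E_{m_0}$, hence $|E_{m_0}|>0$.

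\emph{Step 2: $\|f\|_{E_{m_0}} \le 2\underline m$.} By definition of $E_{m_0}$ we have $\|f\|_{E_{m_0}} \le \underline m + m_0$. The hypothesis $\underline m \ge m_0$ then gives $\underline m + m_0 \le 2\underline m$.

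\emph{Step 3: apply \Cref{t:RemezKL}.} With $m=1$, $E=E_{m_0}$, and the given $\nn=\nn(\mu^{|\infty},M_0,a_K,b_0)$, all hypotheses of \Cref{t:RemezKL} are met: admissibility, $(\mu^{|\nn},M_0)$-smoothness, and $\|f\|_K \ge b_0$. It yields
\[
\max_K|f| = \|f\|_K \le \Cnn\Big(\frac{|K|^{\nu_K}}{|K|^{\nu_K}-(|K|-|E_{m_0}|)^{\nu_K}}\Big)^{\nn}\|f\|_{E_{m_0}}.
\]
Inserting the bound from Step 2 gives the claimed inequality.

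There is no real obstacle. The only point requiring care is Step 1, the positivity of $|E_{m_0}|$, which is where fatness of $K$ (built into admissible sets) is used.
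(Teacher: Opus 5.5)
Your proposal is correct and follows the paper's proof: apply \Cref{t:RemezKL} (with $m=1$) to $E = E_{m_0}$. Positivity of $|E_{m_0}|$ comes from continuity of $f$ together with the fact that $K$ has positive measure near each of its points, and $\|f\|_{E_{m_0}} \le \min_K|f| + m_0 \le 2\min_K|f|$. Your Step~1 just spells out the fatness argument that the paper leaves implicit in ``by the continuity of $f$''.
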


\begin{proof}
    The set $E_{m_0}$ is closed and $|E_{m_0}|>0$, by the continuity of $f$. 
    The statement follows from \Cref{t:RemezKL} and 
    the fact that $\|f\|_{E_{m_0}} \le 2\,\min_{x \in K} |f(x)|$.
\end{proof}

\subsection{Bounds for \texorpdfstring{$1/f$}{1/f}}

Let $\mu^{|\infty} = (\mu_j)_{j \ge 1}$ be an admissible weight and $M_0>0$.
Let us assume that the sequence
\begin{equation} \label{eq:aj}
a_j :=\Big(\frac{\mu_1 \cdots \mu_j}{j!}\Big)^{1/j}, \quad  j\ge 1, 
\end{equation}
is almost increasing 
(cf.\ \Cref{e:RCM}), i.e.,
there exists a constant $H \ge 1$ such that 
\begin{equation} \label{eq:rai}
    a_j \le H \, a_k \quad \text{ whenever } j \le k.
\end{equation}

\begin{lemma} \label[l]{l:inv1}
    Let $\mu^{|\infty} = (\mu_j)_{j \ge 1}$ be an admissible weight satisfying \eqref{eq:rai} with constant $H \ge 1$ and let $M_0 \ge m_0 >0$.
    Let $I \subseteq \R$ be a compact interval and $f : I \to \R$ 
    a $(\mu^{|N},M_0)$-smooth function such that $\min_{x \in I} |f(x)| \ge m_0$. 
    Then the function $1/f : I \to \R$ is well-defined 
    and $\big(H(1+\frac{2 M_0}{m_0})\, \mu^{|N},\frac{2}{m_0}\big)$-smooth.
\end{lemma}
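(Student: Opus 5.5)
Writing $\tilde M_j := \frac{2}{m_0}\big(H(1+\tfrac{2M_0}{m_0})\big)^j \mu_1\cdots\mu_j$, the goal is to show $|(1/f)^{(k)}(x)| \le \tilde M_k$ on $I$ for $0 \le k \le N$. The case $k=0$ is immediate: $|1/f| \le 1/m_0 \le 2/m_0$. For $k \ge 1$ the plan is to use Faà di Bruno's formula for the composite $\phi\circ f$ with $\phi(y)=1/y$. Since $\phi^{(j)}(y) = (-1)^j j!\, y^{-j-1}$, we get
\[
(1/f)^{(k)} = \sum_{j=1}^k (-1)^j \frac{j!}{f^{j+1}} \sum_{\substack{\ell_1+\dots+\ell_j = k\\ \ell_i \ge 1}} \frac{k!}{j!\,\ell_1!\cdots\ell_j!}\, f^{(\ell_1)}\cdots f^{(\ell_j)}.
\]

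The key step is to bound the products of derivatives using the almost monotonicity \eqref{eq:rai}. Set $A_\ell := M_\ell/(M_0\,\ell!) = a_\ell^\ell$. Since $\ell_i \le k$, \eqref{eq:rai} gives $a_{\ell_i} \le H a_k$, and therefore
\[
\prod_i \frac{|f^{(\ell_i)}|}{\ell_i!} \le M_0^j \prod_i a_{\ell_i}^{\ell_i} \le M_0^j H^k a_k^k = M_0^j H^k \frac{\mu_1\cdots\mu_k}{k!}.
\]
Substituting this and $|f|\ge m_0$, we obtain
\[
|(1/f)^{(k)}| \le \frac{1}{m_0}\, H^k \mu_1\cdots\mu_k \sum_{j=1}^k \Big(\frac{M_0}{m_0}\Big)^j \#\{(\ell_1,\dots,\ell_j): \ell_i\ge1,\ \textstyle\sum \ell_i = k\}.
\]
The number of compositions of $k$ into $j$ positive parts is $\binom{k-1}{j-1}$. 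Hence the sum is $\frac{M_0}{m_0}(1+\frac{M_0}{m_0})^{k-1} \le (1+\frac{M_0}{m_0})^k \le (1+\frac{2M_0}{m_0})^k$, which already fits inside $\tilde M_k$, with the factor $2$ in $2/m_0$ to spare.

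The main point to watch is the use of \eqref{eq:rai}. It must be applied factorwise to $a_{\ell_i}^{\ell_i}$ with all $\ell_i \le k$, so that the exponents add up to $\sum \ell_i = k$ and produce exactly one copy of $H^k a_k^k$. One also has to keep track of the factor $j!$ from $\phi^{(j)}$ cancelling against the $1/j!$ in the ordered Faà di Bruno formula. The slack in $(1+2M_0/m_0)$ and in $2/m_0$ absorbs any looser counting, for instance if one uses the multiplicity form of Faà di Bruno instead. Finally, $1/f$ is $\cC^N$ because $f$ is $\cC^N$ and $f$ does not vanish on $I$ (by continuity it stays nonzero on a neighborhood of $I$). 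This completes the plan.
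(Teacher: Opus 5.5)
Your argument is correct, but it takes a different route from the paper's.

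\textbf{The paper's route.} The paper follows Rudin's complex-variable proof. At a fixed $x_0$ it replaces $f$ by its degree-$n$ Taylor polynomial $P$; this has the same $n$-jet, so $(1/f)^{(n)}(x_0)=(1/P)^{(n)}(0)$. It then uses \eqref{eq:rai} and a geometric series to show $|P(z)|>m_0/2$ on the disc $|z|\le r_n$, where $H a_n r_n = m_0/(m_0+2M_0)$. Cauchy's formula then gives $|(1/f)^{(n)}(x_0)|\le \frac{2}{m_0}\, n!\, r_n^{-n}$. The factors $2/m_0$ and $1+2M_0/m_0$ in the statement come exactly from $|P|>m_0/2$ and this choice of radius.

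\textbf{Your route.} You stay on the real line. You expand $(1/f)^{(k)}$ by Fa\`a di Bruno (the $j!$ from $\phi^{(j)}$ cancels the $1/j!$ in the ordered form). You apply \eqref{eq:rai} factorwise so that the exponents sum to $k$, and count compositions by $\binom{k-1}{j-1}$.

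\textbf{Comparison.} Your argument is more elementary and gives a sharper bound:
\[
|(1/f)^{(k)}|\le \frac{1}{m_0}\frac{M_0}{m_0}\Big(1+\frac{M_0}{m_0}\Big)^{k-1}H^k\mu_1\cdots\mu_k .
\]
This is attained in the analytic case $\mu_j=j$, $H=1$. For $m_0<M_0$ and small $\epsilon$, take $f(x)=m_0-M_0\,x/(1-x)$ on $I=[-\epsilon,0]$. Then $(1/f)^{(k)}(0)=\frac{k!}{m_0}\frac{M_0}{m_0}(1+\frac{M_0}{m_0})^{k-1}$. Your bound would therefore slightly improve the constant $\be_0$ in Theorem~\ref{t:Harnack2}.

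The paper's contour-integral argument avoids all combinatorial bookkeeping and finite-order chain-rule issues, because it only needs the $n$-jet at one point. It pays for this with the factor-$2$ losses.

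Your remaining details are also fine. The case $k=0$ is immediate, the $\cC^N$ regularity of $1/f$ near $I$ follows from compactness and nonvanishing, and $N=\infty$ is handled by treating each finite $k$ separately.
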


\begin{proof}
    We follow the proof of \cite[Theorem 2.1]{Rudin62}. Recall that $N \in \N_{\ge 1} \cup \{\infty\}$.
    For $1 \le n < N+1$, define $r_n$ by 
    \begin{equation} \label{eq:rn}
        H\, a_n  r_n = \frac{m_0}{m_0+2M_0}.
    \end{equation}
    Fix $x_0 \in I$ and consider the polynomial $P(z) := \sum_{j = 0}^n \frac{f^{(j)}(x_0)}{j!} z^j$. 
    Using \eqref{eq:rai}, we find that, if $|z| \le r_n$, 
    \begin{align*}
        |P(z)| &\ge |f(x_0)| - \sum_{j = 1}^n \frac{|f^{(j)}(x_0)|}{j!} |z|^j        
        \\
               &\ge m_0 - \sum_{j = 1}^n \frac{M_j}{j!} r_n^j = m_0 - M_0 \sum_{j = 1}^n (a_j r_n)^j  
               \ge m_0 - M_0 \sum_{j = 1}^n (H a_n r_n)^j
               \\
               &=m_0 - M_0 \sum_{j = 1}^n \Big(\frac{m_0}{m_0+2M_0}\Big)^j > m_0 - \frac{m_0 M_0}{m_0+2M_0} \frac{m_0+2M_0}{2M_0} = \frac{m_0}2.
    \end{align*}
    Since $f^{(j)}(x_0) = P^{(j)}(0)$, for $0 \le j \le n$, we have $(1/f)^{(n)}(x_0) = (1/P)^{(n)}(0)$ 
    and hence, by \eqref{eq:aj} and \eqref{eq:rn},
    \begin{align}
        |(1/f)^{(n)}(x_0)| &= \Big|\frac{n!}{2\pi i} \int_{|z|=r_n} \frac{dz}{z^{n+1} P(z)}\Big| 
        \\
                           &\le \frac{2}{m_0} \frac{n!}{r_n^n}
        = \frac{2}{m_0} \Big(H \Big(1+ \frac{2M_0}{m_0}\Big)\Big)^n \mu_1 \cdots \mu_n.
    \end{align}
    This implies the assertion.
\end{proof}

\begin{lemma} \label[l]{l:inv2}
    Let $T : (\R^n)^j \to \R$, 
    \[
        T(v_1,\ldots,v_j) = \sum_{i_1,\ldots,i_j=1}^n T_{i_1,\ldots,i_j} v_1^{i_1} \cdots v_j^{i_j}
    \]
    be a $j$-linear symmetric map, where $v_k = (v_k^{1},\ldots,v_k^n)$.
   Then 
   \[
       \|T\|_2 = \Big( \sum_{i_1,\ldots,i_j=1}^n T_{i_1,\ldots,i_j}^2 \Big)^{1/2} 
       \le n^{\frac{j-1}{2}} \sup_{|v| \le 1} |T(v,\ldots,v)|.
   \]
\end{lemma}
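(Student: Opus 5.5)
The plan is to factor the estimate through the operator norm
\[
\|T\|_{\mathrm{op}} := \sup_{|v_1|,\ldots,|v_j| \le 1} |T(v_1,\ldots,v_j)|
\]
(Euclidean norms). It has two steps: first compare $\|T\|_2$ with $\|T\|_{\mathrm{op}}$ by counting coordinates, then compare $\|T\|_{\mathrm{op}}$ with the diagonal supremum $\sup_{|v|\le 1}|T(v,\ldots,v)|$ using symmetry.

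\emph{Step 1: $\|T\|_2 \le n^{\frac{j-1}{2}} \|T\|_{\mathrm{op}}$.} Group the sum defining $\|T\|_2^2$ according to the first $j-1$ indices:
\[
\|T\|_2^2 = \sum_{i_1,\ldots,i_{j-1}=1}^n \Big(\sum_{i_j=1}^n T_{i_1,\ldots,i_{j-1},i_j}^2\Big).
\]
The inner sum is $|w|^2$, where $w$ is the vector $(T(e_{i_1},\ldots,e_{i_{j-1}},e_i))_{i=1}^n$, i.e.\ the linear functional $u \mapsto T(e_{i_1},\ldots,e_{i_{j-1}},u)$. Its Euclidean norm is
\[
|w| = \sup_{|u|\le 1} |T(e_{i_1},\ldots,e_{i_{j-1}},u)| \le \|T\|_{\mathrm{op}},
\]
since the $e_i$ are unit vectors. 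There are $n^{j-1}$ choices of $(i_1,\ldots,i_{j-1})$, so $\|T\|_2^2 \le n^{j-1}\|T\|_{\mathrm{op}}^2$. This step is routine.

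\emph{Step 2: $\|T\|_{\mathrm{op}} = \sup_{|v|\le 1}|T(v,\ldots,v)|$ for symmetric $T$ on Euclidean space.} This is Banach's classical theorem (also due to Kellogg and Bochnak--Siciak) that on a Hilbert space the norm of a symmetric multilinear form equals the norm of its associated homogeneous polynomial. The inequality ``$\ge$'' is trivial. I would cite the theorem, and if a self-contained argument is wanted, sketch it as follows.

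Choose a maximizer $(v_1,\ldots,v_j)$ of $|T|$ on the product of unit spheres, which exists by compactness. Among all maximizers, pick one maximizing the number of equal entries, or equivalently run a smoothing argument. For two entries $v_1,v_2$ with the others fixed, the map $(x,y)\mapsto T(x,y,v_3,\ldots,v_j)$ is a symmetric bilinear form. By the spectral theorem, its norm is attained on the diagonal. Hence $v_1,v_2$ can be replaced by a common unit vector $u$ without decreasing $|T|$. A limiting or averaging argument then shows that a maximizer with all entries equal exists. The cleanest version of this is the one in Banach's original proof, where one repeatedly replaces $(v_1,v_2)$ by the normalized vector $(v_1+v_2)/|v_1+v_2|$ and uses the bilinear case to show that the maximum is preserved, giving convergence to a diagonal maximizer.

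Combining Steps 1 and 2 gives the lemma. I expect Step 2 to be the only real obstacle; in the paper it is best handled by citing Banach's theorem rather than reproving it.
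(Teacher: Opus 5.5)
Your proposal is correct and matches the paper's argument. The paper proves the same counting bound $\|T\|_2^2 \le n^{j-1}\sup_{|v_1|,\ldots,|v_j|\le 1}|T(v_1,\ldots,v_j)|^2$, phrased as an induction on $j$ that peels off one index at a time rather than grouping the first $j-1$ indices at once, and then concludes with exactly your Step 2, the equality of the multilinear and diagonal suprema, which it cites (Bochnak's Proposition 1) rather than reproving, just as you suggest.
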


\begin{proof}
    We proceed by induction on $j$. For $j=1$, we have equality.
    Assume $j \ge 2$. By induction hypothesis, for the symmetric $(j-1)$-linear maps $T_i := T(e_i,\cdots)$,
    \[
        \|T_i\|_2  \le n^{\frac{j-2}{2}} \sup_{|v| \le 1} |T_i(v,\ldots,v)|.
    \]
    Therefore,
    \begin{align*}
        \|T\|_2^2 =  \sum_{i_1=1}^n \|T_{i_1}\|_2^2 
        \le \sum_{i_1=1}^n  n^{j-2} \sup_{|v| \le 1} |T_{i_1}(v,\ldots,v)|^2 \le n^{j-1} \sup_{|v| \le 1} |T(v,\ldots,v)|^2,
    \end{align*}
    where the last inequality is a consequence of
    \[
        \sup_{|v_1| \le 1,\ldots,|v_j|\le 1} |T(v_1,\ldots,v_j)|= \sup_{|v| \le 1} |T(v,\ldots,v)|,
    \]
    see \cite[Proposition 1]{Bochnak_1971}.
\end{proof}

\begin{lemma} \label[l]{l:inv3}
    Let $\mu^{|\infty} = (\mu_j)_{j \ge 1}$ be an admissible weight satisfying \eqref{eq:rai}  with constant $H \ge 1$ and let $M_0 \ge m_0 >0$.
    Let $K \subseteq \R^n$ be a nonempty fat compact set and $f: K \to \R$ a  $(\mu^{|N},M_0)$-smooth function such that 
    $\min_{x \in K} |f(x)| \ge m_0$.
    Then the function $1/f : K \to \R$ is well-defined and $\big(nH(1+\frac{2M_0}{m_0})\, \mu^{|N},\frac{2}{m_0}\big)$-smooth.
\end{lemma}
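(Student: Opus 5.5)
Since $|f|\ge m_0>0$ on $K$, the function $1/f$ is well-defined, and we only need to bound $\|1/f\|_{j,K}$ for $0\le j<N+1$. For $j=0$ we have $\|1/f\|_K\le 1/m_0\le 2/m_0$. For $j\ge1$ the plan is to reduce to \Cref{l:inv1} along lines, and then to recover all partial derivatives from directional derivatives via \Cref{l:inv2}.

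\emph{Step 1 (directional derivatives).} Fix $x\in K$ and a unit vector $v\in\R^n$. Since $f$ is $\cC^N$ on an open neighborhood of $K$ and the bounds involve only pointwise derivatives at points of $K$, I would follow the proof of \Cref{l:inv1} at the point $x_0=x$ rather than restricting to a segment, which might leave $K$. Consider the one-variable Taylor polynomial
\[
P(z)=\sum_{i=0}^j \frac{d_v^i f(x)}{i!}z^i .
\]
The preceding remark gives $|d_v^i f(x)|\le |v|^i\|f\|_{i,K}\le M_i$. Hence the estimate $|P(z)|>m_0/2$ on $|z|\le r_j$ from the proof of \Cref{l:inv1} carries over verbatim. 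Moreover $d_v^j(1/f)(x)=(1/P)^{(j)}(0)$, since $t\mapsto f(x+tv)$ agrees with $P$ to order $j$ at $t=0$. Cauchy's estimate then yields
\[
|d_v^j(1/f)(x)|\le \frac{2}{m_0}\Big(H\big(1+\tfrac{2M_0}{m_0}\big)\Big)^j\mu_1\cdots\mu_j=:A_j
\]
for all $|v|\le1$.

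\emph{Step 2 (from directional to partial derivatives).} Let $T$ be the symmetric $j$-linear form $T(v_1,\dots,v_j)=d^j(1/f)(x)(v_1,\dots,v_j)$. Its coefficients are $T_{i_1\ldots i_j}=\p_{i_1}\cdots\p_{i_j}(1/f)(x)$, and $T(v,\dots,v)=d_v^j(1/f)(x)$. Group the indices by multi-index: each $\p^\al(1/f)(x)$ with $|\al|=j$ occurs exactly $j!/\al!$ times among the $n^j$ tuples. By Cauchy--Schwarz over the $n^j$ tuples and \Cref{l:inv2},
\[
\sum_{|\al|=j}\frac{j!}{\al!}|\p^\al(1/f)(x)|=\sum_{i_1,\dots,i_j}|T_{i_1\ldots i_j}|\le n^{j/2}\|T\|_2\le n^{j/2}n^{\frac{j-1}2}A_j\le n^jA_j .
\]
Taking the supremum over $x\in K$ gives $\|1/f\|_{j,K}\le \frac{2}{m_0}\big(nH(1+\frac{2M_0}{m_0})\big)^j\mu_1\cdots\mu_j$. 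This is exactly the claimed $(nH(1+\frac{2M_0}{m_0})\mu^{|N},\frac2{m_0})$-smoothness.

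\emph{Expected obstacle.} The main concern is justifying Step 1 at boundary points of $K$, where segments from $x$ may leave $K$. Working with the Taylor polynomial at $x$ avoids this: only the values $\p^\al f(x)$ at $x$ are used, and these are controlled by $\|f\|_{i,K}$. Fatness of $K$ is then needed only to make sense of the derivatives of $1/f$ on $K$ through continuity from the interior. The remaining work is the combinatorial bookkeeping in Step 2, which produces the factor $n^{j-1/2}\le n^j$.
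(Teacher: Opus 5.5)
Your proof is correct and follows the paper's argument. You bound the directional derivatives $d_v^j(1/f)(x)$ by the Cauchy-estimate argument of \Cref{l:inv1}, then apply Cauchy--Schwarz over the $n^j$ index tuples together with \Cref{l:inv2}.

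The only difference is at boundary points. You rerun the pointwise argument of \Cref{l:inv1} with the directional Taylor polynomial at every $x\in K$, which is valid since $f$ is $\cC^N$ near $K$ and only its $j$-jet at $x$ enters. The paper instead applies \Cref{l:inv1} on segments through interior points and reaches boundary points by continuity and fatness. Your route therefore does not need fatness at all, so your closing remark that fatness is still required is unnecessary.
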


\begin{proof}
    Fix $x \in \on{int}(K)$ and $v \in \mathbb S^{n-1}$.  Let $g(t) := f(x+t v)$ for $t\in I$,
    where $I\subseteq \R$ is a compact interval such that $x \in x + I v \subseteq K$.
    Then $g : I \to \R$ is $(\mu^{|N},M_0)$-smooth. By \Cref{l:inv1}, 
    \begin{equation}
        |(1/g)^{(j)}(0)| = |d_v^j(1/f)(x)| \le \frac{2}{m_0} H^j \Big(1+ \frac{2M_0}{m_0}\Big)^j \mu_1 \cdots \mu_j. 
    \end{equation}
    Since $\sum_{|\al|= j} \frac{j!}{\al!} = n^j$, we have 
    \[
        \sum_{|\al|= j} \frac{j!}{\al!} |\p^\al (1/f)(x)| \le n^{j/2}\, \Big(\sum_{|\al|= j} \frac{j!}{\al!} |\p^\al (1/f)(x)|^2 \Big)^{1/2}
    \]
    and, by \Cref{l:inv2}, the right-hand side is bounded by
    \[
        n^{j/2} \cdot n^{(j-1)/2} \sup_{|v| \le 1} \Big| \sum_{|\al|= j} \frac{j!}{\al!} \p^\al (1/f)(x) v^\al \Big| 
        = \frac{n^j}{\sqrt n} \sup_{|v| \le 1}  |d_v^j (1/f)(x)|.
    \]
    It follows that, for $j\ge 1$,
    \[
       \sum_{|\al|= j} \frac{j!}{\al!} |\p^\al (1/f)(x)| \le \frac{2}{\sqrt n\, m_0} (n H)^j \Big(1+ \frac{2M_0}{m_0}\Big)^j \mu_1 \cdots \mu_j,
    \]
    for all $x \in \on{int}(K)$. By continuity and since $K$ is fat, the estimate holds for all $x \in K$.
    Clearly, $\|1/f\|_K \le 1/m_0$.
\end{proof}

\subsection{Second inequality of Harnack type}

\begin{theorem} \label[t]{t:Harnack2}
Let $L\subseteq \R^n$ be a convex body and
$K \in \sK(\R^n)$ 
such that $K \subseteq L$. 
Let $\mu^{|\infty} = (\mu_j)_{j \ge 1}$ be an admissible weight satisfying $\mu_j\ge j$ for $j \ge 1$ 
as well as \eqref{eq:rai} with constant $H\ge 1$. 
Let $M_0\ge m_0>0$.
Set 
\begin{equation}
    \be_0 := n H (1+\tfrac{2M_0}{m_0}).
\end{equation}
Assume that $(\mu^{|\infty},2M_0,a_K \be_0,m_0)$ is admissible and let 
\[
    \nn = \nn(\mu^{|\infty},2M_0,a_K \be_0,m_0).
\]
If $f : L \to \R$ is $(\mu^{|\nn},M_0)$-smooth and satisfies
    $\min_{x \in L} |f(x)| \ge m_0$, then 
    \begin{equation}
        \max_{x \in K} |f(x)| \le \Cnn^2\, \Big(\frac{2^{\nu_K}}{2^{\nu_K}-1}\Big)^{2\nn}\, \min_{x \in K} |f(x)|.
    \end{equation}
\end{theorem}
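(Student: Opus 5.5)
The plan is to apply \Cref{t:RemezKL} (with $m=1$) twice, once to $f$ and once to $1/f$. In each application the set $E$ is taken to be a sublevel set that covers at least half of $K$. Let $m:=\min_K|f|$ and $M:=\max_K|f|$. Since $f$ is continuous on the convex (hence connected) body $L$ and never vanishes there, it has constant sign; replacing $f$ by $-f$ if needed, we may assume $f>0$. Let $\tau$ be a median of $f$ on $K$, so that the sets
\[
E_1:=\{x\in K: f(x)\le \tau\},\qquad E_2:=\{x\in K: f(x)\ge \tau\}
\]
both satisfy $|E_i|\ge |K|/2$. For such sets, $(|K|-|E_i|)^{\nu_K}\le (|K|/2)^{\nu_K}$, so
\[
\frac{|K|^{\nu_K}}{|K|^{\nu_K}-(|K|-|E_i|)^{\nu_K}}\le \frac{2^{\nu_K}}{2^{\nu_K}-1}.
\]

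\emph{Step 1 (Remez for $f$).} By \Cref{r:nnprop}(2), $\nn(\mu^{|\infty},M_0,a_K,m_0)\le \nn(\mu^{|\infty},2M_0,a_K\be_0,m_0)=\nn$, because $\be_0\ge1$ and the degree increases in $M_0$ and in $a$. The same monotonicity gives the corresponding inequality for $\Cnn$. We have $\|f\|_K\ge m_0$, and if $M_0=m_0$ we may enlarge $M_0$ slightly as in \Cref{r:Nnonzero}. Then \Cref{t:RemezKL} applied with $E=E_1$ yields $M\le \Cnn\,(\tfrac{2^{\nu_K}}{2^{\nu_K}-1})^{\nn}\,\tau$.

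\emph{Step 2 (Remez for $1/f$).} By \Cref{l:inv3} applied on the fat set $L$ (using $\min_L|f|\ge m_0$), the function $1/f$ is $(\be_0\,\mu^{|\nn},\tfrac{2}{m_0})$-smooth on $L$. By \Cref{r:nnprop}(1), the degree associated with $(\be_0\mu^{|\infty},\tfrac2{m_0},a_K,b)$ equals $\nn(\mu^{|\infty},\tfrac{2}{m_0},a_K\be_0,b)$. The natural lower bound is $b=\|1/f\|_K=1/m\ge 1/M_0$, and this is the bookkeeping step I expect to be the main obstacle. The quantity that matters is $M_0'/b$: here it is $\frac{2/m_0}{1/M_0}=\frac{2M_0}{m_0}$, which coincides with the ratio $\frac{2M_0}{m_0}$ appearing in $\nn(\mu^{|\infty},2M_0,a_K\be_0,m_0)$. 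Since $\nn$ depends on $(M_0,b)$ only through $M_0/b$, the resulting degree is exactly $\nn$. This explains the otherwise odd choice of parameters in the statement, and it also shows that $b=1/M_0$ (or $1/m$, which gives a smaller degree) is admissible. Applying \Cref{t:RemezKL} to $1/f$ with $E=E_2$, on which $1/f\le 1/\tau$, gives $1/m\le \Cnn\,(\tfrac{2^{\nu_K}}{2^{\nu_K}-1})^{\nn}/\tau$.

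\emph{Step 3 (combination).} Multiplying the two inequalities eliminates $\tau$ and gives $M\le \Cnn^2\,(\tfrac{2^{\nu_K}}{2^{\nu_K}-1})^{2\nn}\,m$, which is the assertion. Two routine points remain to be checked. First, the $(\mu^{|\nn},\cdot)$-smoothness needed in \Cref{t:RemezKL} is available, since $f$ is $\cC^{\nn}$ and \Cref{l:inv3} preserves the order $N=\nn$. Second, the hypothesis $\mu_j\ge j$ remains valid for $\be_0\mu^{|\infty}$, since $\be_0\ge1$.
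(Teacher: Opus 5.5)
Your proposal is correct and is essentially the paper's argument: \Cref{t:RemezKL} is applied to $f$, and also to $1/f$ via \Cref{l:inv3} and the identity $\nn(\mu^{|\infty},\tfrac{2}{m_0},a_K\be_0,\tfrac{1}{M_0})=\nn(\mu^{|\infty},2M_0,a_K\be_0,m_0)$, on complementary sub- and superlevel sets of $K$. The only difference is cosmetic: the paper thresholds at $\sqrt{M_f m_f}$, squares each resulting inequality, and uses that one of the two complementary sets has measure at least $|K|/2$, whereas you threshold at a median so both sets are large and multiply the two inequalities; this gives the same constant, and you also handle the degenerate case $M_0=m_0$, which the paper passes over.
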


\begin{proof}
    Let us set $m_f:=\min_{x \in K} |f(x)|$ and $M_f := \max_{x \in K} |f(x)|$.
    Clearly, $m_0 \le m_f \le M_f \le M_0$.
    The set $E_-:= \{x \in K : |f(x)| \le \sqrt{M_fm_f} \}$ has positive measure.
    By \Cref{t:RemezKL}, 
    \begin{equation}
        M_f \le \mathbf{C}_{\nn'}\, \Big(\frac{|K|^{\nu_K}}{|K|^{\nu_K}-(|K|-|E_-|)^{\nu_K}}\Big)^{\nn'}\, \sqrt{M_fm_f},
    \end{equation}
    where $\nn' := \nn(\mu^{|\infty},M_0,a_K,m_0)$. 
    Since $\nn' \le \nn$ (see \Cref{r:nnprop}), we conclude
    \begin{equation}
        M_f \le \Cnn^2\, \Big(\frac{|K|^{\nu_K}}{|K|^{\nu_K}-(|K|-|E_-|)^{\nu_K}}\Big)^{2 \nn}\, m_f.
    \end{equation}

    By \Cref{l:inv3}, the function $1/f : L \to \R$ is $\big(\be_0\, \mu^{|\nn},\frac{2}{m_0}\big)$-smooth
    and satisfies $\min_{x \in L} |1/f(x)| \ge  1/M_0$. Note that $m_{1/f} := \min_{x \in K} |1/f(x)| = 1/M_f$ and 
    $M_{1/f} := \max_{x \in K} |1/f(x)| = 1/m_f$.
    Observe that 
\[
    \nn(\mu^{|\infty},\tfrac{2}{m_0},a_K \be_0,\tfrac{1}{M_0}) =  \nn(\mu^{|\infty},2M_0,a_K \be_0,m_0).
\]

    Thus, \Cref{t:RemezKL} applied to $1/f$, gives
    \begin{equation}
        M_{1/f}  \le \Cnn\, \Big(\frac{|K|^{\nu_K}}{|K|^{\nu_K}-(|K|-|E_+|)^{\nu_K}}\Big)^{\nn}\,   \sqrt{M_{1/f} m_{1/f}},
    \end{equation}
    where $E_+:= \{x \in K : |1/f(x)| < \sqrt{M_{1/f} m_{1/f}}\}$, which implies 
    \begin{equation}
        M_{f}  \le \Cnn^2\, \Big(\frac{|K|^{\nu_K}}{|K|^{\nu_K}-(|K|-|E_+|)^{\nu_K}}\Big)^{2\nn}\, m_{f}.
    \end{equation}

    Now $E_+ = \{x \in K : \sqrt{M_f m_f} <  |f(x)|\}$ so that $E_-$ and $E_+$ form a partition of $K$. 
    Therefore, $|E_-| \ge |K|/2$ or $|E_+| \ge |K|/2$. Consequently,
    \begin{equation}
        M_{f}  \le \Cnn^2\, \Big(\frac{2^{\nu_K}}{2^{\nu_K}-1}\Big)^{2\nn}\, m_{f}.
    \end{equation}
    The proof is complete.
\end{proof}

\section{Inequalities of Markov type} \label{sec:Markov}

Here we discuss a few inequalities of Markov type which are consequences 
of an upper bound for the number of zeros in terms of $\nn$ which we recall in \Cref{p:zeros} 
and the comparison of the $L^1$- and $L^\infty$-norm, see \Cref{t:comparison}.

\subsection{Number of zeros}

\begin{proposition}[{\cite[Corollary 3.6 and Remark 3.7]{Rainer:2022aa}}] \label[p]{p:zeros}
   Let $I \subseteq \R$ be a compact interval.
   Let $\mu^{|\infty} = (\mu_j)_{j \ge 1}$ be an admissible weight and $M_0>b_0>0$.
   Assume that $(\mu^{|\infty},M_0,|I|,b_0)$ is admissible and let 
    \[ 
        \nn = \nn(\mu^{|\infty},M_0,|I|,b_0).
    \]
   Let $f : I \to \R$ be a $(\mu^{|\nn}, M_0)$-smooth function satisfying $\|f\|_I \ge b_0$.
   Then the total number of zeros of $f$ in $I$ (counted with multiplicities) is finite and 
   bounded by $\nn$.
\end{proposition}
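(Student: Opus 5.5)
This zero bound can be proved by the standard Bang-type argument combining divided differences (Rolle) with the Taylor remainder; it is the mechanism behind the univariate Remez inequality of \Cref{thm:NSV}. After an affine change of variables we may assume $I=[0,1]$. By \Cref{r:nnprop}(1), $f$ then becomes $(|I|\,\mu^{|\nn},M_0)$-smooth and $\nn=2\,\dd_{\mu^{|\infty}}(b_0/M_0)$ after renaming $|I|\mu$ as $\mu$. Write $D:=\dd_{\mu^{|\infty}}(b_0/M_0)$ and $j_0:=j_0(b_0/M_0)$. Suppose, for contradiction, that $f$ has at least $\nn+1$ zeros in $[0,1]$ counted with multiplicity; since $\nn\ge D$, there are at least $D+1$ zeros.

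\emph{Step 1 (Rolle).} Repeated application of Rolle's theorem (counting multiplicities) shows that $f^{(k)}$ has at least $D+1-k$ zeros in $[0,1]$ for $0\le k\le D$. In particular each $f^{(k)}$, $k\le D$, vanishes at some point $x_k$.

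\emph{Step 2 (downward induction).} Bound $\|f^{(k)}\|_{[0,1]}$ from $k=D$ down to $k=0$. Since $f^{(k)}(x_k)=0$, we get $\|f^{(k)}\|\le \|f^{(k+1)}\|$, which is too crude. Instead, use Bang's refinement. Set $m_k:=\|f^{(k)}\|/M_k\le 1$. Bang's lemma asserts that if $f^{(k)}$ has many zeros then
\[
\log\frac{1}{m_0}\ \ge\ \sum_{k=j_0+1}^{D}\frac{\text{const}}{\mu_k},
\]
or more precisely $\sum_{k}\frac{1}{\mu_k}\ge e$ on the relevant range forces $\|f\|<b_0$. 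Concretely, one proves the inequality $m_k\le m_{k+1}^{\,1-1/(e\mu_{k+1})}$-type estimates. These follow from the interpolation $\|f^{(k)}\|^2\le 2\|f^{(k-1)}\|\|f^{(k+1)}\|$ on intervals between consecutive zeros together with the vanishing at $x_k$. Iterating them yields $\|f\|_{[0,1]}\le M_0\exp\bigl(-\text{(number exceeding } j_0)\bigr)<b_0$ once the sum of $1/\mu_k$ from $j_0+1$ onward reaches $e$, i.e.\ once there are more than $D$ zeros. This contradicts $\|f\|_I\ge b_0$. The factor $2$ in $\nn=2D$ absorbs the loss from multiplicities and from needing zeros of all derivatives up to order $D$.

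The main obstacle is Step 2: making Bang's inequality quantitative with exactly the threshold $e$ and the shift $j_0$, so that the count comes out as $\nn$ and needs only derivatives up to order $\nn$ (hence only $(\mu^{|\nn},M_0)$-smoothness). I would follow Bang's original argument, as reworked in \cite{NazarovSodinVolberg04} (and in \Cref{sec:B} for \Cref{thm:NSV}), tracking the derivative orders used. Alternatively, the bound follows from \Cref{thm:NSV} directly. If $f$ had more than $\nn$ zeros, take a near-zero polynomial-type argument: apply \eqref{eq:NSV} to small neighbourhoods $E$ of the zeros. Near a zero of multiplicity $r$ we have $\|f\|_E=O(|E|^{r})$, so the total vanishing order $>\nn$ beats the factor $|E|^{-\nn}$ as $|E|\to0$. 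Making this rigorous requires the zeros to cluster, so I would prefer the Bang-type route, keeping this argument as a heuristic check.
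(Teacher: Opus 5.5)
Your Step~2 is where the whole proof has to happen, and as written it does not contain an argument. Even if you grant a Landau--Kolmogorov inequality $\|f^{(k)}\|^2\le C\|f^{(k-1)}\|\,\|f^{(k+1)}\|$ on intervals between consecutive zeros, these intervals change with $k$, so the sup norms do not chain. Such log-convexity estimates also only bound intermediate derivatives from above by the extreme ones. An estimate like $m_k\le m_{k+1}^{1-1/(e\mu_{k+1})}$, with all $m_k\le 1$, contains no small quantity, so iterating it can never force $\|f\|<b_0$. The sentence ``iterating yields $\|f\|_{[0,1]}\le M_0\exp(\cdots)<b_0$'' restates the claim, and the threshold $e$ and the shift $j_0$ are never produced.

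The output of your Step~1 (each $f^{(k)}$ vanishes at \emph{some} $x_k$) is also too weak on its own. Take $\mu_j\equiv\omega\ge\pi$, $M_0=1$, $b_0\in(0,1)$ and $f(x)=\sin(\omega x)$ on $[0,1]$. Every derivative vanishes somewhere in $[0,1]$, yet $\dd_{\mu^{|\infty}}(b_0)<\infty$. You must use where the zeros of the $f^{(k)}$ lie relative to each other and to a point where $|f|$ is large.

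The missing idea is Bang's normalization by $e^j$ together with its log-Lipschitz property; the paper reproduces these ingredients in Appendix~B.
\begin{itemize}
\item For $0\le k\le\nn$ set $\be_k(x):=\max_{k\le i\le\nn}|f^{(i)}(x)|/(e^iM_i)$. The Taylor-expansion proof of \Cref{l:beta} applies verbatim, so each $\be_k$ satisfies \eqref{eq:Bang1}.
\item Pick $x^*$ with $|f(x^*)|=\|f\|_{[0,1]}$; then $\be_0(x^*)\ge b_0/M_0\ge e^{-j_0}$. Suppose $f$ had more than $\nn=2\dd$ zeros. Then at least $\dd+1$ of them lie on one side of $x^*$, say $y_1\le\dots\le y_{\dd+1}<x^*$.
\item Rolle's theorem with interlacing gives $x_\dd\le\dots\le x_1\le x_0=y_{\dd+1}$ with $f^{(k)}(x_k)=0$.
\item This supplies the smallness: $\be_\dd(x_\dd)=\be_{\dd+1}(x_\dd)\le e^{-\dd-1}$. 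Moreover $\be_{k-1}(x_{k-1})=\be_k(x_{k-1})$.
\item Use $\be_k$ on $[x_k,x_{k-1}]$ and $\be_0$ on $[x_0,x^*]$. This gives a continuous function along a monotone path of length $\le 1$ that rises from $e^{-\dd-1}$ to $e^{-j_0}$.
\item The climbing argument in the proof of \Cref{l:lb} then gives $\sum_{j=j_0+1}^{\dd+1}1/\mu_j<e$ (for your rescaled weight). This contradicts the definition of $\dd$.
\end{itemize}
Only derivatives of order $\le\dd+1\le\nn$ are used. In this argument the factor $2$ comes from choosing one side of $x^*$ so that the path is monotone, not from multiplicities as you suggest.

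Your fallback via \Cref{thm:NSV} fails for a structural reason, not a technical one. A Remez exponent controls local vanishing orders and sublevel-set measures, not the number of separated zeros. For example, $\sin(\omega x)$ has sublevel exponent $1$ but about $\omega/\pi$ zeros in $[0,1]$.
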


\subsection{Inequalities of Markov type for univariate functions}

The following lemma is a variant of \cite[Lemma 8.1]{Pierzchala:2022aa}. 

\begin{lemma} \label[l]{l:Markov1}
   Let $I \subseteq \R$ be a compact interval.
   Let $\mu^{|\infty} = (\mu_j)_{j \ge 1}$ be an admissible weight and $M_1 = M_0\mu_1>b_1>0$.
   Assume that $(\mu^{|\infty}_{+1},M_1,|I|,b_1)$ is admissible and let 
    \[ 
        \nn = \nn(\mu^{|\infty}_{+1},M_1,|I|,b_1).
    \]
   Let $f : I \to \R$ be a $(\mu^{|\nn+1}, M_0)$-smooth function satisfying $\|f'\|_I \ge b_1$.
   Then, for each nonempty Lebesgue measurable subset $E \subseteq I$,  
   \begin{equation}
       \|f'\|_{L^1(E)} \le 2 (\nn+1)\, \|f\|_E.   
   \end{equation}
\end{lemma}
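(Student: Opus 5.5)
The plan is to bound the number of monotonicity intervals of $f$ through the zero count of $f'$ from \Cref{p:zeros}, and then to estimate the integral of $|f'|$ over $E$ on each such interval by a telescoping argument.

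\textbf{Zero count for $f'$.} The derivative $f' : I \to \R$ is $(\mu^{|\nn}_{+1},M_1)$-smooth. Indeed, for $0 \le j \le \nn$,
\[
\|f'^{(j)}\|_I = \|f^{(j+1)}\|_I \le M_{j+1} = M_1\mu_2\cdots\mu_{j+1}.
\]
Together with $\|f'\|_I \ge b_1$ and the admissibility of $(\mu^{|\infty}_{+1},M_1,|I|,b_1)$, \Cref{p:zeros} applies to $f'$. So $f'$ has at most $\nn$ zeros in $I$, counted with multiplicity.

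\textbf{Monotone pieces.} Remove these zeros from $I$. What remains is a partition of $I$, up to finitely many points, into at most $\nn+1$ open intervals $J_1,\ldots,J_k$ with $k \le \nn+1$. On each $J_i$ the derivative $f'$ has constant sign, so $f$ is strictly monotone there.

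\textbf{Estimate on one piece.} Fix $i$ and the measurable set $E_i := E \cap J_i$. We want
\[
\int_{E_i} |f'| \le 2\,\|f\|_E .
\]
This is the main step. One cannot simply take $\int_{J_i}|f'| = |f(\text{end}) - f(\text{start})|$, because the endpoints need not lie in $E$. Instead we use monotonicity together with the area formula. Since $f|_{J_i}$ is a $\cC^1$ diffeomorphism onto its image,
\[
\int_{E_i}|f'| = \cL^1(f(E_i)).
\]
The image satisfies $f(E_i) \subseteq f(E) \subseteq [-\|f\|_E,\|f\|_E]$, so its measure is at most $2\|f\|_E$. This holds for every Lebesgue measurable $E$, since the change of variables formula holds for the injective $\cC^1$ map $f|_{J_i}$.

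\textbf{Summing.} The finitely many zeros of $f'$ form a null set, so summing over at most $\nn+1$ pieces gives
\[
\|f'\|_{L^1(E)} = \sum_{i=1}^k \int_{E_i}|f'| \le 2(\nn+1)\,\|f\|_E .
\]

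The only delicate points are the applicability of \Cref{p:zeros} to $f'$, which depends on the correct weight shift and on finite smoothness of order $\nn+1$ for $f$, and the measure-theoretic identity above. Both appear straightforward.
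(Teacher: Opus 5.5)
Your proof is correct and follows the paper's argument: you bound the zeros of $f'$ via \Cref{p:zeros} with the shifted weight, split $I$ into at most $\nn+1$ monotone pieces, and bound each piece by $2\|f\|_E$. The only difference is the per-piece step. You use the area formula; the paper instead uses $\int_{E\cap I_j}|f'|\le |f(\sup E\cap I_j)-f(\inf E\cap I_j)|$, which follows from monotonicity and the fundamental theorem of calculus on the hull of $E\cap I_j$. Since these two points lie in $\overline{E}$, continuity bounds $|f|$ there by $\|f\|_E$. This handles your concern about endpoints not lying in $E$ more elementarily than your change-of-variables identity, which is valid but heavier than needed.
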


\begin{proof}
    By \Cref{p:zeros} 
    $f'$ has at most $\nn$ zeros in $I$ (counted with multiplicities). 
    Thus there is sequence $t_1 < \cdots <t_n$, with $n \le \nn$, of points in $I$ such that the sign of $f'$ 
    is constant on each interval $I_j := (t_j,t_{j+1})$, for $j=1, \ldots, n-1$, $I_0 := I \cap (-\infty,t_1)$ and 
    $I_n := I \cap (t_n,\infty)$. 
    Then 
    \begin{align*}
        \int_E |f'(x)| \, dx &= \sum_{j=0}^n \int_{E \cap I_j}  |f'(x)| \, dx 
        \\
                             &\le \sum_{j=0}^n |f(\sup E \cap I_j) - f(\inf E \cap I_j)| \le 2(n+1)\, \|f\|_E 
    \end{align*}
    which implies the assertion. 
\end{proof}

Combining \Cref{l:Markov1} with \Cref{t:comparison}, we obtain the following 
inequality of Markov type.

\begin{proposition} \label[p]{p:Markov2}
    In the setting of \Cref{l:Markov1}, for each Lebesgue measurable subset $E \subseteq I$ with $|E|>0$,  
   \begin{equation}
   \|f'\|_{I} \le 2\,\Cnn \, \frac{|I|^{\nn}}{|E|^{\nn+1}}  (\nn+1)^2\, \|f\|_E.   
   \end{equation}
\end{proposition}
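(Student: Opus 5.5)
The plan is to combine the $L^1$ bound for $f'$ from \Cref{l:Markov1} with the $L^\infty$--$L^1$ comparison \eqref{eq:LinftyLqmaster} applied to $f'$ on the interval. The latter is the case $p=\infty$, $q=1$ of \Cref{t:comparison} in the convex-body setting $K=L=I$, $n=1$, where $\nu_K=1$ and $a_K=|I|$. By \Cref{r:convexbody}, the condition $\mu_j\ge j$ is not needed there.

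First I check that $f'$ satisfies the hypotheses of \Cref{t:comparison}. Since $f$ is $(\mu^{|\nn+1},M_0)$-smooth, the derivative $f'$ is $(\mu^{|\nn}_{+1},M_1)$-smooth. We also have $\|f'\|_I\ge b_1$ and $M_1>b_1$. The degree attached to $(\mu^{|\infty}_{+1},M_1,|I|,b_1)$ is exactly the $\nn$ of \Cref{l:Markov1}, and so the constant $\Cnn$ is the corresponding one.

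Applying \eqref{eq:LinftyLqmaster} with $q=1$, $K=I$, $\nu_K=1$, and the given set $E$ yields
\[
\|f'\|_I \le \Cnn \Big(\frac{|I|}{|E|}\Big)^{\nn} (\nn+1)\, \frac{1}{|E|}\int_E |f'(x)|\,dx .
\]
Next, \Cref{l:Markov1} gives $\int_E|f'|\,dx=\|f'\|_{L^1(E)}\le 2(\nn+1)\|f\|_E$. Substituting this bound produces
\[
\|f'\|_I\le 2\,\Cnn\,\frac{|I|^{\nn}}{|E|^{\nn+1}}(\nn+1)^2\|f\|_E,
\]
which is the claim.

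The only step needing care is the bookkeeping in the first step. The Remez and comparison results must be invoked for the shifted weight, with $M_1$ as the zeroth bound, so that the degree and constant coincide with those of \Cref{l:Markov1}. I also need to confirm that the smoothness order $\nn$ required by \Cref{t:comparison} is available for $f'$. It is, since $f$ is smooth of order $\nn+1$.
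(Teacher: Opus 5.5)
Your proposal is correct and follows essentially the same route as the paper. The paper combines \Cref{l:Markov1} with \eqref{eq:LpLqE} for $f'$, which in the case $p=\infty$, $q=1$ is exactly the inequality \eqref{eq:LinftyLqmaster} you use, and it likewise invokes \Cref{r:convexbody} to drop the condition $\mu_j\ge j$.
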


\begin{proof}
    Combine \Cref{l:Markov1} with \eqref{eq:LpLqE} for $f'$. 
    The condition $\mu_j \ge j$ for $j\ge 1$ is not needed; see \Cref{r:convexbody}.
\end{proof}

\begin{corollary}
   Let $I \subseteq \R$ be a compact interval.
   Let $\mu^{|\infty} = (\mu_j)_{j \ge 1}$ be a quasianalytic admissible weight and $M_0>0$.
   Let $f : I \to \R$ be a $(\mu^{|\infty}, M_0)$-smooth function.
   Then, for all $j \ge 1$,
   \begin{equation}
       \|f^{(j)}\|_{I} \le \Big(\frac{2}{|I|} \Big)^j\, \prod_{i=1}^j \mathbf{C}_{\nn_i}  (\nn_i+1)^2\, \|f\|_I,
   \end{equation}
   where $\nn_i = \nn(\mu^{|\infty}_{+i},M_i,|I|,\frac{\|f^{(i)}\|_I}2)$.
\end{corollary}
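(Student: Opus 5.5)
We apply \Cref{p:Markov2} with $E = I$ to each derivative in turn and then multiply the resulting one-step bounds together.

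\textbf{One step.} Fix $i \ge 1$ and look at $g := f^{(i-1)} : I \to \R$. Since $f$ is $(\mu^{|\infty},M_0)$-smooth, $g$ is $(\mu^{|\infty}_{+(i-1)},M_{i-1})$-smooth, and its derivative satisfies $\|g'\|_I = \|f^{(i)}\|_I \le M_i$.

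\textbf{The case $f^{(i)} \equiv 0$.} Then the left-hand side of the claimed inequality vanishes for that $j$ and every larger $j$. So we may assume $\|f^{(i)}\|_I > 0$ for all $i \le j$.

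\textbf{Choice of $b_1$.} Put $b_1 := \tfrac12 \|f^{(i)}\|_I$. Then $b_1 < M_i$ strictly, which is the condition $M_1 > b_1$ in \Cref{l:Markov1} for the shifted weight; here $M_1$ is replaced by $M_{i-1}\mu_i = M_i$. The tuple $(\mu^{|\infty}_{+i}, M_i, |I|, b_1)$ is admissible because $\mu^{|\infty}$, and hence every shift $\mu^{|\infty}_{+i}$, is quasianalytic (\Cref{r:nnprop}(3)). The associated degree is
\[
  \nn(\mu^{|\infty}_{+i},M_i,|I|,\tfrac12\|f^{(i)}\|_I) = \nn_i .
\]
By construction $\|g'\|_I \ge b_1$, and $g$ is $(\mu^{|\nn_i+1}_{+(i-1)},M_{i-1})$-smooth.

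\textbf{Applying \Cref{p:Markov2}.} With $E = I$ the factor $|I|^{\nn}/|E|^{\nn+1}$ becomes $1/|I|$, so
\[
  \|f^{(i)}\|_I \le \frac{2\,\mathbf{C}_{\nn_i}(\nn_i+1)^2}{|I|}\,\|f^{(i-1)}\|_I .
\]
The shifting of the weight must be done consistently. In \Cref{l:Markov1} the relevant weight is the shift by one of the weight of $g$, namely $(\mu^{|\infty}_{+(i-1)})_{+1} = \mu^{|\infty}_{+i}$. The constant $\mathbf C$ is then formed from $\mu^{|\infty}_{+i}$, i.e.\ with $\tilde\mu$ shifted accordingly; I read $\mathbf C_{\nn_i}$ in the statement with this convention.

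\textbf{Iteration.} Chaining the one-step bound for $i = 1, \ldots, j$ gives
\[
  \|f^{(j)}\|_I \le \frac{2^j}{|I|^j}\prod_{i=1}^j \mathbf C_{\nn_i}(\nn_i+1)^2\,\|f\|_I ,
\]
which is the claimed inequality.

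\textbf{Main obstacle.} There is no deep obstacle. The only points needing care are the bookkeeping of the shifted weight and its constant $\mathbf C_{\nn_i}$, and ensuring the strict inequality $b_1 < M_i$ that \Cref{l:Markov1} requires. The strict inequality is exactly why $\tfrac12\|f^{(i)}\|_I$, rather than $\|f^{(i)}\|_I$, appears in $\nn_i$.
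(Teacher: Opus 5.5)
Your proposal is correct and follows the paper's proof, which is a one-line iteration of \Cref{p:Markov2} (with $E=I$), using $\tfrac12\|f^{(i)}\|_I$ in $\nn_i$ so that $M_i>\tfrac12\|f^{(i)}\|_I$; this gives $\nn_i\ge 2$ and makes $\mathbf C_{\nn_i}$ well-defined. Your additional remarks go beyond what the paper states. One covers the degenerate case $f^{(i)}\equiv 0$, where $\nn_i$ is undefined but the left-hand side vanishes. The other says $\mathbf C_{\nn_i}$ should be formed from the shifted weight $\mu^{|\infty}_{+i}$. Both are harmless refinements of details the paper leaves implicit.
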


\begin{proof}
    Iterate \Cref{p:Markov2}. In the definition of $\nn_i$ we put $\tfrac{\|f^{(i)}\|_I}2$ 
    to guarantee that $M_i > \frac{\|f^{(i)}\|_I}2$ and hence $\nn_i \in \N_{\ge 2}$. 
\end{proof}

\subsection{Sampling the supremum norm} \label{ssec:sampling}

We need an easy consequence of \Cref{p:Markov2}.

\begin{corollary} \label[c]{c:sampling1}
   Let $\mu^{|\infty} = (\mu_j)_{j \ge 1}$ be a quasianalytic admissible weight, $M_0>0$, and $\mu_1 >1$.
   Let $f : [0,1] \to \R$ be a nonzero $(\mu^{|\infty}, M_0)$-smooth function.
   Then
   \begin{equation}
       \|f'\|_{[0,1]} \le 2 \, \Cnn\, (\nn+1)^2\, \|f\|_{[0,1]},
   \end{equation}
   where $\nn = \nn(\mu^{|\infty}_{+1},M_1,1,\|f\|_{[0,1]})$.
\end{corollary}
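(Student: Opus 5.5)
The plan is to derive this directly from \Cref{p:Markov2} with $I = E = [0,1]$, taking the threshold $b_1 := \|f\|_{[0,1]}$. A preliminary case split handles the situation where the hypothesis $\|f'\|_I \ge b_1$ of \Cref{l:Markov1} fails.

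First, the trivial case. Suppose $\|f'\|_{[0,1]} < \|f\|_{[0,1]}$. By \Cref{d:Cnn}, $\Cnn = (4e^{4+\ga_{\tilde\mu}(\nn)})^{\nn} \ge 1$, since $\ga_{\tilde\mu}\ge 0$ because $\tilde\mu$ is increasing. Hence $2\Cnn(\nn+1)^2 \ge 1$, and the asserted inequality holds with room to spare.

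Now assume $\|f'\|_{[0,1]} \ge \|f\|_{[0,1]}$ and set $b_1 := \|f\|_{[0,1]}$, which is positive because $f$ is nonzero. We check the hypotheses of \Cref{l:Markov1} with $I = [0,1]$.
\begin{itemize}
\item Since $f$ is $(\mu^{|\infty},M_0)$-smooth, $b_1 = \|f\|_{[0,1]} \le M_0 < M_0\mu_1 = M_1$, where the strict inequality uses $\mu_1 > 1$. So $M_1 > b_1 > 0$.
\item The shifted weight $\mu^{|\infty}_{+1}$ is quasianalytic because $\mu^{|\infty}$ is. By \Cref{r:nnprop}(3), $(\mu^{|\infty}_{+1},M_1,1,b_1)$ is admissible, and $\nn = \nn(\mu^{|\infty}_{+1},M_1,1,\|f\|_{[0,1]})$ is well defined. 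By \Cref{r:Nnonzero} it is at least $2$.
\item A $(\mu^{|\infty},M_0)$-smooth function is in particular $(\mu^{|\nn+1},M_0)$-smooth.
\item $\|f'\|_I \ge b_1$ holds by the case assumption.
\end{itemize}

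\Cref{p:Markov2} with $E = I = [0,1]$, so that $|I| = |E| = 1$, then gives
\[
  \|f'\|_{[0,1]} \le 2\,\Cnn\,(\nn+1)^2\,\|f\|_{[0,1]},
\]
which is the claim.

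There is no real obstacle. The only delicate points are, first, that $M_1 > b_1$ is strict, which is exactly where the hypothesis $\mu_1 > 1$ is used. Second, the trivial case must be dispatched through $\Cnn \ge 1$, because \Cref{l:Markov1} needs $\|f'\|_I \ge b_1$.
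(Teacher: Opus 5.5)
Your proof is correct and follows the paper's argument exactly. It uses the same case split on whether $\|f'\|_{[0,1]}$ exceeds $\|f\|_{[0,1]}$, obtains the nontrivial case from \Cref{p:Markov2} with $I=E=[0,1]$ and $b_1=\|f\|_{[0,1]}$, and uses $\mu_1>1$ precisely to get $M_1>b_1$; your explicit check that $\Cnn\ge 1$ in the trivial case is a detail the paper leaves implicit.
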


\begin{proof}
    The assertion is trivially true if $\|f'\|_{[0,1]} \le \|f\|_{[0,1]}$.
    If $\|f'\|_{[0,1]} > \|f\|_{[0,1]}$, then the statement follows from \Cref{p:Markov2}.
    The assumption $\mu_1>1$ guarantees that $\|f\|_{[0,1]} < M_1$ so that $\nn \in \N_{\ge 2}$.
\end{proof}

Recall that an \emph{$\ep$-net} for $[0,1]$ is a subset $X \subseteq [0,1]$ such that for each $y \in [0,1]$ there
exists $x \in X$ with $|x-y|\le \ep$.

\begin{corollary} \label[c]{c:sampling2}
   In the setting of \Cref{c:sampling1},
   let $X$ be any $\ep$-net for $[0,1]$, where 
   \begin{equation} \label{eq:sampling}
       \ep := \frac{1}{4 \, \Cnn\, (\nn+1)^2}.
   \end{equation}
   Then 
   \begin{equation} \label{eq:sampling_aim}
      \|f\|_X \le \|f\|_{[0,1]} \le 2\,  \|f\|_X.
   \end{equation}
\end{corollary}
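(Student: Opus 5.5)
The first inequality in \eqref{eq:sampling_aim} is trivial because $X \subseteq [0,1]$. For the second, the plan is a mean value argument combined with the Markov-type bound of \Cref{c:sampling1}.

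Choose $y_0 \in [0,1]$ with $|f(y_0)| = \|f\|_{[0,1]}$. This is possible since $f$ is continuous on a compact interval. Because $X$ is an $\ep$-net, there is $x \in X$ with $|x - y_0| \le \ep$. By the mean value theorem,
\[
|f(y_0)| \le |f(x)| + |f(y_0)-f(x)| \le \|f\|_X + \ep\, \|f'\|_{[0,1]}.
\]

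Next invoke \Cref{c:sampling1}, which applies because $f$ is a nonzero $(\mu^{|\infty},M_0)$-smooth function with $\mu_1>1$. It gives $\|f'\|_{[0,1]} \le 2\,\Cnn\,(\nn+1)^2\,\|f\|_{[0,1]}$. With the choice \eqref{eq:sampling} of $\ep$ this yields $\ep\,\|f'\|_{[0,1]} \le \tfrac12 \|f\|_{[0,1]}$. Therefore
\[
\|f\|_{[0,1]} \le \|f\|_X + \tfrac12\|f\|_{[0,1]}.
\]
Since $\|f\|_{[0,1]}$ is finite, we can rearrange to get $\|f\|_{[0,1]} \le 2\,\|f\|_X$.

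The only step that needs care is making sure $\Cnn$ and $\nn$ in \eqref{eq:sampling} are the same constants as in \Cref{c:sampling1}, namely $\nn = \nn(\mu^{|\infty}_{+1},M_1,1,\|f\|_{[0,1]})$. Once that is fixed, no further obstacle is expected.
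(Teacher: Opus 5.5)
Your proof is correct and follows the paper's argument: you pick an $\ep$-net point near a point of $[0,1]$, apply the mean value theorem, and use the Markov bound of \Cref{c:sampling1}, which with the given $\ep$ absorbs half of $\|f\|_{[0,1]}$. The only cosmetic difference is that you fix a maximizer $y_0$ up front, whereas the paper bounds $|f(y)|$ for arbitrary $y$ and then takes the supremum.
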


\begin{proof}
    Let $y \in [0,1]$. There exists $x \in X$ such that $|x-y| \le \ep$. By \Cref{c:sampling1},
    \begin{align*}
        |f(y)| \le |f(x)| + |x-y| \, \|f'\|_{[0,1]} 
        \le \|f\|_X +  2 \ep\, \Cnn\, (\nn+1)^2\, \|f\|_{[0,1]}.
    \end{align*}
    With the choice \eqref{eq:sampling} for $\ep$, the statement follows easily.
\end{proof}

So given a nonzero $(\mu^{|\infty}, M_0)$-smooth function $f : [0,1] \to \R$,
we may compute $|f(x_0)|$ at a random point $x_0$. 
If $|f(x_0)| \ne 0$, then \Cref{c:sampling2} implies that, if 
$X$ consists of $4 \, \Cnn\, (\nn+1)^2 +1$ equidistant points in $[0,1]$, where 
$\nn = \nn(\mu^{|\infty}_{+1},M_1,1,|f(x_0)|)$, then $X$ satisfies \eqref{eq:sampling_aim}.

\subsection{Inequalities of Markov type for multivariate functions}

Let $f : I^n \to \R$ be a function defined on a compact cube $I^n \subseteq \R^n$.
If $n=1$ we set $V(f) := \sup_{x,y \in I} |f(x)-f(y)|$.
In general, for $1 \le i \le n$,
\begin{equation}
  V_i(f) := \inf_\ell V(f|_\ell),  
\end{equation}
where $\ell$ ranges over all line segments in $I^n$ with direction $e_i$. 

\begin{corollary}
Let $I^n \subseteq \R^n$ be a compact cube.
    Let $\mu^{|\infty} = (\mu_j)_{j \ge 1}$ be a quasianalytic admissible weight and $M_0>0$.
    Let $f : I^n \to \R$ be a $(\mu^{|\infty}, M_0)$-smooth function.
    If $V_i(f) >0$, then
    \begin{equation} \label{eq:cube1}
        \|\p_i f\|_{I^n} \le  \frac{2\,\Cnn}{|I|} \, (\nn+1)^2\, \|f\|_{I^n}
    \end{equation}
    where $\nn = \nn(\mu^{|\infty}_{+1}, M_1, |I|, \frac{V_i(f)}{2|I|})$.
\end{corollary}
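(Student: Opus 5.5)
The plan is to reduce to the univariate Markov inequality, Proposition \ref{p:Markov2}, applied along lines parallel to $e_i$, with $E$ equal to the whole segment. Fix a point $x \in I^n$ and let $\ell$ be the line segment in $I^n$ through $x$ with direction $e_i$; it has length $|I|$. Parametrize $\ell$ by an interval $J$ of length $|I|$ and set $g := f|_\ell$, viewed as a function on $J$, so that $g'(t) = \p_i f$ along $\ell$. Since $|d_{e_i}^j f| \le \|f\|_{j,I^n} \le M_j$, the function $g$ is $(\mu^{|\infty},M_0)$-smooth on $J$.

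Next I verify the lower bound on $\|g'\|_J$ required in Lemma \ref{l:Markov1}. By the mean value theorem, for $s,t \in J$ we have $|g(s)-g(t)| \le |I|\,\|g'\|_J$, so $\|g'\|_J \ge V(f|_\ell)/|I| \ge V_i(f)/|I| > \frac{V_i(f)}{2|I|} =: b_1$. Also $b_1 < M_1$: indeed $\|g'\|_J \le M_1$, and the factor $\tfrac12$ gives strict inequality. Hence $M_1 > b_1 > 0$, and since the weight is quasianalytic, $(\mu^{|\infty}_{+1},M_1,|I|,b_1)$ is admissible by Remark \ref{r:nnprop}(3). So the hypotheses of Lemma \ref{l:Markov1} hold with $\nn = \nn(\mu^{|\infty}_{+1},M_1,|I|,b_1)$. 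The important point is that this $\nn$ does not depend on the chosen line $\ell$, which is why the assumption is placed on the infimum $V_i(f)$.

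Proposition \ref{p:Markov2} with $E = J$ then gives
\[
\|g'\|_J \le 2\,\Cnn\,\frac{|I|^{\nn}}{|I|^{\nn+1}}(\nn+1)^2\,\|g\|_J = \frac{2\,\Cnn}{|I|}(\nn+1)^2\,\|g\|_J \le \frac{2\,\Cnn}{|I|}(\nn+1)^2\,\|f\|_{I^n}.
\]
Since $|\p_i f(x)| \le \|g'\|_J$, taking the supremum over $x \in I^n$ yields \eqref{eq:cube1}.

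The only delicate step is the middle one: producing a uniform lower bound on $\|\p_i f\|_\ell$ over all lines $\ell$, so that a single $\nn$ serves for every line. The mean value argument above handles this, and the remaining steps are direct applications of the univariate results.
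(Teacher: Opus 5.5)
Your proposal is correct and follows the paper's proof. You bound $\|\p_i f\|_\ell \ge V_i(f)/|I| > \frac{V_i(f)}{2|I|}$ uniformly over the full-length segments $\ell$ in direction $e_i$, and then apply Proposition~\ref{p:Markov2} with $E=\ell$, spelling out details the paper leaves implicit: the mean value bound, $M_1 > \frac{V_i(f)}{2|I|}$, and admissibility via quasianalyticity.
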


\begin{proof}
    If $\ell \subseteq I$ is a line segment with direction $e_i$,
    then $\|\p_i f\|_\ell \ge V_i(f)/|I|$ for all $x \in \ell$. 
    The statement follows from \Cref{p:Markov2}. 
    (We have $\nn \in \N_{\ge 2}$ since $M_1 > \frac{V_i(f)}{2|I|}$.)
\end{proof}

The next lemma is a variant of \cite[Lemma 8.1]{Pierzchala:2022aa}. 
If $K \subseteq \R^n$ is a nonempty compact set, let $K_i \subseteq \R^{n-1}$ 
denote the image of $K$ under the projection in direction $e_i$. 
In the following, the measure $\cL^{n-1}(K_i)$ will appear; 
for $n=1$, we set $\cL^0(K_1) := 1$, by convention.

\begin{lemma} \label[l]{l:Markov4}
   Let $I^n \subseteq \R^n$ be a compact cube.
   Let $\mu^{|\infty} = (\mu_j)_{j \ge 1}$ be a quasianalytic admissible weight and $M_0 >0$.
   Let $f : I^n \to \R$ be a $(\mu^{|\infty}, M_0)$-smooth function.
   If $V_i(f)>0$, then 
   for each nonempty compact subset $K \subseteq I^n$,  
   \begin{equation} \label{eq:cube2}
       \|\p_i f\|_{L^1(K)} \le 2 \cL^{n-1}(K_i) \,(\nn+1) \, \|f\|_K,   
   \end{equation}
   where $\nn = \nn(\mu^{|\infty}_{+1}, M_1,|I|,\frac{V_i(f)}{2|I|})$. 
\end{lemma}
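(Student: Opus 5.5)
Fubini reduces the claim to a one-dimensional estimate on each line in direction $e_i$; there we bound the number of zeros of $\p_i f$ and apply the argument of \Cref{l:Markov1}. Write $x = (x_i, x')$ with $x' \in \R^{n-1}$ the coordinates transverse to $e_i$. By Fubini,
\[
\|\p_i f\|_{L^1(K)} = \int_{K_i} \int_{K^{x'}} |\p_i f(x_i,x')|\, dx_i\, dx',
\]
where $K^{x'} := \{x_i : (x_i,x') \in K\}$ is the (compact) fiber over $x' \in K_i$. It therefore suffices to show, for every $x' \in K_i$,
\[
\int_{K^{x'}} |\p_i f(x_i,x')|\, dx_i \le 2(\nn+1)\, \|f\|_K .
\]
Integrating this over $K_i$ then gives \eqref{eq:cube2}. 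For $n=1$ this is literally the statement, with the convention $\cL^0(K_1)=1$.

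Fix $x' \in K_i$ and let $\ell$ be the full line segment $I^n \cap (\R e_i + x')$, which has length $|I|$. Set $h := f|_\ell$, viewed as a function of one variable $t$ on an interval of length $|I|$. Since $f$ is $(\mu^{|\infty},M_0)$-smooth, $h$ is $(\mu^{|\infty},M_0)$-smooth, because $|\p_i^j f| \le \|f\|_{j,I^n} \le M_j$. Hence $h' = \p_i f|_\ell$ is $(\mu^{|\infty}_{+1},M_1)$-smooth.

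Next we need the lower bound $\|h'\|_\ell \ge \frac{V_i(f)}{2|I|}$. By the mean value theorem, $V(h) \le |I|\,\|h'\|_\ell$, and $V(h) \ge V_i(f)$ by the definition of $V_i$ as an infimum over such segments. So $\|h'\|_\ell \ge V_i(f)/|I| > \frac{V_i(f)}{2|I|}$. Also $M_1 \ge \|h'\|_\ell > \frac{V_i(f)}{2|I|}$, so the degree $\nn$ in the statement is well defined, and admissibility holds because $\mu^{|\infty}$ is quasianalytic (\Cref{r:nnprop}(3)).

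\Cref{p:zeros} applied to $h'$ on $\ell$ with $b_0 = \frac{V_i(f)}{2|I|}$ now shows that $h'$ has at most $\nn$ zeros on $\ell$. The point is that this count is uniform in $x'$, since $\nn$ depends only on $V_i(f)$, $|I|$, $M_1$ and $\mu^{|\infty}$.

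We then repeat the proof of \Cref{l:Markov1} on the fiber with $E := K^{x'}$. The at most $\nn$ zeros cut $\ell$ into at most $\nn+1$ open intervals $I_j$ on which $h'$ has constant sign. On each piece $h$ is monotone, so $\int_{E\cap I_j} |h'| \le |h(\sup E\cap I_j) - h(\inf E\cap I_j)|$. The endpoints $\sup(E\cap I_j)$ and $\inf(E\cap I_j)$ lie in $\overline{E}$, and $\overline{E} = E$ because $E$ is compact. Hence each term is at most $2\|h\|_{E} \le 2\|f\|_K$. Summing over the pieces gives the one-dimensional bound $2(\nn+1)\|f\|_K$, and Fubini finishes the proof.

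The main obstacle is making sure the zero bound for $h'$ on each line uses the single, uniform $\nn$. This is exactly why $V_i(f)$ is defined as an infimum over all segments, and why the full segment $\ell$, not just the fiber $K^{x'}$, is used to obtain the lower bound on $\|h'\|_\ell$. The measurability needed for Fubini is routine, since $K$ is compact and $\p_i f$ is continuous.
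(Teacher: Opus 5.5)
Your proposal is correct and follows the paper's proof: Fubini over the fibers in direction $e_i$, combined with the one-dimensional estimate of \Cref{l:Markov1} on each full segment, which rests on the zero count of \Cref{p:zeros} and the uniform lower bound $\|\p_i f\|_\ell \ge V_i(f)/|I|$. You also supply details the paper leaves implicit, namely that $\nn$ is uniform in $x'$ and that compactness of the fibers puts the endpoints $\sup(E\cap I_j)$, $\inf(E\cap I_j)$ in $E$.
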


\begin{proof}
    The case $n=1$ follows from \Cref{l:Markov1}.
    Let $n\ge 2$ and $i=1$. Then, by Fubini's theorem and the case $n=1$, 
    \begin{align*}
        \int_{K} |\p_1 f(x_1,x')|\, dx \le \int_{K_1} 2\, (\nn+1) \, \|f\|_{K} \, dx' = 2 \cL^{n-1}(K_1) \,(\nn+1) \, \|f\|_K.
    \end{align*}
    For $i \ge 2$, we argue analogously.
\end{proof}

Let us combine \Cref{l:Markov4} with \Cref{t:comparison}.

\begin{corollary}
   Let $I^n \subseteq \R^n$ be a compact cube.
   Let $\mu^{|\infty} = (\mu_j)_{j \ge 1}$ be a quasianalytic admissible weight satisfying $\mu_j \ge j$ for $j\ge 1$.
   Let $M_1=M_0\mu_1> b_1>0$.
   Let $K\in \sK(\R^n)$ be a subset of $I^n$.
   Let $f : I^n \to \R$ be a $(\mu^{|\infty}, M_0)$-smooth function 
   such that $\|\p_i f\|_K \ge b_1$.
   If $V_i(f)>0$, then 
   \begin{equation} \label{eq:cube3}
       \|\p_i f\|_{K} \le     \frac{2\Cnn\, \cL^{n-1}(K_i)}{\nu_K^\nn \,|K|}    \,(\nn+1)^2 \, \|f\|_K,   
   \end{equation}
   where $\nn = \nn(\mu^{|\infty}_{+1}, M_1,\max\{a_K,|I|\},\min\{b_1,\frac{V_i(f)}{|I|}\})$. 
\end{corollary}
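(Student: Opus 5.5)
The plan is to combine \Cref{l:Markov4}, which bounds $\|\p_i f\|_{L^1(K)}$ by $\|f\|_K$, with the $L^\infty$--$L^1$ comparison \eqref{eq:LpLqE} of \Cref{t:comparison}, applied to $\p_i f$ on $K$. This mirrors the proof of \Cref{p:Markov2}, with \Cref{l:Markov4} replacing \Cref{l:Markov1}.

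\emph{Step 1 (Remez/comparison for $\p_i f$).} Since $f$ is $(\mu^{|\infty},M_0)$-smooth, $\p_i f : I^n \to \R$ is $(\mu^{|\infty}_{+1},M_1)$-smooth, because $\|\p_i f\|_{j,I^n} \le \|f\|_{j+1,I^n} \le M_{j+1}$. Take the setup of \Cref{ssec:setup} with:
\begin{itemize}
\item $L = I^n$;
\item the weight $\mu^{|\infty}_{+1}$, which still satisfies $\mu_{j+1}\ge j$;
\item $M_1$ in place of $M_0$;
\item lower bound $b_1$, since $\|\p_i f\|_K\ge b_1$.
\end{itemize}
Apply \eqref{eq:LpLqE} with $p=\infty$, $q=1$, $E=K$:
\[
\|\p_i f\|_K \le \mathbf C_{\nn''}\,\nu_K^{-\nn''}\,(\nn''+1)\,\frac{1}{|K|}\|\p_i f\|_{L^1(K)},
\qquad \nn''=\nn(\mu^{|\infty}_{+1},M_1,a_K,b_1).
\]

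\emph{Step 2 (Markov-type $L^1$ bound).} \Cref{l:Markov4} gives
\[
\|\p_i f\|_{L^1(K)} \le 2\,\cL^{n-1}(K_i)\,(\nn'+1)\,\|f\|_K,
\qquad \nn'=\nn\bigl(\mu^{|\infty}_{+1},M_1,|I|,\tfrac{V_i(f)}{2|I|}\bigr).
\]
Substituting this into Step 1 yields \eqref{eq:cube3} with $\mathbf C_{\nn''}(\nn''+1)(\nn'+1)$ in place of $\Cnn(\nn+1)^2$.

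\emph{Step 3 (unify the degrees).} By \Cref{r:nnprop}(2), $\nn(\cdot)$ is increasing in $a$ and decreasing in $b$. Hence both $\nn'$ and $\nn''$ are bounded by $\nn(\mu^{|\infty}_{+1},M_1,\max\{a_K,|I|\},\min\{b_1,\frac{V_i(f)}{2|I|}\})$. Likewise $\mathbf C_{\nn''}\le\Cnn$ and $\nu_K^{-\nn''}\le\nu_K^{-\nn}$, the latter because $\nu_K\le1$. Admissibility holds automatically by quasianalyticity (\Cref{r:nnprop}(3)).

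The main obstacle is matching the stated $b$-argument $\frac{V_i(f)}{|I|}$ rather than $\frac{V_i(f)}{2|I|}$. Here I would re-examine the proof of \Cref{l:Markov4}, via \Cref{l:Markov1} and \Cref{p:zeros}. There one needs only $\|\p_i f\|_\ell \ge V_i(f)/|I|$ on each segment $\ell$ in direction $e_i$, which follows from the mean value theorem. So the zero count of $\p_i f|_\ell$ can be run with $b=\frac{V_i(f)}{|I|}$ directly, provided $M_1>\frac{V_i(f)}{|I|}$. If that inequality fails, I would fall back to the halved value as in the earlier corollary, and the statement's $\nn$ should then be read with that minor adjustment.
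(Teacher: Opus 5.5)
Your proposal matches the paper's proof. The paper's proof is the single line that the corollary is ``immediate from \Cref{l:Markov4} and \eqref{eq:LpLqK} applied to $\p_i f$''. Your use of \eqref{eq:LpLqE} with $E=K$, $p=\infty$, $q=1$ is the same inequality, and you unify the degrees by monotonicity in $a$ and $b$ as the paper implicitly does.

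You are right that \Cref{l:Markov4} as stated carries $\frac{V_i(f)}{2|I|}$; the paper's one-line proof passes over this. However, your fallback and the proposed reinterpretation of the statement are unnecessary, and as written they leave the case $\frac{V_i(f)}{|I|}=M_1$ unproved.

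The fix is to apply \Cref{l:Markov1} on each fiber $\ell$ in the Fubini argument of \Cref{l:Markov4} with lower bound $\beta:=\min\{b_1,\frac{V_i(f)}{|I|}\}$. This choice works for three reasons:
\begin{itemize}
\item $\beta>0$.
\item $\beta\le b_1<M_1$, so \Cref{p:zeros} applies in every case, including $\frac{V_i(f)}{|I|}=M_1$.
\item $\beta\le\frac{V_i(f)}{|I|}\le\|\p_i f\|_\ell$ on every fiber, by the mean value theorem.
\end{itemize}
This gives the $L^1$ bound with $\nn(\mu^{|\infty}_{+1},M_1,|I|,\beta)\le\nn$, and hence the statement exactly as written.
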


\begin{proof}
    The assertion is immediate from
    \Cref{l:Markov4} and \eqref{eq:LpLqK} applied to $\p_i f$.
    (Note that $\nn \in \N_{\ge 2}$, because $M_1>b_1$.)
\end{proof}

\section{Oscillatory integrals} \label{sec:oscillatory}

The sublevel set estimates of \Cref{t:sublevel} and the bound for the numbers of zeros of \Cref{p:zeros}
can be combined with the van der Corput lemma to provide estimates for oscillatory integrals 
whose phase functions are $(\mu^{|\infty},M_0)$-smooth.
A similar approach was pursued by \cite{Loi:2021aa} for phase functions that 
are definable in an o-minimal structure.

\subsection{Van der Corput lemma}

\begin{lemma}[{\cite[Proposition 2.2]{Carbery:1999aa}, \cite{Stein93}}] \label[l]{l:Corput} 
    There exists an absolute constant $C>0$ such that for any compact interval $I \subseteq \R$, 
    any $k \ge 2$, and any integrable function $f : I \to \R$ satisfying $|f^{(k)}| \ge 1$ in the sense of distributions,
    \begin{equation}
        \Big|\int_I e^{i\la f(x)} \, dx\Big| \le C \, k\, \la^{-1/k}, \quad \la > 0.
    \end{equation}
\end{lemma}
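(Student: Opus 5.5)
The plan is to follow the sublevel-set approach of Carbery--Christ--Wright rather than the classical induction on $k$. The classical induction gives a constant like $C_k \sim 5\cdot 2^{k-1}$, not the linear growth $C\,k$ required here. First I reduce to smooth $f$. If $|f^{(k)}|\ge 1$ in the sense of distributions with $k\ge 2$, then $f^{(k-1)}$ is (after modification on a null set) absolutely continuous and strictly monotone with slope at least $1$, so $f\in \cC^{k-1}(I)$. Mollifying $f$ preserves $|f^{(k)}|\ge 1$ on slightly shrunk intervals, and the oscillatory integral passes to the limit by dominated convergence. We may also assume $f^{(k)}\ge 1$, replacing $f$ by $-f$ if necessary, since this only conjugates the integral.

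\emph{Step 1 (the case $k=1$ with monotone derivative).} If $g$ is $\cC^1$ on an interval $J$ with $g'$ monotone and $|g'|\ge \al$, then integrating by parts in $\int_J e^{i\la g}\,dx = \int_J \frac{1}{i\la g'}\,d(e^{i\la g})$ gives $|\int_J e^{i\la g}\,dx| \le 3/(\la\al)$. The bound uses only the total variation of $1/g'$, which is at most $1/\al$ because $g'$ is monotone.

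\emph{Step 2 (sublevel estimate).} I show that if $h\in \cC^{m}(I)$ with $|h^{(m)}|\ge 1$, then $|\{x\in I: |h(x)|\le \al\}| \le C_0\, m\, \al^{1/m}$ for an absolute constant $C_0$. To prove this, let $S$ be the sublevel set and choose $m+1$ points $x_0<\cdots<x_m$ in $S$ whose consecutive gaps are at least $|S|/m$ in measure, which is possible by a greedy choice. Lagrange interpolation and the mean value theorem for divided differences give
\[
1\le |h^{(m)}(\xi)| = m!\,|h[x_0,\ldots,x_m]| \le m!\sum_j \frac{\al}{\prod_{i\ne j}|x_j-x_i|}.
\]
The product is at least $(|S|/m)^m\, j!\,(m-j)!$, so summing gives $1 \le \al\,2^m m^m |S|^{-m}$. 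Since $(2^m m^m)^{1/m}=2m$, this yields $|S| \le 2m\,\al^{1/m}$.

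\emph{Step 3 (decomposition).} Apply Step 2 to $h=f'$ with $m=k-1$. The set $S_\al=\{|f'|\le\al\}$ has measure at most $2(k-1)\al^{1/(k-1)}$, and it contributes at most that much to the integral. Since $f^{(k)}\ge 1$, the function $f''$ is strictly monotone after $k-2$ further differentiations, so $f'$ has at most $k-1$ critical points. Hence $I$ splits into at most $k$ intervals on which $f'$ is monotone. Intersecting with $\{|f'|>\al\}$, each of these intervals meets the complement of $S_\al$ in at most two subintervals, because $f'$ is monotone there. This gives at most $2k$ intervals on which Step 1 applies with bound $3/(\la\al)$. Altogether
\[
\Big|\int_I e^{i\la f}\Big| \le 2(k-1)\,\al^{1/(k-1)} + \frac{6k}{\la\al}.
\]
Choosing $\al=\la^{-(k-1)/k}$ gives $\al^{1/(k-1)}=\la^{-1/k}$ and $1/(\la\al)=\la^{-1/k}$, hence the bound $8k\,\la^{-1/k}$.

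The main obstacle is Step 2 with a constant linear in $m$. A naive induction loses a factor of $2$ per step, so the interpolation argument with well-separated nodes, together with the estimate $(m^m)^{1/m}=m$, is what keeps the final constant of the form $C\,k$. The counting of monotonicity intervals in Step 3 is routine but must be done carefully so that it also contributes only $O(k)$.
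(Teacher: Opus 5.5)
Your proof is correct and is essentially the Carbery--Christ--Wright argument that the paper cites without proving: a sublevel-set bound $|\{|f'|\le \alpha\}|\le 2(k-1)\alpha^{1/(k-1)}$ via interpolation at well-separated nodes, followed by the first-derivative van der Corput estimate on the $O(k)$ intervals where $f'$ is monotone and $|f'|>\alpha$. One slip is harmless: a distributional bound $f^{(k)}\ge 1$ allows a singular part, so $f^{(k-1)}$ need not be absolutely continuous and $f$ need not be $\cC^{k-1}$. Your mollification step only uses $f^{(k)}*\rho_\epsilon\ge 1$ and the continuity of $f$ in the interior of $I$, which holds for $k\ge 2$, so the argument stands.
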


\subsection{Univariate phase and amplitude}

\begin{proposition} \label[p]{p:oscillatory}
    Let $I \subseteq \R$ be a compact interval.
    Let $\mu^{|\infty} = (\mu_j)_{j\ge 1}$ be an admissible weight and $M_2=M_0\mu_1 \mu_2\ge b_2>0$.
Assume that $(\mu^{|\infty}_{+2},M_2+\frac{b_2}{2},|I|,\frac{b_2}2)$ is admissible and let 
    \[ 
        \nn = \nn(\mu^{|\infty}_{+2},M_2+\tfrac{b_2}{2},|I|,\tfrac{b_2}{2}).
    \]
    Let $f : I \to \R$ be a $(\mu^{|\nn+2},M_0)$-smooth function satisfying $\|f''\|_I \ge b_2$.
    Let $g : I \to \C$ belong to $\cC^1(I)$. Then, 
    \begin{equation} \label{eq:osc1}
        \Big|\int_I e^{i\la f(x)} g(x)\, dx\Big| \le B\, \la^{-\frac{1}{\nn+2}},
        \quad \text{ if } \la \ge \Big(\frac{2}{b_2}\Big)^{1 + \frac{2}{\nn}}, 
    \end{equation}
    where 
    \begin{equation}
        B = \frac{\Cnn^{1/\nn}\, |I|\, \|g\|_I}{\|f''\|_I^{1/\nn}} + 2C\, (\nn+1) \big( \|g\|_I + \|g'\|_{L^1(I)} \big)
    \end{equation}
    and $C>0$ is an absolute constant.
\end{proposition}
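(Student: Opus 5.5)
The strategy is to split $I$ into the sublevel set where $|f''|$ is small and its complement, and to treat the two pieces by different tools: the sublevel estimate of \Cref{t:sublevel} (with $K=L=I$, $\nu_K = 1$, $a_K=|I|$) on the small set, and van der Corput (\Cref{l:Corput} with $k=2$) on the remaining intervals. The number of those intervals is controlled by \Cref{p:zeros}.

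\emph{Setup.} Set $h := f''$. Then $h$ is $(\mu^{|\nn}_{+2},M_2)$-smooth on $I$ and $\|h\|_I \ge b_2$. Fix a threshold $t\in(0,\frac{b_2}{2}]$, to be chosen later as a power of $\la$, and let $I_t := \{x\in I : |h(x)|\le t\}$.

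\emph{Small set.} By \Cref{t:sublevel} applied to $h$ on the interval $I$ (see \Cref{r:convexbody}, so no condition $\mu_j\ge j$ is needed),
\[
|I_t| \le |I|\Big(\frac{\Cnn\, t}{\|f''\|_I}\Big)^{1/\nn},
\]
so the contribution of $I_t$ to the integral is at most $\|g\|_I\,|I_t|$. The degree used here is $\nn(\mu^{|\infty}_{+2},M_2,|I|,b_2)$, which is at most the $\nn$ of the statement by monotonicity (\Cref{r:nnprop}).

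\emph{Complement.} The set $I\setminus I_t = \{|h|>t\}$ is a union of intervals, and on each of them $|h|>t$. Its endpoints are zeros of $h\mp t$. The functions $h\mp t$ are $(\mu^{|\nn}_{+2},M_2+\frac{b_2}{2})$-smooth and satisfy $\|h\mp t\|_I\ge \|h\|_I - t\ge \frac{b_2}{2}$. This is exactly why the statement uses the parameters $(M_2+\frac{b_2}{2},\frac{b_2}{2})$. By \Cref{p:zeros}, each of $h\mp t$ has at most $\nn$ zeros, so $I\setminus I_t$ consists of $O(\nn+1)$ intervals; I will aim for the count $\nn+1$ to match the constant $2C(\nn+1)$, possibly after a more careful count. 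On each such interval $J$, apply \Cref{l:Corput} to $\la t\,(f/t)$ with $k=2$ to get $|\int_J e^{i\la f}|\le 2C(\la t)^{-1/2}$. Then integrate by parts against $g$ in the usual way, via $\int_J e^{i\la f}g = g(b)\Phi(b)-\int_J \Phi g'$ with $\Phi(x) = \int_a^x e^{i\la f}$. This bounds the contribution of $J$ by $2C(\la t)^{-1/2}(\|g\|_J + \|g'\|_{L^1(J)})$. Summing over the intervals gives
\[
2C(\nn+1)(\la t)^{-1/2}\big(\|g\|_I + \|g'\|_{L^1(I)}\big).
\]

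\emph{Balancing.} Choose $t = \la^{-\nn/(\nn+2)}$, which equates $t^{1/\nn}$ with $(\la t)^{-1/2}$; both then equal $\la^{-1/(\nn+2)}$, and the two contributions combine into the constant $B$. The requirement $t\le \frac{b_2}{2}$ is precisely the condition $\la\ge (2/b_2)^{(\nn+2)/\nn}$ in the statement.

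\emph{Main obstacle.} The delicate step is the bookkeeping of the interval count and of the degrees: one must check that $\nn$ for $h\mp t$ dominates the degree used in the sublevel estimate, and that the endpoints (including those of $I$) give at most $\nn+1$ intervals with the stated constant.
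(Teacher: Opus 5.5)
Your plan is the paper's proof. It uses the sublevel estimate of \Cref{t:sublevel} on $I_t$, \Cref{p:zeros} applied to $f''\mp t$ with parameters $(M_2+\frac{b_2}{2},\frac{b_2}{2})$, van der Corput with $k=2$ plus one integration by parts on the remaining intervals, and $t=\la^{-\nn/(\nn+2)}$.

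\textbf{The gap: the interval count.} You leave this step open, and it matters for the constant. The naive count, with at most $\nn$ zeros each for $f''-t$ and $f''+t$, gives at most $2\nn$ gaps, hence $2\nn+1$ intervals and $2C(2\nn+1)$ in $B$. The missing observation is a pairing by multiplicity.

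Write $h=f''$ and let $J_1<\dots<J_N$ be the components of $\{|h|>t\}$. Let $[c_k,d_k]\subseteq I_t$ be the gap between $J_k$ and $J_{k+1}$. It lies in the interior of $I$, and $|h(c_k)|=|h(d_k)|=t$.
\begin{itemize}
\item If $c_k<d_k$, these are two distinct zeros of $(h-t)(h+t)$.
\item If $c_k=d_k$, then $|h|>t$ on a punctured neighbourhood of $c_k$ and $h(c_k)=\pm t\neq 0$. So $h\mp t$ has an interior local extremum with value $0$ at $c_k$. Hence $(h\mp t)'(c_k)=0$, and this zero has multiplicity at least $2$.
\end{itemize}
The gaps are disjoint and \Cref{p:zeros} counts multiplicities, so
\[
2(N-1)\le \#\{\text{zeros of } h-t\}+\#\{\text{zeros of } h+t\}\le 2\nn ,
\]
that is, $N\le \nn+1$. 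The paper itself only asserts that $|f''|-t$ has at most $\nn$ zeros, which \Cref{p:zeros} as stated does not directly give. It is this pairing that yields the count $\nn+1$ and the constant $2C(\nn+1)$.

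\textbf{A minor point in the sublevel step.} Apply \Cref{t:sublevel} to $h$ directly with the statement's parameters. This is legitimate because $h$ is $(\mu^{|\nn}_{+2},M_2+\frac{b_2}{2})$-smooth with $\|h\|_I\ge \frac{b_2}{2}$. Your detour through $\nn'=\nn(\mu^{|\infty}_{+2},M_2,|I|,b_2)$ has two problems:
\begin{itemize}
\item It needs $M_2>b_2$, whereas the statement allows $M_2=b_2$.
\item It needs an extra line to pass from $(\mathbf{C}_{\nn'}\,t/\|h\|_I)^{1/\nn'}$ to $(\Cnn\, t/\|h\|_I)^{1/\nn}$. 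Use $\mathbf{C}_{\nn'}\le \Cnn$, and split according to whether $\Cnn t/\|h\|_I$ is below or above $1$.
\end{itemize}
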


\begin{proof}
    Consider the sublevel set $I_t := \{x \in I : |f''(x)| \le t\}$ of $f''$. By \Cref{t:sublevel} and \Cref{r:convexbody}, we have 
    \begin{equation}
        |I_t| \le |I| \Big(\frac{\Cnn \, t}{\|f''\|_I}\Big)^{1/\nn}, \quad t >0.
    \end{equation}
    Consequently,
    \begin{equation}
        \Big|\int_{I_t} e^{i\la f(x)} g(x)\, dx\Big| \le \|g\|_I\, |I|\, \Big(\frac{\Cnn \, t}{\|f''\|_I}\Big)^{1/\nn}, \quad t >0.
    \end{equation}

    For $0< t \le \frac{b_2}2$, we have 
    \begin{align*}
        \|f'' \pm t\|_I &\le \|f''\|_I + t \le M_2 + \tfrac{b_2}2
        \intertext{and}
        \|f'' \pm t\|_I &\ge \|f''\|_I - t \ge b_2-\tfrac{b_2}2 =\tfrac{b_2}2
    \end{align*}
    so that, by \Cref{p:zeros}, $|f''| - t$ has at most $\nn$ zeros in $I$. We conclude that, for $0<t\le \frac{b_2}2$, 
    the set $I \setminus I_t = \{x \in I : |f''(x)|>t\}$ consists of at most $\nn+1$ intervals $(a_i,b_i)$ (the intervals containing the endpoints of $I$ are half-open). 
    By partial integration, 
    \begin{align*}
        \int_{a_i}^{b_i} e^{i\la f(x)} g(x)\, dx
        &= \int_{a_i}^{b_i} \p_x \Big(\int_{a_i}^x e^{i\la f(u)}\, du\Big) g(x)\, dx 
        \\
        &= \Big(\int_{a_i}^{b_i} e^{i\la f(u)} \, du\Big) g(b_i) 
        -\int_{a_i}^{b_i} \Big(\int_{a_i}^x e^{i\la f(u)} \, du\Big) g'(x)\, dx.
    \end{align*}
    By \Cref{l:Corput}, for $\la >0$ and $x\in [a_i,b_i]$,
    \begin{align*}
       \Big| \int_{a_i}^{x} e^{i \la f(u)} \, du \Big| 
       = \Big| \int_{a_i \sqrt t}^{x \sqrt t} e^{i \la f(y/\sqrt t)} \, \frac{dy}{\sqrt t} \Big| \le 2C\, (\la t)^{-1/2}.
    \end{align*}
    Thus,
    \begin{align*}
       \Big| \int_{a_i}^{b_i} e^{i \la f(x)} g(x) \, dx \Big|
       \le 2C\, (\la t)^{-1/2}\big(\|g\|_I + \|g'\|_{L^1(I)}\big). 
    \end{align*}
    It follows that, for  $0< t \le \frac{b_2}2$ and $\la >0$,
    \begin{align*}
        \MoveEqLeft \Big|\int_{I} e^{i\la f(x)} g(x)\, dx\Big| 
        \\
        &\le \|g\|_I\, |I|\, \Big(\frac{\Cnn \, t}{\|f''\|_I}\Big)^{1/\nn} + 2C\,(\nn+1)\, (\la t)^{-1/2}\big(\|g\|_I + \|g'\|_{L^1(I)}\big) 
    \end{align*}
    and setting $t = \la^{-\frac{\nn}{\nn+2}}$, we conclude \eqref{eq:osc1}. 
\end{proof}

\subsection{Multivariate phase and amplitude}

The next theorem generalizes \Cref{p:oscillatory} to several variables.

\begin{theorem} \label[t]{t:oscillatory}
    Let $I^n \subseteq \R^n$ be a compact cube.
    Let $\mu^{|\infty} = (\mu_j)_{j\ge 1}$ be an admissible weight and $M_2 = M_0\mu_1 \mu_2\ge b_2>0$.
Assume that $(\mu^{|\infty}_{+2},M_2+\frac{b_2}{2}, \sqrt n \,|I|,\frac{b_2}2)$ is admissible and let 
    \[ 
        \nn = \nn(\mu^{|\infty}_{+2},M_2+\tfrac{b_2}{2},\sqrt n \, |I|,\tfrac{b_2}{2}).
    \]
    Let $f : I^n \to \R$ be a $(\mu^{|\nn+2},M_0)$-smooth function.
    Assume, for some $1 \le i_0 \le n$, we have $\|\p_{i_0}^2f\|_\ell \ge b_2$ for each line segment $\ell$ in $I^n$ with direction $e_{i_0}$. 
    Let $g : I^n \to \C$ belong to $\cC^1(I^n)$. Then, 
    \begin{equation} \label{eq:osc2}
        \Big|\int_{I^n} e^{i\la f(x)} g(x)\, dx\Big| \le B\, \la^{-\frac{1}{\nn+2}},
        \quad \text{ if } \la \ge \Big(\frac{2}{b_2}\Big)^{1 + \frac{2}{\nn}}, 
    \end{equation}
    where 
    \begin{equation}
        B =  \frac{ \Cnn^{1/\nn}\, n\, |I|^n\,\|g\|_{I^n}}{\|\p_{i_0}^2f\|_{I^n}^{1/\nn}}  + 2C\, (\nn+1)\, \big(|I|^{n-1}\,\|g\|_{I^n} +  \|\p_{i_0} g\|_{L^1(I^n)}\big),
    \end{equation}
    and $C>0$ is an absolute constant. 
\end{theorem}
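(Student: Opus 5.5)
We reduce to \Cref{p:oscillatory}, applied on the segments in direction $e_{i_0}$, and then integrate over the remaining variables with Fubini. Assume $i_0=1$ and write $x=(x_1,x')$ with $x'\in I^{n-1}$. For fixed $x'$, put $f_{x'}(s):=f(s,x')$ and $g_{x'}(s):=g(s,x')$ on $I$. Since $\|f\|_{j,I^n}$ dominates the $j$-th derivative in direction $e_1$, the function $f_{x'}$ is $(\mu^{|\nn+2},M_0)$-smooth. By hypothesis $\|f_{x'}''\|_I\ge b_2$. Moreover $\nn(\mu^{|\infty}_{+2},M_2+\frac{b_2}2,|I|,\frac{b_2}2)\le\nn$ by \Cref{r:nnprop}(2), because $|I|\le\sqrt n\,|I|$.

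Rather than applying \Cref{p:oscillatory} verbatim, we rerun its proof with a single threshold $t=\la^{-\nn/(\nn+2)}$ that does not depend on $x'$. Split $I^n=S_t\cup(I^n\setminus S_t)$, where $S_t:=\{x\in I^n:|\p_1^2f(x)|\le t\}$.

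\emph{Sublevel part.} Apply \Cref{t:sublevel} (valid for convex bodies by \Cref{r:convexbody}) to $\p_1^2f$ on the cube $I^n$. The convex body $I^n$ has $\diam(I^n)=\sqrt n\,|I|$, which explains the $\sqrt n$ in $\nn$, and we use $\nu_{I^n}=1/n$. The function $\p_1^2f$ is $(\mu^{|\infty}_{+2},M_2)$-smooth, hence $(\mu_{+2},M_2+\frac{b_2}2)$-smooth, and $\|\p_1^2f\|_{I^n}\ge b_2\ge b_2/2$. This gives
\[
|S_t|\le n|I|^n\Big(\frac{\Cnn t}{\|\p_1^2f\|_{I^n}}\Big)^{1/\nn},
\]
so the integral over $S_t$ is bounded by $\|g\|_{I^n}$ times this quantity. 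This matches the first term of $B$ after inserting $t$.

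\emph{Complement.} Fix $x'$. Exactly as in the proof of \Cref{p:oscillatory}, for $0<t\le b_2/2$ the functions $f_{x'}''\pm t$ satisfy $b_2/2\le\|f_{x'}''\pm t\|_I\le M_2+b_2/2$. By \Cref{p:zeros} each has at most $\nn$ zeros. Hence $\{s:|f''_{x'}(s)|>t\}$ is a union of at most $\nn+1$ intervals. Integrating by parts on each interval and applying \Cref{l:Corput} with $k=2$ bounds the one-dimensional integral over this set by
\[
2C(\nn+1)(\la t)^{-1/2}\bigl(\|g_{x'}\|_I+\|g_{x'}'\|_{L^1(I)}\bigr).
\]
Integrating over $x'\in I^{n-1}$ gives $|I|^{n-1}\|g\|_{I^n}+\|\p_1g\|_{L^1(I^n)}$ in the last factor.

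\emph{Conclusion.} Add the two parts and choose $t=\la^{-\nn/(\nn+2)}$. Then $t^{1/\nn}=(\la t)^{-1/2}=\la^{-1/(\nn+2)}$, which yields \eqref{eq:osc2}. The condition $t\le b_2/2$ is exactly $\la\ge(2/b_2)^{1+2/\nn}$.

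The main obstacle is measurability and bookkeeping. The set $S_t$ is closed, so measurability is fine. The key point is to use one global sublevel set, so that the $x'$-dependence of $\|f''_{x'}\|_I$ never enters the first term. Only the uniform lower bound $b_2$ on each line is needed, and it serves only the zero count.
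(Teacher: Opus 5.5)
Your proposal is correct and follows essentially the same route as the paper. Both use a single global sublevel set $\{|\p_1^2 f|\le t\}$ estimated via \Cref{t:sublevel} on the cube (with $\diam(I^n)=\sqrt n\,|I|$ and $\nu_{I^n}=1/n$), at most $\nn+1$ intervals on each $e_1$-line from \Cref{p:zeros}, integration by parts with van der Corput, Fubini in $x'$, and $t=\la^{-\nn/(\nn+2)}$.

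The one step you state too quickly, as does the paper, is the interval count. Knowing only that $f''_{x'}\pm t$ each have at most $\nn$ zeros, one naively gets $2\nn+1$ intervals, which would spoil the constant $\nn+1$ in $B$. To get $\nn+1$, let $c$ be the number of components of $\{|f''_{x'}|>t\}$. All their endpoints are zeros of $f''_{x'}\mp t$, except at most two (the endpoints of $I$). A zero serving as endpoint of two components is a touching point, hence of multiplicity at least $2$. Therefore $2c-2\le 2\nn$, i.e.\ $c\le\nn+1$.
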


\begin{proof}
    Let us modify the proof of \Cref{p:oscillatory}.
    For simplicity of notation assume $i_0 =1$.
    For $I^n_t := \{x \in I^n : |\p_1^2 f(x)| \le t\}$, \Cref{t:sublevel} and \Cref{r:convexbody} give
    \begin{equation}
    |I^n_t| \le n\,|I|^n \Big(\frac{\Cnn \, t}{\|\p_1^2f \|_{I^n}}\Big)^{1/\nn}, \quad t >0.
    \end{equation}
    Consequently,
    \begin{equation}
    \Big|\int_{I^n_t} e^{i\la f(x)} g(x)\, dx\Big| \le \|g\|_{I^n}\,n\, |I|^n\, \Big(\frac{\Cnn \, t}{\|\p_1^2 f\|_{I^n}}\Big)^{1/\nn}, \quad t >0.
    \end{equation}

    For $0< t \le \frac{b_2}2$, the function $I \ni s \mapsto |\p_1^2 f(s,x')| - t$ has at most $\nn$ zeros, 
    uniformly in $x' \in I^{n-1}$, by \Cref{p:zeros}. 
    So each line with direction $e_1$ intersects $I^n \setminus I^n_t$ in a union of at most $\nn+1$ segments.
    For each such segment, say with endpoints $(a_i,x')$ and $(b_i,x')$ (where $a_i,b_i$ depend on $x'$),
    \Cref{l:Corput} implies, similarly as in the proof of \Cref{p:oscillatory}, that
    \begin{align} \label{eq:osc20}
       \Big| \int_{a_i}^{b_i} e^{i \la f(s,x')} g(s,x') \, ds \Big|
       \le 2C\, (\la t)^{-1/2}\Big(\|g\|_{I^n} + \int_{a_i}^{b_i} |\p_1 g(s,x')| \, ds\Big). 
    \end{align}
    Thus, for  $0< t \le \frac{b_2}2$ and $\la >0$,
    \begin{align}
                                                                         \label{eq:osc21}
        \Big| \int_{I^n \setminus I^n_t} e^{i \la f(x)} g(x) \, ds \Big| &\le \int_{I^{n-1}} \sum_i \Big|\int_{a_i}^{b_i} e^{i \la f(s,x')} g(s,x') \, ds\Big| \, dx'
            \\
                                                                         &\le 2C\, (\nn+1)\, (\la t)^{-1/2}\Big(|I|^{n-1}\,\|g\|_{I^n} +  \|\p_1 g\|_{L^1(I^n)}\Big). 
    \end{align}
    Setting $t = \la^{-\frac{\nn}{\nn+2}}$, we conclude \eqref{eq:osc2}. 
\end{proof}

In the following theorem, we consider oscillatory integrals over sets $K \in  \sK(\R^n)$ 
that are definable in the polynomially bounded o-minimal structure $\R_{\cC_M}$; see \Cref{e:RCM}.
We assume that $\mu^{|\infty} = (\mu_j)_{j\ge 1}$ is a quasianalytic admissible weight satisfying $\mu_j \ge j$ 
as well as all properties listed in \Cref{e:RCM}.

\begin{theorem}
    Let $I^n \subseteq \R^n$ be a compact cube.
    Let $K \in \sK(\R^n)$ be definable in $\R_{\cC_M}$ 
    such that $K \subseteq \on{int}(I^n)$.
    Let $M_2 = M_0\mu_1 \mu_2, b_2,b_2'>0$.
    Let
    \[ 
        \nn = \nn(\mu^{|\infty}_{+2},M_2+\tfrac{b_2}{2},\max\{a_K,|I|\},\min\{\tfrac{b_2}{2},b_2'\}).
    \]
    Let $f : I^n \to \R$ be a $(\mu^{|\infty},M_0)$-smooth function.
    Assume, for some $1 \le i_0 \le n$, we have $\|\p_{i_0}^2f\|_\ell \ge b_2$ 
    for each line segment $\ell$ in $I^n$ with direction $e_{i_0}$ 
    as well as $\|\p_{i_0}^2 f\|_K \ge b_2'$. 
    Let $g : I^n \to \C$ belong to $\cC^1(I^n)$. Then, 
    \begin{equation} \label{eq:osc3}
        \Big|\int_{K} e^{i\la f(x)} g(x)\, dx\Big| \le B\, \la^{-\frac{1}{\nn+2}},\quad \text{ if } \la>0,   
    \end{equation}
    where 
    \begin{equation}
        B =  \frac{\Cnn^{1/\nn}\, |K|\, \|g\|_{K}}{\nu_K \,\|\p_{i_0}^2f\|_{K}^{1/\nn}}  + 2C N_{f,K}\, \big(|I|^{n-1}\,\|g\|_{I^n} +  \|\p_{i_0} g\|_{L^1(I^n)}\big),
    \end{equation}
    $C>0$ is an absolute constant, and $N_{f,K} \in \N$ is an integer depending only on $f$ and $K$. 
\end{theorem}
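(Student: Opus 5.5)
We follow the proof of \Cref{t:oscillatory}, with $i_0=1$ for simplicity, and split the integral over $K$ at a threshold $t>0$ into the part over the sublevel set $K_t:=\{x\in K : |\p_1^2 f(x)|\le t\}$ and the part over $K\setminus K_t$.

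For the sublevel part we apply \Cref{t:sublevel} to $\p_1^2 f$ on $K$. The function $\p_1^2 f$ is $(\mu^{|\infty}_{+2},M_2)$-smooth on $I^n$; we regard it as $(\mu^{|\infty}_{+2},M_2+\frac{b_2}2)$-smooth and use $\|\p_1^2 f\|_K\ge b_2'\ge \min\{\frac{b_2}2,b_2'\}$. Take $L=I^n$, the convex body containing $K$. By \Cref{r:nnprop}(2), enlarging $a$ to $\max\{a_K,|I|\}$ and decreasing $b$ only increases the degree, so the chosen $\nn$ dominates the degree required by \Cref{t:sublevel}. Monotonicity of $\nn\mapsto\Cnn$ and of the exponent $1/\nn$ then lets us pass to the larger $\nn$, provided we first reduce to the case $\Cnn t\le\|\p_1^2f\|_K$; otherwise the trivial bound $|K_t|\le|K|\le|K|/\nu_K$ already suffices. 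This gives
\[
\Big|\int_{K_t} e^{i\la f}g\,dx\Big|\le \frac{|K|\,\|g\|_K}{\nu_K}\Big(\frac{\Cnn t}{\|\p_1^2 f\|_K}\Big)^{1/\nn}.
\]

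For the complement we work on lines $\{(s,x')\}$. The set $(K\setminus K_t)\cap(\R\times\{x'\})$ is definable in $\R_{\cC_M}$ uniformly in the parameters $(x',t)$, since $K$ and $\p_1^2 f|_{I^n}$ are definable. This holds because $f$ is $\cC_M$ on a neighborhood of $I^n$, so after an affine rescaling its restriction to $I^n$ is a restricted $\cC_M$ function, and $\cC_M$ is closed under differentiation by \Cref{e:RCM}(3). By uniform finiteness in o-minimal structures, the number of connected components (intervals) of these fibers is bounded by an integer $N_{f,K}$ independent of $x'$ and $t$. On each such interval $[a_i,b_i]$ we have $|\p_1^2 f|>t$. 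We rescale as in \Cref{p:oscillatory}, apply \Cref{l:Corput} with $k=2$, and integrate by parts in $s$ to get the bound \eqref{eq:osc20}. Integrating over $x'\in I^{n-1}$ and summing over at most $N_{f,K}$ intervals yields
\[
2C\,N_{f,K}(\la t)^{-1/2}\big(|I|^{n-1}\|g\|_{I^n}+\|\p_1 g\|_{L^1(I^n)}\big).
\]
This holds for every $t>0$, so there is no restriction $t\le b_2/2$ and hence no lower bound on $\la$. That is why we use o-minimality here rather than \Cref{p:zeros}.

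Finally we set $t=\la^{-\nn/(\nn+2)}$. Both terms then become $\la^{-1/(\nn+2)}$ times the two summands of $B$, which proves \eqref{eq:osc3} for all $\la>0$.

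The main obstacle is the uniform bound $N_{f,K}$. We need the family of fibers, parametrized by $(x',t)\in\R^{n-1}\times(0,\infty)$, to be definable, which requires $\p_1^2 f$ restricted to the cube to be definable. This rests on the stability properties listed in \Cref{e:RCM}, and it should be checked carefully that $f$ indeed extends as a $\cC_M$ function beyond $I^n$ so that it qualifies as restricted. A secondary check is that the parameter bookkeeping in $\nn$ is compatible with \Cref{t:sublevel}.
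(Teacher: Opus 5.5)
Your proposal is correct and follows the paper's proof essentially step by step. It bounds the sublevel part via \Cref{t:sublevel}, uses uniform finiteness in $\R_{\cC_M}$ instead of \Cref{p:zeros} to bound the number of intervals on every line for every $t>0$, applies van der Corput with integration by parts as in \eqref{eq:osc20}, and chooses $t=\la^{-\nn/(\nn+2)}$. Your monotonicity bookkeeping for $\nn$ is more careful than the paper's.

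The extension issue you flag does not actually arise. You only need definability of $f$, and hence of $\p_1^2 f$, on a closed cube $J$ with $K\subseteq J\subseteq \on{int}(I^n)$, which exists because $K$ is compact. The $\cC_M$ bounds already hold on the open neighborhood $\on{int}(I^n)$ of $J$, so after an affine rescaling $f|_J$ is a restricted $\cC_M$-function. This is exactly what the hypothesis $K\subseteq\on{int}(I^n)$ provides.
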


\begin{proof}
    We follow the proof of \Cref{t:oscillatory}.
    Let $i_0 =1$ and 
    $K_t := \{x \in K : |\p_1^2 f(x)| \le t\}$. Then \Cref{t:sublevel} gives
    \begin{equation}
        \Big|\int_{K_t} e^{i\la f(x)} g(x)\, dx\Big| \le \|g\|_{K}\, \frac{|K|}{\nu_K}\, \Big(\frac{\Cnn \, t}{\|\p_1^2 f\|_{K}}\Big)^{1/\nn}, \quad t >0.
    \end{equation}

    Since $K$ and $f|_K$ are definable in $\R_{\cC_M}$, so is the set
    \[
        \cK := \{(x_1,x',t) \in \R \times \R^{n-1} \times \R : (x_1,x') \in K, \, |\p_1^2 f(x_1,x')| > t\}.
    \]
    Hence there exists $N_{f,K} \in \N$ such that 
    \[
        \cK_{(x',t)} = \{x_1 \in \R : (x_1,x',t) \in \cK\} =  \{x_1 \in \R : (x_1,x') \in K, \, |\p_1^2 f(x_1,x')| > t\}
    \]
    has at most $N_{f,K}$ connected components, for all $(x',t) \in \R^{n-1} \times \R$ (see e.g.\ \cite[4.4]{vandenDriesMiller96}). 

    Let $t>0$ and $x' \in I^{n-1}$. If a connected component of $\cK_{(x',t)}$ has the endpoints $(a_i,x')$ and $(b_i,x')$, 
    then \Cref{l:Corput} implies as in the proof of \Cref{t:oscillatory} that \eqref{eq:osc20} holds. Consequently, for $t,\la>0$,
    \begin{align}
        \Big| \int_{K \setminus K_t} e^{i \la f(x)} g(x) \, ds \Big| 
                                                                     &\le 2C N_{f,K}\, (\la t)^{-1/2}\big(|I|^{n-1}\,\|g\|_{I^n} +  \|\p_1 g\|_{L^1(I^n)}\big). 
    \end{align}
    Setting $t = \la^{-\frac{\nn}{\nn+2}}$, we conclude \eqref{eq:osc3}. 
\end{proof}

\appendix

\section{Examples of admissible weights} \label{sec:A}

In this section, we establish upper and lower bounds for the Bang degree $\dd_{a\, \mu^{|\infty}}(b)$, 
as a function in $a$ and $b$,
for particular weights $\mu^{|\infty}$, namely the analytic weight and the Denjoy weights.
Moreover, we estimate the associated function $\ga_{\tilde \mu}$.

\subsection{The analytic weight}
Let us define $\mu_{\on{an}}^{|\infty} :=(j)_{j\ge 1}$.
Clearly, $\mu_{\on{an}}^{|\infty}$ is an admissible quasianalytic weight which satisfies all 
properties listed in \Cref{e:RCM}. 
In this case, $\Si_{\mu_{\on{an}}^{|\infty}}(1,n)$ is the partial harmonic sum $H_n := \sum_{j=1}^n \frac{1}{j}$,
and, more generally, 
\[
    \Si_{\mu_{\on{an}}^{|\infty}}(m,n) = H_n - H_{m-1}
\] 
if $1 \le m \le n$, where $H_0:= 0$. 

\begin{proposition} \label[p]{p:an}
    Let $a,b>0$ and set $j_0(b) = \lceil \log(b^{-1}) \rceil_{\N}$. 
    If $0 < ae + H_{j_0(b)} \le 1$ (consequently, $j_0(b) =0$), then
    \begin{equation} \label{eq:an1}
        \dd_{a\, \mu_{\on{an}}^{|\infty}}(b) = 0.
    \end{equation}
    If $ae + H_{j_0(b)} > 1$, then
    \begin{equation} \label{eq:an2}
    (j_0(b)+1) \, e^{ae-1} -1 \le  \dd_{a\, \mu_{\on{an}}^{|\infty}}(b) \le (j_0(b)+1)\, e^{ae+1} -1.
    \end{equation}
\end{proposition}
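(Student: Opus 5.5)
We unwind the definition directly. For the weight $a\,\mu_{\on{an}}^{|\infty}=(aj)_{j\ge1}$ we have
\[
\Si_{a\mu_{\on{an}}^{|\infty}}(j_0+1,n)=\frac{1}{a}\,(H_n-H_{j_0}), \qquad j_0:=j_0(b),
\]
so that
\[
\dd_{a\,\mu_{\on{an}}^{|\infty}}(b)=\sup\{n\in\N : H_n-H_{j_0}<ae\}.
\]
The sum is empty for $n\le j_0$, so these $n$ always qualify. Hence the degree is the largest $n\ge j_0$ with $H_n<H_{j_0}+ae$. With this reformulation, both assertions reduce to comparing harmonic numbers with logarithms.

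For \eqref{eq:an1}, note first that $H_{j_0}\le 1-ae<1$ forces $j_0=0$, since $H_j\ge1$ for $j\ge1$. The condition then reads $ae\le1$, so $H_1=1\ge ae$ and $n=1$ fails. Since $H_n$ is increasing, no $n\ge1$ qualifies, and the supremum is attained at $n=0$.

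For \eqref{eq:an2}, let $D$ denote the degree. We use the integral comparisons
\[
\log\frac{n+1}{m+1}\le H_n-H_m\le \log\frac{n}{m}\quad (1\le m\le n),
\]
together with the variant for $m=0$: $\log(n+1)\le H_n\le 1+\log n$.

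\emph{Upper bound.} By maximality, $H_{D}-H_{j_0}<ae$. Since $H_D-H_{j_0}\ge \log\frac{D+1}{j_0+1}$, this gives $D+1<(j_0+1)e^{ae}$, which is stronger than the claimed bound $(j_0+1)e^{ae+1}-1$. (The extra factor $e$ presumably serves only to absorb edge cases such as $j_0=0$.)

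\emph{Lower bound.} By maximality, $H_{D+1}-H_{j_0}\ge ae$. If $j_0\ge1$, the upper comparison gives $\log\frac{D+1}{j_0}\ge ae$, hence $D+1\ge j_0e^{ae}$. We then need $j_0e^{ae}\ge (j_0+1)e^{ae-1}$, i.e. $e\,j_0\ge j_0+1$, which holds for $j_0\ge1$. If $j_0=0$, the hypothesis $ae+H_{0}>1$ gives $ae>1$. Using $H_{D+1}\le 1+\log(D+1)$ we get $D+1\ge e^{ae-1}=(j_0+1)e^{ae-1}$. Here $D\ge1$ is automatic, because $H_1=1<ae$.

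The main obstacle is bookkeeping at the boundary cases: the inclusion of $n=0$ in the supremum, the empty sum when $n=j_0$, and the case $j_0=0$, where the logarithmic comparison for $H_n-H_m$ must be replaced by the additive constant $1$. That additive constant is exactly what produces the $e^{\pm1}$ factors in the statement.
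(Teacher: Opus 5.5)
Your proof is correct and follows essentially the paper's route: the paper also reduces everything to integral (concavity) comparisons between harmonic numbers and logarithms, packaged via the integer $n(x)$ with $H_{n(x)}<x\le H_{n(x)+1}$ and the bounds $\log(n+1)\le H_n\le \log n+1$, applied separately to $H_{n(x)}$ and to $H_{j_0}$. The one difference is that you compare the differences $H_n-H_{j_0}$ directly, which gives the sharper upper bound $(j_0+1)e^{ae}-1$ (so the extra factor $e$ in the statement comes from the paper bounding $H_{j_0}\le 1+\log j_0$ separately, not from the case $j_0=0$); the price is a case split $j_0=0$ versus $j_0\ge 1$ in the lower bound, which the paper's uniform argument avoids.
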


\begin{proof}
    \Cref{l:Bang} implies \eqref{eq:an1}.
    The function $g(t) = \log t$, for $t>0$, is concave so that 
    \[
        g(j+1)-g(j) \le g'(j) \le g(j)-g(j-1),
    \]
    where the left inequality holds for all integers $j\ge 1$, the right for all integers $j\ge 2$. Summing over $j$, we find 
    \begin{equation} \label{eq:Hn}
        \log(n+1) \le H_n \le \log n +1, \quad n \ge 1.
    \end{equation}

Let $x > 1$ and define the positive integer $n(x)$ by
\begin{equation} \label{eq:Comtet}
    H_{n(x)} < x \le H_{n(x)+1}.
\end{equation}
By \eqref{eq:Hn}, we conclude
\begin{equation} \label{eq:nx}
    e^{x-1}-1 \le n(x) < e^x-1.
\end{equation}
By \eqref{eq:Comtet}, we have 
\begin{equation}
    \Si_{\mu_{\on{an}}^{|\infty}}(m,n(x)) < x- H_{m-1} \le \Si_{\mu_{\on{an}}^{|\infty}}(m,n(x)+1).
\end{equation}
Setting $x = ae +H_{m-1}$, we get 
\[
    \Si_{\mu_{\on{an}}^{|\infty}}(m,n) < ae \le \Si_{\mu_{\on{an}}^{|\infty}}(m,n+1)
\]
with $n=n(ae + H_{m-1})$, provided $ae+ H_{m-1}> 1$.
Thus, by \eqref{eq:Hn} and \eqref{eq:nx}, 
\begin{align}
  m\, e^{ae-1} -1  \le  n \le m\, e^{ae+1}-1,
\end{align}
which also holds if $m-1=0$ (by direct verification).
This implies \eqref{eq:an2}. 
\end{proof}

It is easy to compute $\ga_{\tilde \mu_{\on{an}}}$; see \Cref{d:ga}.

\begin{proposition} \label[p]{p:anGa}
    For $\tilde \mu_{\on{an}}(t) = t$,
    we get $\ga_{\tilde \mu_{\on{an}}}  \equiv 1$. \qed
\end{proposition}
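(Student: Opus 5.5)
The statement is \Cref{p:anGa}: for $\tilde \mu_{\on{an}}(t) = t$ we have $\ga_{\tilde \mu_{\on{an}}} \equiv 1$. I would simply unwind \Cref{d:ga}.

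First I check that the choice of $\tilde\mu$ is legitimate. The function $\tilde \mu_{\on{an}}(t)=t$ on $[1,\infty)$ is increasing, continuous, positive and $\cC^1$. It also interpolates the analytic weight, since $\tilde\mu_{\on{an}}(j)=j=\mu_j$ for all $j\ge1$. So it is an admissible choice of $\tilde\mu$ in \Cref{d:ga}, and no left derivatives or piecewise conventions are needed.

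Next I compute the quotient in the definition. Since $\tilde\mu_{\on{an}}'(s)=1$, we have
\[
\frac{s\,\tilde\mu_{\on{an}}'(s)}{\tilde\mu_{\on{an}}(s)} = \frac{s\cdot 1}{s}=1
\]
for every $s\ge 1$. Hence, for every $n\in\N_{\ge1}$, the supremum over $1\le s\le n$ of a constant function equal to $1$ is $1$, which gives $\ga_{\tilde\mu_{\on{an}}}(n)=1$.

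There is no real obstacle here; the only point to keep in mind is that $\ga_{\tilde\mu}$ depends on the chosen interpolant, and the statement fixes the linear one. As a side remark, plugging this into \Cref{d:Cnn} gives $\Cnn=(4e^{5})^{\nn}$ in the analytic case, which is consistent with the remark after \Cref{d:Cnn} that $\Cnn$ is exponential in $\nn$.
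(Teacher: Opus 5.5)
Your proof is correct and is exactly the intended argument: the paper states this as immediate from \Cref{d:ga}, since $s\,\tilde\mu_{\on{an}}'(s)/\tilde\mu_{\on{an}}(s)=1$ for all $s\ge 1$. Your side remark is also correct: with this choice of $\tilde\mu_{\on{an}}$, \Cref{d:Cnn} gives $\Cnn=(4e^{5})^{\nn}$.
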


\subsection{Denjoy weights}

Let $s \in (0,1]$. Define $\mu_j := j (\log (ej))^s$ for $j\ge 1$.
Then, for each $s \in (0,1]$, $\mu_s^{|\infty} := (\mu_j)_{j \ge 1}$ is a quasianalytic admissible weight satisfying 
all properties listed in \Cref{e:RCM}.
Set
$A^s_n := \sum_{j=1}^n \frac{1}{\mu_j}$, for $n \ge 1$, and $A^s_0 := 0$.

\begin{proposition}[The case $s=1$] \label[p]{p:Denjoy1}
    Let $a,b >0$ and set $j_0(b) = \lceil \log(b^{-1})\rceil_\N$.
    If $0< ae+ A^1_{j_0(b)} \le 1$ (consequently, $j_0(b) =0$), then 
    \begin{equation}\label{eq:D1}
        \dd_{a\, \mu_{1}^{|\infty}}(b) = 0.
    \end{equation}
    If $ae+ A^1_{j_0(b)} > 1$, then 
    \begin{equation} \label{eq:D2}
        (j_0(b)+1)^{e^{ae-1}} e^{e^{ae-1}-1}-1  \le   \dd_{a\, \mu_{1}^{|\infty}}(b) \le (j_0(b)+1)^{e^{ae+1}} e^{e^{ae+1}-1}-1.
    \end{equation}
\end{proposition}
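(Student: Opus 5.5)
We mirror the proof of \Cref{p:an}, replacing the harmonic numbers $H_n$ by $A^1_n=\sum_{j=1}^n \frac{1}{j\log(ej)}$. The case \eqref{eq:D1} is immediate from \Cref{l:Bang}(1): if $ae+A^1_{j_0(b)}\le 1$ then $j_0(b)=0$ and $ae\le 1$, so $\frac{1}{a\mu_1}=\frac{1}{a}\ge e$, i.e.\ the first term of $\Si_{a\mu_1^{|\infty}}(1,\cdot)$ already reaches $e$. Otherwise, note that $\Si_{a\mu_1^{|\infty}}(m,n)=\frac{1}{a}(A^1_n-A^1_{m-1})$. Hence $\dd_{a\mu_1^{|\infty}}(b)$ is the largest $n$ with $A^1_n<ae+A^1_{j_0(b)}$, and everything reduces to two-sided estimates for $A^1_n$.

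The key step is the integral comparison for $A^1_n$. The function $h(t)=\frac{1}{t\log(et)}=\frac{1}{t(1+\log t)}$ is decreasing on $[1,\infty)$ with antiderivative $G(t)=\log(1+\log t)=\log\log(et)$. Comparing sums with integrals as in \eqref{eq:Hn} gives
\begin{equation}
\log\log(e(n+1))\le A^1_n\le 1+\log\log(en),\quad n\ge1,
\end{equation}
using $h(1)=1$ for the upper bound. Next, for $x>1$, define $n(x)$ by $A^1_{n(x)}<x\le A^1_{n(x)+1}$. Inverting these bounds yields the analogue of \eqref{eq:nx}:
\begin{equation}
e^{e^{x-1}-1}-1\le n(x)<e^{e^{x}-1}-1.
\end{equation}

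Then set $m=j_0(b)+1$ and $x=ae+A^1_{m-1}$. By the above, $\log\log(em)\le A^1_{m-1}+(\text{error})$, so we bound $e^{x\mp1}$ using $e^{A^1_{m-1}}\approx \log(em)=1+\log m$. The target form $m^{e^{ae\mp1}}e^{e^{ae\mp1}-1}$ equals $\exp\bigl(e^{ae\mp1}(1+\log m)-1\bigr)$. So we need $e^{x-1}\ge e^{ae-1}(1+\log m)$ for the lower bound and $e^{x}\le e^{ae+1}(1+\log m)$ for the upper bound. These follow respectively from $A^1_{m-1}\ge\log(1+\log m)$ and $A^1_{m-1}\le 1+\log(1+\log m)$.

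These last two inequalities are the main obstacle, because the displayed bounds are at index $m-1$ while we need $\log\log(em)$. The upper one is fine for $m\ge2$, since $1+\log\log(e(m-1))\le 1+\log\log(em)$. The lower one needs $A^1_{m-1}\ge\log\log(em)$, which the displayed lower bound gives exactly, since it reads $A^1_{m-1}\ge\log\log(e\,m)$. The case $m=1$ is checked directly, as in \Cref{p:an}: there $A^1_0=0$ and $\log(1+\log 1)=0$.
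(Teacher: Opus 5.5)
Your proposal is correct and follows essentially the same route as the paper's proof. You use the same two-sided bound $\log\log(e(n+1))\le A^1_n\le 1+\log\log(en)$; you obtain it by integral comparison for the decreasing function $h$, while the paper uses concavity of $\log\log(et)$, which amounts to the same thing. The remaining steps also coincide with the paper's: the inversion bounding $n(x)$, the substitution $x=ae+A^1_{m-1}$ using the bounds at index $m-1$, and a direct check of the case $m=1$.
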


\begin{proof}
    \Cref{l:Bang} implies \eqref{eq:D1}.
    Consider $g(t) = \log \log (et)$, for $t \ge 1$. Then
    \[
        g'(t) = \frac{1}{t \log (et)} \quad \text{ and } \quad  g''(t) = - \frac{1+ \log (et)}{(t \log (et))^2}
    \]
    so that $g$ is concave. Thus
    \[
        g(j+1) - g(j) \le g'(j) \le g(j) - g(j-1), 
    \]
    where the left inequality holds for $j\ge 1$ and the right for $j \ge 2$.
    Summing over $j$, we find 
    \[
        g(n+1) - g(1) \le A^1_n \quad \text{ and } \quad A^1_n -A^1_1 \le g(n)-g(1)   
    \]
    which implies 
    \begin{equation} \label{eq:A1}
        \log\log(e(n+1)) \le  A^1_n 
        \le \log\log (en) + 1, \quad n \ge 1.
    \end{equation}

    For $x > 1$ define the positive integer $n(x)$ by 
    \begin{equation} \label{eq:A1x}
        A^1_{n(x)} < x \le A^1_{n(x)+1}.
    \end{equation}
    By \eqref{eq:A1} and \eqref{eq:A1x},
    \begin{equation} \label{eq:A1n}
        e^{e^{x-1}-1}- 1  \le  n(x) < e^{e^x-1}- 1.
    \end{equation}

    By \eqref{eq:A1x}, we have 
    \begin{equation}
        \Si_{\mu_1^{|\infty}}(m,n(x)) < x- A^1_{m-1} \le \Si_{\mu_{1}^{|\infty}}(m,n(x)+1)
    \end{equation}
    so that, setting $x = ae +A^1_{m-1}$, we get 
    \[
        \Si_{\mu_1^{|\infty}}(m,n) < ae \le \Si_{\mu_1^{|\infty}}(m,n+1)
    \]
    with $n=n(ae + A^1_{m-1})$, provided $ae+ A^1_{m-1}> 1$.
    Thus, by \eqref{eq:A1} and \eqref{eq:A1n},
    \begin{equation}
        m^{e^{ae-1}} e^{e^{ae-1}-1}-1  \le n \le m^{e^{ae+1}} e^{e^{ae+1}-1}-1
    \end{equation}
    which also holds if $m-1=0$ (by direct verification).
    This implies \eqref{eq:D2}
\end{proof}

\begin{proposition}[The case $s \in (0,1)$] \label[p]{p:Denjoys}
    Let $a,b >0$ and set $j_0(b) = \lceil \log(b^{-1})\rceil_\N$. 
    If $ae+ A^s_{j_0(b)}\le 1$ (consequently, $j_0(b) =0$), then 
    \begin{equation} \label{eq:Ds1}
        \dd_{a\, \mu_{s}^{|\infty}}(b) = 0.
    \end{equation}
    If $ae+ A^s_{j_0(b)}> 1$, then 
    \begin{equation} \label{eq:Ds2}
        e^{(f_s(a,b)-(1-s))^{\frac{1}{1-s}}-1}-1 \le   \dd_{a\, \mu_{s}^{|\infty}}(b) 
        \le e^{(f_s(a,b)+(1-s))^{\frac{1}{1-s}}-1}-1
    \end{equation}
    where 
    \[
        f_s(a,b) := (1-s)ae + (1+\log(j_0(b)+1))^{1-s}.
    \]
\end{proposition}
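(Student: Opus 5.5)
The plan is to repeat the argument of \Cref{p:Denjoy1}, replacing $\log\log(et)$ by the natural primitive of $1/\mu_j$ for $s\in(0,1)$. The first claim, \eqref{eq:Ds1}, follows from \Cref{l:Bang} exactly as before. Throughout, set $m:=j_0(b)+1$. By definition,
\[
  \Si_{a\,\mu_s^{|\infty}}(m,n)=\tfrac1a\,\Si_{\mu_s^{|\infty}}(m,n).
\]
Hence $n=\dd_{a\,\mu_s^{|\infty}}(b)$ is characterized by
\[
  A^s_n-A^s_{m-1}<ae\le A^s_{n+1}-A^s_{m-1}.
\]

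\textbf{Step 1: comparison of $A^s_n$ with an integral.} Put
\[
  g(t):=\frac{(\log(et))^{1-s}}{1-s},\qquad t\ge1 .
\]
Then $g'(t)=\frac{1}{t(\log(et))^s}=1/\tilde\mu(t)$, and $g'$ is decreasing, so $g$ is concave. As in the earlier proofs,
\[
  g(j+1)-g(j)\le g'(j)\le g(j)-g(j-1),
\]
where the left inequality holds for $j\ge1$ and the right one for $j\ge2$. Summing over $j$, and using $g(1)=\frac1{1-s}$ and $A^s_1=1$, gives for $n\ge1$
\begin{equation}
  (\log(e(n+1)))^{1-s}-1\;\le\;(1-s)A^s_n\;\le\;(\log(en))^{1-s}-s .
\end{equation}
In particular, $(1-s)A^s_{m-1}$ is squeezed between $(\log(em))^{1-s}-1$ and $(1+\log m)^{1-s}-s$. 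Here we use $\log(e(m-1))\le 1+\log m$ and the identity $\log(em)=1+\log(j_0(b)+1)$. For $m=1$ (so $A^s_0=0$) both of these bounds hold by direct verification, since $0\in[0,1-s]$.

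\textbf{Step 2: the two bounds.} For the lower bound, start from $ae+A^s_{m-1}\le A^s_{n+1}$. Multiply by $1-s$, use the upper estimate for $(1-s)A^s_{n+1}$ and the lower estimate for $(1-s)A^s_{m-1}$. This yields
\[
  (\log(e(n+2)))^{1-s}\ge f_s(a,b)-(1-s).
\]
To avoid the off-by-one, I would instead use the sharper upper estimate $(1-s)A^s_{n+1}\le (\log(e(n+1)))^{1-s}-s$; this comes from summing the right-hand inequality only up to $j=n+1$ and is consistent with the displayed pattern of \Cref{p:Denjoy1}. One then obtains
\[
  (\log(e(n+1)))^{1-s}\ge f_s(a,b)-(1-s).
\]
The right-hand side is positive because $ae+A^s_{j_0(b)}>1$. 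Raising to the power $\frac1{1-s}$ and exponentiating gives the left inequality of \eqref{eq:Ds2}.

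For the upper bound, start from $A^s_n<ae+A^s_{m-1}$. Use the lower estimate for $(1-s)A^s_n$ and the upper estimate for $(1-s)A^s_{m-1}$. This gives
\[
  (\log(e(n+1)))^{1-s}<f_s(a,b)+1-s,
\]
which is the right inequality of \eqref{eq:Ds2}.

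\textbf{Expected obstacle.} The only delicate point is index bookkeeping. This means checking that the sum estimates are applied with the correct ranges, so that $n+1$ rather than $n+2$ appears in the lower bound. It also means treating the boundary cases $m=1$ and $n=0$ separately; there I would simply verify the inequalities directly, as was done in \Cref{p:an,p:Denjoy1}.
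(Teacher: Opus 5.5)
Your proposal is correct and follows the paper's proof essentially step for step. It uses the same concave primitive (the paper takes $(\log(et))^{1-s}$, which differs from yours only by the factor $\frac{1}{1-s}$), the same two-sided estimate $(\log(e(n+1)))^{1-s}-1\le(1-s)A^s_n\le(\log(en))^{1-s}-s$, and the same insertion of the bounds for $A^s_{m-1}$ (the paper routes this through an auxiliary $n(x)$ with $x=ae+A^s_{m-1}$). The ``off-by-one'' you worry about is not real: your Step~1 upper estimate applied at index $n+1$ already gives $(1-s)A^s_{n+1}\le(\log(e(n+1)))^{1-s}-s$. Likewise, the case $n=0$ cannot occur when $ae+A^s_{j_0(b)}>1$.
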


\begin{proof}
    \Cref{l:Bang} implies \eqref{eq:Ds1}.
    Fix $s \in (0,1)$ and consider $g_s(t) = (\log (et))^{1-s}$ for $t \ge 1$. 
    Then
    \begin{align*}
        g_s'(t) = \frac{1-s}{t (\log (et))^{s}}
        \quad \text{ and } \quad
        g_s''(t) = -\frac{(1-s) (\log (et))^{s} + s(1-s) (\log (et))^{s-1}}{(t (\log (et))^{s})^2},
    \end{align*}
    so that $g_s$ is concave. Thus 
    \[
        g_s(j+1) - g_s(j) \le g_s'(j) \le g_s(j) - g_s(j-1), 
    \]
    where the left inequality holds for $j\ge 1$ and the right for $j \ge 2$.
    Summing over $j$, we find
    \[
        g_s(n+1) - g_s(1) \le (1-s) A^s_n  \quad \text{ and } \quad 
        (1-s)(A^s_n - A^s_1) \le g_s(n) -g_s(1)
    \]
    which implies
    \begin{equation} \label{eq:As}
        (\log (e(n+1)))^{1-s} - 1 \le    (1-s) A^s_n \le (\log (en))^{1-s} -s, \quad n \ge 1.
    \end{equation}

    For $x > 1$ define the positive integer $n(x)$ by 
    \begin{equation} \label{eq:Asx}
        A^s_{n(x)} < x \le A^s_{n(x)+1}.
    \end{equation}
    By \eqref{eq:As} and \eqref{eq:Asx},
    \begin{equation} \label{eq:Asn}
        e^{((1-s)x+s)^{\frac{1}{1-s}}-1} -1 \le n(x) < e^{((1-s)x+1)^{\frac{1}{1-s}}-1} -1.
    \end{equation}
    By \eqref{eq:Asx},
    \begin{equation}
        \Si_{\mu_s^{|\infty}}(m,n(x)) < x- A^s_{m-1} \le \Si_{\mu_{s}^{|\infty}}(m,n(x)+1).
    \end{equation}
    Setting $x=ae+A^s_{m-1}$, we get 
    \begin{equation}
        \Si_{\mu_s^{|\infty}}(m,n)<ae \le \Si_{\mu_s^{|\infty}}(m,n+1)
    \end{equation}
    with $n=n(ae+A^s_{m-1})$, provided $ae+A^s_{m-1}>1$.
    Thus, by \eqref{eq:As} and \eqref{eq:Asn},
    \begin{equation}
        e^{((1-s)(ae-1) + (\log(em))^{1-s})^{\frac{1}{1-s}}-1}-1 \le   n \le e^{((1-s)(ae+1) + (\log(em))^{1-s})^{\frac{1}{1-s}}-1}-1
    \end{equation}
    which also holds if $m-1=0$ (by direct verification).
    This implies \eqref{eq:Ds2}
\end{proof}

Let us determine $\ga_{\tilde \mu_s}$.

\begin{proposition} \label[p]{p:DenjoyGa}
    Let $s \in (0,1]$. For $\tilde \mu_s(t) = t (\log (et))^s$, we
    have $\ga_{\tilde \mu_s} \equiv 1+s$.
\end{proposition}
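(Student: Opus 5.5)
We need to show that $\sup_{1\le t\le n} t\,\tilde\mu_s'(t)/\tilde\mu_s(t) = 1+s$ for every $n\ge 1$. The plan is simply to compute the logarithmic derivative of $\tilde\mu_s$ and locate its supremum on $[1,n]$.

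First, take logarithms: $\log \tilde\mu_s(t) = \log t + s\log\log(et)$ for $t\ge 1$. This is well defined because $\log(et)=1+\log t\ge 1>0$. Differentiating gives
\begin{equation}
    \frac{\tilde\mu_s'(t)}{\tilde\mu_s(t)} = \frac1t + \frac{s}{t\log(et)},
    \qquad\text{hence}\qquad
    \frac{t\,\tilde\mu_s'(t)}{\tilde\mu_s(t)} = 1 + \frac{s}{1+\log t}.
\end{equation}
Since $\tilde\mu_s$ is $\cC^1$ on $[1,\infty)$, the piecewise and left-derivative conventions of \Cref{d:ga} play no role.

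Next, the function $t\mapsto 1+\frac{s}{1+\log t}$ is strictly decreasing on $[1,\infty)$. Its supremum over $[1,n]$ is therefore attained at $s=1$ (the variable $t=1$), where it equals $1+s$. This holds for every $n\ge 1$, so $\ga_{\tilde\mu_s}(n)=1+s$ for all $n$, i.e.\ $\ga_{\tilde\mu_s}\equiv 1+s$.

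We should also confirm that $\tilde\mu_s$ is an admissible interpolant in the sense of \Cref{d:ga}. It is continuous and increasing, since its logarithmic derivative is positive, and it satisfies $\tilde\mu_s(j)=\mu_j$. There is no real obstacle here; the only point needing care is that the supremum in \Cref{d:ga} runs over $t\ge 1$, so it is attained at the left endpoint $t=1$ rather than as a limit.
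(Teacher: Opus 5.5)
Your proof is correct and is exactly the paper's argument: compute $t\tilde\mu_s'(t)/\tilde\mu_s(t) = 1 + \frac{s}{\log(et)}$, note that it is decreasing on $[1,\infty)$, and read off the supremum $1+s$ at the left endpoint $t=1$. One small slip: ``attained at $s=1$'' should read ``attained at $t=1$''.
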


\begin{proof}
    We have 
    \begin{align*}
        \ga_{\tilde \mu_s}(n) &= \sup_{1 \le t \le n} \frac{t\tilde \mu_s'(t)}{\tilde \mu_s(t)} 
        = \sup_{1 \le t \le n} \frac{t ((\log (et))^s + s(\log (et))^{s-1})}{t (\log (et))^s}
        \\
                              &= \sup_{1 \le t \le n} \Big(1 + \frac{s}{\log(et)}\Big) = 1+s 
    \end{align*}
    as claimed.
\end{proof}

\section{Univariate Remez inequality} \label{sec:B}

The goal of this section is to prove the following vector-valued version of \cite[Theorem B]{NazarovSodinVolberg04},
by adjusting the original proof.

\begin{theorem} \label[t]{t:BNSV}
    Let $\mu^{|\infty}=(\mu_j)_{j\ge1}$ be an admissible weight and $M_0>b_0>0$.
    Assume that $(\mu^{|\infty},M_0,1,b_0)$ is admissible and let $\nn = \nn(\mu^{|\infty},M_0,1,b_0)$.
    Let $f: [0,1] \to \R^m$ be $(\mu^{|\nn},M_0)$-smooth such that $\|f\|_{[0,1]}\ge b_0$.
  Then for each interval $I \subseteq [0,1]$ and each Lebesgue measurable set $E \subseteq I$ with $|E|>0$ we have
  \begin{equation} \label{eq:BNSV}
     \|f\|_I \le \Cnn \,\Big(\frac{m\, |I|}{|E|}\Big)^{\nn} \|f\|_E.
  \end{equation}
\end{theorem}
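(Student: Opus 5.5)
The plan is to adapt the argument of \cite[Theorem B]{NazarovSodinVolberg04} to maps. The Bang-type part of the argument will be run on the whole map $f$, with the vector norms $\|f\|_{j,J}$, and coordinates will be split off only at the very end. This is how the degree $\nn=\nn(\mu^{|\infty},M_0,1,b_0)$, computed from the bound $\|f\|_{[0,1]}\ge b_0$ for the full map, can govern every step even though individual components $f_i$ may be much smaller than $b_0$. The factor $m^{\nn}$ is then the only cost of the vector-valued setting.

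\textbf{Step 1 (Bang's lemma for maps).} First I would prove a vector-valued version of Bang's estimate. If $g:J\to\R^m$ on an interval $J$ satisfies $\|g^{(j)}\|_J\le M_j$ for $j\le \nn$ and $\|g\|_J\ge b$, then $g$ cannot be too flat. Concretely, I would aim to show that at each point $x\in J$ some derivative $g^{(k)}(x)$ with $k\le \nn/2$ is large relative to $\|g\|_J$ and $|J|^{-k}$. Bang's proof uses only the triangle inequality and the log-convexity $M_j^2\le M_{j-1}M_{j+1}$, applied to the quantities $\sup_J|g^{(j)}|$. It therefore carries over verbatim when the absolute value is replaced by $|\cdot|_1$ on $\R^m$. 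The role of the condition $\Si_{\mu^{|\infty}}(j_0(b)+1,n)<e$ is exactly to stop this iteration after at most $\dd_{\mu^{|\infty}}(b)=\nn/2$ steps, with $j_0(b)=\lceil\log(M_0/b)\rceil_\N$ as in the definition of $\nn$. I would rescale $J=I$ to $[0,1]$ and note that $f|_I$ is still $(\mu^{|\nn},M_0)$-smooth with $\|f\|_{[0,1]}\ge b_0$ (this is what the first step of NSV does: localize to $I$ via a Taylor/Bang comparison of $\|f\|_I$ against $\|f\|_{[0,1]}$).

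\textbf{Step 2 (polynomial approximation on $I$).} Combining Step 1 with Taylor's formula, I would produce a polynomial map $P:\R\to\R^m$ of degree $k\le\nn$ such that
\[
\|f-P\|_I\le \tfrac14\,(m\,|I|/|E|)^{-\nn}\,e^{-c\nn}\,\|f\|_I ,
\]
where $c$ is an absolute constant. Concretely, $P$ is the Lagrange interpolant of $f$ at $k+1$ points of $E$ spaced at least $|E|/(k+1)$ apart. The interpolation error is $f^{(k+1)}(\xi)\prod(x-x_i)/(k+1)!$, and it is controlled by $M_{k+1}$ together with the Bang lower bound from Step 1. It is in comparing $M_{k+1}/(k+1)!$ with the Step 1 lower bound that the growth of $\tilde\mu$ enters: the ratios $\mu_{k}/\mu_j$ for $j\le k$ are estimated via $\exp\bigl(\int s\tilde\mu'(s)/\tilde\mu(s)\,ds/s\bigr)\le (k/j)^{\ga_{\tilde\mu}(\nn)}$. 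This is where the factor $e^{\ga_{\tilde\mu}(\nn)\nn}$ in $\Cnn=(4e^{4+\ga_{\tilde\mu}(\nn)})^{\nn}$ comes from. I expect this matching of the error term against the Bang lower bound, with constants exactly of the form $\Cnn$, to be the main obstacle. What I would try first is to follow NSV's choice of the order $k$ as the first index where $\|f^{(k)}\|_I|I|^k$ drops below a geometric threshold, now with vector norms.

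\textbf{Step 3 (polynomial Remez, componentwise).} Choose $i$ with $\|f_i\|_I\ge\|f\|_I/m$. Apply the classical Remez inequality (with the crude Chebyshev bound $T_k\bigl((2|I|-|E|)/|E|\bigr)\le(4|I|/|E|)^k$) to the scalar polynomial $P_i$ on $I$ and the set $E$. Then
\[
\|f\|_I\le m\|f_i\|_I\le m\bigl(\|P_i\|_I+\|f-P\|_I\bigr)\le m(4|I|/|E|)^{\nn}\bigl(\|f\|_E+\|f-P\|_I\bigr)+m\|f-P\|_I ,
\]
using $\|P_i\|_E\le\|f\|_E+\|f-P\|_I$. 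Absorbing the error terms into the left-hand side by means of Step 2 (this is where the factor $(m|I|/|E|)^{-\nn}$ built into Step 2 is used) yields \eqref{eq:BNSV} with $\Cnn$ as in \Cref{d:Cnn}.
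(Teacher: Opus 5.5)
The gap is in Steps 2 and 3. You apply the polynomial Remez inequality to $P_i$ on all of $E$ and then use $\|P_i\|_E\le\|f\|_E+\|f-P\|_I$. This multiplies the approximation error by $(4|I|/|E|)^k$, which forces you to demand $\|f-P\|_I\le\frac14(m|I|/|E|)^{-\nn}e^{-c\nn}\|f\|_I$. No polynomial of degree at most $\nn$ can satisfy this for all $E$. The left side is at least the sup-norm distance on $I$ from $f$ to polynomials of degree $\le\nn$, which does not depend on $E$, while the right side tends to $0$ with $|E|$.

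Concretely, take $m=1$, $\mu_j=j$, $M_0=1$, $b_0=\frac12$, $f(x)=1/(2-x)$ (so $\|f^{(j)}\|_{[0,1]}=j!=M_j$), $I=[0,1]$ and $E=[0,\epsilon]$. For nodes $x_0,\dots,x_k\in E$, the interpolation error at $x=1$ equals $\prod_{i=0}^k\frac{1-x_i}{2-x_i}\ge\big(\frac{1-\epsilon}{2}\big)^{k+1}$, independently of $\epsilon$.

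The missing idea is to use $P$ only at the nodes, where $P=f$ exactly, so that no error term is ever multiplied by a Remez-type factor. The paper interpolates at $\nn$ points $x_1<\dots<x_\nn$ of $E$ with gaps at least $|E|/(\nn-1)$ and bounds the Lagrange basis directly: $|I|^{\nn-1}\sum_j|Q'(x_j)|^{-1}\le(2e|I|/|E|)^{\nn-1}$. This yields $\|f\|_I\le(2e|I|/|E|)^{\nn-1}\|f\|_E+m_\nn|I|^\nn$. The remainder then only has to be absorbed against $\|f\|_I$ itself, via $m_\nn|I|^\nn\le\frac23\|f\|_I$, not against $(|E|/|I|)^\nn\|f\|_I$.

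Even this weaker absorption holds only for short intervals, namely when $m_\nn|I|^\nn\le e^{-\nn(\log m+3+\ga_{\tilde\mu}(\nn))}$, and your plan has no mechanism for the remaining case. The paper proves the short-interval bound in three pieces:
\begin{itemize}
\item Bang's lemma in its pointwise form on all of $[0,1]$ (\Cref{l:lb}) gives $\be(c)>e^{-\dd-1}$ at the center $c$ of $I$, with $\dd$ coming from the global hypothesis $\|f\|_{[0,1]}\ge b_0$. So some $\sum_i|f_i^{(k)}(c)|$ with $k\le\dd$ is large relative to $M_k$.
\item A Markov-type bound for the Taylor polynomial of degree $\nn-1$ at $c$ controls that same quantity by $\|f\|_I|I|^{-k}$, up to explicit factors.
\item \Cref{l:NSVint} compares $m_\nn|I|^\nn$ with $m_k|I|^k$.
\end{itemize}
For long $I$, the paper bisects to a subinterval $I_1$ with $|E\cap I_1|/|I_1|\ge|E|/|I|$ on which the smallness condition holds but $m_\nn|I_1|^\nn\ge 2^{-\nn}e^{-\nn(\log m+3+\ga_{\tilde\mu}(\nn))}$. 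It then concludes using $\|f\|_I\le M_0=1$, and this step is what produces the constant $\Cnn$.

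Finally, rescaling $I$ to $[0,1]$ in Step 1 is not legitimate: $\|f\|_I$ may be far below $b_0$, so the Bang lower bound must be taken on $[0,1]$. No separate comparison of $\|f\|_I$ with $\|f\|_{[0,1]}$ is needed.
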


Let us set $\dd := \dd_{\mu^{|\infty}}(\frac{b_0}{M_0})$ so that $\nn = 2\dd$ and note that $\dd \in \N_{\ge 1}$.
See \Cref{d:Cnn} for the definition of $\Cnn$.

Recall that a function $f = (f_1,\ldots,f_m) :[0,1] \to \R^m$ is $(\mu^{|\nn},M_0)$ smooth if $f \in \cC^\nn([0,1])$ and 
\begin{equation}
    \|f\|_{j,[0,1]} = \sup_{x \in [0,1]} \sum_{i=1}^m |f_i^{(j)}(x)| \le M_j, \quad 0 \le j \le \nn.
\end{equation}
We also use $\|f\|_{[0,1]} = \|f\|_{0,[0,1]} = \sup_{x \in [0,1]} \sum_{i=1}^m |f_i(x)|$.

In the following, we write $m_j := M_j/j!$, for $j\ge 1$, and assume  $M_0 = m_0 = 1$.

\begin{proof}
    We may assume that $M_0 = 1$, by replacing $f$ by $f/M_0$ and $b_0$ by $b_0/M_0$.
  Choose points $x_1 < x_2 < \cdots < x_{\nn}$ in $E$ such that $\min_j (x_{j+1}-x_j) \ge \frac{|E|}{\nn-1}$.
  Put $Q(x) := (x-x_1)(x-x_2)\cdots (x-x_{\nn})$. Then, by the Lagrange interpolation formula, for $x \in I$,
  \[
      f_i(x) = \sum_{j=1}^{\nn} \frac{Q(x)f_i(x_j)}{Q'(x_j)(x-x_j)} + \frac{Q(x) f_i^{(\nn)}(\xi)}{\nn!}, \quad \xi = \xi_{i,x} \in I.
  \]
  It follows that
  \[
      \|f\|_I \le \Big(\sum_{j=1}^{\nn} \frac{1}{|Q'(x_j)|}\Big) \|f\|_E |I|^{\nn-1}  + m_{\nn} |I|^{\nn}.
  \]
  Now
  \begin{align*}
     |Q'(x_j)| &= (x_j-x_{j-1})\cdots (x_j -x_1)(x_{j+1}-x_j) \cdots (x_{\nn}-x_j)
     \\
               &\ge \frac{(j-1)!(\nn-j)!}{(\nn-1)^{\nn-1}} |E|^{\nn-1} > \frac{(j-1)!(\nn-j)!}{(\nn-1)!} \Big(\frac{|E|}{e}\Big)^{\nn-1}
  \end{align*}
  so that
  \[
      \sum_{j=1}^{\nn}\frac{1}{|Q'(x_j)|} < \Big(\frac{2e}{|E|}\Big)^{\nn-1}.
  \]
  Thus
  \begin{equation} 
      \|f\|_I \le \Big(\frac{2e|I|}{|E|}\Big)^{\nn-1} \|f\|_E   +  m_{\nn} |I|^{\nn}.
  \end{equation}

  We claim that 
  \begin{equation} \label{eq:NSV1}
        m_{\nn} |I|^{\nn} \le e^{-\nn (\log m+ 3+\ga_{\tilde \mu}(\nn))} \implies m_{\nn} |I|^{\nn} \le \frac{2}{3} \|f\|_I.
  \end{equation}

    If \eqref{eq:NSV1} is satisfied, then we conclude 
  \begin{equation} \label{eq:firstcase}
      \|f\|_I \le  \Big(\frac{2e|I|}{|E|}\Big)^{\nn} \|f\|_E .
  \end{equation}

  Let us suppose that \eqref{eq:NSV1} is not satisfied.
    Choose a subinterval $I_1 \subseteq I$ such that \eqref{eq:NSV1} is satisfied for $I_1$, as well as
    \begin{equation}
    |E \cap I_1| \ge |E|\frac{|I_1|}{|I|},
    \end{equation}
    and 
    \[
        m_\nn |I_1|^\nn \ge 2^{-\nn} e^{-\nn(\log m + 3 + \ga_{\tilde \mu}(\nn))}.
    \]
    Then, using \eqref{eq:NSV1} and \eqref{eq:firstcase} for $I_1$, 
    \begin{align*}
        1 &\le  
          \Big(2e^{\log m + 3 + \ga_{\tilde \mu}(\nn)} \Big)^\nn m_\nn |I_1|^\nn
          \le
      \frac{2}{3}\Big(2e^{\log m + 3 + \ga_{\tilde \mu}(\nn)} \Big)^\nn \|f\|_{I_1}
      \\
          &\le
          \frac{2}{3}\Big(2e^{\log m + 3 + \ga_{\tilde \mu}(\nn)} \Big)^\nn \Big(\frac{2e |I_1|}{|E \cap I_1|}\Big)^\nn \|f\|_{E\cap I_1}
          \le 
          \Big(4e^{\log m + 4 + \ga_{\tilde \mu}(\nn)} \frac{ |I|}{|E|}\Big)^\nn \|f\|_{E}.
    \end{align*}
    This implies \eqref{eq:BNSV} because $\|f\|_I \le \|f\|_{[0,1]} \le M_0 =1$.
\end{proof}

The proof of the claim \eqref{eq:NSV1} comprises the Lemmas \ref{l:beta}---\ref{l:claim3}.

\begin{lemma}[{\cite[Lemma 3.1]{Rainer:2022aa}}] \label[l]{l:beta}
    The function 
    \[
        \be(x) := \sup_{0 \le j \le \nn} \frac{\sum_{i=1}^m |f_i^{(j)}(x)|}{e^j M_j}, \quad  x \in [0,1],
    \]
    is continuous. More precisely, 
    for all $1 \le j \le \nn$ and all distinct $x,y \in [0,1]$,
            \begin{equation} \label{eq:Bang1}
                \be(x) < \max\{\be(y),e^{-j}\} e^{e |x-y| \mu_j}.
            \end{equation}
\end{lemma}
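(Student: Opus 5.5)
The inequality \eqref{eq:Bang1} is a local propagation estimate: $\be$ cannot jump up much over short distances. I would prove it one derivative order at a time, by a Taylor expansion of order one. Fix $1 \le j \le \nn$ and distinct $x,y$, and write $s:=|x-y|$. For each $k$ with $0 \le k \le \nn$, set
\[
u_k(x) := \frac{\sum_{i=1}^m |f_i^{(k)}(x)|}{e^k M_k},
\]
so that $\be = \max_k u_k$. Continuity of $\be$ is clear, since $f \in \cC^\nn$ and $\be$ is a finite maximum of continuous functions.

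\emph{Step 1: the levels $k<\nn$.} By the mean value theorem applied componentwise,
\[
\sum_i |f_i^{(k)}(x)| \le \sum_i |f_i^{(k)}(y)| + s \sup_z \sum_i |f_i^{(k+1)}(z)|.
\]
Dividing by $e^kM_k$ and using $M_{k+1}=\mu_{k+1}M_k$ gives
\[
u_k(x) \le u_k(y) + e\,s\,\mu_{k+1}\,\sup_z u_{k+1}(z).
\]
This bare estimate is too crude, because it involves the supremum over $z$. The standard remedy, going back to Bang and used in \cite[Lemma 3.1]{Rainer:2022aa}, is a Grönwall-type argument along the segment from $y$ to $x$. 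Let $\phi(t):=\be(y+t(x-y))$. Then for $k \ge j$ the level $u_k$ grows at relative rate at most $e\,s\,\mu_{k+1}/\be$ times $u_{k+1}$. Levels $k<j$ are handled differently: whenever $\be \ge e^{-j}$, the small levels are dominated by the bound $\sum_i|f_i^{(k)}|\le M_k$, so $u_k \le e^{-k}$.

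\emph{Step 2: the top level $k=\nn$.} Here the only information is $u_\nn \le e^{-\nn}$, and this is at most $e^{-j}$. So the top level never causes $\be$ to exceed $\max\{\be(y),e^{-j}\}$.

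\emph{Step 3: combining.} Consider the first parameter $t$ at which $\phi$ exceeds $\max\{\be(y),e^{-j}\}e^{e s\mu_j t}$. At that point the maximum defining $\phi$ is attained at some level $k$. Since the $\mu_k$ increase, one would like $\mu_{k+1}\le\mu_j$, but that holds only for $k<j$, which is the opposite range from Step 1. So the correct grouping is this. Levels $k \ge j$ are bounded by $e^{-k}\le e^{-j}$ via the a priori bound $M_k$; hence they never exceed the threshold $e^{-j}$. Levels $k<j$ are propagated by Step 1 with $\mu_{k+1}\le\mu_j$, which gives a differential inequality $\phi' \le e\,s\,\mu_j\,\phi$ on the relevant set. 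Grönwall then yields $\phi(1) \le \max\{\be(y),e^{-j}\}e^{e s\mu_j}$. Strict inequality follows either from the strictness of the first-exit argument or by slightly enlarging the constant and taking a limit.

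\emph{Main obstacle.} The delicate point is Step 3, the interplay between levels. Bounding $u_k'$ by $u_{k+1}$ couples level $k$ to level $k+1$. One must check that $u_{k+1}\le\phi$ whenever $k+1 \le j$. When $k+1=j$, the relevant bound for $u_j$ is again $\phi$, so the chain closes. Handling the nondifferentiability of $|f_i^{(k)}|$ is the other technical issue. I would treat it with upper Dini derivatives and the first-crossing argument, exactly as in \cite[Lemma 3.1]{Rainer:2022aa}, whose proof carries over verbatim to the $1$-norm over components.
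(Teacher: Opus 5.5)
Your Step~3 grouping is the right one (Step~1 as written swaps the roles of $k<j$ and $k\ge j$), and it proves the non-strict inequality. Levels $k\ge j$ satisfy $u_k\le e^{-k}\le e^{-j}$. For $k<j$ and $t'<t$ one has $u_k(\gamma(t))-u_k(\gamma(t'))\le e\,s\,\mu_{k+1}\int_{t'}^{t}u_{k+1}(\gamma(\tau))\,d\tau$ with $\mu_{k+1}\le\mu_j$, where $\gamma(t)=y+t(x-y)$. Hence $\Psi:=\max\{\phi,e^{-j}\}$ satisfies $\Psi(t)\le A+c\int_0^t\Psi$ with $A:=\max\{\be(y),e^{-j}\}$ and $c:=e\,s\,\mu_j$, and Gr\"onwall gives $\be(x)\le A e^{c}$. (Get the increment bound from the fundamental theorem of calculus, not the componentwise mean value theorem. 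The latter produces points $\xi_i$ depending on $i$, and $\sum_i|f_i^{(k+1)}(\xi_i)|$ is not bounded by $\|f\|_{k+1,[0,1]}$.)

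The gap is the \emph{strict} inequality. Once all levels are collapsed into $\Psi$, the only information left is $\Psi(t)\le A+c\int_0^t\Psi$, and $Ae^{ct}$ satisfies this with equality. An $\ep$-perturbed first-exit argument, or enlarging the constant and passing to the limit, therefore yields only ``$\le$''. This is not cosmetic. In Lemma~\ref{l:lb} the statement is applied with $\be(x_{j-1})=e^{-(j-1)}$ and $\be(x_j)=e^{-j}$. Only the strict version gives $|x_j-x_{j-1}|>1/(e\mu_j)$, hence $e>\Si_{\mu^{|\infty}}(j_0+1,\dd+1)$, contradicting the definition of $\dd$. With ``$\le$'' you only get $e\ge\Si_{\mu^{|\infty}}(j_0+1,\dd+1)$, which is compatible with it.

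The missing idea is to keep the level-lowering structure instead of a single maximum. The standard proof behind the cited lemma (Bang's argument, as in Nazarov--Sodin--Volberg) does this in one step. Expand $f_i^{(k)}$ at $y$ by Taylor's formula to order $j-k$ with integral remainder, so that after summing over $i$ the remainder is at most $M_j s^{j-k}/(j-k)!$. Using $\sum_i|f_i^{(k+l)}(y)|\le\be(y)e^{k+l}M_{k+l}$, $M_{k+l}/M_k\le\mu_j^{l}$ for $k+l\le j$, and $e^{-k}=e^{-j}e^{j-k}$, one gets
\[
u_k(x)\le A\sum_{l=0}^{j-k}\frac{(e\mu_j s)^l}{l!}<A\,e^{e\mu_j s},\qquad 0\le k<j.
\]
The truncated exponential is exactly where strictness comes from.

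Your route can be repaired the same way. For $0\le k\le j$ put $W_k(t):=\max\{e^{-j},\max_{k\le l<j}u_l(\gamma(t))\}$ (the inner maximum being empty for $k=j$), so that $W_j\equiv e^{-j}\le A$ and $W_0=\Psi$. Your increment bound gives $W_k(t)\le A+c\int_0^t W_{k+1}$. Downward induction on $k$ then yields $\be(x)\le W_0(1)\le A\sum_{l=0}^{j}c^l/l!<Ae^{c}$.
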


\begin{proof}
    \cite[Lemma 3.1]{Rainer:2022aa} is formulated and proved in the case $m=1$, but the argument works for general $m$.
\end{proof}

\begin{lemma} \label[l]{l:lb}
    $\min_{x \in [0,1]} \be(x) > e^{-\dd -1}$.
\end{lemma}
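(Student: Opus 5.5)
We argue by contradiction, following the proof of the corresponding lower bound in \cite{NazarovSodinVolberg04} (the single-valued case treated in \cite{Rainer:2022aa}). Recall $M_0=1$ and $\|f\|_{[0,1]}\ge b_0$. Hence there is $y_0\in[0,1]$ with $\sum_i|f_i(y_0)|\ge b_0$, and so $\be(y_0)\ge b_0$. Since $j_0 = j_0(b_0)=\lceil\log(b_0^{-1})\rceil_\N$, we have $b_0\ge e^{-j_0}$, and therefore $\be(y_0)\ge e^{-j_0}$.

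Suppose, for contradiction, that there is $x^*$ with $\be(x^*)\le e^{-\dd-1}$. We walk from $y_0$ toward $x^*$ using \Cref{l:beta}. Assume $x^*>y_0$; the other case is symmetric. For $j=j_0+1,\ldots,\dd+1$ let $y_j$ be the first point in $[y_0,x^*]$ at which $\be(y_j)\le e^{-j}$. These points exist by continuity of $\be$ and satisfy $y_0< y_{j_0+1}\le\cdots\le y_{\dd+1}\le x^*$.

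Apply \eqref{eq:Bang1} with index $j$, taking $x=y_{j-1}$ and $y=y_j$ (for $j=j_0+1$, take $x=y_0$). Then $\max\{\be(y_j),e^{-j}\}=e^{-j}$, while $\be(y_{j-1})\ge e^{-(j-1)}$; at $y_0$ this holds with $j_0$ in place of $j-1$. This gives
\[
e^{-(j-1)} < e^{-j}e^{e|y_{j-1}-y_j|\mu_j},
\]
that is, $|y_j-y_{j-1}|>1/(e\mu_j)$. One point needs care: \eqref{eq:Bang1} is stated only for $1\le j\le\nn$, and we need $\dd+1\le\nn=2\dd$. This holds because $\dd\ge1$.

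Summing over $j=j_0+1,\ldots,\dd+1$ gives
\[
1\ge |x^*-y_0| > \frac1e\,\Si_{\mu^{|\infty}}(j_0+1,\dd+1).
\]
By the definition of the Bang degree (with $a=1$, so the weight is $\mu^{|\infty}$ itself), $\dd$ is the largest $n$ with $\Si_{\mu^{|\infty}}(j_0+1,n)<e$. Hence $\Si_{\mu^{|\infty}}(j_0+1,\dd+1)\ge e$, and the display yields $1>1$, a contradiction. Finiteness of $\dd$ comes from admissibility \eqref{eq:condition}.

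The main obstacle is the bookkeeping at the endpoints of the chain. If $\dd=j_0$, the chain is a single step and the same computation applies. One must also check that the strictness in \eqref{eq:Bang1} really produces a strict final inequality. Finally, since $[0,1]$ is compact and $\be$ is continuous, the minimum is attained, so the strict bound $\min\be>e^{-\dd-1}$ follows.
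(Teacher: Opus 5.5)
Your proof is correct and follows essentially the same route as the paper. You start from a point where $\be\ge e^{-j_0}$ and build a monotone chain with $\be(y_j)\le e^{-j}$ (in fact $=e^{-j}$ for $j>j_0$). You then use \eqref{eq:Bang1} to force gaps $>1/(e\mu_j)$ and sum to contradict $\Si_{\mu^{|\infty}}(j_0+1,\dd+1)\ge e$. Your first-hitting-point construction and the check $\dd+1\le\nn$ make explicit details the paper leaves to continuity and to the remark $\dd\ge 1$.
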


\begin{proof}
    Suppose 
    $\min_{x \in [0,1]} \be(x) \le e^{-\dd -1}$ and let $x_0 \in [0,1]$ be such that $\be(x_0) = e^{-\dd-1}$. 
    Since $\|f\|_{[0,1]}\ge b_0$, we have $\max_{x \in [0,1]} \be(x) \ge \frac{b_0}{M_0} \ge e^{-j_0}$,
    where $j_0 := \lceil \log \frac{M_0}{b_0}\rceil_\N$.
    Since $\be$ is continuous, there is a monotonic sequence $x_j \in [0,1]$ such that $\be(x_j) = e^{-j}$ 
    for all $j_0 \le j \le \dd+1$.
    By \eqref{eq:Bang1}, we have 
    \begin{equation}
        |x_j-x_{j-1}| > \frac{1}{e\, \mu_j} \quad \text{ for } j_0+1 \le j \le \dd+1,
    \end{equation}
    so that, summing over $j$, we find 
    \begin{equation}
        e > \sum_{j=j_0+1}^{\dd+1}  \frac{1}{\mu_j},
    \end{equation}
    contradicting the definition of $\dd$.
\end{proof}

\begin{lemma} \label[l]{l:NSVint}
    For $0\le k \le \dd$, 
    \[
        \Big( \frac{1}{\nn-k}-\frac{1}{\nn}\Big) \log m_\nn - \frac{1}{\nn-k} \log m_k   <  \ga_{\tilde \mu}(\nn)-1.
    \]
\end{lemma}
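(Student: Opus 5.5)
Write $L(j):=\log m_j=\sum_{i=1}^j \log(\mu_i/i)$, using $M_0=m_0=1$ so that $L(0)=0$. Multiplying the asserted inequality by $\nn(\nn-k)>0$ turns it into
\[
k\,L(\nn)-\nn\,L(k) < \nn(\nn-k)\,(\ga_{\tilde\mu}(\nn)-1).
\]
For $1\le k\le \dd=\nn/2$ I would rewrite the left side as $k(\nn-k)\,(A_+-A_-)$. Here $A_+$ is the average of $g(i):=\log\tilde\mu(i)-\log i$ over $k<i\le\nn$, and $A_-$ is the average of $g(i)$ over $1\le i\le k$. The claim then reduces to
\[
A_+-A_- < \frac{\nn}{k}\,(\ga_{\tilde\mu}(\nn)-1).
\]

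The key input is the definition of $\ga_{\tilde\mu}$ (\Cref{d:ga}). For $1\le s\le\nn$ it gives $\frac{d}{ds}\log\tilde\mu(s)\le \ga_{\tilde\mu}(\nn)/s$, hence $g'(s)\le(\ga_{\tilde\mu}(\nn)-1)/s$, using left derivatives at kinks. Integrating, $g(i)-g(i')\le(\ga_{\tilde\mu}(\nn)-1)\log(i/i')$ for $i'\le i\le\nn$, provided $\ga_{\tilde\mu}(\nn)\ge1$. Averaging over $i$ in the upper block and $i'$ in the lower block gives $A_+-A_-\le(\ga_{\tilde\mu}(\nn)-1)\,D$. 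Here $D$ is the upper-block average of $\log i$ minus the lower-block average of $\log i$.

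It remains to bound $D$. The upper average is at most $\log\nn$. By Stirling in the form $\log k!\ge k\log k-k$ (strict for $k\ge1$), the lower average exceeds $\log k-1$. Hence $D<\log(\nn/k)+1\le \nn/k$, using $\log x+1\le x$. Combining these gives the reduced inequality, with strictness coming from the Stirling step.

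The main obstacle is the sign of $\ga_{\tilde\mu}(\nn)-1$ and the boundary case $k=0$. For $k=0$ the left side is $0$, so the statement needs $\ga_{\tilde\mu}(\nn)>1$ outright. If $\ga_{\tilde\mu}(\nn)<1$, the upper bound on $D$ points the wrong way. I would first check whether the normalization forces $\ga_{\tilde\mu}\ge1$; for instance $\mu_j\ge j$ would force $\tilde\mu(s)\ge s$ somewhere with growth, as in the examples of \Cref{sec:A}, where $\ga_{\tilde\mu}\ge1$. Failing that, I would reread the convention for $m_0$ and $\tilde\mu$, perhaps $\tilde\mu(s)/s$ being nondecreasing. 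Only as a last resort would I treat $\ga_{\tilde\mu}(\nn)\le 1$ separately. In that case I would use $g$ nonincreasing together with a sharper lower bound on the distance between the two blocks; this may require adjusting the constant in \Cref{d:Cnn}.
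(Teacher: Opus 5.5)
Your argument is correct whenever $\ga_{\tilde\mu}(\nn)>1$, where it even gives the strict inequality, and it is essentially the paper's proof. The paper writes the left-hand side as $\int_1^{\nn}\frac{\hat\mu'(t)}{\hat\mu(t)}\,S(t)\,dt$ with $\hat\mu(t)=\tilde\mu(t)/t$, so that $\hat\mu'/\hat\mu=g'$ in your notation, and with a kernel satisfying $0\le S(t)<t/\nn$. This is exactly your double average $\frac{k}{\nn}(A_+-A_-)=\frac{1}{\nn(\nn-k)}\sum_{i'\le k<i}\int_{i'}^{i}g'(t)\,dt$ rewritten as a single integral: $S(t)$ is the number of pairs $(i',i)$ with $i'<t\le i$, divided by $\nn(\nn-k)$. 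After inserting $g'\le(\ga_{\tilde\mu}(\nn)-1)/t$, both arguments need $\int_1^{\nn}S(t)\,t^{-1}\,dt\le 1$. The paper gets $<(\nn-1)/\nn$ from the pointwise bound $S(t)<t/\nn$. You instead evaluate the integral exactly as $\frac{k}{\nn}D$ and use Stirling; this is slightly sharper, but the difference is immaterial.

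The obstacle you flag is real, but it is a defect of the statement, not of your proof, so do not pursue the ``last resort'': it cannot succeed.
\begin{itemize}
\item At $k=0$ the lemma literally asserts $\ga_{\tilde\mu}(\nn)>1$. Nothing in the setting of \Cref{thm:NSV} forces this, since $\mu_j\ge j$ is not assumed there.
\item For $\mu_j\equiv 1$ one has $\ga_{\tilde\mu}\equiv 0$. Already at $k=1$ the left-hand side equals $-\frac{\log \nn!}{\nn(\nn-1)}>-1$, so the failure is not confined to $k=0$.
\item For the analytic weight with $\tilde\mu(t)=t$ (\Cref{p:anGa}) one has $m_j\equiv 1$ and $\ga_{\tilde\mu}\equiv 1$. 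Both sides vanish for every $k$, so strictness fails.
\end{itemize}
The paper's own proof tacitly uses $\ga_{\tilde\mu}(\nn)\ge 1$: bounding $\int\frac{t\hat\mu'}{\hat\mu}\cdot\frac{S}{t}\,dt$ by the supremum requires the supremum to be nonnegative. It also only concludes ``$\le$''.

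The clean repair is to replace $\ga_{\tilde\mu}(\nn)$ by $\max\{\ga_{\tilde\mu}(\nn),1\}$. This is still an upper bound for $s\tilde\mu'(s)/\tilde\mu(s)$ and merely enlarges $\Cnn$. With that change your argument gives the non-strict inequality, strict when $\ga_{\tilde\mu}(\nn)>1$. The non-strict form is all that \Cref{l:claim1} uses.
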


\begin{proof}
   The left-hand side equals 
    \begin{align*}
        &\Big( \frac{1}{\nn-k}-\frac{1}{\nn}\Big) \sum_{j=1}^\nn\log(\tfrac{\mu_j}{j}) - \frac{1}{\nn-k} \sum_{j=1}^k\log(\tfrac{\mu_j}{j})    
        \\
        &= \frac{1}{\nn-k} \sum_{j=k+1}^\nn \big(\log(\tfrac{\mu_j}{j})- \log \mu_1\big) 
        - \frac{1}{\nn} \sum_{j=1}^\nn  \big( \log(\tfrac{\mu_j}{j}) - \log \mu_1 \big)    
        \\
        &= \int_1^\nn \frac{\hat \mu'(t)}{\hat \mu(t)} S(t) \, dt,
    \end{align*}
    where $\hat \mu(t) := \tilde \mu(t)/t$ and 
    \[
        S(t) := \frac{1}{\nn-k} \sum_{j=k+1}^\nn \mathbf 1_{[1,j]}(t) 
        - \frac{1}{\nn} \sum_{j=1}^\nn  \mathbf 1_{[1,j]}(t).
    \]
    Since $0 \le S(t) < t/\nn$, we 
    find
    \begin{align*}
        \int_1^\nn \frac{\hat \mu'(t)}{\hat \mu(t)} S(t) \, dt 
        \le \sup_{1 \le t \le \nn} \frac{t \hat \mu'(t)}{\hat \mu(t)} 
        = \sup_{1 \le t \le \nn} \frac{t \tilde  \mu'(t)}{\tilde  \mu(t)}-1  = \ga_{\tilde \mu}(\nn) -1.
    \end{align*}
\end{proof}

\begin{lemma} \label[l]{l:claim1}
    Let $I$ be a subinterval of $[0,1]$.
    If 
    \begin{equation} \label{eq:B2}
        m_{\nn} |I|^{\nn} \le e^{-\nn (\log m+ 3+\ga_{\tilde \mu}(\nn))}
    \end{equation}
    then, for $0 \le k \le \dd$, 
    \begin{equation} \label{eq:NSV2}
        m_{\nn} |I|^{\nn} \le \frac{e^{-2\nn}}{m} m_k |I|^k.
    \end{equation}
\end{lemma}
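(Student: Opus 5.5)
The plan is to take logarithms and reduce \eqref{eq:NSV2} to the estimate of \Cref{l:NSVint}, eliminating $|I|$ with the help of the quantity $X := m_\nn |I|^\nn$. Since $|I|^\nn = X/m_\nn$, we have
\[
k \log|I| = \tfrac{k}{\nn}\big(\log X - \log m_\nn\big).
\]
Hence \eqref{eq:NSV2}, i.e.\ $\log X \le -2\nn - \log m + \log m_k + k\log|I|$, is equivalent to
\[
\Big(1-\tfrac{k}{\nn}\Big)\log X \le -2\nn - \log m + \log m_k - \tfrac{k}{\nn}\log m_\nn .
\]
Since $k \le \dd = \nn/2 < \nn$, we may divide by $\tfrac{\nn-k}{\nn}>0$. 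Using $\tfrac{k}{\nn-k} = \nn\big(\tfrac{1}{\nn-k}-\tfrac1\nn\big)$, the target becomes
\[
\log X \le -\tfrac{\nn}{\nn-k}\,(2\nn+\log m) \;-\; \nn\Big[\Big(\tfrac{1}{\nn-k}-\tfrac{1}{\nn}\Big)\log m_\nn - \tfrac{1}{\nn-k}\log m_k\Big].
\]

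Next I apply \Cref{l:NSVint}: the bracket is $<\ga_{\tilde\mu}(\nn)-1$. Therefore it suffices to show
\[
\log X \le -\tfrac{\nn}{\nn-k}\,(2\nn+\log m) - \nn\big(\ga_{\tilde\mu}(\nn)-1\big).
\]
The hypothesis \eqref{eq:B2} gives $\log X \le -\nn\log m - 3\nn - \nn\,\ga_{\tilde\mu}(\nn)$. So it remains to verify the elementary inequality
\[
\tfrac{\nn}{\nn-k}\,(2\nn+\log m) \le \nn\log m + 4\nn .
\]

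The key input here is that $k\le \dd = \nn/2$ forces $\tfrac{\nn}{\nn-k}\le 2$. The left side is then at most $4\nn + 2\log m$. Moreover $\log m \ge 0$ (since $m\in\N_{\ge1}$) and $\nn = 2\dd \ge 2$, so $2\log m \le \nn \log m$, and the inequality follows.

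The only delicate point is the bookkeeping in the first step: the coefficient of $\log X$ must be positive, which uses $k<\nn$, and the sign conventions must line up so that \Cref{l:NSVint} is applied in the right direction. Everything else is routine.
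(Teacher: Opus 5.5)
Your proof is correct and follows the paper's argument. The paper writes $m_\nn|I|^{\nn-k}/m_k=(m_\nn|I|^\nn)^{(\nn-k)/\nn}\,m_\nn^{k/\nn}/m_k$, bounds the first factor by \eqref{eq:B2} and the second by \Cref{l:NSVint}, and finishes with the same elementary inequality $(\nn-k)(\log m+4)\ge 2\nn+\log m$ for $k\le \nn/2$; your argument is the logarithmic form of this computation.
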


\begin{proof}
    By \eqref{eq:B2},
    \begin{equation}
        \frac{m_{\nn} |I|^{\nn}}{m_k |I|^k} \le e^{-(\nn-k)(\log m+ 3+\ga_{\tilde \mu}(\nn))}  \frac{m_{\nn}^{k/\nn}}{m_k}.
    \end{equation}
    By \Cref{l:NSVint},
    \begin{align*}
        \Big(\frac{m_{\nn}^{k/\nn}}{m_k}\Big)^{1/(\nn-k)} = \frac{m_\nn^{\frac{1}{\nn-k}-\frac{1}{\nn}}}{m_k^{\frac{1}{\nn-k}}}
        < e^{\ga_{\tilde \mu}(\nn)-1}.
    \end{align*}
    It follows that 
    \begin{align*}
        \frac{m_{\nn} |I|^{\nn}}{m_k |I|^k} &\le e^{-(\nn-k)(\log m+ 3+\ga_{\tilde \mu}(\nn)) + (\nn-k) (\ga_{\tilde \mu}(\nn)-1)} 
        \\
                                            &= e^{- (\nn-k)(\log m + 4)} \le e^{-2\nn- \log m} = \frac{e^{-2\nn}}{m},
    \end{align*}
    if $k \le \dd = \nn/2$.
\end{proof}

\begin{lemma}\label[l]{l:claim2}
    For some $0 \le k \le \dd$, 
    \begin{equation}
        m_k \Big(\frac{|I|}{2}\Big)^k \le m e^{(2+\frac{1}{e})\dd} \Big( \|f\|_I + m_{\nn} \Big(\frac{|I|}{2}\Big)^\nn\Big). 
    \end{equation}
\end{lemma}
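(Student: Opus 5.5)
The plan is to evaluate the function $\be$ of \Cref{l:beta} at the midpoint $c$ of $I$ and then control the Taylor data of $f$ at $c$ by $\|f\|_I$ through a coefficient estimate for polynomials. Set $h := |I|/2$, so that $I = [c-h,c+h]$.

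\emph{Step 1 (a large low-order derivative at $c$).} By \Cref{l:lb} we have $\be(c) > e^{-\dd-1}$. Hence there is some $k \le \nn$ with
\[
\sum_{i=1}^m |f_i^{(k)}(c)| > e^{k-\dd-1} M_k .
\]
Each ratio $\sum_i |f_i^{(j)}(c)|/M_j$ is at most $1$, because $f$ is $(\mu^{|\nn},M_0)$-smooth with $M_0=1$. For $j \ge \dd+1$ the $j$-th term in $\be(c)$ is therefore at most $e^{-j} \le e^{-\dd-1}$. So the index $k$ realizing $\be(c)>e^{-\dd-1}$ satisfies $0\le k\le \dd$, and this $k$ is the one in the statement. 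Dividing by $k!$ and multiplying by $h^k$ gives
\[
m_k h^k < e^{\dd+1-k}\, \frac{h^k}{k!} \sum_{i=1}^m |f_i^{(k)}(c)| .
\]

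\emph{Step 2 (Taylor polynomial and remainder).} For each component write $f_i = P_i + R_i$ on $I$, where $P_i$ is the Taylor polynomial of degree $\nn-1$ at $c$. By Taylor's formula and $(\mu^{|\nn},M_0)$-smoothness,
\[
\sum_i \|R_i\|_I \le m_\nn h^\nn ,
\]
and consequently $\sum_i \|P_i\|_I \le m\,(\|f\|_I + m_\nn h^\nn)$. The factor $m$ is crude, since $\|\cdot\|_I$ is the sup of the $1$-norm; I will keep it to match the statement. The $k$-th Taylor coefficient is $a_{i,k} = f_i^{(k)}(c)/k!$.

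\emph{Step 3 (coefficient estimate).} Rescale $P_i(c+ht)$ to $[-1,1]$ and apply the classical bound for coefficients of a polynomial of degree $<\nn$ in terms of its sup-norm on $[-1,1]$: coefficients are dominated by those of the Chebyshev polynomial, roughly $(1+\sqrt2)^{\nn}$ in total, or a sharper $k$-dependent binomial bound. This yields
\[
|a_{i,k}| h^k \le C(\nn,k)\, \|P_i\|_I .
\]
Combining Steps 1 through 3 gives
\[
m_k h^k \le e^{\dd+1-k}\, C(\nn,k)\, m\,\bigl(\|f\|_I + m_\nn h^\nn\bigr).
\]

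\emph{Main obstacle.} The remaining work is to check that $e^{\dd+1-k}C(\nn,k) \le e^{(2+\frac1e)\dd}$ for $k\le \dd$, with $\nn = 2\dd$. The naive Chebyshev bound $e^{\dd+1}(1+\sqrt2)^{2\dd}$ is too large, so the $k$-dependence must be exploited. I would use a bound of the form $C(\nn,k) \lesssim \nn^k/k! \cdot(\text{const})^{\nn}$. Alternatively, I would apply Cauchy/Bernstein estimates on a smaller concentric interval, which trades the constant $(1+\sqrt2)^\nn$ against a factor $(\dd/k)^k$. The elementary maximum $\max_k (\dd/k)^k \le e^{\dd/e}$ is then the source of the term $\frac1e$ in the exponent, and the favourable factor $e^{-k}$ coming from Step 1 absorbs the rest. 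I expect this constant bookkeeping to need the most care. If a cleaner route is required, I would instead follow the original argument of \cite{NazarovSodinVolberg04}, which compares $f$ with its Taylor polynomial and uses Lagrange interpolation at equally spaced nodes as in the proof of \Cref{t:BNSV}.
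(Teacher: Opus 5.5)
Your Steps 1 and 2 follow the paper's proof: the midpoint $c$, \Cref{l:lb}, the choice of $k\le\dd$, and the Taylor polynomial of degree $\nn-1$ with its remainder. You even supply the justification for $k\le\dd$ that the paper leaves implicit. There is one slip. The bound $\sum_i\|R_i\|_I\le m_\nn h^\nn$ is false in general, because the $f_i^{(\nn)}$ may peak at different points. What holds is $\|R_i\|_I\le m_\nn h^\nn$ for each $i$. This still gives your bound $\sum_i\|P_i\|_I\le m(\|f\|_I+m_\nn h^\nn)$, and it is exactly where the factor $m$ in the statement comes from.

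The genuine gap is Step 3, which you leave open as the ``main obstacle''. Apart from bookkeeping, the content of the lemma is the choice of $C(\nn,k)$, and you never fix one that works. The missing ingredient is the Markov-type estimate at the centre of $I$ for polynomials of degree at most $\nn-1$. This is V.~Markov's coefficient bound; the paper quotes it from Mandelbrojt, 6.4.III:
\[
|P^{(k)}(c)|\le\Big(\frac{2}{|I|}\Big)^k(\nn-1)^k\,\|P\|_I,\qquad\text{i.e.}\qquad C(\nn,k)=\frac{(\nn-1)^k}{k!}.
\]
It carries no exponential loss. With it the check is one line, using $x^k/k!\le e^x$ and $\dd\ge1$:
\[
e^{\dd+1-k}\,\frac{(\nn-1)^k}{k!}=e^{\dd+1}\,\frac{\big((\nn-1)/e\big)^k}{k!}\le e^{\dd+1+\frac{2\dd-1}{e}}\le e^{(2+\frac1e)\dd}.
\]

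Your suggestions do not reach this as stated:
\begin{itemize}
\item There is almost no room for an extra factor $(\text{const})^{\nn}$: for large $\dd$ it forces the constant to be at most about $e^{(1-1/e)/2}\approx 1.37$. So the constant cannot be left unspecified.
\item Your accounting of the $\frac1e$ is off. The relevant maximum is $\max_k((\nn-1)/k)^k\le e^{(\nn-1)/e}\approx e^{2\dd/e}$, not $e^{\dd/e}$. The factor $e^{-k}$ from Step 1 is what converts $(\nn-1)^k/k!$ into $e^{(\nn-1)/e}$. The exponent $(2+\frac1e)\dd$ is merely a tidy upper bound for $\dd+1+\frac{2\dd-1}{e}$.
\item Your Cauchy--Bernstein alternative can be completed, but only with the right optimization. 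For $r=e^u$, the largest disc about $0$ inside $E_r$ has radius $\sinh u$. Minimizing $e^{(\nn-1)u}(\sinh u)^{-k}$ over $u>0$ gives $C(\nn,k)\le\big(e(\nn-1)/k\big)^k$. Then $((\nn-1)/k)^k\le e^{(\nn-1)/e}$ finishes exactly as above.
\end{itemize}
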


\begin{proof}
    Let $c$ be the center of $I$. Then, for $x \in I$, 
    \[
        f_i(x) = T^{\nn-1}_cf_i(x) + \frac{f_i^{(\nn)}(\xi)}{\nn !} (x-c)^\nn,
    \]
    where $T^{\nn-1}_cf_i(x)$ is the Taylor polynomial of degree $\nn-1$ of $f_i$ at $c$ and $\xi_{i,x} \in I$.
    By \cite[6.4.III]{Mandelbrojt52},
    \begin{align*}
        |f_i^{(k)}(c)| = |(T^{\nn-1}_c f_i)^{(k)}(c)| &\le \Big(\frac{2}{|I|}\Big)^k (\nn-1)^{k}\,  \| T^{\nn-1}_cf_i\|_I
\\
                                                      &\le \Big(\frac{2}{|I|}\Big)^k (\nn-1)^{k} \Big( \|f_i\|_I +  m_{\nn} \Big(\frac{|I|}2\Big)^\nn\Big).
    \end{align*}
    Thus 
    \begin{align*}
        \frac{  |f_i^{(k)}(c)|}{e^kM_k} &\le \frac{1}{m_k} \Big(\frac{2}{|I|}\Big)^k \Big( \frac{\nn-1}{e} \Big)^k\frac{1}{k!} \Big( \|f_i\|_I +  m_{\nn} \Big(\frac{|I|}2\Big)^\nn\Big)
        \\
                                        &\le \frac{1}{m_k} \Big(\frac{2}{|I|}\Big)^k e^{\frac{\nn-1}{e}} \Big( \|f_i\|_I +  m_{\nn} \Big(\frac{|I|}2\Big)^\nn\Big).
    \end{align*}
    By \Cref{l:lb}, 
    $e^{-\dd-1} < \be(c)$. 
    Therefore, there is some $0 \le k \le \dd$ such that 
    \[
        e^{-\dd-1} < \sum_{i=1}^m \frac{  |f_i^{(k)}(c)|}{e^kM_k}.
    \]
     This implies the assertion.
\end{proof}

\begin{lemma} \label[l]{l:claim3}
    If \eqref{eq:B2}, then $m_\nn |I|^\nn < \frac{2}3 \|f\|_I$.
\end{lemma}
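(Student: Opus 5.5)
Under the hypothesis \eqref{eq:B2}, we combine \Cref{l:claim1} and \Cref{l:claim2}. First apply \Cref{l:claim2}: there is some $0 \le k \le \dd$ with
\[
  m_k \Big(\frac{|I|}{2}\Big)^k \le m\, e^{(2+\frac1e)\dd}\Big(\|f\|_I + m_\nn\Big(\frac{|I|}{2}\Big)^\nn\Big).
\]
Then apply \Cref{l:claim1} with this $k$. Since $m_k|I|^k \le 2^k m_k (|I|/2)^k \le 2^{\dd} m_k (|I|/2)^k$, we obtain
\[
  m_\nn |I|^\nn \le \frac{e^{-2\nn}}{m}\, 2^{\dd}\, m\, e^{(2+\frac1e)\dd}\Big(\|f\|_I + 2^{-\nn} m_\nn |I|^\nn\Big).
\]
Writing $\nn = 2\dd$, the prefactor is $e^{-4\dd}\,2^{\dd} e^{(2+1/e)\dd} = \big(2e^{-2+1/e}\big)^{\dd}$. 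Because $2e^{-2+1/e} \approx 2\cdot 0.195 < 0.4$, this prefactor is at most $0.4^{\dd} \le 0.4$, using $\dd \ge 1$.

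Set $X := m_\nn|I|^\nn$ and let $c := (2e^{-2+1/e})^{\dd} \le 0.4$. The inequality reads $X \le c\,\|f\|_I + c\,2^{-\nn} X$. Since $c\,2^{-\nn} \le 0.4/4 = 0.1$, rearranging gives
\[
  X \le \frac{c}{1-c\,2^{-\nn}}\,\|f\|_I \le \frac{0.4}{0.9}\,\|f\|_I < \tfrac23\,\|f\|_I .
\]
The strict inequality holds whenever $\|f\|_I > 0$. If $\|f\|_I = 0$, then $X \le 0.1X$ forces $X = 0$, which would contradict $m_\nn>0$ unless $|I| = 0$. Degenerate intervals can be excluded, since $|E|>0$ with $E \subseteq I$ gives $|I|>0$. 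Moreover $\|f\|_I=0$ is impossible when $|I|>0$: it would force $X=0$, whereas $m_\nn>0$ and $|I|>0$ give $X>0$.

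The main point to check is the numerical bookkeeping: that \Cref{l:claim1} applies for the particular $k$ produced by \Cref{l:claim2} (it holds for every $k \le \dd$, so this is fine), and that the factor $2^k$ lost in passing from $|I|/2$ to $|I|$ is absorbed by the gain $e^{-2\nn}$. The constants $3+\ga_{\tilde\mu}(\nn)$ and $\log m$ in \eqref{eq:B2} were chosen exactly to make this work. The estimate $2e^{-2+1/e}<0.4$ is the one numerical fact needed.
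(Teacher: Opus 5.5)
Your proof is correct and follows the paper's argument. You apply \Cref{l:claim2} to get some $k\le\dd$, feed that $k$ into \Cref{l:claim1}, absorb the factor $2^k$, use $2^{-\nn}\le\frac14$, and rearrange. The only difference is cosmetic: the paper bounds $2^k\le e^k\le e^{\dd}$, obtaining the prefactor $e^{-(1-1/e)\dd}$ and concluding via $e^{1-1/e}-\frac14>\frac32$, whereas you keep $2^{\dd}$ and use $(2e^{-2+1/e})^{\dd}\le 0.4$.
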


\begin{proof}
    By \Cref{l:claim1,l:claim2}, there exists $0 \le k \le \dd$ such that 
    \begin{align*}
        m_\nn |I|^\nn \le \frac{e^{-2\nn}}{m} m_k |I|^k &\le e^{k-2\nn}  e^{(2+\frac{1}{e})\dd} \Big( \|f\|_I + m_{\nn} \Big(\frac{|I|}{2}\Big)^\nn\Big)
        \\
                                                        &\le  e^{-(1-\frac{1}{e})\dd} \Big( \|f\|_I + \frac{1}4 m_{\nn} |I|^\nn\Big).
    \end{align*}
    This implies the assertion, 
    since $\frac{3}2 < e^{1-\frac{1}e} - \frac{1}4$.
\end{proof}

\subsection*{Acknowledgement}

This research was funded in whole or in part by the Austrian Science Fund (FWF) DOI 10.55776/PAT1381823.
For open access purposes, the authors have applied a CC BY public copyright license to any author-accepted 
manuscript version arising from this submission.

The author acknowledges the use of ChatGPT, Claude, and Perplexity for assistance with literature review.


\def\cprime{$'$}
\providecommand{\bysame}{\leavevmode\hbox to3em{\hrulefill}\thinspace}
\providecommand{\MR}{\relax\ifhmode\unskip\space\fi MR }
\providecommand{\MRhref}[2]{%
  \href{http://www.ams.org/mathscinet-getitem?mr=#1}{#2}
}
\providecommand{\href}[2]{#2}

\end{document}